\newtheorem{theorem}{Theorem}[section]
\newtheorem{lemma}[theorem]{Lemma}
\newtheorem{proposition}[theorem]{Proposition}
\newtheorem{corollary}[theorem]{Corollary}
\newtheorem{definition}[theorem]{Definition}
\newtheorem{remark}[theorem]{Remark}
\newcounter{as}[section]
\newcommand{\mc}[1]{{\mathcal #1}}
\newcommand{\mf}[1]{{\mathfrak #1}}
\newcommand{\mb}[1]{{\mathbf #1}}
\newcommand{\bb}[1]{{\mathbb #1}}
\newcommand{\bs}[1]{{\boldsymbol #1}}
\newcommand{\ms}[1]{{\mathscr #1}}
\newcommand{\<}{\langle}
\renewcommand{\>}{\rangle}
\renewcommand{\epsilon}{\varepsilon}
\renewcommand{\epsilon}{\varepsilon}
\newcommand{\pfrac}[2]{\genfrac{}{}{}{1}{#1}{#2}}
\definecolor{bblue}{rgb}{.2,0.2,.8}
\let\oldtocsection=\tocsection
\let\oldtocsubsection=\tocsubsection
\let\oldtocsubsubsection=\tocsubsubsection
\renewcommand{\tocsection}[2]{\hspace{0em}\oldtocsection{#1}{#2}}
\renewcommand{\tocsubsection}[2]{\hspace{1em}\oldtocsubsection{#1}{#2}}
\renewcommand{\tocsubsubsection}[2]{\hspace{2em}\oldtocsubsubsection{#1}{#2}}
\DeclareRobustCommand{\SkipTocEntry}[5]{}
\begin{document}

\title[Large deviations for the exclusion with Robin boundary
conditions]{Dynamical large deviations for the boundary driven
symmetric exclusion process with Robin boundary conditions.}

\begin{abstract}
In this article, we consider a one-dimensional symmetric exclusion
process in weak contact with reservoirs at the boundary. In the
diffusive time-scaling the empirical measure evolves according to the
heat equation with Robin boundary conditions. We prove the associated
dynamical large deviations principle.
\end{abstract}

\allowdisplaybreaks

\author{T. Franco}
\address{UFBA\\
Instituto de Matem\'atica, Campus de Ondina, Av. Adhemar de Barros, S/N. CEP 40170-110\\
Salvador, Brasil} \email{tertu@ufba.br}

\author{P. Gon\c{c}alves} \address{\noindent Center for Mathematical
Analysis, Geometry and Dynamical Systems, Instituto Superior
T\'ecnico, Universidade de Lisboa, Av. Rovisco Pais, 1049-001 Lisboa,
Portugal} \email{p.goncalves@tecnico.ulisboa.pt}

\author{C. Landim} \address{IMPA, Estrada Dona Castorina 110, CEP
22460 Rio de Janeiro, Brasil
and CNRS UMR 6085, Université de Rouen, France. \\
e-mail: \texttt{landim@impa.br} }

\author{A. Neumann} \address{UFRGS, Instituto de Matem\'atica, Campus
do Vale, Av. Bento Gon\c calves, 9500. CEP 91509-900, Porto Alegre,
Brasil} \email{aneumann@mat.ufrgs.br}

\maketitle

\tableofcontents

\section{Introduction}
\label{sec1}

The investigation of the thermodynamic properties of stationary
nonequilibrium states of interacting particle systems has been proven
to be an important step in the understanding of nonequilibrium
phenomena and a rich source of mathematical problems \cite{D2007,
BDGJL2015, j20}.

In the context of lattices gases, the empirical measure is the only
relevant thermodynamical quantity in the macroscopic description of
the system, and the thermodynamical functionals, as the free energy,
can be identified to the large deviations rate functional.

While in equilibrium the stationary state is given by the Gibbs
distribution associated to the Hamiltonian, in nonequilibrium the
construction of the stationary state requires solving a
dynamical-variational problem which defines the so-called
quasi-potential \cite{FW98}.

At the beginning of this century, Derrida, Lebowitz and Speer
\cite{DLS2003} considered the one-dimensional symmetric exclusion
process in strong contact with reservoirs. In this context, the
empirical measure evolves in the diffusive time-scale according to the
heat equation with Dirichlet boundary conditions \cite{KL}. Expressing
the stationary state as a product of matrices \cite{DEHP93}, they
obtained an explicit formula for the large deviations principle rate
functional of the empirical measure under the stationary state, the
so-called nonequilibrium free energy.

Later, \cite{BDGJL} derived the Derrida-Lebowitz-Speer formula (in
short DLS formula) for the nonequilibrium free energy extending to
infinite dimensions the dynamical approach introduced in \cite{FW98}.
More precisely, they first proved a dynamical large deviations
principle for the empirical measure for symmetric exclusion processes
in strong contact with reservoirs \cite{BDGJL2003}.  Denote by
$I_{[0,T]}(u)$ the large deviations rate function of the dynamical
large deviations principle. Hence, $I_{[0,T]}(u)$ represents the cost
of observing a trajectory $u(t)$ in the time-interval $[0,T]$. Let
$\bar\rho$ be the stationary profile of the hydrodynamic equation,
that is, the typical density profile under the stationary
state. Define the quasi-potential as
\begin{equation*}
V(\gamma) \;=\; \inf_{T>0} \inf_u I_{[0,T]} (u)\;,
\end{equation*}
where the second infimum is carried over all trajectories $u(t)$
connecting the stationary density profile $\bar\rho$ to a density
profile $\gamma$ in the time interval $[0,T]$: $u(0) = \bar\rho$,
$u(T) = \gamma$. It is proven in \cite{BDGJL} that the quasi-potential
$V$ coincides with the nonequilibrium free energy, i.e., that it
satisfies the DLS' equations.

In the sequel, \cite{BG04, F2009} derived a large deviations principle
for the empirical measure under the stationary state from the
dynamical one, with rate functional given by the quasi-potential.
This result was later extended to weakly symmetric exclusion processes
in strong contact with reservoirs \cite{ED2004, BLM09, BGL2009,
BDGJL2011} and to reaction-diffusion models \cite{LT2018, FLT2018}.

It has long been understood that these results extend to
boundary-driven one-dimensional symmetric exclusion processes in weak
contact with reservoirs \cite{D}. But only recently, this result
appeared in \cite{DHS2021} through the matrix ansatz product method.

In this article, we accomplish the first step in the project of
deriving the large deviations principle for the empirical measure
under the stationary state, through the dynamical approach, for
boundary-driven one-dimensional symmetric exclusion processes in weak
contact with reservoirs. The law of large numbers has been obtained in
\cite{BMNS2017, FGN_LPP}. We prove here the dynamical large
deviations, while in the companion paper \cite{BEL21}, it is shown
that the quasi-potential satisfies the DLS' equations obtained in
\cite{DHS2021} for this model.

\section{Notation and Results}
\label{sec2}

\subsection*{The model}

We consider one-dimensional, symmetric exclusion processes in a weak
contact with boundary reservoirs. Fix $N\ge 1$, and let
$\color{bblue} \mf e_N = 1/N$, $\color{bblue} \mf r_N = 1 - (1/N)$,
$\color{bblue} \Lambda_N=\{\mf e_N ,\dots, (N-2)\, \mf e_N, \mf
r_N\}$.  The state-space is represented by
$\color{bblue} \Omega_N=\{0,1\}^{\Lambda_N}$ and the configurations by
the Greek letters $\eta$, $\xi$ so that $\color{bblue} \eta_x$,
$x\in \Lambda_N$, represents the number of particles at site $x$ for
the configuration $\eta$. Here and below all notation introduced in
the text and not in displayed equations is indicated in blue.

Fix throughout this article, $\color{bblue} 0<\alpha \le \beta<1$,
$\color{bblue} A>0$, $\color{bblue} B>0$.  The generator of the Markov
process, represented by $\ms L_N = \ms L^{\alpha, A, \beta, B}_N$, is
given by
\begin{equation*}
\ms L_N \;=\;  L^{\rm lb}_{N} \;+\;  L^{\rm bulk}_{N}
\;+\;  L^{\rm rb}_{N}\;.
\end{equation*}
In this formula, for every function $f:\Omega_N\to \mathbb R$,
\begin{equation}
\label{eq:defgenex}
(L^{\rm bulk}_{N}  f) (\eta) \;=\; N^2\,
\sum_{x\in \Lambda^o_N} [\, f(\sigma^{x,x+\mf e_N} \eta) \,-\, f(\eta)\,] \;,
\end{equation}
where $\Lambda^o_N$ represents the interior of $\Lambda_N$,
$\color{bblue} \Lambda^o_N := \Lambda_N \setminus \{\mf r_N\} = \{\mf
e_N ,\dots, (N-2)\, \mf e_N\}$, and
\begin{equation}
\label{eq:defgeneb}
\begin{gathered}
(L^{\rm lb}_N f) (\eta) \;=\; \frac{N}{A}\,
\big[\, (1-\eta_{\mf e_N})\, \alpha \,+\, (1-\alpha)\, \eta_{\mf e_N}\, \big]
\big[\, f(\sigma^{\mf e_N} \eta) \,-\, f(\eta)\, \big] \;, \\
(L^{\rm rb}_N f) (\eta) \;=\; \frac{N}{B}\,
\big[\, (1-\eta_{\mf r_N})\, \beta \,+\, (1-\beta)\, \eta_{\mf r_N}\, \big]
\big[\, f(\sigma^{\mf r_N} \eta) \,-\, f(\eta)\, \big]
\;.
\end{gathered}
\end{equation}
From now on, we omit the subindex $N$ from $\mf e_N$ and $\mf r_N$.  In the
definitions above,
\begin{equation}\label{eq:def_exchanges}
(\sigma^{x,x+\mf e}\eta)_y \;=\;
\begin{cases}
\eta_y &\mbox{ if } y \neq x,x+\mf e\\
\eta_{x+\mf e} &\mbox{ if } y=x \\
\eta_x &\mbox{ if } y=x+\mf e
\end{cases}
\quad \mbox{ and }
\quad
(\sigma^{x}\eta)_y \;=\;
\begin{cases}
\eta_y &\mbox{ if } y\neq x\\
1-\eta_x &\mbox{ if } y=x \;.
\end{cases}
\end{equation}

For a metric space $\mathbb X$, denote by
$\color{bblue} D([0,T], \mathbb X)$, $T>0$, the space of
right-continuous functions $x\colon [0,T] \to \mathbb X$, with
left-limits, endowed with the Skorohod topology and its associated
Borel $\sigma$-algebra. The elements of $D([0,T],\Omega_N)$ are
represent by $\bs \eta (\cdot)$.

For a probability measure $\mu$ on $\Omega_N$, let
$\color{bblue} \mathbb P_\mu$ be the measure on $D([0,T],\Omega_N)$
induced by the continuous-time Markov process associated to the
generator $\ms L_N$ starting from $\mu$. When the measure $\mu$ is the
Dirac measure concentrated on a configuration $\eta\in \Omega_N$, that
is $\mu = \delta_{\eta}$, we represent
$\color{bblue}\mathbb P_{\delta_\eta}$ simply by
$\color{bblue}\mathbb P_\eta$.  Expectation with respect to
$\mathbb P_\mu$, $\mathbb P_\eta$ is denoted by
$\color{bblue}\mathbb E_\mu$, $\color{bblue}\mathbb E_\eta$,
respectively.

\subsection*{Hydrodynamic limit}  Denote by
$\color{bblue}\mathscr{M}$ the set of non-negative measures on
$[0,1]$ with total mass bounded by $1$ endowed with the weak
topology. Recall that this topology is metrisable and that, with this
topology, $\ms M$ is a  compact space.  For a continuous
function $F:[0,1] \to \mathbb R$ and a measure $\pi\in\ms M$, denote by
$\<\pi, F\>$ the integral of $F$ with respect to $\pi$:
\begin{equation*}
\<\pi, F\> \;=\; \int F(x) \, \pi(dx)\;.
\end{equation*}

Given a configuration $\eta\in \Omega_N$, denote by $\pi = \pi(\eta)$
the measure in $\ms M$ obtained by assigning a mass $N^{-1}$ to each
particle:
\begin{equation*}
\pi \;=\; \pi (\eta) \;=\;
\pfrac{1}{N}\sum_{x\in\Lambda_N}\eta_x\, \delta_{x}\;.
\end{equation*}
The measure $\pi$ is called the \emph{empirical measure}.
Denote by
$\color{bblue}\bs \pi: D([0,T],\Omega_N) \to D([0,T],\ms M)$ the map
which associates to a trajectory $\bs \eta (\cdot)$ its empirical
measure:
\begin{equation}
\label{eq:empirical_measure}
\bs \pi (t) \;=\; \pi (\bs \eta(t)) \;=\;
\pfrac{1}{N}\sum_{x\in\Lambda_N}\eta_x(t) \, \delta_{x}\;.
\end{equation}
For a probability measure $\mu$ in $\Omega_N$, let $\mathbb Q^N_{\mu}$
be the measure on $D([0,T], \ms M)$ given by
$\color{bblue} \mathbb Q^N_{\mu} = \mathbb P^N_{\mu} \,\circ\, \bs
\pi^{-1}$.  The first result, due to \cite{BMNS2017}, establishes the
hydrodynamic behavior of the empirical measure.

\begin{theorem}
\label{mt1}
Fix $T>0$, a measurable density profile
$\gamma \colon [0,1] \to [0,1]$, and a sequence $\{\nu^N \}_{N\ge 1}$
of probability measures on $\Omega_N$ associated to $\gamma$ in the
sense that
\begin{equation}
\label{eq:associated}
\lim_{N\to\infty}\nu^N\Big[\,
\Big|\, \<\pi\,,\, H\> \,-\, \int_0^1 \gamma (x)\, H(x)\, dx \,\Big|
\,>\, \delta\,\Big] \;=\; 0
\end{equation}
for all continuous functions $H\colon [0,1] \to \mathbb R$ and
$\delta>0$.  Then, the sequence of probability measures
$\mathbb Q^N_{\nu^N}$ converges weakly to the probability measure
$\mathbb Q$ concentrated on the trajectory $\pi(t,dx) = u(t,x)\, dx$,
where $u$ is the unique weak solution of the heat equation with Robin
boundary conditions
\begin{equation}
\label{1-06}
\begin{cases}
\partial_tu \,=\, \Delta u\\
(\nabla  u) (t,0) \,=\, A^{-1} [\,u(t,0)-\alpha\,] \\
(\nabla  u)(t,1) \,=\, B^{-1} [\, \beta - u (t,1)] \\
u(0,\cdot) \,=\, \gamma (\cdot)\;.
\end{cases}
\end{equation}
\end{theorem}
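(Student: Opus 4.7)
\medskip
\noindent
\textbf{Plan of proof.} I would follow the classical three-step scheme for hydrodynamic limits: (i) prove tightness of $\{\bb Q^N_{\nu^N}\}_{N\ge 1}$ in $D([0,T],\ms M)$; (ii) show that every limit point is concentrated on absolutely continuous trajectories $\pi(t,dx)=u(t,x)\,dx$ whose density $u$ is a weak solution of \eqref{1-06}; and (iii) appeal to uniqueness of such weak solutions, a standard energy estimate for the Robin problem. Since $\ms M$ is compact, tightness reduces, via Aldous' criterion, to equicontinuity of the real processes $\{\<\bs\pi(\cdot),H\>\}_N$ for $H$ in a countable dense subset of $C([0,1])$; these bounds come from Dynkin's formula together with a crude bound of order $1/N$ on the corresponding quadratic variations.

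The crux is step (ii). For $H\in C^2([0,1])$ (\emph{not} required to vanish at the boundary), I would introduce
\begin{equation*}
M^{N,H}_t \;:=\; \<\bs\pi(t),H\> \,-\, \<\bs\pi(0),H\> \,-\, \int_0^t \ms L_N \<\bs\pi(s),H\>\, ds.
\end{equation*}
A direct computation, using $\eta_{\mf e}^2=\eta_{\mf e}$, yields $L^{\rm lb}_N \<\bs\pi,H\> = A^{-1}H(\mf e)[\alpha-\eta_{\mf e}]$ and $L^{\rm rb}_N \<\bs\pi,H\> = B^{-1}H(\mf r)[\beta-\eta_{\mf r}]$, the weak scalings $N/A,\,N/B$ exactly cancelling the $1/N$ in the empirical measure. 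A single summation by parts on the bulk piece produces the discrete Laplacian $(1/N)\sum_y \eta_y\,\Delta_N H(y)$ plus two boundary remainders of order one, namely $\nabla H(0)\eta_{\mf e}-\nabla H(1)\eta_{\mf r}$ (with the signs dictated by the orientation). Replacing $\Delta_N H$ by $\Delta H$ and sending $N\to\infty$, the resulting expression matches term by term the weak formulation of \eqref{1-06} tested against $H$, \emph{provided} one can substitute $\eta_{\mf e}(s)$, $\eta_{\mf r}(s)$ by the traces $u(s,0)$, $u(s,1)$ of the limiting density. The martingale $M^{N,H}_t$ disappears in $L^2$ thanks to the $O(1/N)$ bound on its quadratic variation.

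The hard step is this \emph{boundary replacement}, $\eta_{\mf e}(s)\rightsquigarrow u(s,0)$ and $\eta_{\mf r}(s)\rightsquigarrow u(s,1)$ in a space-time averaged sense under any subsequential limit $\bb Q^*$. In the interior, the analogous replacement is handled by the usual one- and two-block estimates; near the boundary, however, exchanges are one-sided and the reservoir flips occur on the slow scale $N$, so the bulk Dirichlet-form bound is weaker. I would proceed in two stages: first, using only the bulk Dirichlet form, replace $\eta_{\mf e}(s)$ by the local spatial average $(\epsilon N)^{-1}\sum_{x\le \epsilon}\eta_x(s)$ at cost of order $1/\epsilon$; then let $N\to\infty$ followed by $\epsilon\to 0^+$ and identify the average with $u(s,0)$, using that the limiting density is in $L^2$ in space and that boundary traces are controlled by an energy estimate on $u$ (which can be read off from the Dirichlet form bound). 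The right boundary is identical by symmetry, and the remaining passage to the limit is standard Portmanteau plus dominated convergence.
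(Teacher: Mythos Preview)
Your proposal is correct and follows exactly the classical scheme the paper invokes: Theorem~\ref{mt1} is not proved here but cited from \cite{BMNS2017}, and the paper's sketch for the analogous weakly asymmetric result (Theorem~\ref{p02}) lists precisely your steps---tightness, concentration of limit points on $\ms L^2(0,T;\mc H^1)$ densities, identification of the weak formulation, and uniqueness from Appendix~\ref{sec06}. Your handling of the boundary replacement (bulk Dirichlet form to pass from $\eta_{\mf e}$ to a local average, then the $\mc H^1$ energy estimate to make sense of the trace $u(s,0)$) is the correct mechanism for the slow-boundary regime.
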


In this formula, $\nabla u$ stands for the partial derivative in space
of $u$, $\partial_t u$ for its partial derivative in time and
$\Delta u$ for the Laplacian of $u$ in the space variable.  The
definition of weak solutions of equation \eqref{1-06} and the proof of
uniqueness of weak solutions is provided in Appendix \ref{sec06}.  It
is also presented in \cite{BMNS2017}.

\subsection*{The energy}

Denote by $\ms M_{\rm ac}$ the subset of $\ms M$ of all measures which
are absolutely continuous with respect to the Lebesgue measure and
whose density takes values in the interval $[0,1]$, that is,
$\color{bblue} \ms M_{\rm ac} = \{\pi \in \ms M : \pi(dx) = \gamma (x)
\, dx \;\;\text{and}\;\; 0\,\le\, \gamma(x) \,\le\, 1\,\}$.

For $T>0$, let the {\it energy}
$\mc Q_{[0,T]}: D([0,T], \ms M_{\rm ac}) \to [0,\infty]$ be given by
\begin{equation}
\label{energy}
\begin{split}
& \mc Q_{[0,T]} (u) \;:=\; \sup_{G} \mc Q_{[0,T]}^{G} \\
& \quad
:=\; \sup_{G} \Big\{ \int_0^Tdt \int_{0}^1  u(t,x) \,
(\nabla  G)(t,x) \; dx \;-\; \frac 12  \int_0^Tdt \int_{0}^1
G(t,x)^2  \; dx\, \Big\}\;,
\end{split}
\end{equation}
where the supremum is carried over all smooth functions
$G: [0,T]\times (0,1) \to\mathbb R$ with compact support.
\begin{remark}
\label{rm2}
In this definition and below, for a functional
$\Phi \colon D([0,T], \ms M_{\rm ac}) \to \mathbb R$, we often write
$\Phi(u)$ instead $\Phi(\pi)$ when $\pi(t,dx) = u(t,x)\, dx$.
\end{remark}

Clearly, the energy $\mc Q_{[0,T]}$ is convex and lower
semicontinuous. Moreover, if $\mc Q_{[0,T]}(u)$ is finite, $u$ has a
generalized space derivative, denoted by $\nabla u$, and
\begin{equation}
\label{energy1}
\mc Q_{[0,T]} (u) \;=\; \frac 12 \int_0^T dt\, \int_{0}^1
(\nabla  u_t)^2 \; dx \;\cdot
\end{equation}
Denote by $\color{bblue} D_{\mc E} ([0,T], \ms M_{\rm ac})$ the
trajectories in $D ([0,T], \ms M_{\rm ac})$ with finite energy.

\subsection*{The rate functional.}

For $T>0$ and positive integers $m,n$, denote by
$\color{bblue} C^{m,n}([0,T]\times[0,1])$ the space of functions
$G\colon [0,T]\times[0,1] \to\mathbb R$ with $m$ derivatives in time,
$n$ derivatives in space which are continuous up to the
boundary. Denote by $\color{bblue} C^{m,n}_0([0,T]\times[0,1])$ the
functions $G$ in $C^{m,n} ([0,T]\times[0,1])$ such that
$G(t,0) = G(t,1)=0$ for all $t\in[0,T]$, and by
$\color{bblue} C^{m,n}_c([0,T]\times(0,1))$ the functions in
$C^{m,n}([0,T]\times [0,1])$ with compact support in
$[0,T]\times (0,1)$.

For $0< \varrho <1$, $D>0$, $0<a<1$, $M\in \mathbb R$, let
\begin{equation}
\label{4-05}
\mf b_{\varrho, D} (a,M) \;=\; \frac{1}{D}\, \Big\{ [1-a]\, \varrho \,
[e^M-1] \;+\; a\, [1-\varrho] \, [e^{-M}-1] \, \Big\}  \;.
\end{equation}

Fix a trajectory
$\pi(t,dx) = u(t,x) \, dx \in D_{\mc E}([0,T], \ms M_{\rm ac})$. Then,
for almost all $t\in [0,T]$, $\int_{0}^1 (\nabla u_t)^2 \; dx$ is
finite, and therefore $u(t,\cdot)$ is H\"older-continuous. In
particular, $u(t,0)$ and $u(t,1)$ are well defined for almost all $t$.

Denote by $\<\,\cdot\,,\,\cdot\,\>$ the usual scalar product in
$\ms L^2([0,1])$: $\color{bblue} \< f , g \> = \int_0^1 f(x) g(x) dx$
for $f,g\in \ms L^2([0,1])$.  Recall the convention established in
Remark \ref{rm2}.  For each $H$ in $C^{1,2}([0,T]\times[0,1])$, let
$J_{T,H} \colon D_{\mc E}([0,T], \ms M_{\rm ac})\longrightarrow
\mathbb R$ be the functional given by
\begin{equation}
\label{1-01}
\begin{aligned}
& J_{T,H} ( u ) \; =\; \big\langle u_T, H_T \big\rangle
\;-\; \langle u_0, H_0  \rangle
\;-\; \int_0^{T} \big\langle u_t, \partial_t H_t  \big\rangle \; dt
\\
&\quad \; -\; \int_0^{T}  \big\langle u_t , \Delta H_t \big\rangle \; dt
\;+\;  \int_0^{T} u_t(1)\,  \nabla  H_t (1) \; dt
\; -\;   \int_0^{T} u_t(0)\,  \nabla  H_t(0) \; dt  \\
&\quad
\;-\; \int_0^{T}  \big\langle \sigma( u_t ),
\big( \nabla  H_t \big)^2 \big\rangle \; dt \\
&\quad  \;-\; \int_0^T \Big\{\,
\mf b_{\alpha, A} \big(\, u_t(0)\, ,\, H_t (0)\, \big)
\;+\; \mf b_{\beta, B} \big(\, u_t(1)\, ,\, H_t(1)\, \big)\,
\Big\}\, dt \; .
\end{aligned}
\end{equation}
In this formula and below, $\color{bblue} \sigma(a)=a(1-a)$ stands for
the mobility of the exclusion process. Since trajectories in
$D_{\mc E}([0,T], \ms M_{\rm ac})$ have generalized space-derivatives,
we may integrate by parts the second line and write the functional
$J_{T,H} ( \, \cdot\, )$ as
\begin{equation}
\label{1-01b}
\begin{aligned}
J_{T,H} ( u ) \; & =\; \big\langle u_T, H_T \big\rangle
\;-\; \langle u_0 , H_0  \rangle
\;-\; \int_0^{T} \big\langle u_t, \partial_t H_t  \big\rangle \; dt
\\
& +\; \int_0^{T} \big\langle \nabla  u_t , \nabla  H_t \big\rangle \; dt
-\; \int_0^{T}  \big\langle \sigma( u_t ),
\big( \nabla  H_t \big)^2 \big\rangle \; dt \\
&-\; \int_0^T \Big\{\,
\mf b_{\alpha, A} \big(\, u_t(0)\, ,\, H_t (0)\, \big)
\;+\; \mf b_{\beta, B} \big(\, u_t(1)\, ,\, H_t(1)\, \big)\,
\Big\}\, dt \; .
\end{aligned}
\end{equation}
We extend the definition of $J_{T,H} ( \, \cdot\, )$ to
$D ([0,T], \ms M)$ by setting
\begin{equation*}
J_{T,H} ( \pi ) \;=\; \infty\quad
\text{if}\;\; \pi \,\not\in\, D_{\mc E}([0,T], \ms M_{\rm
ac})\;.
\end{equation*}

\begin{remark}
\label{rm3}
This definition differs from the one presented in \cite{BLM09,
FLM2011, FGN_LPP} in the context of exclusion processes with Dirichlet
boundary conditions. There, one defines $J_{T,H} ( \, \cdot\,)$ in
$D ([0,T], \ms M_{\rm ac})$ by an equation similar to \eqref{1-01}
with $u_t(0)$, $u_t(1)$ replaced by the densities $\alpha$, $\beta$,
respectively.  Here, as the boundary values appear and are not fixed
by the dynamics, in the definition of the functional $J_{T,H}$, one is
forced to restrict the definition to trajectories with finite
energy. Otherwise, the boundary values of a density profile are not
defined.
\end{remark}

Let
$I_{[0,T]} (\, \cdot \, ) \colon D([0,T],\ms M_{\rm
ac})\longrightarrow [0,+\infty]$ be the functional defined by
\begin{equation}
\label{1-02}
I_{[0,T]} (u) \; :=\;
\sup_{H\in   C^{1,2} ([0,T]\times[0,1])} J_{T,H}(u)\; .
\end{equation}

Fix a density profile $\gamma$ in $\ms M_{\rm ac}$, and let
$I_{[0,T]} ( \, \cdot\, | \gamma) \colon D ([0,T], \ms M) \to \bb R$
be given by
\begin{equation}
\label{1-02b}
I_{[0,T]}  ( u | \gamma) \;=\; 
\left\{
\begin{aligned}
& I_{[0,T]} ( u ) \quad 
\text{if}\;\; u(0\,,\, \cdot\,) \,=\, \gamma(\,\cdot\,)\;\; \text{a.s.}\;, \\
& \infty\quad  \text{otherwise}\;.
\end{aligned}
\right.
\end{equation}

\begin{theorem}
\label{mt2}
Fix $T>0$ and a measurable function $\gamma: [0,1] \to [0,1]$.  The
function $I_{[0,T]}(\cdot|\gamma):D([0,T],\mathcal{M})\to[0,\infty]$
is convex, lower semicontinuous and has compact level sets.
\end{theorem}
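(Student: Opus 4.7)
The plan is to prove convexity, lower semicontinuity, and compactness of level sets in that order, exploiting the variational representation $I_{[0,T]}(u\mid\gamma)=\sup_{H\in C^{1,2}([0,T]\times[0,1])} J_{T,H}(u)$ on the convex set $\{u(0,\cdot)=\gamma\}$. Convexity is immediate from \eqref{1-01b}: the first four terms are linear in $u$; the term $-\int_0^T\langle\sigma(u_t),(\nabla H_t)^2\rangle\,dt$ is convex because $\sigma(a)=a(1-a)$ is concave; and the boundary integrands $-\mf b_{\varrho,D}(u_t(\cdot),H_t(\cdot))$ are affine in the boundary traces by \eqref{4-05}. Since the boundary traces depend linearly on $u$ wherever they are defined, $J_{T,H}$ is convex in $u$; the supremum then remains convex, and the linear constraint $u(0,\cdot)=\gamma$ cuts out a convex subset, giving convexity of $I_{[0,T]}(\cdot\mid\gamma)$.

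For lower semicontinuity, since $I_{[0,T]}(\cdot\mid\gamma)$ is a supremum over $H$ of the $J_{T,H}$ on a closed subset, it suffices to show each $J_{T,H}$ is lsc on $D([0,T],\ms M)$. Outside $D_{\mc E}([0,T],\ms M_{\rm ac})$ the functional is $+\infty$ by definition, and since $\mc Q_{[0,T]}$ is itself lsc as a supremum of continuous linear functionals, this is compatible with taking limits. On finite-energy paths, the four linear terms of \eqref{1-01} are Skorokhod-continuous after a standard time-mollification of $H$ near the endpoints to avoid càdlàg atoms of $\pi$. The convex quadratic term is handled by writing $-\sigma(a)=\sup_{\lambda\in\RR}\{(2\lambda-1)a-\lambda^2\}$ and invoking a measurable-selection argument, which realizes $-\int\langle\sigma(u_t),(\nabla H_t)^2\rangle\,dt$ as a supremum of continuous linear functionals of $u$. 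The boundary $\mf b$-terms, being affine in the traces, reduce to the lsc of $u\mapsto\int_0^T u_t(i)\,\varphi(t)\,dt$ for $i\in\{0,1\}$, which I would establish by approximating the trace by the spatial mollification $\delta^{-1}\int_0^\delta u_t(x)\,dx$ and using the finite-energy control to pass to the limit.

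For compactness of $\{I_{[0,T]}(\cdot\mid\gamma)\leq M\}$: since $\ms M$ is compact, I only need equicontinuity in time. Fix $\psi\in C^2([0,1])$ with $\psi(0)=\psi(1)=0$, $0\leq s<t\leq T$, a smooth temporal cut-off $\varphi_\eps\to\chi_{[s,t]}$, and $\lambda\in\RR$. Plugging $H(r,x)=\lambda\varphi_\eps(r)\psi(x)$ into \eqref{1-01b} and letting $\eps\downarrow0$ gives
\begin{equation*}
\lambda\bigl[\langle u_t,\psi\rangle-\langle u_s,\psi\rangle\bigr]+\lambda\int_s^t\langle\nabla u_r,\nabla\psi\rangle\,dr-\lambda^2\int_s^t\langle\sigma(u_r),(\nabla\psi)^2\rangle\,dr\;\leq\;M.
\end{equation*}
Using $\sigma\leq 1/4$, Cauchy--Schwarz on the gradient term against the energy, and optimization in $\lambda$ yields a modulus bound $|\langle u_t,\psi\rangle-\langle u_s,\psi\rangle|\leq C(\psi)\sqrt{(t-s)(1+M+\mc Q_{[0,T]}(u))}$, while an independent bound $\mc Q_{[0,T]}(u)\leq C(1+M)$, obtained by restricting the supremum in $I$ to $H$ with compact spatial support in $(0,1)$ and applying Young's inequality, closes the estimate uniformly on $\{I\leq M\}$. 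Running $\psi$ over a countable dense family then gives Skorokhod relative compactness, and combined with the lsc established above this produces a compact set.

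The hard part will be the boundary-trace portion of the lsc argument, since $u_t(i)$ is not a continuous functional on $D([0,T],\ms M)$; handling it requires combining the lsc of the energy with the mollification of traces and the convex structure of $J_{T,H}$, in order to pass to the limit along weakly convergent sequences of finite-energy paths. The analogous Dirichlet-boundary treatments of \cite{BLM09,FLM2011,FGN_LPP} avoid this issue by fixing the boundary values to $\alpha,\beta$; here, as highlighted in Remark \ref{rm3}, the Robin setting forces the boundary values to be genuine functionals of $u$, which is precisely what makes this step delicate.
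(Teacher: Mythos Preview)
Your convexity argument is fine, and the overall architecture --- energy bound $\Rightarrow$ trace stability $\Rightarrow$ lower semicontinuity of each $J_{T,H}$, plus equicontinuity $\Rightarrow$ compactness --- is the right one. The genuine gap is the step you pass over in one clause: the uniform bound $\mc Q_{[0,T]}(u)\le C(1+M)$ on the level set $\{I_{[0,T]}(\cdot\mid\gamma)\le M\}$.

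Restricting the supremum in \eqref{1-02} to $H$ with compact spatial support and ``applying Young's inequality'' does not give this. With such $H$ the boundary terms vanish, but \eqref{1-01b} still contains the time block $\langle u_T,H_T\rangle-\langle u_0,H_0\rangle-\int_0^T\langle u_t,\partial_t H_t\rangle\,dt$. To extract $\int_0^T\|\nabla u_t\|_2^2\,dt$ you would need to let $\nabla H_t$ approximate $\nabla u_t$ in $\ms L^2(\Omega_T)$; the corresponding $\partial_t H_t$ then tracks $\partial_t u_t$, which you do not control. Time-independent or compactly-supported-in-time test functions only yield bounds on $\|\int_0^T\nabla u_t\,dt\|$ or on averages, not on the full energy. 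The paper obtains the bound (in the stronger form $\int_0^T\!\!\int_0^1 |\nabla u|^2/\sigma(u)\,dx\,dt\le C_0(I_{[0,T]}(u)+1)$, Proposition~\ref{l06}) by an entropy-type argument: one plugs in the nonlinear test function $H=h_a'(u^{\epsilon,\delta})$ with $h_a''=1/(2\sigma_a)$, so that the time block becomes a difference of bounded entropies and disappears, while the spatial block produces exactly the weighted Dirichlet form. This requires regularising $u$ (Lemmata~\ref{l03}--\ref{l04}) and is not a Young's inequality.

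Two smaller points. First, your equicontinuity test uses $\psi$ with $\psi(0)=\psi(1)=0$; such functions are not uniformly dense in $C([0,1])$, so this does not directly yield tightness in $D([0,T],\ms M)$. The paper (Lemma~\ref{l01}) allows general $g\in C^2([0,1])$: the boundary pieces $\mf b_{\varrho,D}$ then contribute a factor $e^{a\|g\|_\infty}$, and one chooses the scaling $a\sim -\log(t-s)$ to absorb it. Second, the boundary integrand in $J_{T,H}$ is \emph{affine} in $u_t(0),u_t(1)$, so lower semicontinuity of the trace functional is not enough; you need continuity of $\int_0^T u^n_t(i)\,\varphi(t)\,dt$ along level-set sequences. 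Your mollification $u_t(0)\approx\delta^{-1}\int_0^\delta u_t$ does give this, but only once the uniform energy bound is in hand --- which brings you back to the first gap. The paper sidesteps proving that each $J_{T,H}$ is lsc on all of $D([0,T],\ms M)$ and instead shows, via the energy bound and Robin-semigroup smoothing (Claims~1 and~2 in the proof of Theorem~\ref{mt2}), that along any sequence $\pi^n\to\pi$ in a level set one has $J_{G}(\pi^n)\to J_G(\pi)$ for every $G$.
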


This result is proved in Section \ref{sec03}, where we also show,
in Lemma \ref{l01}, that any path $\pi$ with finite rate function,
$I_{[0,T]} (\pi | \gamma)<\infty$, is weakly continuous in
time. Moreover, Proposition \ref{l06} states that there exists a
finite constant $C_0$ such that
\begin{equation*}
\int_0^T dt \int_0^1 \frac{[\nabla u]^2}{\sigma(u)} \, dx \;\le\;
C_0 \, \{\, I_{[0,T]} (u) +1\, \}
\end{equation*}
for all $u$ in $D_{\mc E}([0,T], \ms M_{\rm ac})$.

In Section \ref{sec07}, we obtain an explicit formula for the action
functional. Proposition~\ref{p01} states that $I_{[0,T]}(\,\cdot\,)$
can be expressed as
$I^{(1)}_{[0,T]}(\,\cdot\,) + I^{(2)}_{[0,T]}(\,\cdot\,)$. The first
term provides the contribution to the rate function due to the
evolution in the interior of the interval $[0,1]$, while the second
one the contribution due to the evolution at the boundary.

In Section \ref{sec05}, we show that trajectories with finite rate
function can be approximated by smooth ones.  The precise statement
requires some notation.

\begin{definition}
\label{d04}
Given $\gamma\in \ms M_{\rm ac}$, let $\Pi_\gamma$ be the collection
of all paths $\pi(t,dx) = u(t,x) dx$ in $D([0,T], \ms M_{\rm ac})$
such that
\begin{itemize}
\item[(a)] There exists $\mf t >0$, such that $u$ follows the hydrodynamic
equation \eqref{1-06} in the time interval $[0, \mf t]$. In
particular, $u(0,\cdot) = \gamma (\cdot)$.

\item[(b)] For every $0<\delta \le T$, there exists $\epsilon>0$ such that
$\epsilon \le u(t,x) \le 1-\epsilon$ for all $(t,x)$ in
$[\delta , T]\times[0,1]$;

\item[(c)] $u$ is smooth on $(0,T]\times [0,1]$.  
\end{itemize}
\end{definition}

Theorem \ref{mt3} states that for all $\gamma: [0,1] \to [0,1]$, the
set $\Pi_\gamma$ is $I_{[0,T]}(\cdot|\gamma)$-dense. This means that
any trajectory $\pi$ in $D([0,T],\ms{M})$ with finite rate function
can be approximated by a sequence of trajectories
$\pi^n\in \Pi_\gamma$ in such a way that $I_{[0,T]}( \pi^{n})$
converges to $I_{[0,T]}( \pi)$.  This is one of the main technical
difficulties in the proof of the lower bound.

We also provide in Section \ref{sec07} an explicit formula for the
rate function of trajectories in $\Pi_\gamma$.  For $0< \varrho <1$,
$D>0$, $0<a<1$, $M\in \mathbb R$, let
\begin{gather}
\label{5-02}
\mf p_{\varrho, D} (a,M) \;=\; \frac{1}{D}\, \Big\{ [1-a]\, \varrho \,
e^M \;-\; a\, [1-\varrho] \, e^{-M}  \, \Big\}  \;,
\\
\mf c_{\varrho, D} (a,M) \;=\; \frac{1}{D}\, \Big\{ [1-a]\, \varrho \,
[1 - e^M + M e^M ] \;+\; a\, [1-\varrho] \,
[\, 1 - e^{-M}  - M  e^{-M}] \, \Big\}  \;. \nonumber
\end{gather}

\begin{proposition}
\label{l09b}
Fix a density profile $\gamma: [0,1]\to [0,1]$ and a trajectory $u$ in
$\Pi_\gamma$. Then, for each $t>0$, the
elliptic equation
\begin{equation}
\label{5-01b}
\left\{
\begin{aligned}
& \partial_t u \;=\; \Delta u \,-\,
2\, \nabla  \{ \sigma(u) \ \nabla  H \}\; , \\
& \nabla  u_t (1) \,-\, 2\, \sigma(u_t(1)) \, \nabla  H_t(1) \,=\,
\mf p_{\beta, B} \big(\, u_t(1)\,,\, H_t(1)\, \big) \;, \\
& \nabla  u_t (0) \,-\, 2\, \sigma(u_t(0)) \, \nabla  H_t(0) \,=\,
-\, \mf p_{\alpha, A} \big(\, u_t(0)\,,\, H_t(0)\, \big) \;,
\end{aligned}
\right.
\end{equation}
has a unique solution, denoted by $H_t$. The function $H$ belongs to
$C^{1,2}([0,T]\times [0,1])$, and the rate functional $I_{[0,T]} (u)$
takes the form
\begin{equation}
\label{5-03}
\begin{aligned}
I_{[0,T]} (u) \; & =\; \int_0^T dt \int_{0}^1 \sigma(u_t)\,
( \nabla  H_t)^2\, dx \; +\; \int_0^T
\mf c_{\beta, B} \big(\, u_t(1)\,,\, H_t(1)\, \big)\; dt \\
\; & +\; \int_0^T
\mf c_{\alpha, A} \big(\, u_t(0)\,,\, H_t(0)\, \big)\; dt \;.
\end{aligned}
\end{equation}
\end{proposition}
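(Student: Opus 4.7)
The plan is to solve the elliptic problem \eqref{5-01b} at each fixed $t$ by a strictly concave variational principle, identify the resulting $H$ as the maximiser of $K \mapsto J_{T,K}(u)$, and then simplify $J_{T,H}(u)$ via integrations by parts to recover \eqref{5-03}.

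\textbf{Existence, uniqueness and regularity of $H_t$.} Because $u \in \Pi_\gamma$, for every $t > 0$ the profile $u_t$ is smooth on $[0,1]$, and for $t$ away from $0$ it is bounded away from $0$ and $1$, so $\sigma(u_t) \ge c > 0$ and the interior operator in \eqref{5-01b} is uniformly elliptic. I would recognise \eqref{5-01b} as the Euler--Lagrange equation of
\[
\Psi_t[K] \,=\, -\int_0^1 \sigma(u_t)(\nabla K)^2\, dx \,+\, \int_0^1 K\, \partial_t u_t\, dx \,+\, \int_0^1 \nabla u_t\, \nabla K\, dx \,-\, \mf B_{\alpha,A}(u_t(0),K(0)) \,-\, \mf B_{\beta,B}(u_t(1),K(1)),
\]
where $\mf B_{\varrho,D}(a,M) := D^{-1}\{(1-a)\varrho\, e^M + a(1-\varrho)\, e^{-M}\}$ is a primitive of $\mf p_{\varrho,D}(a,\cdot)$. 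The exponential growth of $\mf B$ yields coercivity on $H^1([0,1])$, and its strict convexity in $M$ combined with the gradient term yields strict concavity of $\Psi_t$; hence a unique maximiser $H_t$ exists, promoted to $C^\infty([0,1])$ by elliptic regularity. On $[0,\mf t]$, where $u$ satisfies \eqref{1-06}, the constant $H_t \equiv 0$ solves \eqref{5-01b} (since $\mf p_{\alpha,A}(a,0) = A^{-1}(\alpha - a)$ reproduces the Robin boundary condition of \eqref{1-06}), hence by uniqueness $H$ vanishes there. For joint $C^{1,2}$-regularity in $(t,x)$ I would differentiate the elliptic problem implicitly in $t$ and invert the linearisation, which is possible because strict concavity of $\Psi_t$ makes the linearised operator coercive.

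\textbf{Identification and simplification.} Next, $K \mapsto J_{T,K}(u)$ is concave on $C^{1,2}([0,T]\times[0,1])$: its quadratic gradient term is concave and, since an elementary computation gives $\partial_M^2 \mf b_{\varrho,D} > 0$, the boundary contributions $-\int \mf b$ are also concave, while the remaining terms are affine in $K$. The Gateaux derivative of this functional in direction $\phi$, after integration by parts in $x$ and the identity $\partial_M \mf b_{\varrho, D} = \mf p_{\varrho, D}$, reduces to the weak form of \eqref{5-01b} tested against $\phi$; thus the $H$ constructed above is a critical point and hence the maximiser, so $I_{[0,T]}(u) = J_{T,H}(u)$. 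To evaluate this explicitly I would integrate \eqref{1-01b} by parts in $t$ to turn $-\int\langle u, \partial_t H\rangle\,dt + \langle u_T, H_T\rangle - \langle u_0, H_0\rangle$ into $\int\langle \partial_t u, H\rangle\,dt$; then substitute $\partial_t u = \Delta u - 2\nabla(\sigma(u)\nabla H)$ from \eqref{5-01b} and integrate by parts twice in $x$. The two occurrences of $\int\langle \nabla u, \nabla H\rangle$ cancel, leaving $\int_0^T \langle \sigma(u_t), (\nabla H_t)^2\rangle\, dt$ together with a spatial boundary contribution $\int_0^T [(\nabla u_t - 2\sigma(u_t)\nabla H_t)\, H_t]_0^1\, dt$ minus the $\mf b$-terms. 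The boundary conditions of \eqref{5-01b} rewrite this in terms of $\mf p_{\alpha,A}$ and $\mf p_{\beta,B}$, and the algebraic identity
\[
M\,\mf p_{\varrho,D}(a,M) \,-\, \mf b_{\varrho,D}(a,M) \,=\, \mf c_{\varrho,D}(a,M),
\]
verified by direct expansion of the definitions, produces \eqref{5-03}.

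\textbf{Main obstacle.} The chief technical point is the joint $C^{1,2}$-regularity of $H$. The time-regularity in particular requires care near $t = 0$ (where $u$ is only smooth on $(0,T]\times[0,1]$) and across $t = \mf t$ (where the source $\partial_t u - \Delta u$ transitions from vanishing identically to being generically nonzero). The nonlinear exponential boundary conditions are a double-edged sword: they furnish the coercivity that powers the variational argument of Step 1, but they also complicate the implicit-function argument for time-regularity, since one must keep quantitative control of the boundary values $H(t,0)$ and $H(t,1)$ uniformly in $t$ and work in a function space where these pointwise evaluations are continuous.
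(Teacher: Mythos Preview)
Your proposal is correct and follows essentially the same route as the paper. The only stylistic differences are: (i) the paper observes that for fixed $t$ the system \eqref{5-01b} is a first-order ODE for $\sigma(u_t)\nabla H_t$ with two nonlinear boundary constraints and can therefore be solved explicitly, inheriting smoothness from $u$, rather than invoking a variational principle; and (ii) instead of first arguing $H$ is the unique critical point of the concave functional $G\mapsto J_{T,G}(u)$ and then evaluating $J_{T,H}(u)$ via your identity $M\,\mf p_{\varrho,D}-\mf b_{\varrho,D}=\mf c_{\varrho,D}$, the paper performs one computation for general $G$: after substituting the formula for $L_{T,G}(u)$ obtained from \eqref{5-01b}, it rewrites $J_{T,G}(u)$ as the right-hand side of \eqref{5-03} minus $\int\!\langle\sigma(u_t),(\nabla H_t-\nabla G_t)^2\rangle\,dt$ and minus explicit non-negative boundary terms of the form $e^{G}-e^{H}-(G-H)e^{H}\ge 0$, simultaneously identifying the maximiser and its value. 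Your concern about $C^{1,2}$-regularity across $t=\mf t$ is somewhat overstated, since by Definition~\ref{d04}(c) $u$ is smooth on all of $(0,T]\times[0,1]$, so the data of the elliptic problem vary smoothly through $\mf t$ and the paper simply asserts that ``$H$ inherits the smoothness from $u$''.
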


\subsection*{{Dynamical large deviations principle.}}

The main result of this article reads as follows.

\begin{theorem}
\label{LDP}
Fix $T>0$, $\gamma\in \ms M_{\rm ac}$ and let
$\{\eta^N\}_{N\in \bb N}$ be a sequence of configurations. Assume that
$\delta_{\eta^N}$ is associated to $\gamma$ in the sense of
\eqref{eq:associated}.  Then, the sequence of probability measures
$\{\bb Q_{\eta^N}\}_{N\geq 1}$ satisfies a large deviation principle
with speed $N$ and good rate function $I_T(\cdot|\gamma)$.  Namely,
for each closed set $\mc C \subset D([0,T], \ms M)$ and each open set
$\mc O \subset D([0,T], \ms M)$
\begin{gather*}
\limsup_{N\to\infty} \frac 1N \log \bb P^N_{\eta^N} 
\big[\, \pi^N \in \mc C\, \big]
\;\leq\; - \, \inf_{\pi \in \mc C} I_T (\pi | \gamma)  
\\
\liminf_{N\to\infty} \frac 1N \log \bb P^N_{\eta^N} 
\big[\,  \pi^N\in \mc O \, \big] \;\geq\;
-  \, \inf_{\pi \in \mc O} I_T (\pi |
\gamma) \; . 
\end{gather*}
\end{theorem}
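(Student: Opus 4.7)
The plan is to follow the classical Donsker--Varadhan / Kipnis--Landim strategy via exponential tilts, combined with the $I$-density property of Theorem~\ref{mt3} and the explicit representation of Proposition~\ref{l09b}. Before running this machinery I would establish exponential tightness of $\{\bb Q^N_{\eta^N}\}$ on $D([0,T],\ms M)$: since $\ms M$ itself is compact, it suffices to control the time-oscillations of $\langle \pi^N_t, H\rangle$ for $H$ ranging over a countable dense family in $C^2([0,1])$, which follows from the standard Doob inequality applied to the martingales associated with $\ms L_N$. I would also prove a superexponential energy estimate showing $\bb P^N_{\eta^N}[\mc Q_{[0,T]}(\pi^N)\ge \ell]\le e^{-c\ell N}$, so that the upper bound may be restricted to paths in $D_{\mc E}([0,T],\ms M_{\rm ac})$ where the boundary traces $u_t(0),u_t(1)$ are well defined.

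For the upper bound the key computation is a Feynman--Kac identity: for every $H\in C^{1,2}([0,T]\times[0,1])$ the process
\begin{equation*}
M^H_t \;=\; \exp\Big\{ N\,\langle \pi^N_t, H_t\rangle - N\,\langle \pi^N_0, H_0\rangle
- \int_0^t e^{-N\langle \pi^N_s, H_s\rangle}(\partial_s + \ms L_N) e^{N\langle \pi^N_s, H_s\rangle}\, ds\Big\}
\end{equation*}
is a mean-one positive martingale. Expanding the compensator, the bulk part $L^{\rm bulk}_N$ contributes, after summation by parts, the terms $\langle \nabla u_t, \nabla H_t\rangle - \langle \sigma(u_t),(\nabla H_t)^2\rangle$ together with the boundary fluxes $u_t(1)\nabla H_t(1)-u_t(0)\nabla H_t(0)$; the weak reservoir parts $L^{\rm lb}_N, L^{\rm rb}_N$, because of their rate $N/A, N/B$ and single-site flip, produce in the limit exactly the nonlinear boundary expressions $\mf b_{\alpha,A}(u_t(0),H_t(0))$ and $\mf b_{\beta,B}(u_t(1),H_t(1))$ appearing in \eqref{1-01}. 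Combining this with the exponential tightness and superexponential energy estimate, a standard minimax argument (in the form of Varadhan's lemma together with the convexity and lower semicontinuity from Theorem~\ref{mt2}) inverts $\sup_H$ and $\limsup_{N\to\infty}$ and yields the desired upper bound on compact sets, and hence on arbitrary closed sets.

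For the lower bound I would first establish it on the regular class $\Pi_\gamma$. Given $u\in \Pi_\gamma$, Proposition~\ref{l09b} produces a $C^{1,2}$ field $H$ solving the elliptic system \eqref{5-01b} with boundary conditions encoded by $\mf p_{\alpha,A}$ and $\mf p_{\beta,B}$. I would define the perturbed Markov process whose generator is the $e^{NH}$-tilt of $\ms L_N$ (a Girsanov change of measure $\bb P^{N,H}_{\eta^N}$); standard replacement arguments, using the a priori bound $\epsilon\le u(t,x)\le 1-\epsilon$ on $[\delta,T]\times[0,1]$ from Definition~\ref{d04}(b), show that $\pi^N$ converges in probability under $\bb P^{N,H}_{\eta^N}$ to $u$, while the relative entropy $H(\bb P^{N,H}_{\eta^N}\,|\,\bb P^N_{\eta^N})$ equals $N$ times the right-hand side of \eqref{5-03} up to $o(N)$. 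The entropy inequality together with the hydrodynamic convergence then gives $\liminf_N N^{-1}\log \bb P^N_{\eta^N}[\pi^N\in\mc O]\ge -I_{[0,T]}(u|\gamma)$ for every open $\mc O$ containing $u$. The extension from $\Pi_\gamma$ to arbitrary $u$ with finite rate function is then achieved via the $I_{[0,T]}(\cdot|\gamma)$-density of $\Pi_\gamma$ proved in Theorem~\ref{mt3}, optimising the approximation along $\pi^n\to\pi$ with $I_{[0,T]}(\pi^n)\to I_{[0,T]}(\pi)$.

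The hardest step is the lower bound, and specifically the interplay of the Robin boundary with the energy restriction. Unlike the Dirichlet setting of \cite{BDGJL2003, BLM09}, the boundary values $u_t(0),u_t(1)$ are not prescribed by the dynamics and enter the rate function nonlinearly through $\mf b_{\alpha,A},\mf b_{\beta,B}$; consequently, the change-of-measure computation for the lower bound requires a quantitative replacement lemma that identifies $\eta^N_{\mf e_N}(t)$ and $\eta^N_{\mf r_N}(t)$ with the traces $u_t(0), u_t(1)$ at the exponential level, and the regularisation producing $\Pi_\gamma$ must be carried out in a way that preserves these traces and the boundary contributions to the energy. Once these boundary replacement and approximation technicalities are settled, the rest of the argument fits the Kipnis--Landim template.
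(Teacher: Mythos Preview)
Your proposal is correct and follows essentially the same route as the paper: super-exponential replacement (including a boundary replacement of $\eta_{\mf e}$, $\eta_{\mf r}$ by the traces of the smoothed empirical density), an exponential energy estimate, the upper bound via the exponential martingale associated to $J_{T,H}$, and the lower bound via the tilted dynamics $\bb P^H_{\eta^N}$ of Section~\ref{s4} combined with Theorem~\ref{p02}, the entropy identity, and the $I$-density of $\Pi_\gamma$ from Theorem~\ref{mt3}. The only point the paper stresses that you leave implicit is that, because $J_{T,H}$ is only defined on $D_{\mc E}([0,T],\ms M_{\rm ac})$, the usual piecewise-constant block average must be replaced by the smooth convolution $\Xi_\epsilon(\pi)$ of \eqref{n03} so that the boundary traces entering $\mf b_{\alpha,A}$, $\mf b_{\beta,B}$ make sense at the level of the empirical measure; this is precisely the mechanism behind the boundary replacement you flag as the hardest step.
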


\begin{remark}
\label{nrm1}
In contrast to \cite{kov, BLM09}, the large deviations principle is
formulated for the empirical measure and not the empirical density. We
follow \cite{FLM2011} to show that the rate function can be set as
$+\infty$ for trajectories that do not belong to
$D([0,T], \ms M_{\rm ac})$.
\end{remark}

\section{The rate functional $I_{[0,T]} (\,\cdot\,)$}
\label{sec03}

In this section, we present some properties of the rate function
$I_{[0,T]} (\,\cdot\,)$ and prove Theorem~\ref{mt2}. Fix,
once for all, a measurable density profile $\gamma: [0,1] \to [0,1]$.

\smallskip\noindent{\it Note:} Throughout this article, given a
function $u: [0,T] \times [0,1]\to\mathbb R$, we represent by $u_t$ and
$u(t)$ the function defined on $[0,1]$ and such that $u_t(x) =
u(t,x)$. \smallskip

We start with two elementary bounds.  The first estimate asserts that
the cost of a trajectory in a interval $[0,T]$ is bounded by the sum
of its cost in the intervals $[0,S]$ and $[S,T]$.  Let
$\tau_r u: \mathbb R_+ \times [0,1] \to \mathbb R$, $r>0$, be the
function defined by $\color{bblue} \tau_r u (t,x) = u(t+r,x)$. For all
$\pi(t,dx) = u(t,x)\, dx$ in $D([0,T],\ms M_{\rm ac})$ and $0<S<T$,
\begin{equation}
\label{4-02}
I_{[0,T]} (u) \;\le\; I_{[0,S]} (u)
\;+\; I_{[0,T-S]} (\tau_S u)\;.
\end{equation}
The proof of this claim is elementary and left to the reader. It
relies on the fact that $\sup_n \{a_n + b_n\} \le \sup_n a_n
+ \sup_n b_n$.

The second assertion states that the cost of a trajectory on a
subinterval of $[0,T]$ is bounded by its total cost.  For all
$\pi(t,dx) = u(t,x)\, dx$ in $D([0,T],\ms M_{\rm ac})$ and $0<S<T$,
\begin{equation}
\label{4-04}
I_{[0,S]} (u) \;\le\; I_{[0,T]} (u)\;.
\end{equation}
To prove this claim, assume that $I_{[0,S]} (u)<\infty$,
and fix $\epsilon>0$. The same argument applies to the case
$I_{[0,S]} (u)=\infty$. By definition of the rate
function, there exists $H:[0,S]\times [0,1]\to \mathbb R$ smooth such that
\begin{equation*}
I_{[0,S]} (u) \;\le\; J_{S,H} (u) \;+\;
\epsilon\;.
\end{equation*}
Let $\sigma_n : [0,T]\to [0,1]$, $n\ge 1$, be a sequence of smooth,
monotone functions such that $\sigma_n(t) = 1$ for $0\le t\le S$ and
$\sigma_n(t) = 0$ for $S+(1/n)\le t\le T$. Define the function $H_n:
[0,T]\times [0,1]\to \mathbb R$ as $H_n(t,x) = H(t,x)$, $0\le t\le S$ and
$H_n(t,x) = H(S,x) \, \sigma_n(t)$, $S\le t\le T$. Then,
\begin{equation*}
J_{S,H} (u) \;=\;
\lim_{n\to\infty} J_{T,H_n} (u)
\;\le\; I_{[0,T]} (u) \;,
\end{equation*}
as claimed.

A similar argument yields that the cost of a trajectory $u$ in a
time-interval $[R,R+S]$ is bounded by the total cost. More precisely,
\begin{equation}
\label{4-06}
I_{[0,S]} (\tau_R u) \;\le\; I_{[0,T]} (u)\;.
\end{equation}
for all $S>0$, $R>0$ such that $R+S\le T$.

The proof of the next result is similar to the ones of \cite[Lemma
3.5]{BDGJL2003}, \cite[Lemma 4.1]{FLM2011}. We present it here in sake
of completeness.

\begin{lemma}
\label{l01}
Fix $T>0$ and $\gamma \in \ms M_{\rm ac}$.  Let $u$ be a path in
$D([0,T],\ms M_{\rm ac})$ such that $I_{[0,T]}(u|\gamma) < \infty$.
Then $u(0,x)=\gamma (x)$.  Moreover, for each $M>0$, $g$ in
$C^{2}([0,1])$ and $\epsilon>0$, there exists $\delta>0$ such that
\begin{equation*}
\sup_{u: I_T (u | \gamma) \le M}\,
\sup_{|t-s|\le \delta}
\big| \, \<u_t, g\> - \<u_s , g\> \, \big| \;\le\; \epsilon\;.
\end{equation*}
In particular, $u$ belongs to $C([0,T],\ms M_{\rm ac})$.
\end{lemma}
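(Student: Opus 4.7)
The first assertion is immediate from \eqref{1-02b}: if $I_{[0,T]}(u|\gamma)<\infty$, then $u(0,\cdot)=\gamma$ almost surely. The heart of the lemma is the equicontinuity bound; the plan is a standard test-function argument in the style of \cite[Lemma~3.5]{BDGJL2003} and \cite[Lemma~4.1]{FLM2011}. Fix $g\in C^2([0,1])$, $0\le s<t\le T$, and a parameter $A>0$. For small $\eta>0$, let $\phi_\eta\colon[0,T]\to\mathbb{R}$ be a smooth approximation of $A\mathbf{1}_{[s,t]}$ (ramping up on $[s,s+\eta]$, equal to $A$ on $[s+\eta,t]$, ramping down on $[t,t+\eta]$, with obvious adjustments at the endpoints $s=0$ or $t=T$), and take $H(r,x)=\phi_\eta(r)g(x)$. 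Plug this $H$ into \eqref{1-01}: right-continuity of $r\mapsto\<u_r,g\>$ controls the limit of $-\int\<u_r,\partial_r H_r\>\,dr$, dominated convergence handles the remaining terms, and $u_0=\gamma$ is used in the edge case $s=0$. Sending $\eta\to 0^+$ yields
\begin{equation*}
A\bigl[\<u_t,g\>-\<u_s,g\>\bigr] \;=\; J_{T,H}(u) \;+\; R_{s,t}(A,g,u)\;,
\end{equation*}
where $R_{s,t}$ collects the four integrals over $[s,t]$ of $A\<u_r,\Delta g\>$, $A[u_r(0)\nabla g(0)-u_r(1)\nabla g(1)]$, $A^2\<\sigma(u_r),(\nabla g)^2\>$, and the two Robin contributions $\mf b_{\alpha,A}(u_r(0),Ag(0))+\mf b_{\beta,B}(u_r(1),Ag(1))$.

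Using $0\le u_r\le 1$, $\sigma\le 1/4$, and the elementary bound $|\mf b_{\varrho,D}(a,M)|\le 2D^{-1}e^{|M|}$, we get $|R_{s,t}|\le(t-s)\,K(A,g)$ with $K(A,g)$ depending only on $A$ and $g$, not on $u$, $s$, or $t$. Combining with $J_{T,H}(u)\le I_{[0,T]}(u|\gamma)\le M$, dividing by $A$, and repeating the argument with $-g$ in place of $g$ produces
\begin{equation*}
\bigl|\<u_t,g\>-\<u_s,g\>\bigr|\;\le\;\frac{M}{A}\;+\;(t-s)\,\frac{K(A,g)}{A}\;.
\end{equation*}
Given $\epsilon>0$, first choose $A$ so that $M/A<\epsilon/2$, then $\delta>0$ so small that $\delta K(A,g)/A<\epsilon/2$; this proves the uniform equicontinuity claimed in the lemma.

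The last assertion follows because the weak topology on the compact metrisable space $\ms M$ is generated by a countable family of test functions from $C([0,1])$, and $C^2([0,1])$ is dense in $C([0,1])$: equicontinuity in $t$ of every $\<u_t,g\>$ with $g\in C^2([0,1])$ rules out jumps in the c\`adl\`ag trajectory $u$, giving $u\in C([0,T],\ms M_{\rm ac})$. The only mildly delicate step in the whole argument is the $\eta\to 0^+$ passage in the nonlinear Robin terms, but $\mf b_{\varrho,D}(u_r(i),\phi_\eta(r)g(i))$ is a bounded continuous function of $\phi_\eta(r)\in[0,A]$ with $u_r(i)\in[0,1]$, so dominated convergence applies immediately; no estimate beyond boundedness of $u$ and of $\mf b$ is required.
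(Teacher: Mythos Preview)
Your argument is correct and follows the same test-function strategy as the paper's proof (which in turn cites \cite[Lemma~3.5]{BDGJL2003} and \cite[Lemma~4.1]{FLM2011}): the paper optimises the amplitude parameter explicitly to obtain the rate $C_0/\log(t-s)^{-1}$, whereas your simpler two-step choice of the amplitude and then $\delta$ already suffices for the stated conclusion, and the paper gives a separate test-function argument for $u(0)=\gamma$ even though, as you correctly observe, this follows directly from \eqref{1-02b}. One cosmetic point: your amplitude parameter $A$ clashes with the boundary constant $A$ appearing in $\mf b_{\alpha,A}$; renaming it (the paper uses $a$) would avoid confusion.
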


\begin{proof}
Fix $T>0$, $\gamma \in \ms M_{\rm ac}$ and $u$ in
$D([0,T], \ms M_{\rm ac})$ such that $I_{[0,T]} (u | \gamma) <\infty$.
We first show that $u (0, \cdot ) = \gamma (\cdot)$.

As $I_{[0,T]} (u | \gamma) <\infty$, $u$ has finite energy.  For
$\delta>0$, consider the function
$H_\delta (t,x) = h_\delta (t) g(x)$, where
$h_\delta (t) = (1- \delta^{-1}t)^+$ and $g$ is a $C^2([0,1])$
function which vanishes at the boundary of the interval $[0,1]$. Here
$a^+$ stands for the positive part of $a$. Of course, $H_\delta$ can
be approximated by smooth functions.  Since $u$ is bounded and since
$t \to u(t, \cdot)$ is right continuous for the weak topology,
\begin{equation*}
\lim_{\delta \downarrow 0} J_{T,H_\delta}(u) \;=\; \langle u(0),
g\rangle - \langle\gamma, g\rangle\; .
\end{equation*}
This proves that $u(0) = \gamma$ a.s.\ because $I_{[0,T]} (u | \gamma)
<\infty$.

We turn to the second assertion of the lemma. Fix $g$ in
$C^{2}([0,1])$ and $0 \leq s < t \le T$ such that $t-s<1$. A
convenient test function, depending only on time and similar to the one
proposed after equation (4.3) in \cite{FLM2011}, yields that
\begin{equation*}
\begin{split}
& \langle u(t) \,,\, g\rangle \,-\,
\langle u(s) \,,\, g \rangle  \;\leq\;
\frac{1}{a}  \, I_{[0,T]} (u|\gamma)
\\
& \quad
\;+\; C_1( \Vert \Delta g \Vert_\infty ,
\Vert \nabla  g \Vert_\infty) \, a \, (t-s)
\;+\; C _2( A, B, \Vert g \Vert_\infty) \, a \, (t-s)\,
e^{a \Vert g \Vert_\infty}
\end{split}
\end{equation*}
for all $a>0$. The exponential term comes from the
$\mf b_{\varrho, D}$ contribution to $J_{T,H}$ in the definition
\eqref{1-01}. Choose $ a = -\, (1/2) \, (1+ \Vert g \Vert_\infty)\,
\log (t-s)$ to get that there exists a finite positive constant $C_0$,
depending only on $A$, $B$, $g$, such that
\begin{equation*}
\langle u(t) \,,\, g\rangle \,-\,
\langle u(s) \,,\, g \rangle  \;\leq\;
\frac{C_0}{\log (t-s)^{-1}}  \,
\big\{ \, I_{[0,T]} (u|\gamma) \,+\, 1\,\big\}\;.
\end{equation*}
This completes the proof of the lemma.
\end{proof}

Let $\color{bblue} \mc H^1$ be the Sobolev space of measurable
functions $G:[0,1]\to \mathbb R$ with generalized derivatives
$\nabla G$ in $\ms L^{2}([0,1])$. $\mc H^1$ endowed with the scalar
product $\langle\cdot,\cdot\rangle_{1}$, defined by
\begin{equation}
\label{n08}
\langle G \,,\, H \rangle_{1}\;=\;
\langle G \,,\, H\rangle \;+\;
\langle \nabla  G \,,\, \nabla  H\rangle\;,
\end{equation}
is a Hilbert space. The corresponding norm is denoted by
$\|\cdot\|_{\mc H^1}$:
\begin{equation*}
\|G\|_{\mc H^1}^{2} \;:=\;
\int_0^1 |G(x)|^2 \;dx \;+\;
\int_0^1 |\nabla  G(x)|^2 \;dx\;.
\end{equation*}
Recall from \eqref{1-03} that any function $\gamma$ in $\mc H^1$ has a
continuous version.  Hereafter, we always replace $\gamma$ by its
continuous version $w$.

Consider the function $\phi:\mathbb R\to[0,\infty)$ defined by
\begin{equation*}
\begin{split}
\phi(r)\;:=\;
\begin{cases}
\dfrac{1}{Z}\exp{\{-\dfrac{1}{(1-r^{2})}\}}\ & \text{ if } |r|<1\;, \\
\ 0\ & \text{otherwise}\;,
\end{cases}
\end{split}
\end{equation*}
where the constant $Z$ is chosen so that $\int_{\mathbb R}\phi(r) dr =1$.
For each $\delta>0$, let
\begin{equation}
\label{n04}
\phi^{\delta}(r) \;:=\; \dfrac{1}{\delta}\,
\phi \Big(\dfrac{r}{\delta}\Big)\;,
\end{equation}
whose support is contained in $[-\delta,\delta]$.

Denote by $f*g$ the space or time convolution of two functions $f$,
$g$:
\begin{equation*}
(f*g)(a) \;=\; \int f(a-b)\, g(b) \, db\;,
\end{equation*}
where the integral runs over $\mathbb R$.

Throughout this section, we adopt the following notation. Recall from
Appendix \ref{sec04} that we denote by $(P^{(R)}_t : t\ge 0)$ the
semigroup associated to the Robin Laplacian. For a bounded measurable
function $ u:[0,T]\times[0,1]\to\mathbb R$, define the smooth
approximation in space, time and space-time by
\begin{gather*}
u^{\epsilon}(t,x) \;:=\;  [\, P^{(R)}_\epsilon u_t\, ](x)  \;, \quad
u^{\delta}(t,x) \;:=\; [ u(\cdot, x) * \phi^{\delta}](t)
\;=\; \int_{-\delta}^{\delta} u(t+r,x)\, \phi^{\delta}(r)\; dr\;, \\
u^{\epsilon, \delta}(t,x) \;:=\; [\, P^{(R)}_\epsilon u^\delta_t\,
](x) \;:=\; [\, P^{(R)}_\epsilon u_t\, ]^\delta (x)  \;.
\end{gather*}
In the above formulas, we extend the definition of $u$ to
$[-1,T+1] \times [0,1]$ by setting $ u_{t}= u_{0}$ for
$-1\le t \le 0$, $ u_{t}= u_{T}$ for $T \le t \le T+1$,

Note that we use the same notation, $ u^{\epsilon}$ and $ u^{\delta}$,
for different objects. However, $ u^{\epsilon}$ and $ u^{\delta}$
always represent a smooth approximation of $ u$ in space and time,
respectively. Moreover, the time-convolution commutes with the
operator which explains the identity in the last displayed equation.

We summarize some properties of $ u^{\epsilon}$ in the next result.

\begin{lemma}
\label{l02}
Let $ u:[0,T]\times[0,1]\to\mathbb R$ be a function in
$\ms L^{2}(0,T; \mc H^1)$.  Then, $u^{\epsilon}$ and
$\nabla u^{\epsilon}$ converge to $u$ and $\nabla u$ in
$\ms L^{2}([0,T]\times[0,1])$, respectively.  Moreover, if $u$ is
bounded in $[0,T]\times[0,1]$ and the application
$t\mapsto \langle u_{t}, g\rangle$ is continuous in the time interval
$[0,T]$ for any function $g$ in $C^{\infty}([0,1])$, then, for each
$\epsilon>0$, $n\ge 1$, $u^{\epsilon}$ and $\nabla ^n u^{\epsilon}$
are uniformly continuous in $[0,T]\times[0,1]$.
\end{lemma}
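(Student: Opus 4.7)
For the $\ms L^2$-convergence of $u^\epsilon$ and $\nabla u^\epsilon$, I would exploit the symmetric Markov structure of the Robin semigroup $(P^{(R)}_\epsilon)_{\epsilon \ge 0}$. On the one hand, $P^{(R)}_\epsilon$ is a strongly continuous contraction on $\ms L^2([0,1])$, so $u^\epsilon(t,\cdot) \to u_t$ in $\ms L^2([0,1])$ for every $t$ with $u_t \in \ms L^2([0,1])$, i.e.\ for a.e.\ $t$. For the gradient, I would rely on the associated Dirichlet form $\mc E$, whose form domain is exactly $\mc H^1$ (with an additional boundary contribution coming from $A^{-1}$ and $B^{-1}$): the contraction $\mc E(P^{(R)}_\epsilon f) \le \mc E(f)$ gives a $t$-wise bound on $\|\nabla u^\epsilon(t,\cdot)\|_2$ in terms of $\|u_t\|_{\mc H^1}$, while the form-continuity at $\epsilon = 0$ on the form domain yields $\nabla u^\epsilon(t,\cdot) \to \nabla u_t$ in $\ms L^2([0,1])$ for a.e.\ $t$. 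Since $u \in \ms L^2(0,T;\mc H^1)$, dominated convergence then produces the claimed convergence in $\ms L^2([0,T]\times[0,1])$.

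For the regularity assertion, I would pass to the spectral representation of the Robin Laplacian. Let $\{e_k\}_{k \ge 0}$ be the orthonormal basis of (explicit, trigonometric) eigenfunctions on $[0,1]$, with eigenvalues $\lambda_k \sim \pi^2 k^2$. Each $e_k$ lies in $C^\infty([0,1])$ and the standard bound $\|\nabla^n e_k\|_\infty \lesssim 1 + \lambda_k^{n/2}$ holds. Expanding
\[
u^\epsilon(t,x) \;=\; \sum_{k \ge 0} e^{-\lambda_k \epsilon}\, \alpha_k(t)\, e_k(x)\;,
\qquad \alpha_k(t) \;:=\; \langle u_t, e_k\rangle\;,
\]
the boundedness of $u$ gives $|\alpha_k(t)| \le \|u\|_\infty$ uniformly, and the Gaussian-type decay $e^{-\lambda_k \epsilon}$ makes the series, together with all its spatial derivatives, converge absolutely and uniformly in $(t,x)$. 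The hypothesis on $t \mapsto \langle u_t, g\rangle$ for smooth $g$ renders each $\alpha_k$ continuous, hence uniformly continuous on $[0,T]$. Given $\delta > 0$, I would choose $K = K(\delta, \epsilon, n)$ so large that the tail with $k > K$ of the differentiated series is uniformly below $\delta/2$, and then use the uniform continuity of the finitely many $\alpha_k$ with $k \le K$ (combined with the smoothness of $e_k$) to control the remaining partial sum. This yields uniform continuity of $u^\epsilon$ and of every $\nabla^n u^\epsilon$ on $[0,T]\times[0,1]$.

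\textbf{Main obstacle.} The subtle point is the gradient convergence in the first part: unlike the Neumann or Dirichlet case, $P^{(R)}_\epsilon$ does not commute with $\nabla$ in a convenient way, so one cannot simply transport $\ms L^2$-strong continuity to $\nabla u_t$. The correct substitute is the form-continuity of the semigroup on the form domain of $\mc E$, together with the form-contraction inequality that supplies the integrable majorant. This requires identifying the form domain of the Robin Dirichlet form with $\mc H^1$ and invoking the general fact that $P^{(R)}_\epsilon f \to f$ in the form norm for every $f$ in the form domain; both are standard in the theory of symmetric Dirichlet forms but must be stated explicitly, and I would defer those identifications to the appendix material on the Robin semigroup.
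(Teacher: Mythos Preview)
Your proposal is correct. For the second assertion (uniform continuity of $u^\epsilon$ and $\nabla^n u^\epsilon$) you proceed exactly as the paper does: expand in the Robin eigenbasis, use the bound $\|\nabla^n f_k\|_\infty \lesssim \lambda_k^{n/2}$ together with the exponential weight $e^{-\lambda_k\epsilon}$ to reduce to a finite sum, then invoke continuity of each coefficient $t\mapsto\langle u_t,f_k\rangle$.

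For the first assertion your route differs slightly from the paper's. You argue via abstract Dirichlet-form machinery: strong continuity of $P^{(R)}_\epsilon$ on $\ms L^2$ plus form-continuity and form-contraction on the form domain (identified with $\mc H^1$), combined with dominated convergence in $t$. The paper instead stays with the explicit spectral expansion throughout: it writes
\[
\int_0^T \|\nabla u^\epsilon_t - \nabla u_t\|_2^2\,dt
\;\le\; C_0 \int_0^T \|u^\epsilon_t - u_t\|_{\mc H_R}^2\,dt
\;=\; C_0 \int_0^T \sum_{k\ge1} \lambda_k\,[e^{-\lambda_k\epsilon}-1]^2\,\langle u_t,f_k\rangle^2\,dt
\]
using the norm equivalence $\|\cdot\|_{\mc H^1}\sim\|\cdot\|_{\mc H_R}$, and then appeals directly to the finiteness of $\int_0^T\sum_k\lambda_k\langle u_t,f_k\rangle^2\,dt$. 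Your approach has the advantage of being model-independent (it would work for any symmetric Markov semigroup with form domain $\mc H^1$), while the paper's is more self-contained and avoids quoting general Dirichlet-form facts, since the spectral identities for the Robin Laplacian are already developed in the appendix. Your ``main obstacle'' is therefore less of an obstacle than you suggest: the paper sidesteps the non-commutation of $\nabla$ and $P^{(R)}_\epsilon$ entirely by controlling the full $\mc H_R$-norm of $u^\epsilon_t-u_t$ spectrally.
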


\begin{proof}
Recall the notation introduced in Appendix \ref{sec04}. As $u$ belongs
to $\ms L^{2}(0,T; \mc H^1)$ and the norms $\mc H^1$, $\mc H_R$ are
equivalent,
\begin{equation*}
\int_0^T \Vert \, u_t \, \Vert^2_{\mc H_R} \, dt \;<\; \infty \;.
\end{equation*}
This relation can be rewritten in terms of the eigenfunctions $(f_k :
k\ge 1)$ of the Robin Laplacian as
\begin{equation}
\label{4-03}
\int_0^T \sum_{k\ge 1} \lambda_k \;
\< u_t \,,\, f_k\>^2 \, dt \;<\; \infty \;.
\end{equation}

Since
\begin{equation*}
u_t \;=\;
\sum_{k\ge 1}  \< u_t \,,\, f_k\>\, f_k\;, \qquad
u^{\epsilon}_t \;=\;
\sum_{k\ge 1} e^{-\lambda_k\epsilon}\, \< u_t \,,\, f_k\>\, f_k\;,
\end{equation*}
we have that
\begin{equation*}
\int_0^T \Vert \, u^{\epsilon}_t - u_t \, \Vert^2_{2} \; dt
\;=\; \int_0^T
\sum_{k\ge 1} \big[\, e^{-\lambda_k\epsilon} \,-\, 1\,\big]^2
\, \< u_t \,,\, f_k\>^2\; dt
\end{equation*}
and, by \eqref{6-14},
\begin{equation*}
\begin{aligned}
\int_0^T \Vert \, \nabla  u^{\epsilon}_t - \nabla  u_t \, \Vert^2_{2} \, dt
\; & \le\; C_0\,
\int_0^T \Vert u^{\epsilon}_t - u_t \Vert^2_{\mc H_R} \, dt \\
\; & =\; C_0\, \int_0^T
\sum_{k\ge 1} \lambda_k\,
\big[\, e^{-\lambda_k\epsilon} \,-\, 1\,\big]^2
\, \< u_t \,,\, f_k\>^2 \, dt \;.
\end{aligned}
\end{equation*}
By \eqref{4-03}, the left-hand side of the previous two displayed
equations vanish as $\epsilon \to 0$, which proves the first assertion
of the lemma.

We turn to the second assertion. We may represent $u^{\epsilon}$,
$\nabla ^n u^{\epsilon}$ as
\begin{equation*}
u^{\epsilon}_t (x) \;=\;
\sum_{k\ge 1} e^{-\lambda_k\epsilon}\, \< u_t \,,\, f_k\>\, f_k(x)
\;, \quad
(\nabla ^n u_t^{\epsilon}) (x) \;=\;
\sum_{k\ge 1} e^{-\lambda_k\epsilon}\, \< u_t \,,\, f_k\>\,
(\nabla ^n f_k) (x) \;,
\end{equation*}
The second assertion follows from these identities and from the two
hypotheses of the lemma. Indeed, the bound \eqref{6-11} on the
eigenfunctions $f_k$ permits to restrict the sum to a finite number of
terms.
\end{proof}

For each $a>0$, define the functions $h_a$ and $\sigma_{a}$ on $[0,1]$
by
\begin{gather*}
h_a(x) \;:=\; \dfrac{1}{2(1+2a)} \, \Big\{ ( x+a)
\log{( x+a)}+(1- x + a)\log{(1- x + a)} \Big\}\;, \\
\sigma_{a}(x) \;:=\; (x+a)(1- x+a)\;.
\end{gather*}
Note that $h_a''= (2\, \sigma_{a})^{-1}$.

Until the end of this section, $0<C_0<\infty$ represents a constant
independent of $\epsilon$, $\delta$ and $a$ and that may change from
line to line.

Fix $T>0$ and a path $u$ in $D_{\mc E}([0,T],\ms M_{\rm ac})$.  For a
smooth function $G: [0,T]\times[0,1]\to\mathbb R$ and a bounded
function $H$ in $\ms L^{2}(0,T; \mc H^1)$, define the functionals
\begin{equation*}
\begin{split}
& L_{G}(u) \;=\; \langle u_{T}, G_{T}\rangle
\,-\,  \langle u_0 , G_{0}\rangle
\,-\, \int_{0}^{T}  \langle u_{t}, \partial_{t}G_{t} \rangle\; dt \;, \\
&\quad B^{1}_{H}(u) \;=\; \int_{0}^{T}
\langle\nabla  u_{t}, \nabla  H_{t}\rangle\; dt
\;-\; \int_{0}^{T}
\langle \sigma( u_{t}), (\nabla  H_{t})^{2}\rangle\; dt \;, \\
&\qquad B^{2}_{H}(u) \;=\;
\int_0^T \Big\{\,
\mf b_{\alpha, A} \big(\, u_t(0)\, ,\, H_t (0)\, \big)
\;+\; \mf b_{\beta, B} \big(\, u_t(1)\, ,\, H_t(1)\, \big)\,
\Big\}\, dt \;.
\end{split}
\end{equation*}
By \eqref{1-01}, for paths $u$ such that $u(0) = \gamma$,
\begin{equation*}
\sup_{H\in C^{1,2}([0,T]\times[0,1])}
\Big\{ L_{H}(u)+B_{H}^{1}(u)-B_{H}^{2}(u)\Big\}
\;=\; I_{[0,T]}(u|\gamma)\;.
\end{equation*}

\begin{lemma}
\label{l03}
For $a>0$, $\epsilon>0$, $\delta >0$, let
$H_{\epsilon,\delta} = h_a'( u^{\epsilon,\delta})$,
\begin{equation*}
R^{\epsilon,\delta} \;=\; L_{H_{\epsilon,\delta}}(u^{\epsilon,\delta})
\;-\; L_{(H_{\epsilon,\delta})^{\epsilon,\delta}}(u)\;.
\end{equation*}
Then, for any fixed $a>0$, $\epsilon>0$, $R^{\epsilon,\delta}$
converges to $0$ as $\delta\downarrow0$.
\end{lemma}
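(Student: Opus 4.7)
The plan is to use two symmetries—self-adjointness of the Robin semigroup $P^{(R)}_\epsilon$ on $\ms L^2([0,1])$ and evenness of the time mollifier $\phi^\delta$—to transfer both regularizations from the path in $L_{H_{\epsilon,\delta}}(u^{\epsilon,\delta})$ onto the test function, reducing the difference to small endpoint terms that vanish by right-continuity of the cadlag path $u$.

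First, since $P^{(R)}_\epsilon$ is self-adjoint on $\ms L^2([0,1])$ and commutes with $\partial_t$, the identity $\langle u^{\epsilon,\delta}_t, K_t\rangle = \langle u^\delta_t, K^\epsilon_t\rangle$ (and similarly for $\partial_t K$) plugged into the definition of $L_G$ yields
$$L_{H_{\epsilon,\delta}}(u^{\epsilon,\delta}) \;=\; L_{F}(u^\delta)\,, \qquad L_{(H_{\epsilon,\delta})^{\epsilon,\delta}}(u) \;=\; L_{F^\delta}(u)\,,$$
where $F := P^{(R)}_\epsilon H_{\epsilon,\delta}$. The claim thus reduces to showing $L_F(u^\delta) - L_{F^\delta}(u) \to 0$.

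Second, expand both sides and write the time convolutions out explicitly. In $\int_0^T\langle u^\delta_t, \partial_t F_t\rangle\,dt$, Fubini together with the substitution $s = t+r$ shifts the integration range to $[r,T+r]$; in $\int_0^T\langle u_t, \partial_t F^\delta_t\rangle\,dt$, the fold $r \mapsto -r$ (allowed by the evenness of $\phi^\delta$) produces $\langle u_s, \partial_t F(s-r,\cdot)\rangle$ integrated over $[0,T]$, and the bulk contributions on $[0,T]$ cancel. The surplus from $s\in[0,r]\cup[T,T+r]$ is handled using the extensions $u_t \equiv u_0$ for $t\le 0$ and $u_t \equiv u_T$ for $t\ge T$ (and analogously for $F$, making $\partial_t F \equiv 0$ outside $[0,T]$); combined with the endpoint pairings $\langle u^\delta_0, F_0\rangle - \langle u_0, F^\delta_0\rangle$ and $\langle u^\delta_T, F_T\rangle - \langle u_T, F^\delta_T\rangle$, everything telescopes to
$$L_F(u^\delta) - L_{F^\delta}(u) \;=\; -\int_0^\delta \phi^\delta(r)\,\Big\{\,\langle u_r - u_0, F_0\rangle \,+\, \langle u_T - u_{T-r}, F_T\rangle\,\Big\}\,dr\;.$$

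Third, rescale $s = r/\delta$ so that each integral becomes $\int_0^1 \phi(s)\,\langle u_{\delta s}-u_0, F_0\rangle\,ds$ (and likewise at $T$). For fixed $\epsilon>0$, the ultracontractive bound on $P^{(R)}_\epsilon$, the $\ms L^2$-convergence $u^{\epsilon,\delta}_0 \to u^\epsilon_0$ supplied by Lemma~\ref{l02}, and the Lipschitz continuity of $h_a'$ on $[0,1]$ together give $F_0 \to P^{(R)}_\epsilon h_a'(u^\epsilon_0)$ in sup-norm, uniformly bounded in $\delta$. Splitting $\langle u_{\delta s}-u_0, F_0\rangle$ into a piece tested against the fixed continuous limit (which tends to $0$ for every $s>0$ by weak right-continuity of $u$) and a uniformly small correction, dominated convergence concludes the endpoint at $0$. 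The argument at $T$ is identical, using the corresponding one-sided continuity of $u$ at $T$ (available by Lemma~\ref{l01} in the regime where $R^{\epsilon,\delta}$ is ultimately applied). The main obstacle is the telescoping computation in Step~2: one must carefully track how the constant extensions past $0$ and $T$ interact with the convolution so that the bulk contributions cancel exactly, leaving only the controlled endpoint residuals.
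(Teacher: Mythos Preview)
Your approach is correct but takes a genuinely different route from the paper's. Both begin by using the self-adjointness of $P^{(R)}_\epsilon$ to pass to the comparison of $L_F(u^\delta)$ with $L_{F^\delta}(u)$, where $F=H^\epsilon$. From there the paper splits the difference into an endpoint piece $R_1^{\epsilon,\delta}$ and a bulk piece $R_2^{\epsilon,\delta}$ coming from $\int_0^T\langle u^\delta_t,\partial_t H^\epsilon_t\rangle\,dt-\int_0^T\langle u_t,\partial_t H^{\epsilon,\delta}_t\rangle\,dt$; the bulk piece is bounded crudely by $C_0\,\delta\,\|\partial_t H^\epsilon\|_\infty$, and the technical heart is then showing that $\delta\,\|\partial_t H^\epsilon\|_\infty\to0$ despite $\partial_t H^\epsilon$ scaling like $\delta^{-1}$, by exploiting the uniform continuity of $u^\epsilon$ on $[-1,T+1]\times[0,1]$ provided by Lemma~\ref{l02}. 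You instead carry out the convolution bookkeeping exactly, so that the bulk contributions cancel identically and only the endpoint residuals $\int_0^\delta\phi^\delta(r)\,\langle u_r-u_0,F_0\rangle\,dr$ and $\int_0^\delta\phi^\delta(r)\,\langle u_T-u_{T-r},F_T\rangle\,dr$ survive; these you handle by weak continuity of $u$ and the sup-norm stability of $F_0,F_T$ in $\delta$ (via the ultracontractive estimate \eqref{6-17}). Your argument is more algebraic and bypasses the analysis of $\partial_t H^\epsilon$ entirely; the paper's version is coarser but makes the role of the smoothness of $u^\epsilon$ explicit. One point worth noting: the left-continuity of $u$ at $T$ that you need is not automatic for a generic c\`adl\`ag path, but you correctly flag that Lemma~\ref{l01} supplies it when $I_{[0,T]}(u)<\infty$; the paper's proof has the same hidden hypothesis, since the second assertion of Lemma~\ref{l02} (used to get continuity of $u^\epsilon$) also requires $t\mapsto\langle u_t,g\rangle$ to be continuous.
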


\smallskip\noindent{\bf Warning:} Until the end of Proposition
\ref{l06} proof's, we drop the dependence of
$H = H_{\epsilon,\delta} = h_a'( u^{\epsilon,\delta})$ on $\epsilon$,
$\delta$. Hence, $H$ always stands for $H_{\epsilon,\delta}$.

\begin{proof}[Proof of Lemma \ref{l03}]
Recall that $C_0$ represents a constant independent of
$\epsilon$, $\delta$ and $a$, that may change from line to line.  As
$P^{(R)}_\epsilon$ is a self-adjoint operator in $\ms L^2([0,1])$ and
commutes with the time-derivative,
\begin{equation*}
\begin{split}
L_{H}(u^{\epsilon,\delta})
& \;=\;\langle  u^{\delta}_{T}, H_{T}^{\epsilon} \rangle
- \langle u_{0}^{\delta}, H_{0}^{\epsilon} \rangle
- \int_{0}^{T} \langle  u^{\delta}_{t},
\partial_{t}H_{t}^{\epsilon}\rangle  \;dt \\
& \;=\; \langle  u_{T}, H_{T}^{\epsilon,\delta} \rangle
- \langle u_{0}, H_{0}^{\epsilon,\delta} \rangle
- \int_{0}^{T} \langle  u^{\delta}_{t},
\partial_{t}H_{t}^{\epsilon}\rangle \;dt
\;+\;  R_{1}^{\epsilon,\delta}\;,
\end{split}
\end{equation*}
where
\begin{equation*}
R_{1}^{\epsilon,\delta} := R^{\epsilon,\delta,T} -
R^{\epsilon,\delta,0} \quad\text{and}\quad R^{\epsilon,\delta,t}
:=\langle u_{t}^{\delta}- u_{t}, H_{t}^{\epsilon}\rangle +\langle
u_{t}, H_{t}^{\epsilon}-H^{\epsilon,\delta}_{t} \rangle\;
\end{equation*}
for $0\le t\le T$.

A simple computation yields that
\begin{equation*}
\int_{0}^{T} \langle  u_{t}^{\delta},
\partial_{t}H_{t}^{\epsilon}\rangle \;dt
\;=\; \int_{0}^{T} \langle u_{t},
\partial_{t}H_{t}^{\epsilon,\delta} \rangle \; dt
\;+\; R_{2}^{\epsilon,\delta}  \;,
\end{equation*}
where
$|R_{2}^{\epsilon,\delta}|\le
C_0\delta\|\partial_{t}H^{\epsilon}\|_{\infty}$.  To conclude the
proof, it is enough to show that, for each fixed $a>0$, $\epsilon>0$,
$R_{1}^{\epsilon,\delta}$ and
$\delta\|\partial_{t}H^{\epsilon}\|_{\infty}$ converge to zero as
$\delta\downarrow0$.

We first prove that
\begin{equation}
\label{lim4.1}
\lim_{\delta\downarrow0}R^{\epsilon,\delta,t} \;=\; 0 \quad\text{for}\quad
t=0\;\text{ and } \;t=T\;.
\end{equation}
We consider the case $t=T$, the argument being similar for
$t=0$. As $P^{(R)}_t$ is symmetric,
\begin{equation*}
R^{\epsilon,\delta,T} \;=\;
\langle  u_{T}^{\epsilon,\delta}- u_{T}^{\epsilon}, H_{T}\rangle
+\langle  u^{\epsilon}_{T}, H_T
-H^{\delta}_T \rangle \;.
\end{equation*}
By Lemma \ref{l02}, for each $x\in[0,1]$, $u^{\epsilon}(\cdot,x)$ is
continuous.  Therefore, by definition of $H$, for any
$(t,x)\in[0,T]\times[0,1]$,
\begin{gather*}
\lim_{\delta\downarrow0} u^{\epsilon,\delta}(t,x)
\;=\;  u^{\epsilon}(t,x) \; ,  \\
\lim_{\delta\downarrow0} H^{\delta}(T,x)
\;=\; \lim_{\delta\downarrow 0} h_a'( u^{\epsilon,\delta})^{\delta}
(T,x) \;=\; h_a'( u^{\epsilon}) (T,x) \;=\;
\lim_{\delta\downarrow0} h_a'( u^{\epsilon,\delta})
\;=\; \lim_{\delta\downarrow0} H (T,x)
\nonumber
\end{gather*}
because $h'_a$ is bounded and continuous on $[0,1]$. Note that the
dependence on $\delta$ of the last term on the right-hand side is
hidden, as $H$ is actually $h_a'( u^{\epsilon,\delta})$. Claim
\eqref{lim4.1} follows from these results, from the boundedness of $u$
and $h'_a$, and the bounded convergence theorem.

It remains to show that $\delta\|\partial_{t}H^{\epsilon}\|_{\infty}$
converges to $0$ as $\delta \downarrow 0$.  An elementary computation
gives that, for any $t\in[0,T]$,
\begin{equation*}
\partial_{t}H^{\epsilon}(t)
\;=\; P^{(R)}_\epsilon \Big[\,
h''_a( u^{\epsilon,\delta}(t)) \,
\int_{-\delta}^{\delta}  u^{\epsilon}(t-r) \,
(\phi^{\delta})'(r)\; dr \, \Big] \;.
\end{equation*}
Since $\phi^{\delta}$ is a symmetric function, a change of variables
shows that
\begin{equation*}
\int_{-\delta}^{\delta}
 u^{\epsilon}(t-r)\, (\phi^{\delta})'(r) \; dr
\;=\;\int_{0}^{\delta}
\{ \, u^{\epsilon}(t-r) \,-\,
u^{\epsilon}(t+r)\, \} \, (\phi^{\delta})'(r) \; dr \;.
\end{equation*}
By Lemma \ref{l02}, $ u^{\epsilon}$ is uniformly continuous on
$[-1,T+1]\times [0,1]$. On the other hand,
$\delta\int_0^{\delta} (\phi^{\delta})'(r) \, dr = - \, \phi(0)$.
Therefore, the last expression multiplied by $\delta$ converges to $0$
as $\delta\downarrow0$ uniformly in $[0,T]\times[0,1]$. Since $h''_a$ is
uniformly bounded, by the bounded convergence theorem,
$\delta\|\partial_{t}H_{\epsilon}\|_{\infty}$ converges to $0$ as
$\delta\downarrow0$.
\end{proof}

\begin{lemma}
\label{l04}
There exists a positive constant $C_0<\infty$ such that
\begin{equation}
\label{bound4.4}
\int_{0}^{T} dt \ \int_0^1
\frac{(\nabla  u(t,x))^{2}}{\sigma_{a}( u(t,x))}
\; dx \;\le\; C_0 \, B^{1}_{h_a'( u)}(u)\;,\quad
|B^{2}_{h_a'( u)}(u)| \;\le\; C_0
\end{equation}
for all $u\in D_{\mc E} ([0,T], \ms M_{\rm ac})$ and
$0<a<1$. Moreover, for each $u\in D_{\mc E} ([0,T], \ms M_{\rm ac})$
and $i=1,2$,
\begin{equation*}
\lim_{\epsilon\downarrow0}\lim_{\delta\downarrow0}
B^{i}_{H^{\epsilon,\delta}}(u) \;=\; B^{i}_{h_a'(u)}(u)\;.
\end{equation*}
\end{lemma}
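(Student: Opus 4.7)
The plan is to establish the two inequalities in \eqref{bound4.4} by direct calculation with $H = h_a'(u)$, and then obtain the iterated limits by dominated convergence, using the uniform bounds just derived as integrable envelopes.

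\textbf{First inequality.} Since $h_a''(r) = 1/[2\sigma_a(r)]$, the chain rule gives $\nabla h_a'(u) = \nabla u/[2\sigma_a(u)]$. Substituting into the definition of $B^1_H(u)$ yields
\begin{equation*}
B^1_{h_a'(u)}(u) \;=\; \int_0^T\!\!\int_0^1 \frac{(\nabla u)^2}{2\sigma_a(u)}\,dx\,dt \;-\; \int_0^T\!\!\int_0^1 \frac{\sigma(u)\,(\nabla u)^2}{4\sigma_a(u)^2}\,dx\,dt .
\end{equation*}
The identity $\sigma_a(r) = \sigma(r) + a(1+a) \ge \sigma(r)$ bounds the second integral by $(1/4)\int\!\int (\nabla u)^2/\sigma_a(u)\,dx\,dt$, so $B^1_{h_a'(u)}(u) \ge (1/4)\int\!\int (\nabla u)^2/\sigma_a(u)\,dx\,dt$, i.e.\ \eqref{bound4.4} with $C_0 = 4$.

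\textbf{Second inequality.} Differentiating $h_a$ gives $h_a'(r) = \alpha_a \log[(r+a)/(1-r+a)]$ with $\alpha_a := 1/[2(1+2a)] \in (0,1/2]$, whence $e^{\pm h_a'(r)} = [(r+a)/(1-r+a)]^{\pm \alpha_a}$. The key observation is the pair of elementary inequalities
\begin{equation*}
(1-r)\, e^{h_a'(r)} \;\le\; 1 \qquad\text{and}\qquad r\, e^{-h_a'(r)} \;\le\; 1 \qquad (r\in[0,1],\; a\in(0,1)).
\end{equation*}
The first follows from the factorization $(1-r) e^{h_a'(r)} = (1-r)^{1-\alpha_a}\,[(1-r)(r+a)/(1-r+a)]^{\alpha_a}$ combined with the elementary inequality $(1-r)(r+a) \le 1-r+a$ (straightforward by splitting into $r \le 1-a$ and $r > 1-a$, using $a \le 1$); the second is symmetric. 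Together with $|e^{\pm M}-1| \le e^{\pm M}+1$, the definition \eqref{4-05} then gives $|\mf b_{\varrho, D}(r, h_a'(r))| \le 2/D$ uniformly in $r$ and $a$, whence $|B^2_{h_a'(u)}(u)| \le 2T(1/A + 1/B)$.

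\textbf{Iterated limits and main obstacle.} By Lemma \ref{l02}, $u^\epsilon \to u$ and $\nabla u^\epsilon \to \nabla u$ in $\ms L^2([0,T]\times[0,1])$ as $\epsilon \to 0$, while the second assertion of that lemma provides, for each fixed $\epsilon > 0$, uniform convergence of $u^{\epsilon,\delta}$ to $u^\epsilon$ (and of $\nabla u^{\epsilon,\delta}$ to $\nabla u^\epsilon$) as $\delta \to 0$. Since $h_a'$ and $h_a''$ are bounded continuous on $[0,1]$, these transfer to the analogous convergences of $H^{\epsilon,\delta}$ and $\nabla H^{\epsilon,\delta}$, and dominated convergence (using boundedness of $u$) passes the iterated limit through the two integrals defining $B^1$. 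For $B^2$, the boundary traces $u_t(0)$ and $u_t(1)$ are defined for a.e.\ $t$ because $u(t,\cdot)$ is H\"older continuous for a.e.\ $t$; the pointwise convergence $H^{\epsilon,\delta}(t,x) \to h_a'(u_t(x))$ at $x \in \{0,1\}$ for a.e.\ $t$ then holds under the iterated limit, and the uniform envelope $|\mf b_{\varrho, D}(\,\cdot\,,h_a'(\,\cdot\,))| \le 2/D$ from the preceding step justifies dominated convergence in $t$. The main obstacle is this boundary step: the trace is only defined almost everywhere in time and the Robin semigroup $P^{(R)}_\epsilon$ entangles space with boundary conditions, so the commutation of endpoint evaluation with the two smoothings in the iterated limit is the delicate point, and the uniform boundary bound is exactly what makes the ensuing dominated-convergence argument go through.
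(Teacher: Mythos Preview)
Your treatment of the two inequalities in \eqref{bound4.4} is correct and, if anything, cleaner than the paper's: you apply the Sobolev chain rule directly to $h_a'(u)$ (valid since $h_a'\in C^1$ with bounded derivative on $[0,1]$), whereas the paper routes through the approximation $u^\epsilon$. Your boundary estimate $(1-r)e^{h_a'(r)}\le 1$, $r e^{-h_a'(r)}\le 1$ is a nice explicit way to get the uniform bound on $\mf b_{\varrho,D}(r,h_a'(r))$.

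The iterated-limit part, however, has a genuine gap. Recall that $H^{\epsilon,\delta}$ is not $h_a'(u^{\epsilon,\delta})$ but its further smoothing $[P^{(R)}_\epsilon\, h_a'(u^{\epsilon,\delta})]^\delta$; in particular $\nabla H^{\epsilon,\delta} = \big(P^{(M)}_\epsilon[\,h_a''(u^{\epsilon,\delta})\,\nabla u^{\epsilon,\delta}\,]\big)^\delta$ via \eqref{6-23b}. The sentence ``these transfer to the analogous convergences of $H^{\epsilon,\delta}$ and $\nabla H^{\epsilon,\delta}$'' hides the work of peeling off the outer $P^{(M)}_\epsilon$ and showing the inner product $h_a''(u^{\epsilon})\nabla u^{\epsilon}$ converges in $\ms L^2$; the paper does this in several steps using that $P^{(M)}_\epsilon$ is an $\ms L^2$-contraction and that $h_a''(u^\epsilon)\to h_a''(u)$ in measure. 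For $B^2$ the issue is sharper: you assert ``the pointwise convergence $H^{\epsilon,\delta}(t,x)\to h_a'(u_t(x))$ at $x\in\{0,1\}$'' but give no mechanism for it. After $\delta\to 0$ one has $P^{(R)}_\epsilon[h_a'(u^\epsilon_t)](0)$, and passing $\epsilon\to 0$ at the \emph{endpoint} requires a quantitative sup-norm estimate such as Lemma~\ref{l08}, $\|P^{(R)}_\epsilon f - f\|_\infty \le C\epsilon^{1/5}\|f\|_{\mc H_R}$, together with the trace convergence $u^\epsilon_t(0)\to u_t(0)$ coming from $u^\epsilon\to u$ in $\ms L^2(0,T;\mc H^1)$. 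Finally, the envelope you invoke, $|\mf b_{\varrho,D}(r,h_a'(r))|\le 2/D$, is not the right object: what appears in $B^2_{H^{\epsilon,\delta}}(u)$ is $\mf b_{\varrho,D}(u_t(0),H^{\epsilon,\delta}(t,0))$ with mismatched arguments. A correct envelope follows instead from the $\ms L^\infty$-bound $|H^{\epsilon,\delta}|\le \|h_a'\|_\infty$ (via \eqref{10-01}), which depends on $a$ but is uniform in $\epsilon,\delta$---enough for dominated convergence once the pointwise limit is in hand.
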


\begin{proof}
Let $u$ be a path in $D_{\mc E} ([0,T], \ms M_{\rm ac})$.  We first
show that
\begin{equation}
\label{lim4.2}
\lim_{\epsilon\downarrow0}\lim_{\delta\downarrow0}
B^{1}_{H^{\epsilon,\delta}}(u) \;=\;
B^{1}_{h_a'( u)}(u)\;.
\end{equation}

By Lemma \ref{l02}, $\nabla  u^{\epsilon}$ is uniformly continuous in
$[0,T]\times [0,1]$.  Therefore, for any $(t,x)\in[0,T]\times[0,1]$,
\begin{gather*}
\lim_{\delta\downarrow0}\nabla  u^{\epsilon,\delta}(t,x)
\;=\;\nabla  u^{\epsilon}(t,x)\;.
\end{gather*}
Recall from the end of the Appendix \ref{sec06} the definition of the
semigroup $P^{(M)}_t$. By \eqref{6-23b}
$\nabla P^{(R)}_\epsilon = P^{(M)}_\epsilon \nabla$. Hence,
\begin{gather*}
\lim_{\delta\downarrow0}\nabla  H^{\epsilon,\delta} (t,x)
\;=\; \lim_{\delta\downarrow0} P^{(M)}_\epsilon
\Big[\, h_a''( u^{\epsilon, \delta})\, \nabla  u^{\epsilon, \delta}
\,\Big]^\delta (t,x)
\;=\; P^{(M)}_\epsilon \big[\,
h_a''( u^{\epsilon}_t )\, \nabla  u^{\epsilon}_t\, \big] (x) \;.
\end{gather*}
Hence, as $(\nabla u)(t,x) \, dx\, dt$ is a finite measure on
$[0,T]\times [0,1]$, by the bounded convergence theorem,
\begin{equation}
\label{01}
\lim_{\delta\downarrow0}B^{1}_{H^{\epsilon,\delta}}(u) \;=\;
\int_{0}^{T}
\Big\{\, \big\< \, \nabla   u_{t} \,,\, G^\epsilon_t \,\big \>
\,-\, \big\<\, \sigma( u_{t}) \,,\,
[\, G^\epsilon_t \, ]^{2} \big \> \, \Big\}\; dt \;,
\end{equation}
where $G^\epsilon (t,x) = P^{(M)}_\epsilon [\,
h_a''( u^{\epsilon}_t )\, \nabla  u^{\epsilon}_t\, ] (x)$.

On the one hand, since $P^{(M)}_\epsilon$ is a contraction in
$\ms L^2 ([0,1])$, $h_a''$ is bounded, and since, by Lemma \ref{l02},
$\nabla u^{\epsilon}$ converges to $\nabla u$ in
$\ms L^{2}([0,T]\times[0,1])$,
\begin{equation*}
\lim_{\epsilon\downarrow0} \int_{0}^{T} dt\ \int_0^1
\big\{ \, P^{(M)}_\epsilon \big [\, h_a''( u^{\epsilon}_{t}) \, \big(
\nabla  u^{\epsilon}_{t} - \nabla  u_{t} \,\big) \,\big] \big\}^2
\; dx  \;=\; 0\;.
\end{equation*}
Therefore, on the right-hand side of \eqref{01}, in the formula for
$G^{\epsilon}$ we may replace $\nabla u^{\epsilon}_{t}$ by
$\nabla u_{t}$ at a cost that vanishes as $\epsilon\to 0$.

Since $h_{a}''$ is Lipschitz continuous, by Lemma \ref{l02}, as
$\epsilon\downarrow 0$, $h_{a}''( u^{\epsilon})$ converges in measure
to $h_{a}''( u)$. In other words, for any $b>0$, the Lebesgue measure
of the set
$\{(t,x)\in [0,T]\times[0,1]; |h_{a}''( u^{\epsilon}(t,x))-h_{a}''(
u(t,x))|\ge b\}$ converges to $0$ as $\epsilon\downarrow0$.
Therefore, as $\nabla  u_t$ belongs to $\ms L^2 ([0,T]\times[0,1])$,
\begin{equation*}
\lim_{\epsilon\downarrow0} \int_{0}^{T}
\big\< h_a''( u^{\epsilon}_{t}) \, \big[\nabla  u_{t}\big]^2 \big\>
\;dt \;=\; \int_{0}^{T}
\big\< h_a''( u_{t}) \, \big[\nabla  u_{t}\big]^2 \big\>
\;dt \;.
\end{equation*}
In consequence, on the right-hand side of \eqref{01}, in the formula
for $G^{\epsilon}$ we may further replace $h_{a}''( u^{\epsilon})$ by
to $h_{a}''( u)$.

To complete the proof of \eqref{lim4.2}, it remains to recall that for
any $f$ in $\ms L^2 ([0,1])$, $P^{(M)}_\epsilon f$ converges in
$\ms L^2 ([0,1])$ to $f$ as $\epsilon \to 0$.

We turn to the proof that
\begin{equation*}
\lim_{\epsilon\downarrow0}\lim_{\delta\downarrow0}\,
B^{2}_{H^{\epsilon,\delta}}(u) \;=\;  B^{2}_{h_a'( u)}(u) \;.
\end{equation*}
We examine the boundary condition at $x=0$, the other one being
similar.

By Lemma \ref{l02}, $u^{\epsilon}$ is uniformly continuous. Hence, as
$h_a'$ is continuous in the interval $[0,1]$, as $\delta \to 0$,
$H^{\epsilon,\delta} (t,0)$ converges to 
$P^{(R)}_\epsilon [ \, h_a'(u^{\epsilon}_t)\, ] (0)$. Therefore,
\begin{equation}
\label{n01}
\lim_{\delta\downarrow0}
\int_0^T \mf b_{\alpha, A} \big(\, u_t(0)\, ,\,
H_{\epsilon,\delta} (t,0)\, \big) \, dt
\;=\; \int_0^T \mf b_{\alpha, A} \big(\, u_t(0)\, ,\,
P^{(R)}_\epsilon [ \, h_a'(u^{\epsilon}_t)\, ] (0) \, \big) \, dt\;.
\end{equation}

To conclude the proof, we first replace on the right-hand side
$P^{(R)}_\epsilon [ \, h_a'(u^{\epsilon}_t)\, ] (0)$ by
$h_a'(u^{\epsilon}_t (0))$. Since $ h_a'$ is bounded, there exists a
finite constant $C_1 = C_1(a, A, \alpha)$ such that
\begin{equation*}
\begin{aligned}
& \Big|\, \int_0^T \mf b_{\alpha, A} \big(\, u_t(0)\, ,\,
P^{(R)}_\epsilon [ \, h_a'(u^{\epsilon}_t)\, ] (0) \, \big) \, dt
\;-\;
\int_0^T \mf b_{\alpha, A} \big(\, u_t(0)\, ,\,
h_a'(u^{\epsilon}_t (0)) \, \big) \, dt
\,\Big| \\
&\quad \le\;
C_1 \int_0^T \big|\,
P^{(R)}_\epsilon [ \, h_a'(u^{\epsilon}_t)\, ] (0) \, -\,
h_a'(u^{\epsilon}_t (0))\,\big| \, dt\;.
\end{aligned}
\end{equation*}

By \eqref{6-05}, Lemma \ref{l08} and \eqref{6-14},
\begin{equation*}
\big|\, P^{(R)}_\epsilon [ \, h_a'(u^{\epsilon}_t)\, ] (0) \;-\;
h_a'(u^{\epsilon}_t (0)\, )\, \big| \;\le\;
C_0\, \sqrt{ A} \, \epsilon^{1/5}
\big\Vert\, h_a'(u^{\epsilon}_t \, )\, \big\Vert_{\mc H^1}
\end{equation*}
for some finite constant $C_0$. Hence, the term on the right-hand side
in the penultimate displayed equation is bounded by
\begin{equation*}
C_1 \,  \epsilon^{1/5}\, \int_0^T
\big\Vert\, h_a'(u^{\epsilon}_t \, )\, \big\Vert_{\mc H^1}
\, dt\;.
\end{equation*}
By Lemma \ref{l02}, $u^{\epsilon}$ converges to $u$ in
$\ms L^2(0,T, \mc H^1)$. Since $h_a'$ and $h_a''$ are bounded, the
previous integral is bounded uniformly in $\epsilon$. In particular,
the previous expression vanishes as $\epsilon \to 0$.

It remains to estimate the right-hand side of \eqref{n01} with
$P^{(R)}_\epsilon [ \, h_a'(u^{\epsilon}_t)\, ] (0)$ replaced by
$h_a'(u^{\epsilon}_t (0))$. By \eqref{6-05} and since, by Lemma
\ref{l02}, $u^{\epsilon}$ converges to $u$ in
$\ms L^2(0,T, \mc H^1)$, $\lim_{\epsilon \to 0}
u^{\epsilon}_t(0) \,=\, u_t(0)$. Hence, as $h_a'$ is continuous,
and since, by \eqref{6-17}, $u^{\epsilon}$ is uniformly bounded, by
the bounded convergence theorem,
\begin{equation*}
\lim_{\epsilon \to 0}
\int_0^T \mf b_{\alpha, A} \big(\, u_t(0)\, ,\,
h_a'(u^{\epsilon}_t(0)) \, \big) \, dt
\;=\; \int_0^T \mf b_{\alpha, A} \big(\, u_t(0)\, ,\,
h_a'(u_t(0)) \, \big) \, dt \;,
\end{equation*}
which completes the proof of first assertion of the lemma.

We turn to the bounds \eqref{bound4.4}. As $\sigma(x) \le
\sigma_a(x)$, for each $\epsilon >0$,
\begin{equation*}
\int_{0}^{T}
\langle\nabla  u^{\epsilon}_{t} \,,\, \nabla  h_a'(u^{\epsilon}_t) \rangle\; dt
\;-\; \int_{0}^{T}
\langle \sigma_a ( u^{\epsilon}_{t}) \,,\, 
(\nabla  h_a'(u^{\epsilon}_t))^{2}\rangle\; dt \;\le\;
B^1_{h_a'(u^{\epsilon})} (u^{\epsilon}) \;.
\end{equation*}
Compute the left-hand side to get that
\begin{equation*}
\frac{1}{4}\, \int_{0}^{T} dt \ \int_0^1
\frac{(\nabla  u^{\epsilon})^{2}}{\sigma_{a}( u^{\epsilon})}
\; dx \;\le\; B^1_{h_a'(u^{\epsilon})} (u^{\epsilon}) \;.
\end{equation*}
The arguments presented in the first part of the proof permit to let
$\epsilon \to 0$ on both sides of this inequality and yield the first
estimate in \eqref{bound4.4}.

To estimate $B^{2}_{h_a'( u)}(u)$, note that
\begin{equation*}
\begin{aligned}
& \mf b_{\varrho, D} \big(\, v\, ,\, h_a'(v) \, \big) \;=\;  \\
& \frac{1}{D}\, \Big\{\, \varrho\, [1-v]\,
\Big[ \, \Big( \frac{v+a}{1+a-v}\Big)^{1/2+4a} \,-\, 1\,\Big]
\,+\, [1-\varrho]\, v \,
\Big[ \, \Big( \frac{1+a-v}{v+a}\Big)^{1/2+4a} \,-\,
1\,\Big]\,\Big\}\;.
\end{aligned}
\end{equation*}
In particular, $\mf b_{\varrho, D}$, as a function of $v$ and $a$ is
bounded: for all $0<\varrho<1$, $D>0$, there exists a finite constant
$C_0 = C_0(\varrho, D)$ such that
\begin{equation*}
\sup_{0\le a<1} \sup_{v\in [0,1]}
\big|\, \mf b_{\varrho, D} \big(\, v\, ,\, h_a'(v) \, \big)\,\big|
\;\le\; C_0\;.
\end{equation*}
The second inequality in \eqref{bound4.4} follows from this estimate
and the definition of $B^{2}_{h_a'( u)}(u)$.
\end{proof}

\begin{proposition}
\label{l06}
There exists a constant $C_0>0$ such that
\begin{equation*}
\int_0^{T} dt \int_0^1 \frac{|\nabla  u(t,x)|^{2}}{\sigma(u(t,x))}
\; dx
\;\le\; C_0\,\{ \, I_{[0,T]}(u) \,+\, 1 \, \}
\end{equation*}
for any path $u$ in $D_{\mc E}([0,T],\ms M_{\rm ac})$.
\end{proposition}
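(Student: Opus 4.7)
The plan is to apply the variational formula \eqref{1-02} for $I_{[0,T]}$ with the smooth test function $\widetilde H_{\epsilon,\delta} := (H_{\epsilon,\delta})^{\epsilon,\delta}$, where $H_{\epsilon,\delta} = h_a'(u^{\epsilon,\delta})$, which was prepared precisely for this purpose in Lemmas \ref{l03}--\ref{l04}. Since $P^{(R)}_\epsilon$ smooths in space and convolution with $\phi^{\delta}$ smooths in time, $\widetilde H_{\epsilon,\delta}$ lies in $C^{1,2}([0,T]\times[0,1])$ and is admissible in the supremum defining $I_{[0,T]}(u)$. Therefore
\begin{equation*}
I_{[0,T]}(u) \;\ge\; J_{T,\widetilde H_{\epsilon,\delta}}(u)
\;=\; L_{\widetilde H_{\epsilon,\delta}}(u) \;+\; B^{1}_{\widetilde H_{\epsilon,\delta}}(u)
\;-\; B^{2}_{\widetilde H_{\epsilon,\delta}}(u)\; ,
\end{equation*}
and the proof will consist of passing to the iterated limit $\delta\downarrow 0$, then $\epsilon\downarrow 0$, then $a\downarrow 0$ on the right-hand side.

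For the time-boundary term, Lemma \ref{l03} gives $L_{\widetilde H_{\epsilon,\delta}}(u) - L_{H_{\epsilon,\delta}}(u^{\epsilon,\delta})\to 0$ as $\delta\downarrow 0$. A direct chain-rule computation, using that $u^{\epsilon,\delta}$ is smooth in time and that $F(v) := v\, h_a'(v) - h_a(v)$ satisfies $F'(v) = v\, h_a''(v)$, yields
\begin{equation*}
L_{h_a'(u^{\epsilon,\delta})}(u^{\epsilon,\delta}) \;=\; \int_0^1 \big[\, h_a\big( u^{\epsilon,\delta}(T,x)\big) \,-\, h_a\big( u^{\epsilon,\delta}(0,x)\big)\, \big]\, dx \; .
\end{equation*}
Since $h_a$ is uniformly bounded on $[0,1]$ for $a\in(0,1)$, we conclude that $|L_{H_{\epsilon,\delta}}(u^{\epsilon,\delta})|\le C_0$ with $C_0$ independent of $\epsilon$, $\delta$, $a$. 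For the bulk and boundary contributions, Lemma \ref{l04} provides the convergence $\lim_{\epsilon\downarrow 0}\lim_{\delta\downarrow 0} B^{i}_{\widetilde H_{\epsilon,\delta}}(u) = B^{i}_{h_a'(u)}(u)$ for $i=1,2$, the uniform bound $|B^{2}_{h_a'(u)}(u)|\le C_0$, and the coercive lower bound
\begin{equation*}
B^{1}_{h_a'(u)}(u) \;\ge\; \frac{1}{C_0}\int_0^T \!\!\! dt\, \int_0^1 \frac{(\nabla u(t,x))^2}{\sigma_a(u(t,x))}\, dx\; .
\end{equation*}

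Combining these three ingredients in the iterated limit produces
\begin{equation*}
I_{[0,T]}(u) \;+\; 2\, C_0 \;\ge\; \frac{1}{C_0}\int_0^T \!\!\! dt\, \int_0^1 \frac{(\nabla u)^2}{\sigma_a(u)}\, dx\; .
\end{equation*}
A short computation shows $\sigma_a(x) = \sigma(x) + a(1+a) \downarrow \sigma(x)$ as $a\downarrow 0$, so $1/\sigma_a \uparrow 1/\sigma$ and the monotone convergence theorem permits letting $a\downarrow 0$, delivering the claimed estimate. I expect the main obstacle to be the careful bookkeeping in the double limit $\delta\downarrow 0$, then $\epsilon\downarrow 0$ for the boundary term $B^{2}$: the functional involves the traces $u^{\epsilon,\delta}_t(0)$ and $u^{\epsilon,\delta}_t(1)$, which are only controlled through the $\mc H^1$-trace inequality, and one must exploit the uniform estimate on $P^{(R)}_\epsilon$ applied to an $\mc H^1$ function to commute the limits past the nonlinearity $\mf b_{\varrho,D}$. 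However, this is exactly what is carried out inside Lemma \ref{l04}, so at this stage only the assembly of the pieces remains.
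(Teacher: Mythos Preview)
Your proof is correct and follows essentially the same path as the paper: plug the smooth test function $(h_a'(u^{\epsilon,\delta}))^{\epsilon,\delta}$ into the variational formula, use Lemma~\ref{l03} to transfer $L$ onto $u^{\epsilon,\delta}$, compute $L_{h_a'(u^{\epsilon,\delta})}(u^{\epsilon,\delta})=\int_0^1[h_a(u^{\epsilon,\delta}_T)-h_a(u^{\epsilon,\delta}_0)]\,dx$, invoke Lemma~\ref{l04} for the $B^i$ terms, and let $a\downarrow 0$. The only cosmetic difference is that the paper appeals to Fatou's lemma in the last step, whereas you observe the sharper fact that $\sigma_a(x)=\sigma(x)+a(1+a)$ is monotone in $a$, so monotone convergence applies.
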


\begin{proof}
We may assume, without loss of generality, that $I_{[0,T]}(u)$ 
is finite.  By the variational formula \eqref{1-02} and with the
notation of Lemma \ref{l03},
\begin{align}
\label{bound4.2}
L_{H}(u^{\epsilon,\delta})
+B^{1}_{H^{\epsilon,\delta}}(u)-B^{2}_{H^{\epsilon,\delta}}(u) - R^{\epsilon,\delta}
\;\le\; I_{[0,T]}(u)\;,
\end{align}
where, recall, $H$ stands for the function
$h_a'( u^{\epsilon,\delta})$.

Since $ u^{\epsilon,\delta}$ is smooth, an integration by parts
yields that
\begin{equation*}
L_{H}(u^{\epsilon,\delta})\;=\;
\int_0^1  h_a( u^{\epsilon,\delta}_{T}) \; dx
\;-\;  \int_0^1  h_a( u^{\epsilon,\delta}_{0})\; dx \;.
\end{equation*}
There exists, therefore, a constant $C_0$, independent of $\epsilon$,
$\delta$ and $a$, such that
\begin{equation*}
|L_{H}(u^{\epsilon,\delta})| \;\le\; C_0\;.
\end{equation*}

In \eqref{bound4.2}, let $\delta\downarrow 0$ and then
$\epsilon\downarrow 0$. It follows from the previous bound, and from
Lemmata \ref{l03} and \ref{l04} that
\begin{equation*}
B^{1}_{h'_a(u)}(u) \;-\;
B^{2}_{h'_a(u)}(u) \;\le\; I_{T}(u) \;+\; C_0 \;.
\end{equation*}
Thus, by \eqref{bound4.4},
\begin{equation*}
\int_0^{T} dt\ \int_0^1 \frac{|\nabla  u(t,x)|^{2}}
{\sigma_{a}( u(t,x))} \; dx \;\le\; C_0 \, \{ I_{T}(u)+1\}\;.
\end{equation*}
It remains to let $a\downarrow0$ and to apply Fatou's lemma.
\end{proof}

\noindent{\bf Note:} Since the rate function is declared to be
infinite on trajectories with infinite energy, this result is not
meant to show that a trajectory has finite energy. Its interest lies
on the fact that it provides a uniform bound of a strong version of
the energy for trajectories with rate function bounded by a constant.

\begin{corollary}
\label{zero}
The density $u$ of a path $\pi (t,dx) = u(t,x)\, dx$ in
$D([0,T], \ms M_{\rm ac})$ is the weak solution of the
initial-boundary value problem \eqref{1-06} if, and only if
$I_{T}(u|\gamma) = 0$.
\end{corollary}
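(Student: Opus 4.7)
The statement is an equivalence, so I would treat the two implications in turn, both relying on the representation \eqref{1-01b} of $J_{T,H}(u)$ together with the weak formulation of \eqref{1-06} given in Appendix \ref{sec06}.

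For the direction ``weak solution $\Rightarrow I_{[0,T]}(u|\gamma)=0$'', observe first that a weak solution $u$ of \eqref{1-06} satisfies $u(0,\cdot)=\gamma$ and lies in $\ms L^{2}(0,T;\mc H^{1})$, so $u \in D_{\mc E}([0,T],\ms M_{\rm ac})$ and $I_{[0,T]}(u|\gamma)=I_{[0,T]}(u)$. Since $J_{T,0}(u)=0$, it suffices to show $J_{T,H}(u)\le 0$ for every $H\in C^{1,2}([0,T]\times[0,1])$. Using $H$ as a test function in the weak formulation of \eqref{1-06} lets me replace the first four terms of \eqref{1-01b} by the boundary contributions $\int_{0}^{T}[A^{-1}(\alpha-u_{t}(0))H_{t}(0)+B^{-1}(\beta-u_{t}(1))H_{t}(1)]\,dt$. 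After this substitution, $J_{T,H}(u)$ decomposes into the non-positive interior term $-\int_{0}^{T}\langle\sigma(u_{t}),(\nabla H_{t})^{2}\rangle\,dt$ plus the pointwise-in-time boundary quantities $G_{\varrho,D}(v,M):=D^{-1}(\varrho-v)M-\mf b_{\varrho,D}(v,M)$ evaluated at $(\varrho,D,v)=(\alpha,A,u_{t}(0))$ and $(\beta,B,u_{t}(1))$. A one-line computation gives $G_{\varrho,D}(v,0)=0$, $\partial_{M}G_{\varrho,D}(v,0)=0$, and $\partial_{M}^{2}G_{\varrho,D}(v,M)=-D^{-1}\{(1-v)\varrho e^{M}+v(1-\varrho)e^{-M}\}<0$, so $G_{\varrho,D}(v,M)\le 0$ for every $M\in\mathbb R$. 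This yields $J_{T,H}(u)\le 0$, as needed.

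For the converse ``$I_{[0,T]}(u|\gamma)=0\Rightarrow u$ is the weak solution'', the finiteness of the rate function triggers Lemma \ref{l01} (so $u(0,\cdot)=\gamma$) and places $u$ in $D_{\mc E}([0,T],\ms M_{\rm ac})$, hence $u\in\ms L^{2}(0,T;\mc H^{1})$. The hypothesis gives $J_{T,H}(u)\le 0$ for every $H$ and, by the scaling freedom of the variational formula, also $J_{T,\pm\epsilon H}(u)\le 0$ for every $\epsilon>0$. Expanding $\mf b_{\varrho,D}(v,\epsilon h)=D^{-1}(\varrho-v)\,\epsilon h+O(\epsilon^{2})$ and noting that the mobility term is already of order $\epsilon^{2}$, the first-order coefficient in $\epsilon$ of $J_{T,\epsilon H}(u)$ equals
\begin{equation*}
\begin{aligned}
L_{0}(H) \;=\;& \langle u_{T},H_{T}\rangle-\langle u_{0},H_{0}\rangle
-\int_{0}^{T}\langle u_{t},\partial_{t}H_{t}\rangle\,dt
+\int_{0}^{T}\langle\nabla u_{t},\nabla H_{t}\rangle\,dt \\
& -\int_{0}^{T}\big[\,A^{-1}(\alpha-u_{t}(0))H_{t}(0)
+B^{-1}(\beta-u_{t}(1))H_{t}(1)\,\big]\,dt \;.
\end{aligned}
\end{equation*}
Dividing $J_{T,\pm\epsilon H}(u)\le 0$ by $\epsilon$ and sending $\epsilon\downarrow 0$ forces $L_{0}(H)=0$ for every $H\in C^{1,2}([0,T]\times[0,1])$. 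This identity is precisely the weak formulation of \eqref{1-06}, and combined with $u(0,\cdot)=\gamma$ and the uniqueness of weak solutions recalled in Appendix \ref{sec06}, $u$ is the unique weak solution of \eqref{1-06}.

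The strategy is essentially algebraic once one sees the right manipulation, and no serious obstacle arises: the only point requiring care is the concavity computation showing $G_{\varrho,D}(v,M)\le 0$, which is the quantitative expression of the fact that the Legendre-type boundary contribution $\mf b_{\varrho,D}$ dominates its own linearisation in $M$. All other steps are direct substitutions into \eqref{1-01b} and the weak formulation.
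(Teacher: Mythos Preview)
Your proof is correct and follows essentially the same route as the paper's. The quantity you call $G_{\varrho,D}(v,M)$ is exactly $-\mf q_{\varrho,D}(v,M)$ in the paper's notation \eqref{6-04}, and your concavity computation is just an alternative way of verifying $\mf q_{\varrho,D}\ge 0$; the converse via the first-order expansion of $J_{T,\epsilon H}$ is identical to the paper's derivative argument at $\epsilon=0$.
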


\begin{proof}
Suppose that the density $u$ of a path $\pi (t,dx)= u(t,x)\, dx$ in
$D([0,T], \ms M_{\rm ac})$ is the weak solution of the
initial-boundary value problem \eqref{1-06}. Then, by Lemma \ref{l14},
$u$ has finite energy. On the other hand, by Definition
\ref{d02} and the equation following it, $u(0) = \gamma$ a.s. and for
any $G$ in $C^{1,2}([0,T]\times[0,1])$,
\begin{equation*}
\begin{split}
J_{T,G}(u) \;=\;
& - \, \int_{0}^{T}  \< \sigma( u_{t}), (\nabla  G_{t})^{2}\> \; dt \\
& -\int_{0}^{T} \big \{\,  \mf q_{\beta, B} (u_t(1), G_t(1))
\,+\, \mf q_{\alpha, A} (u_t(0), G_t(0)) \,\big\} \; dt\;,
\end{split}
\end{equation*}
where
\begin{equation}
\label{6-04}
\mf q_{\varrho, D} (a, M) \;=\; \frac{1}{D}\, \Big\{ [1-a] \, \varrho \,
\big[\, e^{M} \,-\, M \, -\, 1\,\big] \;+\;
a \, [1-\varrho] \,  \big[\, e^{-M} \,+\, M \, -\, 1\,\big]\, \Big\} \;.
\end{equation}
Here we used the fact that $u-a$ can be written as $u(1-a) - (1-u)a$.
As $\mf q_{\varrho, D} (a, M) \ge 0$, $J_{T,G}(u) \le 0$. Hence, the
supremum in the variational problem \eqref{1-02} is attained at $H=0$
and $I_{[0,T]}(u)=0$. Since $u(0) = \gamma$, $I_{[0,T]}(u|\gamma)=0$.

On the other hand, if $I_{[0,T]}(u|\gamma) = 0$, then, for any $G$ in
$C^{1,2}([0,T]\times[0,1])$ and $\epsilon$ in $\mathbb R$,
$J_{T,\epsilon G}(u)\le 0$. Since $J_{T,0}(u) = 0$, the derivative in
$\epsilon$ of $J_{T,\epsilon G}(u)$ at $\epsilon=0$ is equal to
$0$. Therefore, by Definition \ref{d02}, the density $u$ is a weak
solution of the initial-boundary value problem \eqref{1-06}.
\end{proof}

Let $E_{q}$, $q\ge 0$, be the level set of the rate function
$I_{[0,T]}(\cdot|\gamma)$:
\begin{equation*}
E_{q}\;:=\; \big\{\, \pi\in D([0,T], \ms M) \,|\,
I_{[0,T]}(\pi|\gamma) \,\le\, q\, \}\;.
\end{equation*}

\begin{proof}[Proof of Theorem \ref{mt2}]
The rate function $I_{[0,T]}(\, \cdot\, |\, \gamma)$ is convex because
the energy $\mc Q_{[0,T]} (\,\cdot\,)$ and the functionals
$J_{T,H}(\,\cdot\,)$ are convex.

Let $\{\pi^{n}:n\ge1\}$ be a sequence in $D([0,T],\ms M)$ such that
$\pi^{n}$ converges to some element $\pi$ in $D([0,T], \ms M)$.  We
show that
$I_{[0,T]}(\pi|\gamma)\le\liminf_{n\to\infty}
I_{[0,T]}(\pi^{n}|\gamma)$.  If $\liminf I_{[0,T]}(\pi^{n}|\gamma)$ is
equal to $\infty$, the conclusion is clear.  Therefore, we may assume
that the set $\{\pi^{n}: n\ge1\}$ is contained in $E_{q}$ for some
$q>0$. In particular, by definition of $I_{[0,T]}(\,\cdot\, |\gamma)$
and by Lemma \ref{l01}, $\pi^n (t,dx) = u^n(t,x)\, dx$ for some
$u^n \in C([0,T], \ms M_{\rm ac})$ with finite energy.

Since $u^{n}$ belongs to $C([0,T], \ms M_{\rm ac})$ and
$\pi^n(t,dx) = u^n (t,x) \,dx$ converges to $\pi(t,dx)$ in
$D([0,T], \ms M)$, $\pi(t,dx) = u(t,x)\, dx$ for some
$u\in C([0,T], \ms M_{\rm ac})$.  Moreover, by the lower
semicontinuity of the energy $\mathcal{Q}_{[0,T]}$ and by Proposition
\ref{l06},
\begin{equation*}
\mathcal{Q}_{[0,T]}(u)\;\le\;\varliminf_{n\to\infty}
\mathcal{Q}_{[0,T]}(u^{n}) \;\le\; C_0(q+1) \;<\; \infty
\end{equation*}
for some finite constant $C_0$.

\smallskip\noindent{\it Claim 1:} The sequence $\{ u^{n} : n\ge1\}$
converges to $ u$ in $\ms L^{2}([0,T]\times[0,1])$.  \\
\noindent Indeed, by the triangle inequality,
\begin{equation*}
\begin{split}
& \frac{1}{3}\, \int_{0}^{T} \  \| u_{t} -  u^{n}_{t}\|^2_{2}\ dt \\
&\quad  \;\le\; \int_{0}^{T}\ \| u_{t} -  u^{\epsilon}_{t}\|^2_{2}\ dt \;+\;
\int_{0}^{T} \ \| u_{t}^{\epsilon} -  u^{n,\epsilon}_{t}\|^2_{2}\ dt
\;+\; \int_{0}^{T} \ \| u_{t}^{n,\epsilon} -  u^{n}_{t}\|^2_{2}\ dt  \;,
\end{split}
\end{equation*}
where $u_{t}^{\epsilon}= P^{(R)}_\epsilon u_{t}$,
$u_{t}^{n,\epsilon}= P^{(R)}_\epsilon u_{t}^{n}$.  By Lemma \ref{l07}
and \eqref{6-14}, and since $\Vert u_t \Vert_\infty \le 1$, the first
and the last terms are bounded by
\begin{equation*}
C_0 \, \epsilon^{2/3} \, \int_0^T \{\,  \| u_{t}\|^2_{\mc H^1}
\,+\, \| u^n_{t}\|^2_{\mc H^1}\,\}\, dt \;\le\;
C_0 \, \epsilon^{2/3} \, \{\, q + T + 1 \,\} \;.
\end{equation*}

On the other hand,
\begin{equation*}
\int_{0}^{T} \ \| u_{t}^{\epsilon} -  u^{n,\epsilon}_{t}\|^2_{2}
\; dt  \;\le\;
\int_{0}^{T} \sum_{k\ge 1} e^{- 2 \lambda_k \epsilon}\,
\< u^n_t - u_t \,,\, f_k\>^2 \; dt \;.
\end{equation*}
As $\pi^n$ converges to $\pi$ in $D([0,T], \ms M)$, for all
$g\in C([0,1])$, $\< u^n_t - u_t \,,\, g\> \to 0$ for almost all
$t\in [0,T]$. In particular, for every $\epsilon>0$, the right-hand
side of the previous displayed equation vanishes as $n\to \infty$,
which proves Claim 1.

\smallskip\noindent{\it Claim 2:} We have that
\begin{equation}
\label{5-07}
\lim_{n\to\infty} \int_{0}^{T} \big\{\,
| \, u_{t}(0) -  u^{n}_{t}(0)\, |^2 \,+\,
| \, u_{t}(1) -  u^{n}_{t}(1) \, |^2 \,\big\} \ dt \;=\; 0 \;.
\end{equation}
We consider the boundary $x=0$, the argument for $x=1$ being
identical. The proof is similar to the one of Claim 1 and relies on
Lemma \ref{l08} instead of Lemma \ref{l07}. By the triangle
inequality, the previous integral, for $x=0$ only and divided by $3$,
is bounded by
\begin{equation*}
\int_{0}^{T}  | u_{t}(0) -  u^{\epsilon}_{t}(0)|^2  \ dt
\;+\; \int_{0}^{T}  | u^{\epsilon}_{t}(0) -  u^{n, \epsilon}_{t}(0)|^2 \ dt
\;+\; \int_{0}^{T}  | u^{n, \epsilon}_{t}(0) - u^{n}_{t}(0) |^2  \ dt
\;.
\end{equation*}
As $u_t$, $u^n_t$ are continuous for almost all $t$ [because they have
finite energy], we may repeat the argument of Claim 1, using Lemma
\ref{l08} instead of Lemma \ref{l07}, to show that the first and third
integrals in the previous equation are bounded by
$C_0 \, \epsilon^{2/5} \, \{\, q + T + 1 \,\}$.

By \eqref{6-15}, \eqref{6-11} and Schwarz inequality,
\begin{equation*}
\begin{aligned}
| u^{\epsilon}_{t}(0) -  u^{n, \epsilon}_{t}(0)|^2
\; & \le\; \sum_{k\ge 1} e^{- \lambda_k \epsilon}\,
\< u^n_t - u_t \,,\, f_k\>^2 \sum_{k\ge 1} e^{- \lambda_k \epsilon} \\
\; & =\; C_0(\epsilon) \, \sum_{k\ge 1} e^{- \lambda_k \epsilon}\,
\< u^n_t - u_t \,,\, f_k\>^2 \;.
\end{aligned}
\end{equation*}
At this point, we may repeat the arguments presented in Claim 1 to
complete the proof of Claim 2.

\smallskip
By Claims 1, 2 and \eqref{1-01b}, for any function $G$ in
$C^{1,2}([0,T]\times[0,1])$,
\begin{equation}
\label{5-08}
\lim_{n\to\infty}J_{G}(\pi^{n})\;=\;J_{G}(\pi)\;.
\end{equation}
Therefore,
$I_{[0,T]}(\pi|\gamma)\le\liminf_{n\to\infty}
I_{[0,T]}(\pi^{n}|\gamma)$, proving that $I_{[0,T]}(\,\cdot\,|\gamma)$
is lower semicontinuous.

The same argument shows that $E_{q}$ is closed in $D([0,T],\ms M)$.
By Lemma \ref{l11} below, $E_{q}$ is relatively compact in
$D([0,T],\ms M)$. Thus, $E_{q}$ is compact in $D([0,T],\ms M)$, as
claimed.
\end{proof}

The proof of the next result is similar to the one contained in the
proof of Theorem 4.2 in \cite{BLM09}.

\begin{lemma}
\label{l11}
For each $q>0$, the set $E_{q}$ is relatively compact in
$D([0,T],\ms M)$.
\end{lemma}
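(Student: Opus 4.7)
The plan is to apply the Arzelà–Ascoli theorem in $C([0,T],\ms M)$ and then transfer the conclusion to $D([0,T],\ms M)$. The key input is the uniform modulus of continuity already established in Lemma \ref{l01}.

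First, I would note that by Lemma \ref{l01}, every $\pi$ with $I_{[0,T]}(\pi|\gamma)\le q$ belongs to $C([0,T],\ms M_{\rm ac})\subset C([0,T],\ms M)$. Thus $E_q\subset C([0,T],\ms M)$, and since the natural injection $C([0,T],\ms M)\hookrightarrow D([0,T],\ms M)$ is continuous (and the two induced topologies agree on $C$), it suffices to prove that $E_q$ is relatively compact in $C([0,T],\ms M)$ endowed with the uniform topology.

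Next, I would fix a concrete metric for the weak topology on $\ms M$. Choose a countable dense sequence $\{g_k\}_{k\ge 1}$ in $C([0,1])$ with $\|g_k\|_\infty\le 1$ and with each $g_k$ in $C^2([0,1])$; such a sequence exists because $C^2([0,1])$ is dense in $C([0,1])$. Set
\begin{equation*}
d(\mu,\nu)\;:=\;\sum_{k\ge 1}2^{-k}\,|\langle\mu,g_k\rangle-\langle\nu,g_k\rangle|\,,
\end{equation*}
which, since $\ms M$ has total mass bounded by $1$, induces the weak topology on $\ms M$. Because $\ms M$ is compact, pointwise relative compactness in $\ms M$ is automatic, so by the Arzelà–Ascoli theorem the only thing left to check is equicontinuity of $E_q$ in the metric $d$.

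Given $\epsilon>0$, pick $K\ge 1$ large enough that $\sum_{k>K}2^{-k+1}<\epsilon/2$. For each $k\le K$, Lemma \ref{l01} applied with $M=q$ and with the function $g_k\in C^2([0,1])$ supplies a $\delta_k>0$ such that
\begin{equation*}
\sup_{\pi\in E_q}\,\sup_{|t-s|\le\delta_k}\,|\langle u_t,g_k\rangle-\langle u_s,g_k\rangle|\;\le\;\epsilon/2\,.
\end{equation*}
Taking $\delta=\min_{k\le K}\delta_k$ and splitting the series at $K$ yields $d(\pi_t,\pi_s)\le\epsilon$ for all $\pi\in E_q$ and $|t-s|\le\delta$. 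This is precisely the equicontinuity required, so Arzelà–Ascoli gives relative compactness of $E_q$ in $C([0,T],\ms M)$, and hence in $D([0,T],\ms M)$.

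There is no genuine obstacle here: the analytic content is entirely contained in Lemma \ref{l01}. The only minor point requiring attention is that Lemma \ref{l01} is stated only for test functions in $C^2([0,1])$, which is why I would select the dense sequence $\{g_k\}$ inside $C^2([0,1])$ rather than in $C([0,1])$; density of $C^2$ in $C([0,1])$ in the sup norm ensures that the metric $d$ built from such a sequence still metrizes the weak topology.
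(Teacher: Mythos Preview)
Your argument is correct: equicontinuity from Lemma \ref{l01} together with compactness of $\ms M$ is all that Arzel\`a--Ascoli needs, and the passage from $C([0,T],\ms M)$ to $D([0,T],\ms M)$ is standard. The care you take in choosing the dense family $\{g_k\}$ inside $C^2([0,1])$ is exactly what is required to match the hypothesis of Lemma \ref{l01}.

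This differs from the paper's proof, which is sequence-based: the paper first extracts a weak $\ms L^2([0,T]\times[0,1])$ limit from a given sequence in $E_q$, then upgrades to strong $\ms L^2$ convergence by reusing the energy estimates (Claims 1 and 2 from the proof of Theorem \ref{mt2}, involving the Robin semigroup $P^{(R)}_\epsilon$), shows the limit again lies in $E_q$, and finally invokes the uniform modulus of continuity from Lemma \ref{l01} to pass from strong $\ms L^2$ convergence to convergence in $C([0,T],\ms M_{\rm ac})$. Your route is more elementary and self-contained for the purpose of relative compactness alone, and avoids the detour through $\ms L^2$ and the Robin semigroup; the paper's route has the side benefit of simultaneously re-establishing that the limit remains in $E_q$, but since closedness is already handled separately in the proof of Theorem \ref{mt2}, that is not essential here.
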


\begin{proof}
Fix $q>0$ and let $\pi^n$ be a sequence in $E_q$. By Lemma \ref{l01},
$\pi^n(t,dx) = u^n(t,x)\, dx$ for some
$u^n \in C([0,T], \ms M_{\rm ac})$. Since $0\le u^n(t,x) \le 1$, there
exists a subsequence, still denoted by $(u^n : n\ge 1)$, which
converges weakly in $\ms L^2 ([0,T]\times [0,1])$ to some trajectory
$u$. by the lower semicontinuity of $\mc Q_{[0,T]}$,
$\mc Q_{[0,T]}(u) <\infty$.

The proofs of Claims 1 and 2 in Theorem \ref{mt2} yield that $u^n$
converges strongly to $u$ in $\ms L^2 ([0,T]\times [0,1])$ and that
\eqref{5-07} holds. Therefore, by \eqref{5-08} and the fact that
$\pi^n$ belongs to $E_q$,
$I_{[0,T]}(\pi|\gamma)\le\liminf_{n\to\infty}
I_{[0,T]}(\pi^{n}|\gamma) \le q$. By Lemma \ref{l01}, $u^n$, $u$ are
uniformly weakly continuous in time. In particular, strong convergence
in $\ms L^2 ([0,T]\times [0,1])$ implies convergence in
$C([0,T], \ms M_{\rm ac})$.
\end{proof}

\section{Deconstructing the rate functional}
\label{sec07}

The main result of this section, stated in Proposition \ref{p01}
below, shows that the rate function $I_{[0,T]}(\,\cdot\,)$ can
be decomposed as the sum of two rate functions. The first one measures
the cost of the trajectory due to its evolution in the bulk, while the
second one measures the costs due to the boundary evolution. This
decomposition of the rate function is the main tool in the proof that
any trajectory $u$ with finite rate function can be approximated by a
sequence of regular trajectories $(u^n:n\ge 1)$ in such a way that
$I_{[0,T]}(u^n\,|\gamma) \to I_{[0,T]}(u\,|\gamma)$, the content of
the next section.

\subsection*{Weighted Sobolev spaces}

Let $\color{bblue} \Omega_T$ be the cylinder $[0,T]\times [0,1]$.  Fix
a non-negative weight $\kappa : \Omega_T \to \mathbb R_+$, and denote by
$\color{bblue} \ms L^2(\kappa)$ the Hilbert space induced by the smooth
functions in $C^\infty (\Omega_T)$ endowed with the scalar product
defined by
\begin{equation*}
\<\!\< G,H \>\!\>_{\kappa} \;=\;
\int_0^T dt \int_0^1 \kappa_t\, \, G_t \, H_t \; dx \;.
\end{equation*}
Above and hereafter, induced means that we first declare two functions
$F$, $G$ in $C^{\infty} (\Omega_T)$ to be equivalent if
$\<\!\< F-G, F-G \>\!\>_{\kappa} =0$ and then we complete the quotient
space with respect to the scalar product.

Denote by $\color{bblue} C^\infty_K (\Omega_T)$ the space of smooth
functions $H : \Omega_T \to \mathbb R$ with support contained in
$(0,T) \times (0,1)$. Let $\color{bblue} \mc H^{1} (\kappa)$,
$\color{bblue} \mc H^{1}_0 (\kappa)$ be the Hilbert spaces induced by
the sets $C^\infty (\Omega_T)$, $C^\infty_K (\Omega_T)$ endowed with
the scalar products, $\<\!\< G,H \>\!\>_{1,2,\kappa}$,
$\<\!\< G,H \>\!\>_{1,\kappa}$, respectively defined by
\begin{equation*}
\begin{gathered}
\<\!\< G,H \>\!\>_{1,2,\kappa} \;=\;
\<\!\< G,H \>\!\>_{\kappa} \;+\;
\<\!\< \nabla  G, \nabla  H \>\!\>_{\kappa} \;, \\
\<\!\< G,H \>\!\>_{1,\kappa} \;=\;
\<\!\< \nabla  G, \nabla  H \>\!\>_{\kappa}  \;.
\end{gathered}
\end{equation*}
The Poincar\'e's inequality yields that the norms induced by the
scalar products $\<\!\< G,H \>\!\>_{1,2,\kappa}$,
$\<\!\< G,H \>\!\>_{1,\kappa}$ are equivalent in
$\mc H^{1}_0 (\kappa)$.

Denote by $\color{bblue} \Vert \cdot \Vert_{\kappa}$,
$\color{bblue} \Vert \cdot \Vert_{1, \kappa}$ the norm associated to
the scalar product $\<\!\<\cdot, \cdot \>\!\>_{\kappa}$,
$\<\!\<\cdot, \cdot \>\!\>_{1,\kappa}$, respectively.  Let
$\color{bblue} \mc H^{-1} (\kappa)$ be the dual of
$\mc H^{1}_0 (\kappa)$; it is a Hilbert space equipped with the norm
$\Vert \cdot \Vert_{-1, \kappa}$ defined by
\begin{equation}
\label{9-0}
\|L\|^2_{-1,\kappa} \;=\; \sup_{G\in C_{K}^\infty (\Omega_T)}
\big\{ \, 2\,  L(G) \;-\;
\Vert G \Vert^2_{1, \kappa} \, \big \}\;.
\end{equation}
By Riesz' representation theorem, an element $L$ of
$\mc H^{-1} (\kappa)$ can be written as
$L(H) =$\break $ \<\!\< \nabla  G \,,\, \nabla  H \>\!\> _{\kappa}$ for some $G$
in $\mc H^{1}_0 (\kappa)$.

When $\kappa \equiv 1$, we represent $\ms L^2(\kappa)$,
$\mc H^{1} (\kappa)$, $\mc H^{1}_0 (\kappa)$, $\mc H^{-1} (\kappa)$ as
$\color{bblue} \ms L^2(\Omega_T)$,
$\color{bblue} \mc H^{1}(\Omega_T)$,
$\color{bblue} \mc H^{1}_0(\Omega_T)$
$\color{bblue} \mc H^{-1} (\Omega_T)$, respectively.  Next result is
\cite[Lemma 4.8]{BLM09}. It states that $\mc H^{-1} (\kappa)$ is
formally the space $\{\nabla  P : P\in \ms L^2 (\kappa^{-1})\}$. For an
integrable function $H: [0,1] \to\mathbb R$, let
$\color{bblue} \<H\> = \int_{0}^1 H(x) \; dx$.

\begin{lemma}
\label{l20}
A linear functional $L: \mc H^1_0(\kappa)\to\mathbb R$ belongs to
$\mc H^{-1} (\kappa)$ if, and only if, there exists $P$ in
$\ms L^2 (\kappa^{-1})$ such that
$L(H) = \int_0^T dt \int_0^1 P_t \, \nabla  H_t dx$ for every $H$ in
$C^\infty_K(\Omega_T)$. In this case,
\begin{equation*}
\Vert L\Vert^2_{-1,\kappa}
\;=\; \int_0^T \, \big\{\,
\< P_t , P_t\>_{\kappa(t)^{-1}} \;-\; c_t \,\big\}\; dt \;,
\end{equation*}
where $c_t = \{\, \< P_t /\kappa_t \>^2 \,/\,  \<  1/ \kappa_t\>\,\}
\, \mb 1\{\< 1/\kappa_t\> <\infty\}$.
\end{lemma}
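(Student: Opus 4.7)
The plan is to establish the two implications separately and then derive the explicit norm identity from the variational definition \eqref{9-0} via completion of squares and a time-slice minimization.

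For the ``if'' direction, given $P\in\ms L^2(\kappa^{-1})$, I would define $L(H):=\int_0^T dt\int_0^1 P_t\,\nabla H_t\,dx$ for $H\in C^\infty_K(\Omega_T)$. Splitting $P\,\nabla H=(P/\sqrt{\kappa})(\sqrt{\kappa}\,\nabla H)$ and applying Cauchy--Schwarz yields $|L(H)|\le\Vert P\Vert_{\ms L^2(\kappa^{-1})}\,\Vert H\Vert_{1,\kappa}$, so $L$ is bounded on the dense subset $C^\infty_K(\Omega_T)$ and extends continuously to an element of $\mc H^{-1}(\kappa)$. For the ``only if'' direction, I would apply the Riesz representation stated just before the lemma: each $L\in\mc H^{-1}(\kappa)$ equals $\<\!\<\nabla G,\nabla H\>\!\>_\kappa$ for some unique $G\in\mc H^1_0(\kappa)$. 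Setting $P_t:=\kappa_t\,\nabla G_t$ then gives $L(H)=\int_0^T\int_0^1 P_t\,\nabla H_t\,dx\,dt$ together with $\Vert P\Vert^2_{\ms L^2(\kappa^{-1})}=\int\kappa(\nabla G)^2=\Vert G\Vert^2_{1,\kappa}<\infty$, so $P\in\ms L^2(\kappa^{-1})$.

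For the norm identity, I would substitute $L(H)=\int P\,\nabla H$ into \eqref{9-0} and complete the square to obtain, for every $H\in C^\infty_K(\Omega_T)$,
\begin{equation*}
2L(H)-\Vert H\Vert^2_{1,\kappa}
\;=\;\int_0^T\!\!\int_0^1\frac{P_t^2}{\kappa_t}\,dx\,dt
\;-\;\int_0^T\!\!\int_0^1\kappa_t\Big(\nabla H_t-\frac{P_t}{\kappa_t}\Big)^{2}\,dx\,dt.
\end{equation*}
Because $H$ vanishes at $x=0,1$, the profile $g_t:=\nabla H_t$ has zero $x$-mean for a.e.\ $t$. Taking the supremum over $H$ thus reduces, after a density argument ensuring test-function gradients exhaust the mean-zero gradients in $\ms L^2(\kappa)$, to minimising $\int_0^1\kappa_t(g_t-P_t/\kappa_t)^2\,dx$ over mean-zero $g_t$, slice by slice. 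This is an orthogonal projection in $\ms L^2(\kappa_t)$: the orthogonal complement of the mean-zero subspace is the span of $1/\kappa_t$ precisely when $\<1/\kappa_t\><\infty$, and a Lagrange-multiplier computation gives the optimal constant $\lambda_t=\<P_t/\kappa_t\>/\<1/\kappa_t\>$ with minimal residual $\lambda_t^2\,\<1/\kappa_t\>=c_t$. When $\<1/\kappa_t\>=\infty$, the function $1/\kappa_t$ falls out of $\ms L^2(\kappa_t)$, the mean-zero subspace is dense, and the residual vanishes, matching the indicator in the definition of $c_t$. Integrating in $t$ then yields the announced identity.

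The principal obstacle I anticipate is the density step. Given an optimal mean-zero $g_t$ realising the time-slice minimum, the natural candidate $H_n(t,x):=\chi_n(t)\int_0^x g(t,y)\,dy$ with a smooth temporal cutoff $\chi_n\to 1$ on $[0,T]$ satisfies $H_n(t,0)=H_n(t,1)=0$ precisely because $g$ has zero $x$-mean; however, to force $H_n\in C^\infty_K(\Omega_T)$ with convergence in $\Vert\cdot\Vert_{1,\kappa}$, one must additionally mollify $g$ in both variables and, in the degenerate case $\<1/\kappa_t\>=\infty$, truncate near regions where $\kappa_t$ is small enough to spoil $\ms L^2(\kappa)$-integrability. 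Once this approximation is carried out, the supremum in the variational problem is reached in the limit and the stated formula for $\Vert L\Vert^2_{-1,\kappa}$ follows.
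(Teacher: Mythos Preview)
The paper does not supply a proof of this lemma; it is quoted verbatim as \cite[Lemma 4.8]{BLM09}. Your outline is the standard argument and is correct in all essentials: Cauchy--Schwarz for the ``if'' direction, Riesz representation with $P=\kappa\,\nabla G$ for the ``only if'' direction, and completion of the square followed by a slice-wise $\ms L^2(\kappa_t)$--projection onto the Lebesgue-mean-zero hyperplane for the norm formula. Your identification of the residual $c_t=\langle P_t/\kappa_t\rangle^2/\langle 1/\kappa_t\rangle$ via the Lagrange multiplier is exactly right, and your treatment of the degenerate case $\langle 1/\kappa_t\rangle=\infty$ is also correct: the functional $g\mapsto\int_0^1 g\,dx$ is then unbounded on $\ms L^2(\kappa_t)$, so mean-zero functions are dense and the infimum of the quadratic residual is zero.

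The density step you flag is genuine but routine. One clean way to handle it: given the optimal slice profile $g_t=P_t/\kappa_t-\lambda_t/\kappa_t$ (or an $\epsilon$-approximant thereof in the degenerate case), first truncate and mollify $g_t$ in $x$ to a compactly supported smooth $g^\delta_t$ and restore the zero mean by subtracting the constant $\langle g^\delta_t\rangle$ times a fixed smooth compactly supported bump of unit integral; this correction has vanishing $\ms L^2(\kappa_t)$--cost as $\delta\to 0$. Then set $H^\delta(t,x)=\chi_\delta(t)\int_0^x g^\delta_t(y)\,dy$ with a temporal cutoff $\chi_\delta\in C^\infty_c(0,T)$. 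A further mollification in $t$ (which commutes with $\nabla$) produces genuine elements of $C^\infty_K(\Omega_T)$ whose gradients converge to $g$ in $\ms L^2(\kappa)$, so the supremum in \eqref{9-0} is attained in the limit.
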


\subsection*{Representation theorems}

Until the end of this section, {\color{bblue}
$\pi(t,dx) = u(t,x)\, dx$ is a path in
$D_{\mc E}([0,T], \ms M_{\rm ac})$. We assume that $u$ is continuous
on $\Omega_T$ and smooth in time, there exists $\epsilon>0$ such that
$\epsilon \le u(t,x) \le 1-\epsilon$ for all $(t,x) \in \Omega_T$, and
$I_{[0,T]} (u)<\infty$}. These conditions are fulfilled in sets of the
form $[\delta, T]\times [0,1]$, $\delta>0$, by paths in $\Pi_3$, a
class of trajectories to be introduced in Section \ref{sec05}. As $u$
is bounded away from $0$ and $1$, the spaces $\ms L^2(\sigma(u))$ and
$\ms L^2(\Omega_T)$ coincide, as well as, the other Hilbert spaces
introduced in the previous subsection with $\kappa = \sigma(u)$.

Denote by $\mf W \colon C^{0,1}(\Omega_T) \to \mathbb R$ the
functional given by
\begin{equation*}
\mf W (H) \; =\; \int_0^T dt \int_0^1 \sigma(u_t)\, |\nabla  H_t|^2
\, dx
\;+\; \int_0^T \Psi (t, H_t(0), H_t(1)) \; dt \;,
\end{equation*}
where
\begin{equation*}
\Psi (t,M, N) \;=\; \mf b_{\alpha, A} (u_t(0), M)
\;+\; \mf b_{\beta, B} (u_t(1), N) \;,
\end{equation*}
and $\mf b_{\varrho, D} (a,M)$ has been introduced in
\eqref{4-05}. For each $0\le t\le T$, $(M,N) \mapsto \Psi (t,M, N)$ is
a smooth, convex function which takes negative values.
\smallskip

Fix a linear functional $L: C^{0,1}(\Omega_T) \to \mathbb R$.  Denote
by $L_0$ its restriction to $C^{0,1}_0(\Omega_T)$:
\begin{equation}
\label{9-09}
L_0(H) \;=\; L(H)\;, \quad H \,\in\,  C^{0,1}_0(\Omega_T)\;,
\end{equation}
where
\begin{equation*}
C^{0,1}_0(\Omega_T) \;:=\; \big\{\, H \in
C^{0,1}(\Omega_T) : H(t,0) = H(t,1) = 0 \,,\, 0\le t\le T \,\big\}
\;.
\end{equation*}

Let $\Xi: \Omega_T \to \mathbb R$ the function given by
\begin{equation}
\label{9-02}
\Xi(t,x) \;=\; \frac{1}
{\int_0^1 1/\sigma(u(t,y)) \; dy} \,
\int_0^x \frac{1}{\sigma(u(t,y))}\; dy \;.
\end{equation}
Note that $\Xi$ belongs to $C^{\infty, 1}(\Omega_T)$, and that
$\Xi(t,0)=0$, $\Xi(t,1)=1$ for all $0\le t\le T$.  Let $\ell^{(0)}$,
$\ell^{(1)} \colon C([0,T]) \to \mathbb R$ be the linear functionals given
by
\begin{equation}
\label{9-04}
\ell^{(0)} (h) \;=\; L \big(\, h(t) \, [1-\Xi(t,x)]\,\big)\;, \quad
\ell^{(1)} (h) \;=\; L \big(\, h(t) \, \Xi(t,x)  \,\big)\;.
\end{equation}
Note that the right-hand sides of the previous identities are well
defined because $\Xi$ belongs to $C^{0,1}(\Omega_T)$.

\smallskip\noindent{\it Note:} The definition of $\ell^0$, $\ell^1$
explains why we defined $L$ in $C^{0,1}(\Omega_T)$ and not in
$C^{\infty, \infty}(\Omega_T)$. For
$L \big(\, h(t) \, [1-\Xi(t,x)]\,\big)$ to make sense, we need the map
$(t,x) \mapsto h(t) \, [1-\Xi(t,x)]$ to belong to the domain of
definition of $L$. \smallskip

Decompose a function $H\colon \Omega_T \to \bb R$ as
$H = H^{(0)} + H^{(1)}$, where
\begin{equation}
\label{9-14}
H^{(1)}(t,x) \;=\; H(t,0) \;+\; [\, H(t,1) - H(t,0)\,] \; \Xi(t,x)
\;.
\end{equation}
Note that $H^{(0)}(t,0) = H^{(0)}(t,1) =0$ for all $0\le t\le T$. In
particular, $H^{(0)}$ belongs to $C^{0,1}_0(\Omega_T)$ so that
$L_0(H^{(0)})$ is well defined and $L_0(H^{(0)}) = L(H^{(0)})$.

By linearity and the previous paragraph,
$L(H) = L_0(H^{(0)}) + L(H^{(1)})$. By definition of $H^{(1)}$,
$\ell^0$, $\ell^1$,
$L(H^{(1)}) = L(\, H(t,0) \, [\, 1- \Xi\,]\,) \,+\, L(\, H(t,1) \, \Xi
\,) \;=\; \ell^0 (H(\cdot ,0)) \,+\, \ell^1 (H(\cdot ,1))$, Hence, for
all $H$ in $C^{0,1}(\Omega_T)$,
\begin{equation*}
L(H) \;=\; L_0(H^{(0)}) \;+\;
\ell^0 (H(\cdot ,0)) \;+\, \ell^1 (H(\cdot ,1))\;.
\end{equation*}

\begin{lemma}
\label{l18}
Let $L: C^{0,1}(\Omega_T) \to \mathbb R$ be a linear functional.
Then,
\begin{equation}
\label{9-01}
\sup_{H \in C^{0,1}(\Omega_T)} \big\{ L(H) \,-\, \mf W(H)\, \big\}
\;=\; S_1 \;+\; S_2\;,
\end{equation}
where
\begin{equation}
\label{9-03}
S_1 \;=\; \sup_{G \in C^{0,1}_0(\Omega_T)} \big\{ \,
L_0(G) \;-\; \int_0^T dt \int_0^1 \sigma(u_t)\,
|\nabla  G_t|^2 \, dx \, \big\} \;,
\end{equation}
and
\begin{equation*}
S_2\;=\; \sup_{h, g \in C ([0,T])}
\big\{ \,  \ell(g,h)
\;-\; \int_0^T \zeta_t\, [\, h(t) - g(t)\,]^2 \; dt
\,-\, \int_0^T  \Psi (t,g_t,h_t) \; dt \, \big\}\;.
\end{equation*}
In this formula, $\color{bblue} \zeta_t = 1/\<1/\sigma(u_t)\>$ and
$\color{bblue} \ell(g,h) = \ell^{(0)} (g) \,+\, \ell^{(1)} (h)$.
\end{lemma}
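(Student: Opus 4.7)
The plan is to exploit the additive decomposition $H = H^{(0)} + H^{(1)}$ introduced just before the statement, where $H^{(0)} \in C^{0,1}_0(\Omega_T)$ and $H^{(1)}(t,x) = g(t) + [h(t) - g(t)]\,\Xi(t,x)$, with $g(t) := H(t,0)$ and $h(t) := H(t,1)$. The identity $L(H) = L_0(H^{(0)}) + \ell^{(0)}(g) + \ell^{(1)}(h)$ obtained just before the lemma already splits the linear part into a bulk piece and a boundary piece. The bulk of the work is to show that $\mf W(H)$ splits in the same way, with no cross interaction between $H^{(0)}$ and $(g,h)$.

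The key computation is the explicit form of $\nabla\Xi$. Differentiating \eqref{9-02} gives $\nabla\Xi(t,x) = \{\sigma(u_t(x))\,\<1/\sigma(u_t)\>\}^{-1}$, so the product $\sigma(u_t(x))\,\nabla\Xi(t,x) = \zeta_t$ is independent of $x$. Expanding $\nabla H_t = \nabla H^{(0)}_t + [h(t)-g(t)]\,\nabla\Xi(t,\cdot)$ in $\int_0^1 \sigma(u_t)|\nabla H_t|^2\, dx$ produces three contributions: the diagonal piece $\int_0^1 \sigma(u_t)|\nabla H^{(0)}_t|^2\, dx$; the cross piece
\begin{equation*}
2\,[h(t)-g(t)]\,\zeta_t \int_0^1 \nabla H^{(0)}_t(x)\, dx
\;=\; 2\,[h(t)-g(t)]\,\zeta_t\,[H^{(0)}_t(1)-H^{(0)}_t(0)] \;=\; 0 \;,
\end{equation*}
which vanishes because $H^{(0)}$ is zero on the lateral boundary; and the boundary piece $[h(t)-g(t)]^2\int_0^1 \sigma(u_t(x))|\nabla\Xi(t,x)|^2\, dx = \zeta_t\,[h(t)-g(t)]^2$, as follows from $\int_0^1 \sigma(u_t)^{-1}\,dx = 1/\zeta_t$. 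Consequently
\begin{equation*}
\mf W(H) \;=\; \int_0^T\!\! dt \int_0^1 \sigma(u_t)\,|\nabla H^{(0)}_t|^2\, dx
\;+\; \int_0^T \zeta_t\,[h(t)-g(t)]^2\, dt
\;+\; \int_0^T \Psi(t,g(t),h(t))\, dt \;.
\end{equation*}

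Combining the two decompositions, $L(H) - \mf W(H)$ breaks additively into a functional of $H^{(0)}$ alone and a functional of $(g,h)$ alone. To conclude, I would argue that the map $H \mapsto (H^{(0)}, g, h)$ is a bijection between $C^{0,1}(\Omega_T)$ and $C^{0,1}_0(\Omega_T) \times C([0,T]) \times C([0,T])$: injectivity is immediate, and surjectivity holds because, given $(H^{(0)}, g, h)$ on the right, the function $H^{(0)}(t,x) + g(t) + [h(t)-g(t)]\Xi(t,x)$ lies in $C^{0,1}(\Omega_T)$ under the standing hypothesis that $u$ is continuous on $\Omega_T$ and satisfies $\epsilon \le u \le 1-\epsilon$, which ensures that $\Xi$ and $\nabla\Xi$ are continuous on $\Omega_T$. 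The supremum over $H$ therefore factorises as the sum of the two partial suprema, yielding precisely $S_1 + S_2$. The only delicate point — the main, and still rather mild, obstacle — is verifying that the bijection respects the test-function class $C^{0,1}(\Omega_T)$, which amounts to the regularity of $\Xi$ just mentioned and follows directly from the standing assumptions on $u$.
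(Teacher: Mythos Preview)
Your proof is correct and follows essentially the same route as the paper: the decomposition $H=H^{(0)}+H^{(1)}$, the identity $L(H)=L_0(H^{(0)})+\ell^{(0)}(g)+\ell^{(1)}(h)$, and the vanishing of the cross term (which the paper phrases as orthogonality of $\nabla H^{(0)}$ and $\nabla H^{(1)}$ in $\ms L^2(\sigma(u))$) are exactly the ingredients used. Your version is in fact slightly more explicit, since you write out the computation of $\nabla\Xi$ and address the bijection $H\leftrightarrow(H^{(0)},g,h)$ directly, whereas the paper leaves these as an ``elementary computation'' and an implicit step.
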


The first variational problem concerns the interior of $\Omega_T$,
while the second one the boundary of the cylinder $\Omega_T$.

\begin{proof}[Proof of Lemma \ref{l18}.]
Fix a linear functional $L: C^{0,1}(\Omega_T) \to \mathbb R$.  Write
$H = H^{(0)} + H^{(1)}$, as in \eqref{9-14}. Since $H^{(0)}$ belongs
to $C^{0,1}_0(\Omega_T)$, $L_0(H^{(0)})$ is well defined and
$L_0(H^{(0)}) = L(H^{(0)})$.

By linearity, $L(H) = L_0(H^{(0)}) + L(H^{(1)})$. On the other hand,
an elementary computation yields that $\nabla  H^{(0)}$ and
$\nabla  H^{(1)}$ are orthogonal in $\ms L^2(\sigma(u))$:
\begin{equation*}
\int_0^T dt \int_0^1 \sigma(u_t)\, \nabla  H^{(0)}_t\,
\nabla  H^{(1)}_t\; dx \;=\; 0\;.
\end{equation*}
Therefore, the supremum appearing in \eqref{9-01} can be written as
\begin{equation*}
\begin{aligned}
& \sup_{H \in C^{0,1}(\Omega_T)} \Big\{ \,
L_0(H^{(0)}) \;-\; \int_0^T dt \int_0^1 \sigma(u_t)\,
|\nabla  H^{(0)}_t|^2 \, dx \\
& \quad \,+\; L(H^{(1)})
\;-\; \int_0^T \zeta_t\, [\, H(t,1) - H(t,0)\,]^2 \; dt
\,-\, \int_0^T \Psi (t, H_t(0), H_t(1))  \; dt
\, \Big\}\;.
\end{aligned}
\end{equation*}
The first line depends only on $H^{(0)}$, while the second one only on
$H_t(0)$, $H_t(1)$. We may, therefore, split the supremum in two
pieces. Recall the definition of the functionals $\ell$
to rewrite the previous supremum as
\begin{equation*}
\begin{aligned}
& \sup_{G \in C^{0,1}_0(\Omega_T)} \big\{ \,
L_0(G) \;-\; \int_0^T dt \int_0^1 \sigma(u_t)\,
|\nabla  G_t|^2 \, dx \, \big\} \\
& +\;
\sup_{h, g \in C([0,T])}
\big\{ \,  \ell (g,h)
\;-\; \int_0^T \zeta_t\, [\, h(t) - g(t)\,]^2 \; dt
\,-\, \int_0^T \Psi (t,g_t,h_t) \; dt \, \big\}\;,
\end{aligned}
\end{equation*}
as claimed.
\end{proof}

We apply Lemma \ref{l18} to the linear functionals appearing in the
definition of the rate functional $I_{[0,T]}(\,\cdot\,)$.
Denote $L^{(\partial_t)}$, $L^{(\nabla )}: C^{0,1}(\Omega_T) \to \mathbb R$
the linear functionals given by
\begin{equation}
\label{9-08}
L^{(\partial_t)} (G) \;=\; \int_{0}^{T} \langle \, \partial_{t} u_{t},
G_{t} \, \rangle\; dt \;, \quad L^{(\nabla )} (H) \;=\; \int_{0}^{T}
\langle\, \nabla  u_{t}, \nabla  H_{t} \, \rangle\; dt \;,
\end{equation}
and let $\color{bblue} \mf L = L^{(\partial_t)} + L^{(\nabla )}$.
Denote by $\mf L_0$, $\mf l^0$, $\mf l^1$ the linear functionals
associated to $\mf L$ by \eqref{9-09}, \eqref{9-04}, so that
\begin{equation*}
\begin{gathered}
\mf L_0 (G) \;=\;
\int_{0}^{T} \langle \, \partial_{t} u_{t},
G_{t} \, \rangle\; dt \;+\; \int_{0}^{T}
\langle\, \nabla  u_{t}, \nabla  G_{t} \, \rangle\; dt \;,
\\
\mf l^0 (g) \;=\; \int_0^T a(t) \, g(t) \; dt \;,
\quad
\mf l^1 (g) \;=\; \int_0^T b(t)\,\, g(t) \; dt \;,
\end{gathered}
\end{equation*}
where
\begin{equation}
\label{9-10}
a(t) \;=\; \< \, \partial_{t}  u_{t}, [1-\Xi_{t}] \, \rangle
\;-\;
\< \, \nabla   u_{t}, \nabla  \Xi_{t} \, \rangle \;, \;\;
b(t) \;=\; \< \, \partial_{t}  u_{t} \,,\, \Xi_{t} \, \rangle
\;+\;
\< \, \nabla   u_{t}, \nabla  \Xi_{t} \, \rangle \;.
\end{equation}
With this notation,
\begin{equation}
\label{9-16}
\mf L (H) \;=\; \mf L_0 (H^{(0)}) \;+\;
\mf l^0 (H(\cdot, 0)) \;+\; \mf l^1 (H(\cdot, 1))\;.
\end{equation}

Denote by $\Upsilon_t \colon \mathbb R^2 \to \mathbb R$, $0\le t\le T$, the
strictly convex map defined by
\begin{equation*}
\Upsilon_t (x,y) \;=\; \zeta_t\, [\, x - y \,]^2
\,+\, \mf b_{\alpha, A}
(u_t(0), x) \,+\, \mf b_{\beta, B} (u_t(1), y)\;,
\end{equation*}
and let $\Phi_t: \mathbb R^2 \to \mathbb R$, $t\ge 0$ be its Legendre
transform:
\begin{equation}
\label{9-15}
\Phi_t(a,b) \;=\;
\sup_{x,y\in \mathbb R} \big\{ \,  a\, x \,+\, b\, y
\;-\; \Upsilon_t (x,y) \,\big\} \;.
\end{equation}

\begin{lemma}
\label{l21}
Under the hypotheses stated at the beginning of this subsection,
\begin{equation*}
I_{[0,T]}(u) \;=\;
I^{(1)}_{[0,T]}(u) \;+\; I^{(2)}_{[0,T]}(u) \;,
\end{equation*}
where
\begin{equation*}
I^{(1)}_{[0,T]}(u) \;=\;
\frac{1}{4}\, \Vert\, \mf L_0  \,\Vert^2_{-1, \sigma(u)} \;, \quad
I^{(2)}_{[0,T]}(u) \;=\;
\int_0^T \Phi_t (a_t, b_t) \; dt \;.
\end{equation*}
\end{lemma}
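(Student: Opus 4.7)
My plan is the following. Under the standing hypothesis that $u$ is smooth in $t$, I first rewrite $J_{T,H}(u)$ in \eqref{1-01b} by integrating the first line by parts in time: since $\partial_t \langle u_t, H_t\rangle = \langle \partial_t u_t, H_t\rangle + \langle u_t, \partial_t H_t\rangle$, the boundary and $\partial_t H$ terms collapse into $\int_0^T \langle \partial_t u_t, H_t\rangle\, dt = L^{(\partial_t)}(H)$. Combined with the evident $\int_0^T \langle \nabla u_t, \nabla H_t\rangle\, dt = L^{(\nabla)}(H)$ and the definition of $\mf W$, this yields the clean splitting $J_{T,H}(u) = \mf L(H) - \mf W(H)$. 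Every $H \in C^{0,1}(\Omega_T)$ is approximable by $C^{1,2}$ functions so that both $\mf L$ and $\mf W$ are continuous along the approximation (uniform convergence of $H$ and $\ms L^2$ convergence of $\nabla H$, using finite energy of $u$ and boundedness of $u$), which allows me to enlarge the supremum in \eqref{1-02} from $C^{1,2}$ to $C^{0,1}(\Omega_T)$. Applying Lemma~\ref{l18} with $L = \mf L$ and invoking the decomposition \eqref{9-16} then produces
\[
I_{[0,T]}(u) \;=\; S_1 \,+\, S_2
\]
with $S_1, S_2$ as displayed in that lemma.

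For the bulk piece $S_1$, by the definition \eqref{9-0} of the $\mc H^{-1}(\sigma(u))$-norm applied to $\mf L_0$ and the rescaling $G \mapsto G/2$ inside the variational formula, the variational problem defining $S_1$ is precisely the one yielding $\tfrac{1}{4}\Vert \mf L_0\Vert^2_{-1,\sigma(u)}$, so $S_1 = I^{(1)}_{[0,T]}(u)$.

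For the boundary piece $S_2$, I would use that $\mf l(g,h) = \int_0^T [\,a(t)\,g(t) + b(t)\,h(t)\,]\, dt$ and that every remaining integrand in $S_2$ depends pointwise on $(t, g(t), h(t))$, to rewrite the functional as $\int_0^T \{\, a_t\,g_t + b_t\,h_t - \Upsilon_t(g_t, h_t)\,\}\, dt$; the pointwise supremum over $\mathbb R^2$ of the integrand equals $\Phi_t(a_t, b_t)$ by \eqref{9-15}. Moving the sup inside gives $S_2 \le \int_0^T \Phi_t(a_t, b_t)\, dt$ immediately. For the reverse inequality, I select the unique pointwise maximizers $(g^\star(t), h^\star(t))$ of $\Upsilon_t^\star(a_t, b_t, \cdot, \cdot)$ (uniqueness by strict convexity of $\Upsilon_t$ in $(x,y)$), note that they depend measurably on $t$ by continuity of $t \mapsto (u_t(0), u_t(1), \zeta_t)$, and that they are locally bounded thanks to the superlinear growth in the second argument of $\mf b_{\varrho, D}$ appearing inside $\Upsilon_t$; then I approximate them by continuous $g_n, h_n$ via a Lusin-type argument and pass to the limit using the local Lipschitz continuity of $(x,y) \mapsto \Upsilon_t(x,y)$. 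This identifies $S_2 = I^{(2)}_{[0,T]}(u)$, completing the decomposition.

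The main obstacle I anticipate is exactly this last measurable-selection step for $S_2$: justifying the interchange of $\sup_{g,h \in C([0,T])}$ and $\int_0^T$ under the continuity constraint imposed by Lemma~\ref{l18}, which requires both the regularity in $t$ of the objects entering $\Upsilon_t$ (supplied by our smoothness-in-time hypothesis on $u$) and quantitative control of the pointwise maximizers through the growth of $\mf b_{\varrho, D}$. All other steps are essentially bookkeeping: the integration by parts in time, the density extension from $C^{1,2}$ to $C^{0,1}(\Omega_T)$, and the Riesz-type identification of the $\mc H^{-1}(\sigma(u))$-norm.
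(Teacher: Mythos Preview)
Your proposal is correct and follows essentially the same route as the paper: integrate by parts in time to write $J_{T,H}(u)=\mf L(H)-\mf W(H)$, enlarge the class of test functions to $C^{0,1}(\Omega_T)$ by approximation, apply Lemma~\ref{l18}, and then identify $S_1$ with $\tfrac14\Vert\mf L_0\Vert^2_{-1,\sigma(u)}$ via \eqref{9-0} and $S_2$ with $\int_0^T\Phi_t(a_t,b_t)\,dt$ by pushing the supremum inside the integral. The paper's proof is terser on exactly the two points you flag (the $C^{1,2}\to C^{0,1}$ extension and the interchange $\sup\leftrightarrow\int$), simply asserting both; your measurable-selection argument for $S_2$ is sound, though under the standing hypotheses (continuity of $u$ on $\Omega_T$, smoothness in time, $\epsilon\le u\le 1-\epsilon$) the data $t\mapsto(a_t,b_t,u_t(0),u_t(1),\zeta_t)$ are regular enough that the unique maximizer of the strictly convex problem is itself continuous in $t$, so a direct plug-in already saturates the supremum without Lusin. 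One trivial slip: the rescaling that turns $S_1$ into $\tfrac14\Vert\mf L_0\Vert^2_{-1,\sigma(u)}$ is $G\mapsto 2G$, not $G\mapsto G/2$.
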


\begin{proof}
By definition of the rate functional $I_{[0,T]}$, given in
\eqref{1-02},
\begin{equation*}
I_{[0,T]}(u) \;=\; \sup_{H \in C^{0,1}(\Omega_T)}
\big\{ \mf L (H) \,-\, \mf W(H)\, \big\}\;.
\end{equation*}
Hence, by Lemma \ref{l18}, \eqref{9-0} and the definition of
$\mf l^0$, $\mf l^1$, given above \eqref{9-10},
\begin{equation*}
I_{[0,T]}(u) \;=\; I^{(1)}_{[0,T]}(u)
\;+\; I^{(2)}_{[0,T]}(u) \;,
\end{equation*}
where
\begin{equation}
\label{9-19}
\begin{gathered}
I^{(1)}_{[0,T]}(u) \;=\;
\frac{1}{4}\, \Vert\, \mf L_0 \,\Vert^2_{-1, \sigma(u)}
\;, \\
I^{(2)}_{[0,T]}(u) \;=\;
\sup_{h, g \in C ([0,T])}
\big\{ \,  \mf l (g,h) \;-\;
\int_0^T \Upsilon_t (g_t,h_t) \; dt \, \big\}
\end{gathered}
\end{equation}
and $\mf l (g,h) = \mf l^0 (g) + \mf l^1 (h)$.  The second term can
be written as
\begin{equation*}
\begin{aligned}
& \sup_{h, g \in C ([0,T])}
\int_0^T \big \{\, a(t)\, g(t) \,+\,
b(t) \, h(t) \;-\;   \Upsilon_t (g_t, h_t) \, \big\} \; dt \,  \\
& \quad =\;
\int_0^T  \sup_{x, y\in \mathbb R} \big\{\, a(t)\, x \,+\,
b(t) \, y  \;-\;    \Upsilon_t (x, y) \, \big\} \; dt
\;=\; \int_0^T \Phi_t (a_t, b_t) \; dt \;.
\end{aligned}
\end{equation*}
This completes the proof of the lemma.
\end{proof}

The function $\Phi_t$ is convex and continuous.  Moreover,
$\Phi_t(a,b) \ge 0$ [take $x=y=0$ in the supremum] and
$\Phi_t(a,b) \le \Phi^0_t(a) + \Phi^1_t(b)$, where $\Phi^0_t$,
$\Phi^1_t$ are the Legendre transform of
$\mf b_{\alpha, A} (u_t(0), \cdot)$,
$\mf b_{\beta, B} (u_t(1), \,\cdot\,)$, respectively:
\begin{equation*}
0\;\le\; \Phi_t(a,b) \;\le\; \Phi^0_{u_t(0)}(a)
\;+\; \Phi^1_{u_t(1)}(b)\;,
\end{equation*}
where
\begin{equation*}
\Phi^0_u(a) \;=\; a\, \ln \Big\{ \frac{\sqrt{a^2
+ 4 \, \mf f_{0,u} \, \mf g_{0,u}} \,+\, a}
{2\, \mf f_{0,u}} \,\Big\} \,-\,
\sqrt{a^2 + 4 \, \mf f_{0,u}\, \mf g_{0,u}}
\,+\, \mf f_{0,u} \,+\, \mf g_{0,u}\;,
\end{equation*}
$\mf f_{0,u} = (1/A) \, [1-u]\, \alpha$,
$\mf g_{0,u} = (1/A) \, u\, [1-\alpha]$. The formula for $\Phi^1_u$ is
similar. One just needs to replace $A$, $\alpha$ by $B$, $\beta$,
respectively. In particular,
\begin{equation}
\label{9-11}
0\;\le\; \Phi_t(a,b) \;\le\; C_0 \, \{ 1
\,+\, |a| \,\ln^+ |a| \,+\, |b| \,\ln^+ |b| \,\} \;,
\end{equation}
where $\ln^+ x =0 $ for $0<x\le 1$ and $\ln^+ x = \ln x$ for $x\ge 1$.

\smallskip\noindent{\it Note:} It might be disconcerting that
$\Phi_t(0,0)$ is not equal to $0$. This is a consequence of the fact
that $\mf b_{\beta, B} (a, \cdot)$ takes negative values. To remedy,
one can add and subtract a linear term to
$\mf b_{\beta, B} (a, \cdot)$, transforming
$\mf b_{\beta, B} (a, \cdot)$ into $\mf q_{\beta, B} (a, \cdot)$,
given by \eqref{6-04}.  In constrast with
$\mf b_{\beta, B} (a, \cdot)$, $\mf q_{\beta, B} (a, \cdot)$ is
nonnegative and attains its minimum at $0$.

After these modifications, $\Phi_t$ becomes
\begin{equation*}
\Phi_t (a ,b)\;=\;
\widehat \Phi_t
\Big(\, a \,+\, \frac{1}{A}\, [\alpha - u_t(0)]
\,,\, b \,+\, \frac{1}{B}\, [\beta - u_t(1)] \, \Big)\;,
\end{equation*}
where
\begin{equation*}
\widehat \Phi_t(a,b) \;=\;
\sup_{x,y\in \mathbb R}
\Big\{ \,  a\, x \,+\, b\, y
\;-\; \zeta_t\, [\, x - y \,]^2  \,-\, \mf q_{\alpha, A} (u_t(0), x)
\,-\, \mf q_{\beta, B} (u_t(1), y) \,\Big\} \;,
\end{equation*}
and $\widehat \Phi_t(a,b) \ge \widehat \Phi_t(0,0) =0$.
\smallskip

Both functions $\Phi_t$ and $\widehat \Phi_t$ are convex and
continuous.  As $\Phi_t$, $\widehat \Phi_t$ depend on the trajectory
$u$, whenever we wish to stress this dependence, we represent
$\Phi_t(a,b)$, $\widehat \Phi_t(a,b)$, by
$\color{bblue} \Phi^{(u)}_t(a,b)$,
$\color{bblue} \widehat \Phi^{(u)}_t(a,b)$, respectively.

\smallskip

Lemma \ref{l21} decomposes the rate function as the sum of two
independent functionals. The first piece can still be simplified.
This is the content of the next result.  Under the
hypotheses of this subsection,
$\Vert\, L^{(\nabla )}_0 \,\Vert^2_{-1, \sigma(u)} < \infty$. Since
$\int_0^T \Phi_t (a_t, b_t) \; dt$ is finite as well, it follows from
the previous lemma that
\begin{equation}
\label{9-06}
I_{[0,T]}(u) \;<\; \infty
\;\;\text{if, and only if,}\;\;
\Vert\, L^{(\partial_t)}_0  \,\Vert^2_{-1,\sigma(u)}
\;<\; \infty\;.
\end{equation}

Suppose that $L^{(\partial_t)}_0$ belongs to $\mc H^{-1}(\sigma(u))$.
By Lemma \ref{l20}, there exists $P$ in $\ms L^2(\sigma(u)^{-1})$ such
that
\begin{equation*}
L^{(\partial_t)}_0 (H) \;=\; \int_0^T \<\, P_t \,,\, \nabla  H_t\,\> \; dt
\end{equation*}
for all $H$ in $C^{\infty}_K(\Omega_T)$.  This identity extends to
$C^{0,1}_0(\Omega_T)$. Since $H_t$ vanishes at the boundary $x=0$,
$x=1$, the same identity holds if we replace $P_t$ by $P_t-c_t$ for
some function $c$ in $\mc L^1([0,T])$. By choosing the right constant
[that is $c_t = \<P_t/\sigma(u_t)\>/\<1/\sigma(u_t)\>$], we may assume
that $\<P_t/\sigma(u_t)\>=0$ for almost all $0\le t\le T$. We denote
by $M$ the element of $\ms L^2(\sigma(u)^{-1})$ satisfying this
condition and the previous displayed equation:
\begin{equation}
\label{9-07}
L^{(\partial_t)}_0 (H) \;=\;
\int_0^T \<\, M_s \,,\, \nabla  H_s\,\> \; ds
\;, \quad \int_0^1
\frac{M_t}{\sigma(u_t)}\; dx \;=\; 0
\end{equation}
for all $H\in C^{0,1}_0(\Omega_T)$ and almost all $0\le t\le
T$. Moreover, as $\<M_t/\sigma(u_t)\>=0$ for almost all $t$, by Lemma
\ref{l20},
\begin{equation*}
\Vert\, L^{(\partial_t)}_0  \,\Vert^2_{-1,\sigma(u)}
\;=\; \int_0^T \, dt \int_0^1 \frac{M_t^2}{\sigma(u_t)} \; dx \;.
\end{equation*}

\begin{lemma}
\label{l22}
Fix a trajectory satisfying the hypotheses stated at the beginning of
this section.  Then,
\begin{equation*}
I^{(1)}_{[0,T]}(u) \;=\; \frac{1}{4}\, \int_0^T \big\{\,
\Vert\, M_t \,+\, \nabla  u_t\,\Vert^2_{\sigma(u_t)^{-1}}
\,-\, R_t\,\big\} \; dt \;,
\end{equation*}
where
\begin{equation*}
R_t \;=\; \big \<\, \frac{\nabla  u_t}{\sigma(u_t)}\,\big\>^2
\, \frac{1}{\<\sigma(u_t)^{-1}\>}
\;=\; \Big\{ \, \log \frac{u_t(1)}{1-u_t(1)}  \,-\,
\log \frac{u_t(0)}{1-u_t(0)} \,\Big\}^2
\, \frac{1}{\<\sigma(u_t)^{-1}\>}\;\cdot
\end{equation*}
\end{lemma}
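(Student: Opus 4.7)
The plan is to combine Lemma~\ref{l21} (which identifies $I^{(1)}_{[0,T]}(u)$ with $\tfrac14\Vert\mf L_0\Vert^2_{-1,\sigma(u)}$) with the representation theorem Lemma~\ref{l20} applied to the weight $\kappa=\sigma(u)$. The whole argument reduces to identifying a suitable representative $P$ in $\ms L^2(\sigma(u)^{-1})$ for the functional $\mf L_0 = L^{(\partial_t)}_0 + L^{(\nabla)}_0$ and then unwinding the boundary correction $c_t$ from Lemma~\ref{l20}.

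First I would handle $L^{(\nabla)}_0$. By its very definition in \eqref{9-08}, $L^{(\nabla)}_0(H)=\int_0^T\<\nabla u_t,\nabla H_t\>\,dt$ for $H\in C^{0,1}_0(\Omega_T)$, so the representative for $L^{(\nabla)}_0$ is simply $P=\nabla u$. This belongs to $\ms L^2(\sigma(u)^{-1})$ by Proposition~\ref{l06}, since $I_{[0,T]}(u)<\infty$. Second, I would invoke the hypothesis $I_{[0,T]}(u)<\infty$ together with \eqref{9-06} to conclude that $L^{(\partial_t)}_0\in \mc H^{-1}(\sigma(u))$, and then use the function $M\in\ms L^2(\sigma(u)^{-1})$ introduced in \eqref{9-07}, which represents $L^{(\partial_t)}_0$ and is normalized by $\int_0^1 M_t/\sigma(u_t)\,dx=0$ for a.e.\ $t$. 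Adding, $\mf L_0(H)=\int_0^T\<M_t+\nabla u_t,\nabla H_t\>\,dt$ for every $H\in C^{0,1}_0(\Omega_T)$, so $P=M+\nabla u$ is a representative of $\mf L_0$.

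Third, I would apply the formula from Lemma~\ref{l20} with this $P$ and $\kappa=\sigma(u)$. Since $u$ is bounded away from $0$ and $1$, $\<1/\sigma(u_t)\><\infty$ for every $t$, so the indicator in the definition of $c_t$ equals $1$ and
\begin{equation*}
\Vert\mf L_0\Vert^2_{-1,\sigma(u)}
\;=\;\int_0^T\Big\{\,\Vert M_t+\nabla u_t\Vert^2_{\sigma(u_t)^{-1}}
\,-\,\frac{\<(M_t+\nabla u_t)/\sigma(u_t)\>^2}{\<1/\sigma(u_t)\>}\,\Big\}\,dt.
\end{equation*}
The normalization $\<M_t/\sigma(u_t)\>=0$ now reduces the numerator of the correction term to $\<\nabla u_t/\sigma(u_t)\>^2$, which is exactly the first expression for $R_t$.

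Finally, I would identify the second expression for $R_t$: since $u$ is continuous with values in $[\epsilon,1-\epsilon]$ and has a generalized space derivative, the chain rule gives $\nabla u_t/\sigma(u_t)=\nabla\bigl[\log(u_t/(1-u_t))\bigr]$, so the fundamental theorem of calculus yields
\begin{equation*}
\int_0^1\frac{\nabla u_t(x)}{\sigma(u_t(x))}\,dx
\;=\;\log\frac{u_t(1)}{1-u_t(1)}\,-\,\log\frac{u_t(0)}{1-u_t(0)}.
\end{equation*}
Squaring and dividing by $\<1/\sigma(u_t)\>$ gives the second expression for $R_t$, and multiplying by $1/4$ produces the stated formula for $I^{(1)}_{[0,T]}(u)$. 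The one subtle point, and the only potential obstacle, is justifying that the representative of $L^{(\nabla)}_0$ really is $\nabla u$ (not some other function differing by a function of $t$ only) and that the specific choice of $M$ with vanishing mean-against-$1/\sigma(u)$ is the one compatible with the $c_t$ correction; both are handled by the observation that the test functions $H$ vanish at $x=0,1$, so adding a function of $t$ alone to any representative does not change the functional.
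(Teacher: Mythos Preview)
Your proposal is correct and follows essentially the same route as the paper: represent $\mf L_0$ via $P=M+\nabla u$ using \eqref{9-07} and the definition of $L^{(\nabla)}_0$, then apply Lemma~\ref{l20} and simplify the correction term using the normalization $\<M_t/\sigma(u_t)\>=0$. Your explicit justification of the second expression for $R_t$ (via the chain rule and the fundamental theorem of calculus) is a nice addition that the paper leaves implicit.
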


\begin{proof}
As $I_{[0,T]}(u)$ is finite, by \eqref{9-06} and the paragraph
preceding the statement of the lemma, $L^{(\partial_t)}_0$ belongs to
$\mc H^{-1}(\sigma(u))$ and there exists $M$ in
$\ms L^2(\sigma(u)^{-1})$ satisfying \eqref{9-07}.  Therefore, for all
$H$ in $C^{0,1}_0(\Omega_T)$,
\begin{equation*}
\mf L_0 (H) \;=\; L^{(\nabla )}_0 (H) \;+\; L^{(\partial_t)}_0 (H)
\;=\; \int_0^T \<\, M_t
\,+\, \nabla  u_t \,,\, \nabla  H_t\,\> \; dt\;.
\end{equation*}
By Lemma \ref{l20},
\begin{equation*}
\Vert\, \mf L_0 \,\Vert^2_{-1,\sigma(u)}
\;=\; \int_0^T
\Big\{\, \Vert\, M_t \,+\, \nabla  u_t\,\Vert^2_{\sigma(u_t)^{-1}}
\,-\, R_t\, \,\big\}\; dt \;,
\end{equation*}
where $R_t$ has been introduced in the statement of the lemma. This
completes the proof of the lemma.
\end{proof}

We summarize the last two results in the next proposition.

\begin{proposition}
\label{p01}
Fix a path $\pi(t,dx) = u(t,x)\, dx$ in $D([0,T], \ms M_{\rm
ac})$. Assume that $u$ is continuous on $\Omega_T$ and smooth in time,
that there exists $\epsilon>0$ such that
$\epsilon \le u(t,x) \le 1-\epsilon$ for all $(t,x) \in \Omega_T$, and
that $I_{[0,T]} (u)<\infty$.  Then,
\begin{equation*}
I_{[0,T]}(u) \;=\;
I^{(1)}_{[0,T]}(u) \;+\; I^{(2)}_{[0,T]}(u) \;,
\end{equation*}
where Lemma \ref{l22} provides a formula for first term  and Lemma
\ref{l21} for the second.
\end{proposition}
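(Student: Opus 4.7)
The plan is essentially to collect the two preceding lemmas, since Proposition \ref{p01} is a summary statement. First I would invoke Lemma \ref{l21}: under the stated hypotheses on $u$ (continuity on $\Omega_T$, smoothness in time, uniform bounds $\eps \le u \le 1-\eps$, and finite rate functional), the lemma already yields the decomposition
\begin{equation*}
I_{[0,T]}(u) \;=\; I^{(1)}_{[0,T]}(u) \;+\; I^{(2)}_{[0,T]}(u)\;,
\end{equation*}
with $I^{(1)}_{[0,T]}(u) = (1/4)\,\Vert \mf L_0 \Vert^2_{-1,\sigma(u)}$ and $I^{(2)}_{[0,T]}(u) = \int_0^T \Phi_t(a_t,b_t)\, dt$, where $a_t$, $b_t$ are the boundary functionals defined in \eqref{9-10} and $\Phi_t$ is the Legendre transform from \eqref{9-15}. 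This already gives the second summand in the desired form.

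Next I would verify the hypothesis needed for Lemma \ref{l22}, namely that $L^{(\partial_t)}_0 \in \mc H^{-1}(\sigma(u))$. This is exactly the content of \eqref{9-06}: since $I_{[0,T]}(u)<\infty$ by assumption and since $\Vert L^{(\nabla)}_0 \Vert^2_{-1,\sigma(u)}$ is finite (the energy bound, using that $u$ is bounded away from $0$ and $1$ so that $\sigma(u)^{-1}$ is bounded), the finiteness of $\Vert L^{(\partial_t)}_0 \Vert^2_{-1,\sigma(u)}$ follows. Lemma \ref{l20} then provides the representative $M \in \ms L^2(\sigma(u)^{-1})$ characterized by \eqref{9-07}, and Lemma \ref{l22} directly rewrites $I^{(1)}_{[0,T]}(u)$ as the integral of $(1/4)\{\Vert M_t + \nabla u_t\Vert^2_{\sigma(u_t)^{-1}} - R_t\}$ with $R_t$ as stated.

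Combining these two invocations yields the proposition. There is essentially no new computation here: both decompositions have been done, and the role of Proposition \ref{p01} is to package them into a single reference for use in Section \ref{sec05}. The only mildly delicate point, and hence the main (if minor) obstacle, is ensuring consistency of the two formulas for $R_t$ in Lemma \ref{l22}: the identity
\begin{equation*}
\Big\langle \frac{\nabla u_t}{\sigma(u_t)} \Big\rangle \;=\; \log\frac{u_t(1)}{1-u_t(1)} \,-\, \log\frac{u_t(0)}{1-u_t(0)}
\end{equation*}
follows from the fundamental theorem of calculus applied to the antiderivative of $1/\sigma(u) = 1/[u(1-u)]$, which is $\log\{u/(1-u)\}$, and this is legitimate because $u$ is bounded away from $0$ and $1$. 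With this identity in hand, Proposition \ref{p01} is an immediate consequence of Lemmas \ref{l21} and \ref{l22}.
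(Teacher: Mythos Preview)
Your proposal is correct and matches the paper's approach exactly: the paper introduces Proposition~\ref{p01} with the sentence ``We summarize the last two results in the next proposition'' and gives no separate proof, treating it as an immediate consequence of Lemmas~\ref{l21} and~\ref{l22}. Your additional remarks (verifying the hypothesis of Lemma~\ref{l22} via \eqref{9-06} and checking the logarithmic identity for $R_t$) are sound but not strictly necessary, since both points are already handled in the discussion surrounding those lemmas.
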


\begin{remark}
\label{rm1}
In the statement of Proposition \ref{p01}, we imposed many regularity
assumptions on $u$ because this is the context in which this result is
applied in the next section. The proof shows that they can be relaxed.
\end{remark}

\section{$I_{[0,T]}(\,\cdot\,)$-density}
\label{sec05}

In this section, we prove that any trajectory $\pi\in D([0,T], \ms M)$
with finite rate function can be approximated by a sequence of smooth
trajectories $\{\pi^{n}: n\ge1\}$ such that
\begin{equation*}
\pi^{n}\longrightarrow\pi
\;\; \text{ and }\;\; 
I_{[0,T]}(\pi^{n}|\gamma)\longrightarrow I_{[0,T]}(\pi|\gamma)\;.
\end{equation*}
We follow an approach proposed in \cite{qrv, BLM09, FLM2011}. Here,
and throughout this section, $\gamma\colon [0,1]\to [0,1]$ is a fixed
density profile.  We first introduce some terminology.

\begin{definition}
\label{d05}
A subset $A$ of $D([0,T],\ms M)$ is said to be
$I_{[0,T]}(\cdot|\gamma)$-dense if for any $\pi$ in $D([0,T],\ms M)$
such that $I_{[0,T]}(\pi|\gamma)<\infty$, there exists a sequence
$\{\pi^{n}:n\ge1\}$ in $A$ such that $\pi^{n}$ converges to $\pi$ in
$D([0,T],\ms M)$ and $I_{[0,T]}(\pi^{n}|\gamma)$ converges to
$I_{[0,T]}(\pi|\gamma)$.
\end{definition}

\begin{theorem}
\label{mt3} 
For all $\gamma: [0,1] \to [0,1]$, the set $\Pi_\gamma$ is
$I_{[0,T]}(\cdot|\gamma)$-dense. If there exists $\epsilon_0 >0$ such
that $\epsilon_0 \le \gamma \le 1-\epsilon_0$, condition (b) in
Definition \ref{d04} can be replaced by the existence of $\epsilon >0$
such that $\epsilon \le u(t,x) \le 1-\epsilon$ for all
$(t,x) \in [0,T] \times [0,1]$.
\end{theorem}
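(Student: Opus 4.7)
My plan is to follow the three-step approximation scheme of \cite{qrv, BLM09, FLM2011}: given a trajectory $\pi(t,dx) = u(t,x)\,dx$ with $I_{[0,T]}(u|\gamma)<\infty$ (which, by Lemma \ref{l01}, lies in $C([0,T],\ms M_{\rm ac})$ with $u(0,\cdot)=\gamma$), produce in succession modifications enjoying properties (b), (a), and (c) of Definition \ref{d04}, controlling the rate function at each stage. \emph{Step 1 (separation from $0$ and $1$):} Let $\bar u$ be the weak solution of \eqref{1-06} with initial datum $\gamma$. Parabolic regularity and the strong maximum principle, combined with $0<\alpha\le\beta<1$, give $\bar u\in C^\infty((0,T]\times[0,1])$ and, for each $\delta>0$, a constant $\epsilon_\delta>0$ with $\epsilon_\delta \le \bar u(t,x) \le 1-\epsilon_\delta$ on $[\delta,T]\times[0,1]$. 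For $\lambda\in(0,1)$, the convex combination $u^\lambda := (1-\lambda)u + \lambda\bar u$ preserves the initial condition and, by convexity (Theorem \ref{mt2}) together with Corollary \ref{zero}, satisfies $I_{[0,T]}(u^\lambda|\gamma) \le (1-\lambda)\, I_{[0,T]}(u|\gamma)$; lower semicontinuity yields the matching liminf as $\lambda\downarrow 0$. Property (b) holds with $\epsilon = \lambda\epsilon_\delta$, and $u^\lambda\to u$ in $D([0,T],\ms M)$. When $\epsilon_0\le\gamma\le 1-\epsilon_0$, the maximum principle gives $\epsilon_\delta$ independent of $\delta$, producing the stronger uniform conclusion stated at the end of the theorem.

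\emph{Step 2 (hydrodynamic initial matching).} Given $u$ satisfying (b), for small $\tau>0$ set $v^\tau := \bar u$ on $[0,\tau]$, $v^\tau := u$ on $[2\tau,T]$, and linearly interpolate in time between $\bar u(\tau,\cdot)$ and $u(2\tau,\cdot)$ on $[\tau,2\tau]$. Subadditivity \eqref{4-02} and \eqref{4-06} reduce the control of $I_{[0,T]}(v^\tau|\gamma)$ to the transition cost on $[\tau,2\tau]$, the cost on $[0,\tau]$ being zero by Corollary \ref{zero}. This is the main obstacle: the interpolation has time-derivative of order $\tau^{-1}$, so no naive $\ms L^2$ bound suffices. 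I would invoke the decomposition in Proposition \ref{p01}: the boundary term $I^{(2)}$ on an interval of length $\tau$ vanishes with $\tau$ because $\Phi_t$ is locally bounded (cf.\ \eqref{9-11}), while the bulk term is, by Lemma \ref{l22}, the $\mc H^{-1}(\sigma(v^\tau))$-norm of the primitive $M^\tau$ of $\partial_t v^\tau$. Passing to this negative Sobolev norm effectively integrates one derivative in space before measuring magnitude, yielding a bound of order $\tau^{-1}\|u(2\tau,\cdot)-\bar u(\tau,\cdot)\|_{\mc H^{-1}}^{2}$; weak continuity at $t=0$ (Lemma \ref{l01}) and the energy bound from Proposition \ref{l06} force this quantity to be $o(1)$ as $\tau\downarrow 0$. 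Convergence $v^\tau\to u$ in $D([0,T],\ms M)$ is immediate from the vanishing of the transition window.

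\emph{Step 3 (smoothing).} From $v$ satisfying (a) and (b), define
\[ v^{\epsilon,\delta}(t,x) \;:=\; P^{(R)}_\epsilon\!\big[\, v(\cdot,\cdot)*\phi^\delta\,\big](t,x), \]
extending $v$ outside $[0,T]$ by its endpoint values. Property (b) persists by the contractivity of both mollifications on $\ms L^\infty$; property (a) persists by choosing $\epsilon,\delta$ smaller than the length of the initial hydrodynamic layer, since $P^{(R)}_\epsilon$ commutes with the semigroup of \eqref{1-06}. Convergence $I_{[0,T]}(v^{\epsilon,\delta}|\gamma)\to I_{[0,T]}(v|\gamma)$ follows once more from Proposition \ref{p01}: the bulk term is dispatched by Lemmata \ref{l02}, \ref{l03}, \ref{l04}, which yield $\ms L^2$-convergence of $\nabla v^{\epsilon,\delta}$ to $\nabla v$, and the boundary term is handled by Lemma \ref{l08} for the pointwise convergence of the boundary traces together with the dominated-convergence majorant \eqref{9-11} for $\Phi_t$. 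A diagonal extraction along the three approximations then produces a sequence in $\Pi_\gamma$ realising both the convergence in $D([0,T],\ms M)$ and the convergence of the rate function.
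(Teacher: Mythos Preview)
Your Step~2 has a genuine gap. The linear interpolation gives $\partial_t v^\tau = \tau^{-1}[u(2\tau,\cdot)-\bar u(\tau,\cdot)]$ on $[\tau,2\tau]$, so the bulk cost scales like $\tau^{-1}\|u(2\tau,\cdot)-\bar u(\tau,\cdot)\|^2_{\mc H^{-1}}$. For this to vanish you would need $\|u(2\tau,\cdot)-\bar u(\tau,\cdot)\|_{\mc H^{-1}} = o(\sqrt{\tau})$, but Lemma~\ref{l01} only yields a logarithmic modulus, $|\langle u_t-u_s,g\rangle|\lesssim 1/\log(|t-s|^{-1})$, which makes $\tau^{-1}\|\cdot\|^2_{\mc H^{-1}}\to\infty$. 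Moreover, Proposition~\ref{p01} is not available at this stage: it requires the trajectory to be continuous on $\Omega_T$ and smooth in time, whereas after your Step~1 the path $u^\lambda$ has no time regularity beyond weak continuity. The paper avoids both issues by inserting the hydrodynamic layer \emph{first} (Lemma~\ref{pi1}) and joining it to $u$ via the \emph{time-reversed} solution $t\mapsto u^{(\gamma)}(2\delta-t)$ on $[\delta,2\delta]$; the cost of this reversed piece is bounded directly through $J_{\delta,H}$ by $\int_0^\delta\!\int_0^1 |\nabla u^{(\gamma)}|^2/\sigma(u^{(\gamma)})\,dx\,dt$ plus a boundary term, both vanishing as $\delta\to 0$ by the energy identity~\eqref{n06}. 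No quantitative $\mc H^{-1}$ rate is needed.

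Step~3 also fails as written. The Robin semigroup $P^{(R)}_\epsilon$ drives profiles toward $0$ (Theorem~\ref{t01} gives only $\min\{0,\mathrm{ess\,inf}\,\phi\}\le P^{(R)}_\epsilon\phi$), so the lower bound in~(b) is lost; and $P^{(R)}_\epsilon$ does not commute with the \emph{inhomogeneous} equation~\eqref{1-06} (only with its homogeneous part, since $P^{(R)}_\epsilon\bar\rho\neq\bar\rho$), so property~(a) is destroyed. The paper instead smooths in space by $u^n_t = w_t + P^{(D)}_{\chi_n(t)}[u_t-w_t]$, with $w_t$ the affine interpolant of the boundary values: this fixes the boundary traces, and the Dirichlet maximum principle keeps $u^n_t$ between $\min u_t$ and $\max u_t$, preserving~(b). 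Property~(a) is maintained by taking $\chi_n\equiv 0$ on the initial layer. The decomposition of Proposition~\ref{p01} is invoked only \emph{after} time-smoothness has been established in a separate step ($\Pi_3$), which legitimises its hypotheses.
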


The proof of Theorem \ref{mt3} is divided into several steps.
Throughout this section, denote by
$\color{bblue} u^{(\gamma)}:[0,T]\times [0,1] \to [0,1]$ the unique
weak solution of the boundary-initial valued problem \eqref{1-06} with
initial profile $u_0 = \gamma$.

Let $\color{bblue} \Pi_{1}$ be the set of all paths
$\pi(t,dx)=u(t,x)\, dx$ in $D_{\mc E}([0,T],\ms M_{\rm ac})$, whose
density $u$ is a weak solution of the Cauchy problem \eqref{1-06} in
some positive time interval. In other words, there exists $\delta>0$
such that $u_t = u^{(\gamma)}_t$ for $0\le t\le \delta$.

\begin{lemma}
\label{pi1}
The set $\Pi_{1}$ is $I_{[0,T]}(\cdot|\gamma)$-dense.
\end{lemma}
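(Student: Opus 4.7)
The plan is to construct an approximating sequence by smoothly gluing the hydrodynamic solution $u^{(\gamma)}$ to $u$ over a vanishing transition window. Fix $u$ with $I_{[0,T]}(u|\gamma)<\infty$, so by Lemma \ref{l01} the path $u$ is continuous in $\ms M$ and $u(0,\cdot)=\gamma$. For $\varepsilon>0$ small, pick a smooth weight $\lambda_\varepsilon\colon[0,T]\to[0,1]$ with $\lambda_\varepsilon\equiv 1$ on $[0,\varepsilon]$, $\lambda_\varepsilon\equiv 0$ on $[2\varepsilon,T]$, and $\int_0^T|\lambda'_\varepsilon|\,dt\le C$ independent of $\varepsilon$, and set
\begin{equation*}
u^\varepsilon(t,x)\;:=\;\lambda_\varepsilon(t)\,u^{(\gamma)}(t,x)\;+\;[1-\lambda_\varepsilon(t)]\,u(t,x)\;.
\end{equation*}
Being a convex combination, $u^\varepsilon$ takes values in $[0,1]$; it is time-continuous, coincides with $u^{(\gamma)}$ on $[0,\varepsilon]$ (hence $u^\varepsilon\in\Pi_1$) and with $u$ on $[2\varepsilon,T]$. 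Finite energy of $u^\varepsilon$ follows from $\nabla u^\varepsilon=\lambda_\varepsilon\nabla u^{(\gamma)}+(1-\lambda_\varepsilon)\nabla u$ together with the energy bounds on $u$ (Proposition \ref{l06}) and on $u^{(\gamma)}$. Convergence $u^\varepsilon\to u$ in $D([0,T],\ms M)$ is immediate: the two paths agree on $[2\varepsilon,T]$, while on $[0,2\varepsilon]$ both $u(t,\cdot)$ and $u^{(\gamma)}(t,\cdot)$ tend weakly to $\gamma$.

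For the rate functions, I combine an upper bound with lower semicontinuity. Iterating the subadditivity \eqref{4-02} twice yields
\begin{equation*}
I_{[0,T]}(u^\varepsilon|\gamma)\;\le\;I_{[0,\varepsilon]}(u^\varepsilon|\gamma)\;+\;I_{[0,\varepsilon]}(\tau_\varepsilon u^\varepsilon)\;+\;I_{[0,T-2\varepsilon]}(\tau_{2\varepsilon}u^\varepsilon)\;.
\end{equation*}
The first term vanishes by Corollary \ref{zero}, since on $[0,\varepsilon]$ the path $u^\varepsilon$ coincides with the unique weak solution of \eqref{1-06} issued from $\gamma$. The third is bounded by $I_{[0,T]}(u|\gamma)$ via \eqref{4-06}, as $\tau_{2\varepsilon}u^\varepsilon$ is the restriction of $u$ to $[2\varepsilon,T]$ read as a trajectory on $[0,T-2\varepsilon]$. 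The matching lower bound $\liminf_{\varepsilon\to 0}I_{[0,T]}(u^\varepsilon|\gamma)\ge I_{[0,T]}(u|\gamma)$ follows from the lower semicontinuity of $I_{[0,T]}(\cdot|\gamma)$ proved in Theorem \ref{mt2}.

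The main technical obstacle is to show that the transition cost $I_{[0,\varepsilon]}(\tau_\varepsilon u^\varepsilon)$ tends to $0$. The variational functional in \eqref{1-01} is convex in $u$: all terms are linear except $-\int\langle\sigma(u),(\nabla H)^2\rangle\,dt$, which is convex in $u$ because $-\sigma$ is convex. Substituting $u^\varepsilon=\lambda_\varepsilon u^{(\gamma)}+(1-\lambda_\varepsilon)u$ on the sub-interval, expanding, and taking the supremum over $H$, one can bound $I_{[\varepsilon,2\varepsilon]}(u^\varepsilon)$ by the sum of the rate functions of $u^{(\gamma)}$ and of $u$ over the shrinking window, plus a crossed contribution of the form $\int_\varepsilon^{2\varepsilon}\lambda'_\varepsilon(t)\,\langle u^{(\gamma)}(t)-u(t),H(t)\rangle\,dt$. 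The $u^{(\gamma)}$-piece vanishes identically by Corollary \ref{zero}; the $u$-piece vanishes because the energy density is integrable over $[0,T]$ by Proposition \ref{l06} and the window shrinks; the crossed term is bounded by $\sup_{t\in[\varepsilon,2\varepsilon]}|\langle u^{(\gamma)}(t)-u(t),H(t)\rangle|$ times $\int|\lambda'_\varepsilon|\,dt=O(1)$, which tends to $0$ by the weak modulus of continuity at $t=0$ from Lemma \ref{l01}. Organizing these estimates through the decomposition $I=I^{(1)}+I^{(2)}$ of Proposition \ref{p01} ($I^{(1)}$ being an $\ms L^2$-norm amenable to Minkowski's inequality and $I^{(2)}$ a time integral of a convex boundary functional) streamlines the accounting, and this is where the substantive technical work of the proof resides.
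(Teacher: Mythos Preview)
Your construction differs from the paper's, and the transition-cost estimate has a real gap. The crossed term $\int_\varepsilon^{2\varepsilon}\lambda'_\varepsilon(t)\,\langle u^{(\gamma)}_t-u_t,\,H_t\rangle\,dt$ must be controlled \emph{after} taking the supremum over $H$, not for fixed $H$. Weak continuity at $t=0$ from Lemma~\ref{l01} gives smallness of $\langle u^{(\gamma)}_t-u_t,\,g\rangle$ only for a fixed test function $g$; it says nothing uniform in $H$. To absorb this term into the negative part $-\int\langle\sigma(u^\varepsilon_t),(\nabla H_t)^2\rangle\,dt$ and the boundary piece you would need a Poincar\'e-type bound $\|H_t\|_2\le C\big(\|\nabla H_t\|_{\ms L^2(\sigma(u^\varepsilon_t))}+|H_t(0)|\big)$, and then Young's inequality. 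This works only if $\sigma(u^\varepsilon_t)$ is bounded below on $[\varepsilon,2\varepsilon]$ (compare the argument in Lemma~\ref{l19}, which crucially uses that $u\in\Pi_2$); here $u$ is an arbitrary finite-rate path and $\sigma(u_t)$ may vanish. The same degeneracy blocks your appeal to Proposition~\ref{p01}, whose hypotheses (continuity on $\Omega_T$, smoothness in time, $\epsilon\le u\le 1-\epsilon$) are not satisfied by $u^\varepsilon$ on the transition window. Finally, the assertion that ``the $u$-piece vanishes because the energy density is integrable and the window shrinks'' is not justified: the rate functional is a supremum, not an integral of a fixed density, and nothing in the paper yields $I_{[0,\varepsilon]}(\tau_\varepsilon u)\to 0$ for a general $u$.

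The paper's construction avoids all of this by never interpolating against $u$ near $t=0$. It sets $u^\delta(t)=u^{(\gamma)}(t)$ on $[0,\delta]$, $u^\delta(t)=u^{(\gamma)}(2\delta-t)$ on $[\delta,2\delta]$, and $u^\delta(t)=u(t-2\delta)$ on $[2\delta,T]$. On the middle interval the path is the \emph{time-reversed} hydrodynamic flow, which solves $\partial_t v=-\Delta v$. Plugging this into $J_{\delta,H}$ gives an explicit expression bounded, uniformly in $H$, by $\int_0^\delta\!\int_0^1 |\nabla u^{(\gamma)}|^2/\sigma(u^{(\gamma)})\,dx\,dt$ plus boundary terms of the form $(1/A)\int_0^\delta|\alpha-u^{(\gamma)}_t(0)|\,|\log[u^{(\gamma)}_t(0)/(1-u^{(\gamma)}_t(0))]|\,dt$, and these vanish as $\delta\to 0$ by the a~priori identity~\eqref{n06} for the hydrodynamic solution. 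The point is that the transition piece involves only $u^{(\gamma)}$, for which the required integrability is available, and never the unknown $u$.
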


\begin{proof}
Fix $\pi$ in $D([0,T],\ms M)$ such that
$I_{[0,T]}(\pi|\gamma)<\infty$. By definition of the rate function,
$\pi$ belongs to $D([0,T],\ms M_{\rm ac})$, $\pi(t,dx) = u(t,x)\, dx$,
and $\mathcal{Q}_{[0,T]} (u) < \infty$.

For each $\delta>0$, consider the path
$\pi^{\delta}(t,dx) = u^{\delta}(t,x)\, dx$ defined by
\begin{align*}
u^{\delta}(t,x) \;=\; 
\begin{cases}
u^{(\gamma)} (t,x) & \text{ if } t\in[0,\delta]\;, \\
u^{(\gamma)}(2\delta-t,x) & \text{ if } t\in[\delta,2\delta]\;, \\
u(t-2\delta,x) & \text{ if } t\in[2\delta,T]\;. 
\end{cases}
\end{align*}

\noindent{\it Claim A:} The trajectory $\pi^{\delta}$ belongs to
$\Pi_{1}$.  Indeed, by definition, $u^{\delta}$ is the weak solution
of the Cauchy problem \eqref{1-06} in the time-interval
$[0,\delta]$. On the other hand, by definition of $u^\delta$,
$\mathcal{Q}_{[0,T]} (u^{\delta}) \,\le\, 2\, \mathcal{Q}_{[0,\delta]}
(u^{(\gamma)}) \,+\, \mathcal{Q}_{[0,T]} (u)$.  By Corollary
\ref{zero}, $\mathcal{Q}_{[0,\delta]}(u^{(\gamma)})<\infty$. On the
other hand, $\mathcal{Q}_{[0,T]} (u)$ is finite because
$I_{[0,T]}(\pi|\gamma)<\infty$. Therefore, $u^\delta$ has finite
energy, which completes the proof of Claim A.

It is clear that $\pi^{\delta}$ converges to $\pi$ in $D([0,T],\ms M)$
as $\delta \downarrow 0$. To conclude the proof of the lemma it is
enough to show that $I_{[0,T]}(\pi^{\delta}|\gamma)$ converges to
$I_{[0,T]}(\pi|\gamma)$ as $\delta \downarrow 0$.

Since the rate function is lower semicontinuous,
$I_{[0,T]}(\pi|\gamma)\le\liminf_{\delta\to0}I_{[0,T]}(\pi^{\delta}|\gamma)$.
To prove that
$\limsup_{\delta\to0}I_{[0,T]}(\pi^{\delta}|\gamma)\le
I_{[0,T]}(\pi|\gamma)$, decompose the rate function
$I_{[0,T]}(\pi^{\delta}|\gamma)$ into the sum of the contributions on
each time interval $[0,\delta]$, $[\delta, 2\delta]$ and
$[2\delta, T]$.

Recall the notation introduced at the beginning of Section
\ref{sec03}. By \eqref{4-02}, \eqref{4-04}, since in the interval
$[2\delta, T]$ $\pi^{\delta}$ is a time translation of the path
$\pi$,
\begin{equation*}
I_{[0,T]}(\pi^{\delta}|\gamma) \;\le\;
I_{[0,\delta]}(\pi^{\delta}|\gamma) \;+\;
I_{[0,\delta]} (\tau_\delta u^{\delta} \, |\, u^{(\gamma)}_\delta) \;+\;
I_{[0,T]}(\pi|\gamma)\;.
\end{equation*}
Since the density $u^{\delta}$ is a weak solution of the equation
\eqref{1-06} on the interval $[0,\delta]$, by Corollary \ref{zero},
the first contribution is equal to $0$. It remains to show that the
second term on the right-hand side vanishes as $\delta\to 0$.

Let $v^\delta = \tau_\delta u^{\delta}$.  As
$v^\delta (t) = u^{(\gamma)} (\delta-t)$, the density $v^{\delta}$
solves the backward heat equation:
$\partial_{t}v^{\delta} \,=\, -\, \Delta v^{\delta}$. Thus, by
Definition \ref{d01} and \eqref{1-01b}, for each $H$ in
$C^{1,2}([0,T]\times[0,1])$,
\begin{equation*}
\begin{split}
J_{\delta,H} (v^\delta) \; & =\;
\int_0^{\delta} \big\{\, 2\, \langle \, \nabla
u^{(\gamma)}_{t} \,,\, \nabla H_{t} \, \rangle
\,-\,\langle \, \sigma(u^{(\gamma)}_{t})
\,,\, (\nabla H_{t})^{2} \, \rangle \big\} \; dt \\
& -\; \int_0^T \Big\{\, 
\widehat {\mf b}_{\alpha, A}
\big(\, u^{(\gamma)}_t(0)\, ,\, H_t (0)\, \big)
\;+\; \widehat{\mf b}_{\beta, B}
\big(\, u^{(\gamma)}_t(1)\, ,\, H_t(1)\, \big)\,
\Big\}\, dt \; ,
\end{split}
\end{equation*}
where
\begin{equation*}
\widehat {\mf b}_{\varrho, D} (a,M) \;=\;
\frac{1}{D}\, \Big\{ [1-a]\, \varrho \,
[e^M-1 + M] \;+\; a\, [1-\varrho] \, [e^{-M}-1 - M] \, \Big\}  \;.
\end{equation*}
By Schwarz inequality, the first integral on the right-hand side is
bounded above by
\begin{equation*}
\int_{0}^{\delta} dt\ \int_0^1
\frac{|\nabla u^{(\gamma)} (t,x)|^{2}}
{\sigma(u^{(\gamma)}(t,x))} \, dx \;.
\end{equation*}
By \eqref{n06}, this expression vanishes as $\delta\to 0$.  On the
other hand, maximizing $\widehat {\mf b}_{\varrho, D} (a,M)$ over $M$
yields that the second integral is bounded above by
\begin{equation*}
\begin{aligned}
& \int_{0}^{\delta} \Big\{\,
\frac{1}{B} \, [\, \beta - u^{(\gamma)}_t(1)\,]
\, \log \frac {[1-u^{(\gamma)}_t(1)] \, \beta}  
{u^{(\gamma)}_t(1) \, [1-\beta]} \,+\,
\frac{1}{A} \, [\, \alpha - u^{(\gamma)}_t(0)\,]
\, \log \frac {[1-u^{(\gamma)}_t(0)] \, \alpha}  
{u^{(\gamma)}_t(0) \, [1-\alpha]}\, \Big\}\, dt \\
&\le\;
\int_{0}^{\delta} \Big | \, \frac{\beta - u^{(\gamma)}_t(1)}{B} 
\, \log \frac {[1-u^{(\gamma)}_t(1)]}  
{u^{(\gamma)}_t(1)} \,+\,
\frac{\alpha - u^{(\gamma)}_t(0)}{A} 
\, \log \frac {[1-u^{(\gamma)}_t(0)]}  
{u^{(\gamma)}_t(0)}\, \Big | \, dt
\;+\; C_0 \delta
\end{aligned}
\end{equation*}
for some finite constant $C_0 = C_0 (\alpha, \beta, A, B)$. By
\eqref{n06}, this expression vanishes as $\delta\to 0$.

Putting together the previous estimates shows that there exists a
function $c(\delta)$, independent of $H$, such that
$\lim_{\delta\to 0} c(\delta) =0$ and
\begin{equation*}
J_{\delta,H} (\tau_\delta u ^\delta) \; \le \;
c(\delta)
\end{equation*}
for all $H\in C^{1,2}([0,T]\times[0,1])$. This shows that
$\lim_{\delta\to 0} I_{[0,\delta]}(\tau_\delta u^\delta \,|\,
u^{(\gamma)}_\delta) =0$, and completes the proof of the lemma.
\end{proof}

Let $\color{bblue} \Pi_{2}$ be the set of all paths
$\pi(t,dx)=u(t,x)\, dx$ in $\Pi_{1}$ with the property that for every
$\delta>0$ there exists $\epsilon>0$ such that
$\epsilon\le u(t,x)\le 1-\epsilon$ for all
$(t,x)\in[\delta,T]\times [0,1]$.

\begin{lemma}
\label{l15} 
The set $\Pi_{2}$ is $I_{T}(\,\cdot\, |\gamma)$-dense.
\end{lemma}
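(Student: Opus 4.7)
The plan is to approximate a path $\pi(t,dx) = u(t,x)\,dx$ in $\Pi_1$ with $I_{[0,T]}(\pi|\gamma)<\infty$ by its convex combination with the hydrodynamic solution $u^{(\gamma)}$ itself. Concretely, for $0<\epsilon<1$ I would set
\begin{equation*}
u^{\epsilon} \;:=\; (1-\epsilon)\, u \;+\; \epsilon\, u^{(\gamma)}\;,
\end{equation*}
and claim that $u^{\epsilon}\in \Pi_{2}$ and $I_{[0,T]}(u^{\epsilon}|\gamma) \to I_{[0,T]}(u|\gamma)$ as $\epsilon\downarrow 0$.

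First I would verify $u^{\epsilon}\in \Pi_{1}$. Since $u\in \Pi_{1}$, there exists $\mathfrak{t}>0$ with $u_{t}=u^{(\gamma)}_{t}$ on $[0,\mathfrak{t}]$; hence $u^{\epsilon}_{t}=u^{(\gamma)}_{t}$ there, so the initial condition $\gamma$ is preserved and $u^{\epsilon}$ is a weak solution of \eqref{1-06} on $[0,\mathfrak{t}]$. Finite energy is immediate from the convexity of $\mc{Q}_{[0,T]}$, Proposition \ref{l06} applied to $u$, and Corollary \ref{zero} applied to $u^{(\gamma)}$. Next I would upgrade this to $u^{\epsilon}\in \Pi_{2}$: by parabolic regularity and the strong maximum principle / Hopf lemma for the Robin problem with $0<\alpha,\beta<1$, the hydrodynamic solution $u^{(\gamma)}$ is smooth on $(0,T]\times[0,1]$ and, for each $\delta>0$, satisfies $c(\delta)\le u^{(\gamma)}(t,x)\le 1-c(\delta)$ on $[\delta,T]\times[0,1]$ for some $c(\delta)>0$. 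Since $0\le u\le 1$, this gives $\epsilon\, c(\delta)\le u^{\epsilon}(t,x)\le 1-\epsilon\, c(\delta)$ on $[\delta,T]\times[0,1]$, which is exactly the defining property of $\Pi_{2}$.

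The convergence $\pi^{\epsilon}\to\pi$ in $D([0,T],\ms M)$ is trivial since $|u^{\epsilon}-u|\le \epsilon$. For the rate functions, the upper bound follows from convexity of $I_{[0,T]}(\,\cdot\,|\gamma)$ (Theorem \ref{mt2}) and the identity $I_{[0,T]}(u^{(\gamma)}|\gamma)=0$ from Corollary \ref{zero}:
\begin{equation*}
I_{[0,T]}(u^{\epsilon}|\gamma) \;\le\; (1-\epsilon)\, I_{[0,T]}(u|\gamma) \;+\; \epsilon\cdot 0 \;=\; (1-\epsilon)\, I_{[0,T]}(u|\gamma)\;.
\end{equation*}
The lower bound is the lower semicontinuity of $I_{[0,T]}(\,\cdot\,|\gamma)$ (also Theorem \ref{mt2}). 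Together these give $I_{[0,T]}(u^{\epsilon}|\gamma)\to I_{[0,T]}(u|\gamma)$.

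The only non-tautological input is the uniform separation of $u^{(\gamma)}$ from $\{0,1\}$ on $[\delta,T]\times[0,1]$, which is where the assumption $\alpha,\beta\in(0,1)$ enters and where one cannot avoid a genuine PDE argument; it is however a regularity statement already available from Appendix \ref{sec06} and was used implicitly via \eqref{n06} in the proof of Lemma \ref{pi1}. Everything else is a one-line convexity/semicontinuity argument, so this is where I expect the only real work to lie. If the density $\gamma$ itself is bounded away from $0$ and $1$, one can strengthen $c(\delta)$ to a constant independent of $\delta$, which accounts for the second statement of Theorem \ref{mt3}.
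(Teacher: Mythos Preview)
Your proposal is correct and follows essentially the same approach as the paper: the paper also takes the convex combination $u^{\epsilon}=(1-\epsilon)u+\epsilon\,u^{(\gamma)}$, invokes Theorem~\ref{mt4} (which packages the maximum-principle argument you sketch) to get the separation of $u^{(\gamma)}$ from $\{0,1\}$ on $[\delta,T]\times[0,1]$, and then closes with the same convexity/lower-semicontinuity pair. The only cosmetic difference is that the paper cites Lemma~\ref{l14} directly for the finite energy of $u^{(\gamma)}$, whereas you route it through Corollary~\ref{zero}.
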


\begin{proof}
Fix $\pi(t,dx)=u(t,x)\, dx$ in $\Pi_{1}$ such that
$I_{[0,T]}(\pi|\gamma)<\infty$.  For each $0<\epsilon<1$, define the
path $\pi^{\epsilon}(t,dx) = u^{\epsilon}(t,x) dx$ by
$u^{\epsilon} = (1-\epsilon) u + \epsilon u^{(\gamma)}$.

\smallskip
\noindent{\it Claim A:} For each $0<\epsilon< 1$, the trajectory
$\pi^{\epsilon}$ belongs to $\Pi_1$. Since $\pi$ belongs to $\Pi_1$,
by definition, there exists $\delta>0$ such that
$\pi^{\epsilon}_t = \pi_t$ for $0\le t\le \delta$. Therefore,
$\pi^{\epsilon}$ follows the hydrodynamic equation in the
time-interval $[0,\delta]$.  On the other hand, by the convexity of
the energy,
$\mathcal{Q}_{[0,T]}(u^{\epsilon}) \le \epsilon\, \mathcal{Q}_{[0,T]}
(u^{(\gamma)}) + (1-\epsilon)\, \mathcal{Q}_{[0,T]}(u)$. Hence, by
lemma \ref{l14}, $\mathcal{Q}_{[0,T]}(u^{\epsilon})<\infty$.
Therefore, $\pi^{\epsilon}$ belongs to $\Pi_{1}$, as claimed.

\smallskip
\noindent{\it Claim B:} For each $0<\epsilon< 1$, the trajectory
$\pi^{\epsilon}$ belongs to $\Pi_2$.  By Theorem \ref{mt4}, for every
$\delta>0$ there exists $\kappa >0$ such that
$\kappa \le u^{(\gamma)}_t\le 1-\kappa$ for all $\delta \le t \le
T$. This property is inherited by $u^{\epsilon}$ for a different
$\kappa = \kappa(\epsilon) $ because $0\le u\le 1$, which proves Claim
B.

It is clear that $\pi^{\epsilon}$ converges to $\pi$ in
$D([0,T],\ms M)$ as $\epsilon \downarrow 0$. Therefore, to conclude
the proof it is enough to show that $I_{[0,T]}(\pi^{\epsilon}|\gamma)$
converges to $I_{[0,T]}(\pi|\gamma)$ as $\epsilon \downarrow 0$. Since
the rate function is lower semicontinuous,
$I_{[0,T]}(\pi|\gamma)\le\liminf_{\epsilon\downarrow0}
I_{[0,T]}(\pi^{\epsilon}|\gamma)$.  On the other hand, as the rate
function $I_{[0,T]}(\,\cdot\,|\, \gamma)$ is convex, by Corollary
\ref{zero},
\begin{equation*}
I_{[0,T]}(\pi^{\epsilon}|\gamma)\;\le\;
(1-\epsilon)\, I_{[0,T]}(\pi|\gamma) \;+\;
\epsilon \, I_{[0,T]}( u^{(\gamma)}|\gamma)
\;\le\;
(1-\epsilon)\, I_{[0,T]}(\pi|\gamma) 
\;.
\end{equation*}
This completes the proof of the lemma.
\end{proof}

Let $\color{bblue} \Pi_{3}$ be the set of all paths
$\pi(t,dx)=u(t,x)\, dx$ in $\Pi_{2}$ whose density $u$ is continuous
in $(0,T]\times [0,1]$ and smooth in time: For all $x\in [0,1]$,
$u(x, \,\cdot\,)$ belongs to $C^{\infty}( (0,T])$.

\begin{lemma}
\label{l19} 
The set $\Pi_{3}$ is $I_{[0,T]}(\cdot|\gamma)$-dense.
\end{lemma}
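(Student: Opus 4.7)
Let $\pi(t,dx) = u(t,x)\,dx \in \Pi_2$ with $I_{[0,T]}(\pi|\gamma) < \infty$, so there exists $\mf t > 0$ with $u = u^{(\gamma)}$ on $[0, \mf t]$. The idea is to smooth $u$ in space and time on $[\mf t/2, T]$, while keeping $u$ unchanged on $[0, \mf t/4]$ (where it already equals $u^{(\gamma)}$ and hence is smooth by standard parabolic regularity for the Robin heat equation), and to glue these two regimes via a smooth cutoff in time. Concretely, fix $\theta \in C^\infty([0,T])$ with $\theta \equiv 1$ on $[0, \mf t/4]$ and $\theta \equiv 0$ on $[\mf t/2, T]$; extend $u$ to $[-1, T+1]$ by $u_t = \gamma$ for $t<0$ and $u_t = u_T$ for $t>T$; for $0 < \epsilon < \mf t/8$ set
\begin{equation*}
u^{*,\epsilon}(t,x) \;=\; P^{(R)}_{\epsilon^2} \bigl[\, u * \phi^\epsilon \,\bigr]_t(x)\;,
\qquad u^\epsilon(t,x) \;=\; \theta(t)\, u(t,x) \;+\; (1 - \theta(t))\, u^{*,\epsilon}(t,x)\;.
\end{equation*}

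The trajectory $u^\epsilon$ belongs to $\Pi_3$: it equals $u^{(\gamma)}$ on $[0, \mf t/4]$, so it satisfies the hydrodynamic equation there with $u^\epsilon(0,\cdot) = \gamma$; it is smooth on $(0, T] \times [0,1]$, since $u^{(\gamma)}$ is smooth on $(0, \mf t] \times [0,1]$ and $u^{*,\epsilon}$ is smooth on $[0,T]\times[0,1]$ by construction (time-convolution gives smoothness in $t$, the Robin semigroup smoothness in $x$); and it is bounded away from $\{0,1\}$ on $[\delta, T] \times [0,1]$ for each $\delta > 0$, since both $u$ (by hypothesis) and $u^{(\gamma)}$ (by Theorem \ref{mt4}) are, and $u^{*,\epsilon}$ inherits the bound from $u$ by averaging. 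The convergence $u^\epsilon \to u$ in $D([0,T], \ms M)$ follows from Lemma \ref{l02} together with $u^\epsilon = u$ on $[0, \mf t/4]$.

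The key point is the convergence $I_{[0,T]}(u^\epsilon|\gamma) \to I_{[0,T]}(u|\gamma)$. Lower semicontinuity (Theorem \ref{mt2}) gives the $\liminf$ inequality. For the $\limsup$ inequality, I use the subadditivity \eqref{4-02} to split the cost into the contributions on $[0, \mf t/4]$, $[\mf t/4, \mf t/2]$ and $[\mf t/2, T]$. The first vanishes by Corollary \ref{zero} since $u^\epsilon = u^{(\gamma)}$ there. The contribution on $[\mf t/4, \mf t/2]$ is bounded uniformly in $\epsilon$ by Proposition \ref{l06}, using that both $u$ and $u^{*,\epsilon}$ are bounded in $\mc H^1$ on this sub-interval, and converges to $I_{[\mf t/4,\mf t/2]}(u|u(\mf t/4))$ by convexity of $J_{T,H}$ and the convergence $u^{*,\epsilon}\to u$. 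Finally, on $[\mf t/2, T]$ one has $u^\epsilon = u^{*,\epsilon}$, which is smooth and bounded away from $\{0,1\}$, so Proposition \ref{p01} applies and decomposes the rate function as $I^{(1)}+I^{(2)}$; each piece is then shown to converge to the corresponding piece for $u$.

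The main obstacle is the boundary piece $I^{(2)}$. It is the integral of the nonlinear convex function $\Phi_t^{(u^{*,\epsilon})}$ (which depends on the boundary traces $u^{*,\epsilon}(t,0), u^{*,\epsilon}(t,1)$) evaluated at the quantities $a^{u^{*,\epsilon}}_t, b^{u^{*,\epsilon}}_t$ of \eqref{9-10} (which involve $\partial_t u^{*,\epsilon}$ and $\nabla u^{*,\epsilon}$ through the weight $\Xi$). The strong $\ms L^2([0,T])$ convergence of the traces $u^{*,\epsilon}(\cdot,0)\to u(\cdot,0)$ and $u^{*,\epsilon}(\cdot,1)\to u(\cdot,1)$ follows from Lemma \ref{l08}, in the same manner as Claim 2 in the proof of Theorem \ref{mt2}. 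Combined with the growth bound \eqref{9-11} and an $\ms L \log \ms L$-type integrability of $a^u_t, b^u_t$ inherited from the finiteness of $I_{[0,T]}(u|\gamma)$ via the representation of Proposition \ref{p01}, a Vitali-type argument yields the desired convergence of the boundary integrals. Convergence of the bulk piece $I^{(1)}$ is analogous but easier, using the contractivity of mollification in the weighted Sobolev space $\mc H^{-1}(\sigma(u))$ from Lemma \ref{l20}.
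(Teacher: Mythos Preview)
Your approach has a structural circularity that the paper's proof avoids. On the main interval $[\mf t/2,T]$ you want to apply Proposition~\ref{p01} to decompose $I_{[\mf t/2,T]}(u^{*,\epsilon})$ as $I^{(1)}+I^{(2)}$ and then show that each piece converges to ``the corresponding piece for $u$''. But Proposition~\ref{p01} (and the representation theorems of Section~\ref{sec07} it rests on) is stated under the standing hypothesis that the trajectory is \emph{smooth in time}; this is precisely the $\Pi_3$ regularity you are trying to establish. For $u\in\Pi_2$ the quantities $a^u_t,b^u_t$ of \eqref{9-10} involve $\partial_t u_t$, which has no classical meaning, so there is no ``corresponding piece for $u$'' to converge to and no $\ms L\log\ms L$-integrability to inherit. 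The same circularity infects your $I^{(1)}$ argument: the contractivity you invoke would have to compare $\Vert\mf L_0^{u^{*,\epsilon}}\Vert_{-1,\sigma(u^{*,\epsilon})}$ with $\Vert\mf L_0^{u}\Vert_{-1,\sigma(u)}$, but the latter is not shown to be well-defined for $u\in\Pi_2$.

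There is a second, smaller gap: on the transition region $[\mf t/4,\mf t/2]$ you appeal to ``convexity of $J_{T,H}$'', but $u^\epsilon_t=\theta(t)\,u_t+(1-\theta(t))\,u^{*,\epsilon}_t$ is a \emph{time-varying} convex combination, so neither the convexity of $J_{T,H}$ nor of $I_{[0,T]}$ gives a direct bound. (This piece is salvageable by a direct computation, since $u=u^{(\gamma)}$ is smooth there; the paper handles its analogous transition layer exactly this way.)

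The paper sidesteps both issues by a much lighter construction: it performs \emph{only} a time mollification $u^n_t=\int u_{t+\chi_n(t)s}\,\varphi(s)\,ds$, with $\chi_n\equiv 1/n$ on $[2\delta,T]$. On that interval $u^n_t$ is a genuine convex average of time-translates of $u$ with \emph{fixed} weights, so concavity of $\sigma$ yields $J_{T-2\delta,H}(\tau_{2\delta}u^n)\le\int\varphi_{1/n}(s)\,J_{T-2\delta,H}(\tau_{2\delta+s}u)\,ds$, and one bounds the right-hand side by $I_{[0,T]}(u)$ via \eqref{4-06}. No decomposition of the rate function is needed at this stage; the hard step using Proposition~\ref{p01} is deferred to Lemma~\ref{l16} ($\Pi_3\to\Pi_4$), where $u$ is already smooth in time and the hypotheses of Section~\ref{sec07} are met. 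Separating time and space smoothing in this order is the key organizational idea you are missing.
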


\begin{proof}
Fix $\pi(t,dx)=u(t,x)dx$ in $\Pi_{2}$ such that
$I_{[0,T]}(\pi|\gamma)<\infty$.  Since $\pi$ belongs to the set
$\Pi_{1}$, the density $u$ solves the equation \eqref{1-06} in a
time interval $[0,3\delta]$ for some $\delta>0$. Let
$\varphi:\bb R\to\bb R$ be a smooth, nonnegative function such that
\begin{align*}
\text{supp}\ \varphi\;\subset\; (0,1)  \;\; \text{ and }\;\; 
\int_{0}^{1}\varphi(s)\, ds \;=\; 1\;.
\end{align*}
Set
$\color{bblue} \varphi_\epsilon(s) \,=\,
\epsilon^{-1}\varphi(s/\epsilon)$. 

Let $\chi:[0,T] \to [0,1]$ be
a smooth, nondecreasing function such that
\begin{align}
\label{6-22}
\begin{cases}
\chi(t)\;=\;0\    &\text{ if }\ t\in[0, \delta]\;, \\
0\;<\chi(t)\;<1\  &\text{ if }\ t\in (\delta,2\delta)\;, \\
\chi(t)\;=\;1\    &\text{ if }\ t\in[2\delta, T]\;,
\end{cases}
\end{align}
and set $\color{bblue} \chi_n (t) = \chi (t)/n$ for $n\ge 1$. Hence,
$\chi_n(t) = 1/n$ for $t\ge 2\delta$. 

Let $\pi^{n}(t,dx) = u^{n}(t,x)\, dx$ where
\begin{align*}
u^{n}(t,x) \;=\; \int_{0}^{1}u(t+\chi_{n}(t)\, s)\,
\varphi(s)\; ds
\;=\; \int_{\bb R} u(t+s)\, \varphi_{\chi_{n}(t)} (s)\;  ds\;.
\end{align*}
In the above formula, we extend the definition of $u$ to $[0,T+1]$ by
setting $u_{t}=u^{(u_T)}_{t-T}$ for $T \le t \le T+1$. This means that
on the interval $[T,T+1]$, $u_t$ follows the hydrodynamic equation
\eqref{1-06} starting from the initial condition $u_T$. [If $w$
represents the solution of equation \eqref{1-06} with $\gamma=u_T$,
$u_{T+t} = w_t$ for $0\le t\le 1$].

\smallskip\noindent{\it Claim A:} The trajectory $\pi^{n}$ belongs to
$\Pi_{1}$ for all $n>\delta^{-1}$.

Fix such $n\in \bb N$. By construction, the density $u^n$ coincides
with the solution $u^{(\gamma)}$ of the hydrodynamic in the
time-interval $[0,\delta]$. To estimate the energy of $u^n$, we
consider the time-intervals $[0,\delta]$, $[\delta, 2\delta]$ and
$[2\delta, T]$ separately. On $[0,\delta]$, $u^n$ coincides with
$u^{(\gamma)}$. Therefore, by Lemma \ref{l14}, the energy of $u^n$ in
this interval is bounded (uniformly in $n$). In the interval
$[\delta, 2\delta]$, $u^n_t$ is a convex combination of $u_{t+s}$ for
$0\le s\le 1/n\le \delta$. Since $u$ coincides with $u^{(\gamma)}$ in the
interval $[\delta, 3\delta]$, and since the solution is smooth in this
interval and bounded away from $0$ and $1$, the energy of $u^n$ in
this interval is bounded (uniformly in $n$). Finally, for $2\delta \le
t\le T$,
\begin{align*}
u^{n}(t,x) \;=\; 
\int_{0}^{1/n} u(t+s)\, \varphi_{1/n} (s)\;  ds\;.
\end{align*}
By convexity of the energy,
\begin{equation*}
Q_{[2\delta ,T]} (\pi^n)
\;\le\; \int_{0}^{1/n} Q_{[2\delta ,T]} (\tau _s \pi ) \, \varphi_{1/n}
(s)\;  ds \;\le\;
Q_{[2\delta ,T+1/n]} (\pi ) \;.
\end{equation*}
where the translation $\tau_s$ has been introduced in \eqref{4-02}.
This quantity is finite because $u$ has finite energy and by Lemma
\ref{l14}. This proves Claim A.

\smallskip\noindent{\it Claim B:} The trajectory $\pi^{n}$ belongs to
$\Pi_{3}$ for all $n>\delta^{-1}$.

As $\pi$ belongs to $\Pi_2$, by construction, so does $\pi^n$. By
Definition \ref{d02} and Theorem \ref{t01}, the function $u$ is smooth
in the set $(0, 3\delta) \times [0,1]$. Therefore, by definition, the
function $u^{n}$ is smooth in time on $(0,T]\times [0,1]$. As
$n>\delta^{-1}$, and since $u = u^{(\gamma)}$ is continuous in
$(0,3\delta)\times [0,1]$, by definition, $u^n$ is continuous in
$(0,2\delta)\times [0,1]$. We turn to the set
$[2\delta , T]\times [0,1]$. By convexity, for all
$2\delta \le t\le T$,
\begin{equation*}
\begin{aligned}
& \int_0^1 (\nabla u^n_t)^2\, dx \;\le\;
\int_{0}^{1/n}  ds \, \varphi_{1/n} (s)
\int_0^1 [\,\nabla u_{t+s} \,]^2 \: dx \\
&\quad \le\;
C_n\, \int_t^{t+(1/n)} ds 
\int_0^1 [\,\nabla u_{s} \,]^2 \: dx
\le\;
C_n\, \int_0^{T+1} ds 
\int_0^1 [\,\nabla u_{s} \,]^2 \: dx 
\end{aligned}
\end{equation*}
for some finite constant $C_n$. The last integral is finite for two
reasons. By Lemma \ref{l14}, the integral restricted to $[T,T+1]$ is
finite. The integral on $[0,T]$ is finite because $\pi$ has finite
energy as all elements of $\Pi_2$. It follows from this bound and from
its definition that $u^n_t$ is continuous on
$[2\delta, T]\times [0,1]$, which proves Claim B. \smallskip

It is clear that $\pi^{n}$ converges to $\pi$ in
$D([0,T],\mathcal{M})$. It remains to show that
$I_{[0,T]}(\,u^n\,|\gamma) \to I_{[0,T]}(\,u\,|\gamma)$.  As the
rate-function $I_{[0,T]}(\,\cdot\,|\gamma) $ is lower semicontinuous,
we turn to the bound
$\limsup_{n\to\infty} I_{[0,T]}(\pi^{n}|\gamma) \le
I_{[0,T]}(\pi|\gamma)$.

By \eqref{4-04}, the cost of the trajectory
$\pi^n$ in the interval $[0,T]$ is bounded by the sum of its cost in
the intervals $[0,\delta]$, $[\delta, 2\delta]$, $[2\delta, T]$. As
$u^n = u$ in the time-interval $[0,\delta]$, and as $u$ is the
solution of the hydrodynamic equation in this interval,
\begin{equation}
\label{6-20}
I_{[0,\delta]}(\pi^{n}|\gamma) \;=\; 0\; .
\end{equation}

Consider the contribution to $I_{[0,T]}(\pi^{n}|\gamma)$ of the piece
of the trajectory corresponding to the time interval $[2\delta,
T]$. Recall the definition of the functional $\tau_t$, introduced just
above \eqref{4-02}.  Since $\chi_n(t)=1/n$ in this interval, by the
concavity of $\sigma(\cdot)$, for any smooth function
$H:[0,T-2\delta] \times [0,1]\to \bb R$,
\begin{equation*}
\begin{aligned}
J_{T-2\delta,H} (\, \tau_{2\delta} u^n \,) \; & \le\;
\int \varphi_{1/n}(s)\,
J_{T-2\delta,H} (\, \tau_{2\delta +s} u \,)\; ds \\
& \le\;
\int \varphi_{1/n}(s)\, I_{[0,T-2\delta]}
(\, \tau_{2\delta +s} u \,)\; ds
\;. 
\end{aligned}
\end{equation*}
By \eqref{4-02}, the right-hand side is bounded by
\begin{equation*}
\int \varphi_{1/n}(s)\,  \big\{\, I_{[0,T-2\delta-s]}
(\, \tau_{2\delta +s} u \,) 
\;+\; I_{[0,s]}
(\, \tau_{T} u \,)\,\} \; ds\;.
\end{equation*}
Since $u$ solves the hydrodynamic equation on the interval $[T,T+1]$,
by Corollary \ref{zero}, $I_{[0,s]} (\, \tau_{T} u \,) =0$ for
$s\le 1$. Hence, by \eqref{4-06}, the previous integral is bounded by
\begin{equation*}
\int \varphi_{1/n}(s)\,  I_{[0,T]} (\,  u  \,)  \; ds
\;\le\; I_{[0,T]} (\,  u\, \,) \;.
\end{equation*}
Therefore, optimizing over $H$,
\begin{equation}
\label{6-19}
I_{[0,T-2\delta]}
(\,  \tau_{2\delta} \pi^n \,)
\;\le\; I_{[0,T]} (\,  u \,)\;.
\end{equation}

We turn to the contribution to $I_{[0,T]}(\pi^{n}|\gamma)$ of the
piece of the trajectory corresponding to the time interval
$[\delta, 2\delta]$. Since $u$ solves the hydrodynamic equation
\eqref{1-06} on the time interval $[\delta,3\delta]$, it is smooth in
$(0,3\delta) \times [0,1]$. Hence, by definition of $u^n$,
\begin{equation*}
\partial_{t}u^{n}(t,x) \;=\;
\int_{\bb R} \partial_{t}u(t+s,x) \,
\varphi_{\chi_{n}(t)} (s) \; ds
\;+\; \int_{\bb R}u(t+s,x ) \;
\partial_{t} \varphi_{\chi_{n}(t)} (s) \; ds\;.
\end{equation*}
As $u$ solves the hydrodynamic equation \eqref{1-06} on the time
interval $[\delta,3\delta]$, for any function $G$ in
$C^{1,2}([0,T]\times [0,1])$,
\begin{equation*}
\begin{aligned}
& \langle u_{2\delta}^{n}, G_{2\delta}\rangle \,-\,
\langle u_{\delta}^{n}, G_{\delta}\rangle
\,-\, \int_{\delta}^{2\delta}
\langle u_{t}^{n}, \partial_{t}G_{t}\rangle \; dt
\;=\;
-\, \int_{\delta}^{2\delta} \langle \nabla u^n_{t}, \nabla G_{t}\rangle \;
dt \\
&\qquad +\; 
\int_{\delta}^{2\delta} \big\{\, \frac{1}{B} \,[\beta - u^n_{t}(1)]\,
G_t(1) \,+\,\frac{1}{A} \,[\alpha - u^n_{t}(0)] \, G_t(0) \,\big\}\; dt
\;+\; \int_{\delta}^{2\delta}
\langle r_{t}^{n}, G_{t}\rangle \; dt \;,
\end{aligned}
\end{equation*}
where
\begin{equation*}
r^n_t(x)  \;=\; \int_{\bb R}
u (t+s,x) \,
\partial_t \varphi_{\chi_{n}(t)} (s)\; ds\;.
\end{equation*}
Therefore,
\begin{equation*}
\begin{aligned}
J_{\delta, G} (\tau_\delta u^n) \; &\le\; 
\int_{\delta}^{2\delta} \langle r_{t}^{n}, G_{t}\rangle \; dt
\;-\; \int_{\delta}^{2\delta} dt \int_0^1
\sigma (u^n_{t})  \, [\nabla G_{t}]^2 \; dx \\
& -\; \int_{\delta}^{2\delta} \big\{\,
\mf q_{\beta, B} (u^n_{t}(1), G_t(1))
\,+\, \mf q_{\alpha, A} (u^n_{t}(0), G_t(0))
 \,\big\}\; dt \;,
\end{aligned}
\end{equation*}
where $\mf q_{\varrho, D} (a, M)$ has been introduced in \eqref{6-04}.
Since $u$ belongs to $\Pi_2$, there exists $\epsilon>0$ such that
$\epsilon \le u(t,x) \le 1-\epsilon$ for all $\delta\le t\le T$,
$0\le x\le 1$. By Theorem \ref{mt4}, this bound extends to
$T\le t\le T+1$, $0\le x\le 1$. By definition, it is inherited by
$u^n$. Therefore, there exists a positive constant
$c_0 = c_0(\epsilon)$ such that
\begin{equation*}
\begin{aligned}
J_{\delta, G} (\tau_\delta u^n) \; &\le\; 
\int_{\delta}^{2\delta} \langle r_{t}^{n}, G_{t}\rangle \; dt
\;-\; c_0  \int_{\delta}^{2\delta} dt \int_0^1
[\nabla G_{t}]^2 \; dx \\
& -\; c_0  \,  \int_{\delta}^{2\delta} \big\{\,
G_t(1)^2 \,+\, G_t(0)^2  \,\big\}\; dt \;,
\end{aligned}
\end{equation*}
Adding and subtracting $G_t(0)$ to $G_t$ in
$\langle r_{t}^{n}, G_{t}\rangle$ yields, by Young's inequality, that
this scalar product is bounded by
$(1/2A_1) \< ( r_{t}^{n})^2\> + A_1\< [G_t-G_t(0)]^2\> + A_1 G_t(0)^2$
for all $A_1>0$. Hence, by choosing $A_1$ appropriately,
\begin{equation*}
J_{\delta, G} (\tau_\delta u^n) \; \le\; 
C_0 \, \int_{\delta}^{2\delta} dt \int_0^1  (r_{t}^{n})^2 \; dx  \;,
\end{equation*}
so that
\begin{equation}
\label{6-21}
I_{[0,\delta]} (\tau_\delta u^n) \; \le\; 
C_0 \, \int_{\delta}^{2\delta} dt \int_0^1  (r_{t}^{n})^2 \; dx  \;,
\end{equation}

It remains to show that $r^{n}(t,x)$ converges to $0$, as
$n\to\infty$, in $\ms L^2[(\delta,2\delta)\times [0,1])$.  Fix a point
$(t,x)$ in this set.  Since
$\int_{\bb R}\partial_{t}\, \varphi_{\chi_{n}(t)}(s) \, ds \,=\,
\partial_{t}\, \int_{\bb R}\varphi_{\chi_{n}(t)}(s) \, ds \,=\, 0$,
$r^{n}(t,x)$ can be written as
\begin{equation*}
\int_{\bb R} [\,  u(t+s,x)- u(t,x) \,] \,
\partial_{t} \, \varphi_{\chi_{n}(t)}(s) \, ds\; .
\end{equation*}
Since $ u$ is Lipschitz continuous on $[\delta, 3\delta]\times [0,1]$,
there exists a positive constant $C(\delta)>0$, depending only on
$\delta$, such that
\begin{equation*}
| \, u(t+s,x)- u(t,x)\, | \;\le\; C(\delta) \, s \;,
\end{equation*}
for any $(t,x)\in[\delta, 2\delta]\times [0,1]$ and $s\in[0,\delta]$.
Therefore $r^{n}(t,x)$ is bounded above by
\begin{equation*}
C(\delta)\int_{\bb R} s \, \big|\, 
\partial_{t} \, \varphi_{\chi_{n}(t)}(s) \, \big| \, ds \;.
\end{equation*}
By the change of variables $s' = s/\chi_{n}(t)$, 
\begin{equation*}
\int_{\bb R} s \, \big|\, 
\partial_{t} \, \varphi_{\chi_{n}(t)}(s) \, \big| \, ds
\;\le\; \frac{\|\, \chi'\,\|_{\infty}}{n} \int_{0}^{1} \big\{ 
\, s\, \varphi(s) + s^{2}\, |\varphi'(s)| \,\big\}\, ds\; .
\end{equation*}
Therefore,  as $n\to\infty$, $r^{n}$ converges to $0$ uniformly
in $(\delta,2\delta)\times [0,1]$, and, by \eqref{6-21},
\begin{equation*}
\lim_{n\to\infty} I_{[0,\delta]}
(\,  \tau_{\delta} \pi^n \,)
\;=\; 0\;.
\end{equation*}

By \eqref{4-02}, \eqref{6-20}, \eqref{6-19} and the previous estimate,
$\limsup_{n\to\infty} I_{[0,T]} (\, \pi^n\,|\, \gamma \,) \,\le\,
I_{[0,T]} (\, \pi\,|\, \gamma \,)$, which completes the proof of the
lemma.
\end{proof}

Let $\color{bblue} \Pi_{4}$ be the set of all paths
$\pi(t,dx) = u(t,x)\, dx$ in $\Pi_{3}$ whose density $u(t,\cdot)$
belongs to the space $C^{\infty}([0,1])$ for any $t\in (0,T]$. Note
that $\Pi_4 = \Pi_\gamma$, introduced in Definition
\ref{d04}. \smallskip

Denote by $\color{bblue} (P^{(D)}_t:t\ge 0)$,
$\color{bblue} (P^{(N)}_t:t\ge 0)$ the semigroup associated to the
Laplacian on $[0,1]$ with Dirichlet, Neumann boundary conditions,
respectively. The following property will be used many times
below. For all $s\ge 0$ and function $f$ in $C^1([0,1])$,
\begin{equation}
\label{6-23}
\nabla P^{(D)}_{s} f \;=\; P^{(N)}_{s} \nabla f\;.
\end{equation}
To check this identity, fix $f$ in $C^1([0,1])$, and let
$u_s := P^{(D)}_{s} f$. Clearly $u_s$ is the solution of the heat
equation on $[0,1]$ with boundary conditions $u_s(0)=u_s(1)=0$ and
initial condition $u_0=f$. Let $v_s := \nabla u_s$, Then, $v_s$ solves
the heat equation on $[0,1]$ with boundary conditions
$\nabla v_s(0)= \nabla v_s(1)=0$ and initial condition $v_0=\nabla
f$. Hence, $v_s$ can be represented as $v_s = P^{(N)}_{s} \nabla f$,
that is,
$P^{(N)}_{s} \nabla f = v_s = \nabla u_s = \nabla P^{(D)}_{s} f$, as
claimed.

Fix $\pi(t,dx) = u(t,x)\, dx$ in $\Pi_{3}$ such that
$I_{[0,T]}(\pi|\gamma)<\infty$.  Since $\pi$ belongs to the set
$\Pi_{1}$, the density $u$ solves the equation \eqref{1-06} in some
time interval $[0,3\delta]$, $\delta>0$.  Recall the definition of the
function $\chi_n(\cdot)$ introduced in \eqref{6-22}.  Let
$\pi^{n}(t,dx) = u^{n}(t,x)\, dx$, where
\begin{equation}
\label{6-29}
u^n_t \;=\; w_t \;+\; P^{(D)}_{\chi_n(t)} [\,u_t - w_t\,]\;.
\end{equation}
In this formula, $w_t(\cdot)$ is the smooth function given by
$\color{bblue} w_t(x) \,=\, u_t(0) \,+\, [\, u_t(1) \,-\, u_t(0)\,] \,
x$.

\begin{lemma}
\label{l17}
Fix $\pi(t,dx) = u(t,x)\, dx$ in $\Pi_{3}$ such that
$I_{[0,T]}(\pi|\gamma)<\infty$. Define $u^n$, $n\ge 1$, by
\eqref{6-29}.  For each $n\ge 1$, $\pi^{n} (t,dx)= u^n(t,x)\, dx$
belongs to $\Pi_4$ and the trajectory $u^n$ has finite energy.
\end{lemma}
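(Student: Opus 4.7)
The plan is to verify the four conditions characterising $\Pi_4 = \Pi_\gamma$ for the trajectory $\pi^n$: finite energy, agreement with the hydrodynamic equation on some initial interval, a uniform bound away from $0$ and $1$ on each slab $[\delta',T]\times[0,1]$, and smoothness on $(0,T]\times[0,1]$. Because $\chi_n(t) = 0$ on $[0,\delta]$ and $P^{(D)}_0 = \mathrm{Id}$, the definition \eqref{6-29} collapses to $u^n_t = w_t + (u_t - w_t) = u_t$ there. Since $\pi \in \Pi_3 \subset \Pi_1$, $u$ solves \eqref{1-06} on $[0,\delta]$, and so does $u^n$, which is condition (a) of Definition \ref{d04}.

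For the bound away from $\{0,1\}$, the crux is the following representation. The function $v_t := u_t - w_t$ is continuous on $[0,1]$ and vanishes at $x = 0, 1$. Since $w_t$ is affine in $x$, $\Delta w_t \equiv 0$, and the map $U_t(s,x) := w_t(x) + (P^{(D)}_s v_t)(x)$ is the unique classical solution of the Dirichlet problem $\partial_s U_t = \Delta U_t$ on $[0,1]$ with time-independent boundary data $u_t(0), u_t(1)$ and initial datum $u_t$; hence $u^n_t(x) = U_t(\chi_n(t),x)$. Since $\pi \in \Pi_2$, for each $\delta' > 0$ there exists $\epsilon > 0$ with $\epsilon \le u_t \le 1 - \epsilon$ on $[\delta',T]\times[0,1]$; in particular this two-sided bound holds for the boundary values $u_t(0), u_t(1)$, so the parabolic maximum principle propagates it to $u^n_t$ on $[\delta',T]\times[0,1]$, establishing condition (b).

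For smoothness, on $[0,\delta]$ one has $u^n = u$, which is smooth in $(0,3\delta)\times[0,1]$ by Theorem \ref{t01} applied to the hydrodynamic solution. For $t > \delta$ one has $\chi_n(t) > 0$, so $P^{(D)}_{\chi_n(t)}$ maps the bounded function $v_t$ into $C^\infty([0,1])$ (vanishing at the endpoints); adding the affine function $w_t$ preserves spatial smoothness on $[0,1]$, yielding $u^n_t \in C^\infty([0,1])$ for every $t \in (0,T]$, which is exactly the extra property required to upgrade $\Pi_3$-membership to $\Pi_4$-membership. Time-smoothness on $(0,T]$ follows from the smoothness of $\chi_n$, the time-regularity of $u$ inherited from $\pi \in \Pi_3$, and the smoothing of the Dirichlet semigroup for positive times; continuity across $t = \delta$ is immediate because $\chi_n(\delta) = 0$ and $v_\delta$ is continuous.

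Finally, for finite energy, apply the commutation identity $\nabla P^{(D)}_s = P^{(N)}_s \nabla$ of \eqref{6-23} to $v_t$, which lies in $\mc H^1_0([0,1])$ for a.e.\ $t$ since $u \in D_{\mc E}([0,T],\ms M_{\rm ac})$, to obtain
\[
\nabla u^n_t \;=\; [u_t(1) - u_t(0)] \;+\; P^{(N)}_{\chi_n(t)} \nabla(u_t - w_t) \;.
\]
Because $P^{(N)}_s$ is a contraction on $\ms L^2([0,1])$ and $|u_t(1) - u_t(0)| \le 1$, one obtains $\|\nabla u^n_t\|_2^2 \le C(1 + \|\nabla u_t\|_2^2)$ for an absolute constant $C$; integrating in $t$ and invoking $\mc Q_{[0,T]}(u) < \infty$ yields $\mc Q_{[0,T]}(u^n) < \infty$. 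I expect the main obstacle to be the auxiliary-PDE representation in the second paragraph, as the remaining three verifications reduce to standard smoothing and contraction properties of the Dirichlet and Neumann heat semigroups.
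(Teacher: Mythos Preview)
Your proof is correct and follows essentially the same approach as the paper: you use the Dirichlet heat-equation representation together with the parabolic maximum principle for the two-sided bound (the paper's Claim~B), the smoothing of $P^{(D)}_s$ for $s>0$ for spatial regularity, and the commutation identity $\nabla P^{(D)}_s = P^{(N)}_s \nabla$ combined with the $\ms L^2$-contraction of $P^{(N)}_s$ for the energy bound (the paper's Claim~A). The only difference is organisational --- the paper separately treats the time intervals $[0,\delta]$, $[\delta,2\delta]$, $[2\delta,T]$ for the energy estimate, whereas your global bound $\|\nabla u^n_t\|_2^2 \le C(1+\|\nabla u_t\|_2^2)$ is cleaner and equally valid.
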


\begin{proof}
\noindent{\it Claim A:} The trajectory $\pi^{n}$ belongs to
$\Pi_{1}$.

By definition, $u^n_t = u_t = u^{(\gamma)}_t$ for $0\le t\le
\delta$. It remains to estimate its energy.  As $u^n_t=u^{(\gamma)}_t$
for $0\le t\le \delta$, by Lemma \ref{l14}, the contribution to the
total energy of the evolution of $u^n$ in the time interval
$[0,\delta]$ is bounded. We turn to the contribution in the time
interval $[\delta,T]$.

By definition and \eqref{6-23},
$\nabla u^n_t = \nabla w_t \,+\, P^{(N)}_{\chi_n(t)} \nabla [\,u_t -
w_t\,]$, so that
$(\nabla u^n_t)^2 \le 2\,  (\nabla w_t)^2 \,+\, 2\, \{\,
P^{(N)}_{\chi_n(t)} \nabla [\,u_t -w_t\,]\, \}^2$. Therefore, as
$\epsilon(\delta) \le u^n_t \le 1 - \epsilon(\delta)$ for $\delta \le
t\le T$,  
\begin{equation*}
\begin{aligned}
& \int_\delta^T dt\int_0^1 \frac{|\nabla u^n_t|^2}{\sigma(u^n_t)}\; dx \;\le\;
C_0(\epsilon)\, \int_\delta^T dt\int_0^1 |\nabla u^n_t|^2 \; dx \\
& \;\; \;\le\; C_0(\epsilon)\, \int_\delta^T dt\int_0^1
(\nabla w_t)^2 \; dx \;+\;
C_0(\epsilon)\, \int_\delta^T dt\int_0^1
\{\, P^{(N)}_{\chi_n(t)} \nabla [\,u_t -w_t\,]\, \}^2  \; dx \;,
\end{aligned}
\end{equation*}
where the constant $C_0(\epsilon)$ changed from line to line. The
first term is bounded by the definition of $w_t$. As $P^{(N)}_{s}$ is
a contraction in $\ms L^2([0,1])$, the second term is bounded by
\begin{equation*}
C_0(\epsilon)\, \int_\delta^T dt\int_0^1 (\nabla u_t)^2  \; dx
\;+\; C_0(\epsilon)\, \int_\delta^T dt\int_0^1 (\nabla w_t)^2  \; dx\;.
\end{equation*}
The first term is bounded because $\pi_t(dx) = u(t,x)\, dx$ belongs to
$\Pi_3$. We already estimated the second one. This completes the proof
of Claim A.

\noindent{\it Claim B:} The trajectory $\pi^{n}$ belongs to $\Pi_{2}$.
By Theorem \ref{mt4}, and since $\pi$ belongs to $\Pi_{2}$, for every
$\delta'>0$, there exists $\epsilon>0$ such that
$\epsilon \le u_t\le 1-\epsilon$ for all $t \in [\delta',T]$.  Denote
by $\color{bblue} \epsilon (\delta)$ the constant $\epsilon$ when
$\delta'=\delta$.  As $u^n_t = u_t$ for $0\le t\le \delta$, this
property extends to $u^n_t$ in the interval $[0,\delta]$: for every
$0<\delta' \le \delta$, there exists $\epsilon>0$ such that
$\epsilon \le u^n_t\le 1-\epsilon$ for all $t \in [\delta',\delta]$.

We turn to the interval $[\delta, T]$. Fix $\delta \le t \le T$.  Let
$\color{bblue} v_s = v^{(t)}_s = w_t + P^{(D)}_{s} [u_t- w_t]$,
$s\ge 0$. Note that $u^n_t = v^{(t)}_{\chi_n(t)}$. By definition, $v$ is the
unique solution of the heat equation with Dirichlet boundary
conditions:
\begin{equation*}
\left\{
\begin{aligned}
& \partial_s v \;=\; \Delta v \; , \\
& v_s (0) \,=\, u_t(0) \;,\;\; v_s (1) \,=\, u_t(1) \\
& v(0, \cdot) = u_t (\cdot)\; .
\end{aligned}
\right.
\end{equation*}
Here we used the fact that $w(t,0) = u(t,0)$, $w(t,1) = u(t,1)$ and
that $\Delta w_t= 0$.
By the maximum principle, for all $s\ge 0$,
$\min_{0\le x \le 1} u_t(x) \le \min_{0\le x \le 1} v_s(x) \le
\max_{0\le x \le 1} v_s(x) \le \max_{0\le x \le 1} u_t(x)$. Hence, the
bound $\epsilon(\delta) \le u_t\le 1-\epsilon(\delta)$, which holds
for all $t \in [\delta,T]$ by definition of $\epsilon(\delta)$,
extends to $v^{(t)}_{\chi_n(t)} = u^n_t$. Therefore, $\pi^n$ belongs
to $\Pi_2$, as claimed.

The condition $\Delta w_t= 0$ selects $w_t$ among other possible
choices. More precisely, in principle one could define $w_t$ as
$w_t(x) \,=\, u_t(0) \,+\, [\, u_t(1) \,-\, u_t(0)\,] \, f(x)$ for any
smooth function $f(x)$ such that $f(0) =0$, $f(1)=1$. However, the
proof that $u^n$ belongs to $\Pi_2$ is based on the maximum principle
for the heat equation with Dirichlet boundary conditions. For $v$ to
be a solution we need $\Delta w_t=0$ which imposes the choice
$f(x) =x$.

It remains to examine the regularity in space and time of the
trajectory $u^n_t$.  Since $u_t$ belongs to $\Pi_3$ and as the
time-derivative commutes with the operator $P^{(D)}_s$, by definition,
the trajectory $u^n_t$ also belongs to $\Pi_3$. Furthermore, as $w_t$
is smooth in space, by Theorem \ref{mt4} and its equivalent version
for the heat equation with Dirichlet boundary conditions,
$u^n_t \in C^{\infty}([0,1])$ for all $0<t\le T$, and $u^n_t$ belongs to
$\Pi_4$. This completes the proof of the lemma.
\end{proof}

\begin{lemma}
\label{l16} 
The set $\Pi_{4}$ is $I_{[0,T]}(\cdot|\gamma)$-dense.
\end{lemma}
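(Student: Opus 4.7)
The plan is to combine the three preceding lemmas with the construction of Lemma~\ref{l17}. By Lemma~\ref{l19} it suffices, for any $\pi(t,dx)=u(t,x)\,dx$ in $\Pi_{3}$ with $I_{[0,T]}(\pi|\gamma)<\infty$, to exhibit a sequence $\pi^{n}\in\Pi_{4}$ with $\pi^{n}\to\pi$ in $D([0,T],\ms M)$ and $I_{[0,T]}(\pi^{n}|\gamma)\to I_{[0,T]}(\pi|\gamma)$. I take $\pi^{n}$ to be the trajectory defined in \eqref{6-29}. By Lemma~\ref{l17} the inclusion $\pi^{n}\in\Pi_{4}$ and the finite-energy property are already known, so only the two convergence statements remain.

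For the weak convergence, I would use that $u_{t}-w_{t}$ vanishes at $x=0$ and $x=1$ for every $t$, and that $P^{(D)}_{s}f\to f$ strongly in $\ms L^{2}([0,1])$ as $s\downarrow 0$ on $\ms L^{2}$. Applied pointwise in $t$ with $s=\chi_{n}(t)$, this gives $u^{n}_{t}\to u_{t}$ in $\ms L^{2}([0,1])$. Together with the uniform bound $0\le u^{n}\le 1$, the uniform weak-in-time equicontinuity provided by Lemma~\ref{l01} (note the energy uniform bound proved in Lemma~\ref{l17}) lifts this to convergence in $D([0,T],\ms M)$. Lower semicontinuity from Theorem~\ref{mt2} then yields $I_{[0,T]}(\pi|\gamma)\le\liminf_{n}I_{[0,T]}(\pi^{n}|\gamma)$.

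For the matching upper bound I would split at the time $\delta>0$ on which $u$ coincides with $u^{(\gamma)}$. By \eqref{4-02}, $I_{[0,T]}(\pi^{n}|\gamma)\le I_{[0,\delta]}(\pi^{n}|\gamma)+I_{[0,T-\delta]}(\tau_{\delta}\pi^{n})$; the first term is zero by Corollary~\ref{zero} since $u^{n}=u^{(\gamma)}$ on $[0,\delta]$. On $[\delta,T]$ both $u$ and $u^{n}$ satisfy the hypotheses of Proposition~\ref{p01}, so the rate function decomposes as $I^{(1)}_{[\delta,T]}+I^{(2)}_{[\delta,T]}$. The crucial structural properties are the identities
\begin{equation*}
u^{n}_{t}(0)=u_{t}(0),\quad u^{n}_{t}(1)=u_{t}(1),\quad
\nabla u^{n}_{t}-\nabla u_{t}=(P^{(N)}_{\chi_{n}(t)}-I)\bigl(\nabla u_{t}-\nabla w_{t}\bigr),
\end{equation*}
where the first two follow because $P^{(D)}_{s}$ preserves zero boundary data, and the third follows from \eqref{6-23}. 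The first two identities make $\mf b_{\alpha,A}(u_{t}(0),\cdot)$, $\mf b_{\beta,B}(u_{t}(1),\cdot)$ and the functional $\Upsilon_{t}$ in \eqref{9-15} identical for $u$ and $u^{n}$; moreover $\sigma(u^{n})\to\sigma(u)$ uniformly on $[\delta,T]\times[0,1]$ thanks to the uniform ellipticity bound $\epsilon\le u^{n}\le 1-\epsilon$ from Claim B of Lemma~\ref{l17} together with the third identity, so $\Xi^{(n)}_{t}\to\Xi_{t}$ and $\zeta^{(n)}_{t}\to\zeta_{t}$.

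What remains, and is the main obstacle, is the convergence $I^{(1)}_{[\delta,T]}(u^{n})\to I^{(1)}_{[\delta,T]}(u)$. By Lemma~\ref{l22} this reduces to showing that the minimizer $M^{n}\in\ms L^{2}(\sigma(u^{n})^{-1})$ attached to $L^{(\partial_{t},n)}_{0}$ converges to $M$ in $\ms L^{2}(\sigma(u)^{-1})$. For each test function $H\in C^{0,1}_{0}(\Omega_{T})$ one has, after integration by parts in time,
\begin{equation*}
\mf L^{(n)}_{0}(H)=-\int_{0}^{T}\!\langle u^{n}_{t},\partial_{t}H_{t}\rangle\,dt+\int_{0}^{T}\!\langle\nabla u^{n}_{t},\nabla H_{t}\rangle\,dt,
\end{equation*}
and the $\ms L^{2}$-convergences $u^{n}\to u$, $\nabla u^{n}\to\nabla u$ on $[\delta,T]\times[0,1]$ give $\mf L^{(n)}_{0}(H)\to\mf L_{0}(H)$ pointwise. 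Upgrading this to norm convergence in $\mc H^{-1}(\sigma(u))$ would then be done by combining the uniform bound on $\|\mf L^{(n)}_{0}\|_{-1,\sigma(u^{n})}$ (from Proposition~\ref{l06} applied to $u^{n}$) with weak-star compactness and the uniqueness of the limit identified by the pointwise convergence; for $I^{(2)}$ the continuity of $\Phi^{(u)}_{t}$ in its parameters and \eqref{9-11} provide the required uniform integrability to pass to the limit. Finally, the last assertion of Theorem~\ref{mt3} for strictly interior initial data requires only minor modifications: if $\epsilon_{0}\le\gamma\le1-\epsilon_{0}$, then the maximum principle gives a uniform ellipticity bound on $u^{(\gamma)}$ on all of $[0,T]\times[0,1]$, and the same holds along the approximation scheme.
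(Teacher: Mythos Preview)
Your overall framework matches the paper's: reduce to $\pi\in\Pi_{3}$ via Lemma~\ref{l19}, use the sequence $u^{n}$ from \eqref{6-29}, split at $\delta$, and on $[\delta,T]$ invoke the decomposition $I=I^{(1)}+I^{(2)}$ of Proposition~\ref{p01}. Your treatment of $I^{(2)}$ is essentially the paper's (boundary values and hence $\Phi^{(u^{n})}_{t}=\Phi^{(u)}_{t}$ are unchanged, then dominated convergence via \eqref{9-11}). The problem is the $I^{(1)}$ part.

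The argument you sketch for $I^{(1)}$ does not close. You propose to obtain \emph{norm} convergence of $\mf L^{(n)}_{0}$ in $\mc H^{-1}(\sigma(u))$ from a uniform bound on $\|\mf L^{(n)}_{0}\|_{-1,\sigma(u^{n})}$, weak-star compactness, and uniqueness of the weak limit. But weak-star compactness plus identification of the limit yields only weak convergence, and the dual norm is merely lower semicontinuous along weakly convergent sequences; this gives $\|\mf L_{0}\|_{-1}\le\liminf_{n}\|\mf L^{(n)}_{0}\|_{-1}$, which is the \emph{wrong} inequality for the needed $\limsup_{n}I^{(1)}(u^{n})\le I^{(1)}(u)$. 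Moreover, your proposed source for the uniform bound is circular: Proposition~\ref{l06} bounds the energy in terms of $I_{[0,T]}(u^{n})$, not the other way around, and $I_{[0,T]}(u^{n})$ is precisely what you are trying to control. (A smaller circularity: you appeal to Lemma~\ref{l01} for equicontinuity of $u^{n}$, but that lemma also requires a bound on $I_{[0,T]}(u^{n}|\gamma)$; the convergence $\pi^{n}\to\pi$ should instead be argued directly from $P^{(D)}_{s}f\to f$ in $\ms L^{2}$ and the fact that $u,u^{n}\in C([0,T],\ms M_{\rm ac})$.)

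What the paper does for $I^{(1)}$ is a direct, structural estimate that cannot be replaced by soft compactness. Using $\partial_{t}v^{n}_{t}=(I-P^{(D)}_{\widehat\chi_{n}(t)})\partial_{t}\widehat w_{t}+P^{(D)}_{\widehat\chi_{n}(t)}\partial_{t}v_{t}+\widehat\chi_{n}'(t)\,\Delta P^{(D)}_{\widehat\chi_{n}(t)}[v_{t}-\widehat w_{t}]$, the representation $L^{(\partial_{t})}_{0}(H)=\int\langle M_{t},\nabla H_{t}\rangle\,dt$ from \eqref{6-24}, and the commutation $\nabla P^{(D)}_{s}=P^{(N)}_{s}\nabla$, one writes
\[
\mf L^{(n)}_{0}(H)\;=\;\int_{0}^{T_{\delta}}\big\langle P^{(N)}_{\widehat\chi_{n}(t)}M_{t}+P^{(N)}_{\widehat\chi_{n}(t)}\nabla v_{t},\,\nabla H_{t}\big\rangle\,dt\;+\;\text{(errors)},
\]
and bounds the error terms explicitly by quantities $r_{j}(n)\to 0$. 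Lemma~\ref{l20} then converts this into an \emph{upper} bound for $\|\mf L^{(n)}_{0}\|^{2}_{-1,\sigma(v^{n})}$ by the $\ms L^{2}(\sigma(v^{n})^{-1})$ norm of $P^{(N)}_{\widehat\chi_{n}(t)}(M_{t}+\nabla v_{t})$, which converges to $\|M+\nabla v\|^{2}_{\sigma(v)^{-1}}$ by strong continuity of $P^{(N)}$ and $v^{n}\to v$ a.e.\ under the uniform ellipticity bound. This quantitative step is the missing idea in your proposal.
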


\begin{proof}
Fix $\pi(t,dx) = u(t,x)\, dx$ in $\Pi_{3}$  such that
$I_{[0,T]}(\pi|\gamma)<\infty$. Keep in mind that $u$ is
continuous in $(0,T]\times [0,1]$. Define $u^n$, $n\ge
1$, by \eqref{6-29}, and let $\pi^{n} (t,dx) = u^n(t,x)\,
dx$. By Lemma \ref{l17}, $\pi^n$ belongs to $\Pi_{4}$.

By definition, $\pi^{n}$ converges to $\pi$ in $D([0,T],\ms M)$.
Hence, by the lower semicontinuous of the rate function, it remains to
show that
$\limsup_{n\to\infty}I_{[0,T]}(\pi^n|\gamma) \le
I_{[0,T]}(\pi|\gamma)$.

By \eqref{4-02}, the cost of the trajectory $\pi^n$ in the interval
$[0,T]$ is bounded by the sum of its cost in the intervals
$[0,\delta]$, $[\delta, T]$:
\begin{equation}
\label{6-30}
I_{[0,T]} (u^n) \;\le\; I_{[0,\delta]} (u^n)
\;+\; I_{[0,T-\delta]} (\tau_\delta u^n)\;.
\end{equation}

As $u^n = u^{(\gamma)}$ in the
time-interval $[0,\delta]$,
\begin{equation}
\label{6-31}
I_{[0,\delta]}(u^{n}) \;=\; 0\; .
\end{equation}
We turn to the interval $[\delta, T]$. Recall the notation introduced
in \eqref{4-02}. The cost of the trajectory in this interval is given
by $I_{[0,T-\delta]}(\tau_{\delta}\, u^n)$. Let
$\color{bblue} \widehat \chi_n(t) = \chi_n(t-\delta)$,
$\color{bblue} T_\delta = T-\delta$,
$\color{bblue} v= \tau_{\delta}\, u$,
$\color{bblue} v^n= \tau_{\delta}\, u^n$,
$\color{bblue} \widehat w= \tau_{\delta}\, w$, and observe that
$v^n_t = \widehat w_t + P^{(D)}_{\widehat \chi_n(t)} [v_t - \widehat
w_t]$, $0\le t\le T_\delta$.  Moreover,
\begin{equation}
\label{6-25}
\epsilon(\delta) \;\le\; v^n_t \;\le\;  1 \,-\, \epsilon(\delta)
\end{equation}
for $0 \le t\le T_\delta$, where $\epsilon(\delta)$ has been
introduced at the beginning of the proof of Lemma \ref{l17}.  With
this notation,
$I_{[0,T-\delta]} (\tau_\delta u^n) = I_{[0,T_\delta]}(v^n)$.

By Lemma \ref{l21},
$I_{[0,T_\delta]}(v^n) = I^{(1)}_{[0,T_\delta]}(v^n) +
I^{(2)}_{[0,T_\delta]}(v^n)$.  We estimate each term of this sum
separately.  The next observation will be useful in the argument.

Let $L^{(\partial_t)}$, $L^{(\partial_t)}_0$ be the functional
introduced in \eqref{9-08}, \eqref{9-09} with $T$, $u_t$ replaced by
$T_\delta$, $v_t$, respectively. Keep in mind that these linear
functionals depend on the trajectory $u(\cdot, \cdot)$, that is, on
$v$. Since
$I_{[0,T_\delta]}(v ) = I_{[0,T-\delta]}(\tau_{\delta}\, u) \le
I_{[0,T]}(u) < \infty$, by \eqref{9-06}, \eqref{9-07},
$L^{(\partial_t)}_0$ belongs to $\mc H^{-1}(\sigma (v))$ and there
exists $M$ in $\ms L^2(\sigma(v)^{-1})$ such that
\begin{equation}
\label{6-24}
L^{(\partial_t)}_0 (H) \;=\;
\int_0^{T_\delta} \<\, M_s \,,\, \nabla H_s\,\> \; ds \;, \quad
\int_0^1 \frac{M_s}{\sigma(v_s)}\; dx \;=\; 0
\end{equation}
for all $H$ in $C^\infty_K(\Omega_{T_\delta})$, and almost all
$0\le s\le T_\delta$.

We turn to $I^{(1)}_{[0,T_\delta]}(v^n)$.  By Lemma \ref{l21},
$I^{(1)}_{[0,T_\delta]}(v^n) = (1/4) \Vert \mf L_0 \Vert^2_{-1,
\sigma(v^n)}$. The linear functional $\mf L_0 $ introduced just below
\eqref{9-08} is the sum of $L^{(\partial_t)}_0$ with
$L^{(\nabla)}_0$. We first examine $L^{(\partial_t)}_0$.

\smallskip\noindent{\it The linear functional $L^{(\partial_t)}_0$}.
By definition, since $P^{(D)}_{s}$ is a symmetric
operator in $\ms L^2([0,1])$, for every $H\in C^\infty_K(\Omega_T)$,
\begin{equation*}
\begin{aligned}
& \int_0^{T_\delta} \< \, \partial_t v^n_t \,,\, H_t\,\>\, dt 
\;=\;
\int_0^{T_\delta} \< \,  (\, I \,-\, P^{(D)}_{\widehat \chi_n(t)} \,)\, 
\partial_t \widehat w_t \,,\, H_t  \,\> \, dt \\
&\quad \:+\;
\int_0^{T_\delta} \< \, \partial_t v_t \,,\, 
P^{(D)}_{\widehat \chi_n(t)} H_t \,\> \, dt
\:+\; \int_0^\delta \widehat \chi'_n(t) \,
\<\, \Delta P^{(D)}_{\widehat \chi_n(t)}  [ \, v_t - \widehat w_t\,]
\,,\, H_t\,\>\, dt \;.
\end{aligned}
\end{equation*}
The last integral runs from $0$ to $\delta$ because $\widehat
\chi'_n(t)$ vanishes for $t\ge \delta$.

As $(t,x) \mapsto (P^{(D)}_{\widehat \chi_n(t)} H_t)(x)$ is a smooth
function which vanishes at $x=0$ and $x=1$, by \eqref{6-24}, the
second term on the right-hand side is equal to the time integral of
$\< \, M_t \,,\, \nabla \, P^{(D)}_{\widehat \chi_n(t)} H_t \,\>$.  By
\eqref{6-23}, this scalar product is equal to
$\< \, M_t \,,\, P^{(N)}_{\widehat \chi_n(t)} \nabla H_t \,\> \,=\, \<
\, P^{(N)}_{\widehat \chi_n(t)} \, M_t \,,\, \nabla H_t \,\>$ because
the operator $P^{(N)}_{\widehat \chi_n(t)} $ is symmetric in
$\ms L^2([0,1])$.

On the other hand, as $H_t$ vanishes at the boundary, an integration
by parts yields that the third term on the right-hand side is equal to
\begin{equation*}
-\, \int_0^\delta \widehat \chi'_n(t) \,
\<\, \nabla P^{(D)}_{\widehat \chi_n(t)}  [ \, v_t - \widehat w_t\,]
\,,\, \nabla H_t\,\>\, dt \;=\;
-\, \int_0^\delta \widehat \chi'_n(t) \,
\<\,  P^{(N)}_{\widehat \chi_n(t)}  \nabla [ \, v_t - \widehat w_t\,]
\,,\, \nabla H_t\,\>\, dt\;,
\end{equation*}
where we apllied the identity \eqref{6-23} once more.

In conclusion,
\begin{equation}
\label{6-28}
\begin{aligned}
& \int_0^{T_\delta} \< \, \partial_t v^n_t \,,\, H_t\,\>\, dt 
\;=\;
\int_0^{T_\delta} \< \, (\, I \,-\, P^{(D)}_{\widehat \chi_n(t)} \,)\, 
\partial_t \widehat w_t \,,\, H_t  \,\> \, dt \\
&\quad \:+\;
\int_0^{T_\delta} \< \, P^{(N)}_{\widehat \chi_n(t)} \, M_t
\,,\, \nabla H_t \,\>\, dt
\;-\;  \int_0^\delta \widehat \chi'_n(t) \,
\<\,  P^{(N)}_{\widehat \chi_n(t)}  \nabla [ \, v_t - \widehat w_t\,]
\,,\, \nabla H_t\,\>\, dt \;.
\end{aligned}
\end{equation}

We estimate the first and the last term on the right-hand side.
By Young's inequality $xy \le (1/2A_1) x^2 + (A_1/2) y^2$, $A_1>0$,
the first term on the right-hand side is bounded by
\begin{equation*}
\frac{1}{2A_1}\, \int_0^{T_\delta} \< \,
[\, (\, I \,-\, P^{(D)}_{\widehat \chi_n(t)} \,) \, \partial_t \widehat w_t\, ]^2 \,\> \, dt
\;+\; \frac{A_1}{2}\, \int_0^{T_\delta}
\< \, H_t^2 \,\> \, dt
\end{equation*}
for all $A_1>0$. As $H_t$ vanishes at the boundary of $[0,1]$, by
Poincar\'e's inequality and \eqref{6-25}, this sum is bounded by
\begin{equation*}
\begin{aligned}
& \frac{1}{2A_1}\, \int_0^{T_\delta} \< \,
[\, (\, I \,-\, P^{(D)}_{\widehat \chi_n(t)} \,) \, \partial_t \widehat w_t\, ]^2 \,\> \, dt
\;+\; C_0\, A_1 \, \int_0^{T_\delta} \< \, (\, \nabla H_t \,)^2 \,\> 
dt \\
& \quad\le \frac{1}{2A_1}\, \int_0^{T_\delta} \< \,
[\, (\, I \,-\, P^{(D)}_{\widehat \chi_n(t)} \,) \, \partial_t \widehat w_t\, ]^2 \,\> \, dt
\;+\; C_0\, A_1 \, \int_0^{T_\delta} dt\,
\int_0^1 \sigma(v^n_t) \,  (\, \nabla H_t \,)^2 \; dx
\end{aligned}
\end{equation*}
for some finite constant $C_0 = C_0(u)$ which may change from line to line.

Since $\widehat \chi'_n(t) = (1/n) \, \chi'(t-\delta)$, by Young's
inequality, the third term on the right-hand side of \eqref{6-28} is
bounded by
\begin{equation*}
\frac{C_0}{n}\, \int_0^\delta 
\<\, \{\, P^{(N)}_{\widehat \chi_n(t)}
\nabla  [ \, v_t - \widehat w_t\,]\,\}^2 \,\>\, dt
\;+\;
\frac{1}{n}\, \int_0^\delta \<\, (\, \nabla H_t\,)^2 \,\>\, dt
\end{equation*}
for some finite constant $C_0$ which depends on $\chi(\cdot)$.  As
$P^{(N)}_{s}$, $s\ge 0$, is a contraction in $L^2([0,1])$ and since
$\epsilon(\delta) \le v^n_t \le 1- \epsilon(\delta)$, this
expression is less than or equal to
\begin{equation*}
\frac{C_0}{n}\, \int_0^\delta dt \int_0^1 
[ \, \nabla v_t - \nabla \widehat w_t\,]^2 \; dx
\;+\;
\frac{C_1}{n}\, \int_0^\delta dt \int_0^1 \sigma(v^n_t)
[\, \nabla H_t\,]^2 \; dx
\end{equation*}
for some finite constant $C_1=C_1(u)$. We turn to the linear
functional $L^{(\nabla)}_0$.

\smallskip\noindent{\it The linear functional $L^{(\nabla)}_0$}.
By definition of $v^n_t$,
\begin{equation*}
\int_0^{T_\delta} \< \, \nabla v^n_t \,,\, \nabla H_t\,\>\, dt 
\;=\;
\int_0^{T_\delta} \< \, (\, I \,-\, P^{(N)}_{\widehat \chi_n(t)} \,)\,
\nabla \widehat w_t \,,\, \nabla  H_t  \,\> \, dt
\:+\;
\int_0^{T_\delta} \< \, P^{(N)}_{\widehat \chi_n(t)} \, \nabla v_t
\,,\, \nabla H_t \,\>\, dt\;.
\end{equation*}
For similar reasons to the ones presented above, the first term on the
right-hand side is bounded by
\begin{equation*}
\frac{1}{2A_2}\, \int_0^{T_\delta} \< \,
[\, (\, I \,-\, P^{(N)}_{\widehat \chi_n(t)} \,) \, \nabla \widehat w_t\, ]^2 \,\> \, dt
\;+\; C_0\, A_2 \, \int_0^{T_\delta} dt\,
\int_0^1 \sigma(v^n_t) \, (\, \nabla H_t \,)^2  \; dx
\end{equation*}
for all $A_2>0$ and some finite constant $C_0=C_0(u)$.

\smallskip\noindent{\it The linear functional $\mf L_0$}.  We are now
in a position to estimate
$I^{(1)}_{[0,T_\delta]}(v^n) = (1/4) \Vert \mf L_0 \Vert^2_{-1,
\sigma(v^n)}$.  Let
\begin{equation*}
\begin{gathered}
r_1(n) \;=\; \int_0^{T_\delta} \< \,
[\, (\, I \,-\, P^{(D)}_{\widehat \chi_n(t)} \,) \, \partial_t \widehat w_t\, ]^2
\,\> \, dt \;, \quad
r_2(n) \;=\; \frac{C_0}{n}\, \int_0^\delta dt \int_0^1 
[ \, \nabla v_t - \nabla \widehat w_t\,]^2 \; dx\;, \\
r_3(n) \;=\; \int_0^{T_\delta} \< \,
[\, (\, I \,-\, P^{(N)}_{\widehat \chi_n(t)} \,) \, \nabla \widehat w_t\, ]^2 \,\> \,
dt \;.
\end{gathered}
\end{equation*}
As both semigroups are continuous, $\lim_{n\to\infty} r_j(n) = 0$ for
$j=1$, $3$. As $u$ (and, thus, $v$) has finite energy, by definition
of $\widehat w$, $\lim_{n\to\infty} r_2(n) = 0$.  Set
$\color{bblue} A_j = \sqrt{r_j(n)} =: c_j(n)$, $j=1$, $3$, and
$c_2(n) := r_2(n)$, to get from the bounds obtained above that
\begin{equation*}
\begin{aligned}
& 2\, \int_0^{T_\delta} \< \, \partial_t v^n_t \,,\, H_t\,\>\, dt  \;+\;
2\, \int_0^{T_\delta} \< \, \nabla v^n_t \,,\, \nabla H_t\,\>\, dt
\;-\; \int_0^{T_\delta} dt\,
\int_0^1 \sigma(v^n_t) \,  (\, \nabla H_t \,)^2 \; dx \\
& \quad \le\; 2\, \int_0^{T_\delta} \< \, P^{(N)}_{\widehat \chi_n(t)} \, M_t
\,,\, \nabla H_t \,\>\, dt \;+\;
2\, \int_0^{T_\delta} \< \, P^{(N)}_{\widehat \chi_n(t)} \, \nabla v_t
\,,\, \nabla H_t \,\>\, dt \\
& \quad -\; [1 - \epsilon_n] \, \int_0^{T_\delta} dt\,
\int_0^1 \sigma(v^n_t) \,  (\, \nabla H_t \,)^2 \; dx
\; + \; c_n \;,
\end{aligned}
\end{equation*}
where $c_n = \sum_{1\le j\le 3} c_j(n)$,
$\epsilon_n = C_0 \, [\, c_1(n) \,+\, c_2(n) \, +\, (1/n)\,]$
so that $\lim_{n \to \infty} \epsilon_n = 0$. Note that $c_n$,
$\epsilon_n$ do not depend on $H$.  Hence, by definition of
$I^{(1)}_{[0,T_\delta]}(\,\cdot \,)$ and \eqref{9-0},
\begin{equation*}
I^{(1)}_{[0,T_\delta]}(v^n) \;\le\;
\frac{1}{4(1 - \epsilon_n)}\,
\Vert\, L^n_0 \,\Vert^2_{-1,\sigma(v^n)}
\;+\; c_n \;,
\end{equation*}
where $L^n_0 (H) = \int_0^{T_\delta} \< P^{(N)}_{\widehat \chi_n(t)} \, M_t \,+ \,
P^{(N)}_{\widehat \chi_n(t)} \, \nabla v_t \,,\, \nabla H_t\, \> \, dt$.

By Lemma \ref{l20}, the first term on the right-hand side of the
previous displayed equation is equal to
\begin{equation*}
\frac{1}{4(1 - \epsilon_n)}\, \int_0^{T_\delta} \Big\{
\int_0^1 \frac{1}{\sigma(v^n_t)} \,
[\, P^{(N)}_{\widehat \chi_n(t)} \, M_t \,+ \, P^{(N)}_{\widehat \chi_n(t)} \, \nabla v_t\,]^2
\; dx \, - R^n_t\, \Big\} \, dt \;,
\end{equation*}
where
$R^n_t = \<\, [P^{(N)}_{\widehat \chi_n(t)} \, M_t \,+ \, P^{(N)}_{\widehat \chi_n(t)} \, \nabla
v_t]/ \sigma(v^n_t)\, \>^2 \,/\, \<\, 1/ \sigma(v^n_t)\>$.

Consider the limit, as $n\to\infty$, of the two previous displayed
equations.  Since $\epsilon_n\to 0$ and $c_n \to 0$ we may ignore
these constants. On the other hand, by \eqref{6-25},
$\epsilon(\delta) \;\le\; v^n_t \;\le\; 1 \,-\, \epsilon(\delta)$.
Therefore, as the semigroup $(P^{(N)}_{t} : t\ge 0)$ is continuous in
$\ms L^2([0,1])$, we may replace in the previous equations
$P^{(N)}_{\widehat \chi_n(t)} \, M_t$,
$P^{(N)}_{\widehat \chi_n(t)} \, \nabla v_t$ by $M_t$, $\nabla v_t$,
respectively, at a cost which vanishes as $n\to\infty$. Finally, as
$v^n_t \to v_t$ a.e., we conclude that
\begin{equation*}
\limsup_{n\to \infty}
I^{(1)}_{[0,T_\delta]}(v^n) \;\le\;
\frac{1}{4}\, \int_0^{T_\delta} \Big\{
\int_0^1 \frac{1}{\sigma(v_t)} \,
[\, M_t \,+ \, \nabla v_t\,]^2 \; dx \, - R'_t\, \Big\} \, dt \;, 
\end{equation*}
where
$R'_t = \<\, [\, M_t \,+ \, \nabla v_t]/ \sigma(v_t)\>^2 \,/\, \<\, 1/
\sigma(v_t)\>$. By \eqref{6-24}, this expression is equal to $R_t$,
where
$R_t \,=\, \<\, \nabla v_t/ \sigma(v_t)\>^2 \,/\, \<\, 1/
\sigma(v_t)\>$. Hence, by Lemma \ref{l22} [with $u_t$ replaced by
$v_t$],
\begin{equation*}
\limsup_{n\to \infty}
I^{(1)}_{[0,T_\delta]}(v^n) \;\le\;
I^{(1)}_{[0,T_\delta]}(v)\;.
\end{equation*}

We turn to $I^{(2)}_{[0,T_\delta]}(v^n)$. By Lemma
\ref{l21},
\begin{equation*}
I^{(2)}_{[0,T_\delta]}(v^n)
\;=\; \int_0^{T_\delta} \Phi^{v_n}(a^n_t, b^n_t)\; dt\;,
\end{equation*}
where $a^n_t$, $b^n_t$ are given by \eqref{9-10}, \eqref{9-02} with
$u$ replaced by $v^n$. To stress the dependence of $\Phi$ on $v_n$, we
denoted this functional by $\Phi^{v_n}$. However, as
$v^n(t,1) = v(t,1)$, $v^n(t,0) = v(t,0)$, $\Phi^{v_n}= \Phi^{v}$. 

Let $\Xi^n$, $\Xi$ be given by \eqref{9-02} with $v^n$, $v$ in place
of $u$, respectively. As $v^n \to v$ almost everywhere, and since
$\epsilon (\delta) \le v^n \le 1 - \epsilon (\delta)$, the continuous
function $\Xi^n$ converges to $\Xi$ pointwisely.

An elementary computation, similar to the one presented above when we
examined the rate function $I^{(1)}_{[0,T_\delta]}$, yields that
\begin{equation*}
\begin{aligned}
a^n_t \;& =\; \< \, (\, I \,-\, P^{(D)}_{\widehat \chi_n(t)} \,) \,
\partial_t \widehat w_t \,,\, 1 \,-\, \Xi^n_t \,\>
\;-\; \< \, (\, I \,-\, P^{(N)}_{\widehat \chi_n(t)} \,) \,
\nabla \widehat w_t \,,\, \nabla \Xi^n_t \,\> \\
& \quad +\; \< \, P^{(D)}_{\widehat \chi_n(t)}  \,
\partial_t v_t \,,\, 1 \,-\, \Xi^n_t \,\>
\;-\; \< \, P^{(N)}_{\widehat \chi_n(t)} \,
\nabla v_t \,,\, \nabla \Xi^n_t \,\> \\
&\quad \;+\; \widehat \chi_n'(t)\,
\< \,  v_t \, -\, \widehat w_t \,,\,
\Delta P^{(D)}_{\widehat \chi_n(t)} [1 \,-\, \Xi^n_t] \,\> \;.
\end{aligned}
\end{equation*}
Note that in the last term the operator
$\Delta P^{(D)}_{\widehat \chi_n(t)}$ is acting on $[1 \,-\, \Xi^n_t]$
instead of $v_t \, -\, \widehat w_t$, as in the first part of the
proof. Here, we simply used the fact that the semigroup $P^{(D)}_{r}$
is symmetric.

As $v_t \, -\, \widehat w_t$ vanishes at the boundary, an integration by
parts and \eqref{6-23} yield that the last term is equal to
\begin{equation*}
-\, \widehat \chi_n'(t)\,
\< \,  \nabla [ v_t \, -\, \widehat w_t] \,,\,
\nabla P^{(D)}_{\widehat \chi_n(t)} [1 \,-\, \Xi^n_t] \,\>
\;=\; \widehat \chi_n'(t)\,
\< \,  P^{(N)}_{\widehat \chi_n(t)}
\nabla [ v_t \, -\, \widehat w_t] \,,\,
\nabla  \Xi^n_t \,\>\;,
\end{equation*}
where we used that the semigroup $P^{(N)}_s$ is symmetric in
$\ms L^2([0,1])$.

Since $\epsilon (\delta) \le v^n \le 1 - \epsilon (\delta)$, there
exists a finite constant $C_0$ such that $|\, \Xi^n_t\,|\,\le\, C_0$,
$|\, \nabla \Xi^n_t\,|\,\le\, C_0$ for all $n\ge 1$,
$0\le t\le T_\delta$. Therefore, as
$\widehat \chi_n'(t)\,=\, (1/n) \chi'(t-\delta)$ and since the
operators $P^{(N)}_{s}$, $P^{(D)}_{s}$ are contractions in
$\ms L^2([0,1])$, there exists a finite constant $C_0$ such that
$|\, a^n_t\,|^2 \,\le\, C_0 \, \{\, 1\,+\, \< \, (\partial_t v_t)^2 \,\>
\,+\, \< \, (\nabla v_t)^2 \,\>\,\}$ for all $n\ge 1$,
$0\le t\le T_\delta$. Moreover,
\begin{equation*}
\lim_{n\to\infty} \, \widehat \chi_n'(t)\,
\< \,  P^{(N)}_{\widehat \chi_n(t)}
\nabla [ v_t \, -\, \widehat w_t] \,,\,
\nabla  \Xi^n_t \,\> \;=\; 0\;,
\end{equation*}
and, as $\Xi^n_t \to \Xi_t$, $\nabla \Xi^n_t \to \nabla \Xi_t$ in
$\ms L^2([0,1])$, for all $0\le t\le T_\delta$,
\begin{equation}
\label{6-27}
\lim_{n\to\infty} a^n_t \;=\; a_t \;:=\;
\< \, \partial_t v_t \,,\, 1 \,-\, \Xi_t \,\>
\;-\; \< \,  \nabla v_t \,,\, \nabla \Xi_t \,\>\;.
\end{equation}
A similar bound and limit hold for the sequence $b^n_t$. Since
$\Phi^{v}$ is continuous and the map $t \mapsto \< \, (\partial_t v_t)^2 \,\>
\,+\, \< \, (\nabla v_t)^2 \,\>$ is integrable, by \eqref{9-11},
\eqref{6-27} and the dominated convergence theorem,
\begin{equation*}
\lim_{n\to\infty}
\int_0^{T_\delta} \Phi^{v}(a^n_t, b^n_t)\; dt \;=\;
\int_0^{T_\delta} \Phi^{v}(a_t, b_t)\; dt\;.
\end{equation*}
By Lemma \ref{l21}, the right-hand side is equal to
$I^{(2)}_{[0,T_\delta]}(v)$. Therefore,
\begin{equation*}
\lim_{n\to\infty} I^{(2)}_{[0,T_\delta]}(v^n)
\;=\; I^{(2)}_{[0,T_\delta]}(v) \;.
\end{equation*}

Since $v^n = \tau_\delta u^n$, $v = \tau_\delta u$, adding together
the estimates on $I^{(1)}_{[0,T_\delta]}(v^n)$ and
$I^{(2)}_{[0,T_\delta]}(v^n)$, yield that
\begin{equation*}
\limsup_{n\to\infty} I_{[0,T_\delta]}(\tau_\delta u^n)
\;\le \; I_{[0,T_\delta]}(\tau_\delta u)\;.
\end{equation*}
By \eqref{4-06}, this expression is bounded by
$I_{[0,T]}( u \,|\, \gamma)$, which completes the proof of the lemma
in view of \eqref{6-30}, \eqref{6-31}.
\end{proof}

\begin{proof}[Proof of Theorem \ref{mt3}]
The first assertion follows from Lemma \ref{l16} and the definition of the
set $\Pi_4$.

Assume that there exists $\epsilon_0 >0$ such that
$\epsilon_0 \le \gamma \le 1-\epsilon_0$. Fix
$\pi \in D([0,T], \ms M)$ such that $I_{[0,T]}
(\pi|\gamma)<\infty$. Let $\pi^n (t,dx) = u^n(t,x)\, dx$ be the
sequence in $\Pi_\gamma$ which
$I_{[0,T]} (\,\cdot\,|\gamma)$-approximates $\pi$ in the sense of
Definition \ref{d05}. Since $\pi^n$ belongs to $\Pi_\gamma$, there
exists $\delta>0$ and $\epsilon>0$ such that $u^n_t = u^{(\gamma)}_t$
for $0\le t\le \delta$ and $\epsilon \le u^n(t,x) \le 1-\epsilon$ for
all $(t,x) \in [\delta, T]\times [0,1]$. By \eqref{1-08}, there
exists $\epsilon_1>0$ such that
$\epsilon_1 \le u^n(t,x) \le 1-\epsilon_1$ for all
$(t,x) \in [0,\delta]\times [0,1]$. This completes the proof of the
theorem.
\end{proof}

\begin{remark}
\label{rm4}
The difference between the present context and \cite{BLM09} is that
the rate function is convex. We used this property to restrict our
attention to trajectories bounded away from $0$ and $1$ and smooth in
time {\rm [}that is to paths in $\Pi_3${\rm ]}. 
\end{remark}

We conclude this section deriving the explicit formula for the rate
functions of trajectories in $\Pi_\gamma$.

\begin{proof}[Proof of Proposition \ref{l09b}]
As $u$ belongs to $\Pi_\gamma$, $u$ is smooth in
$(0,T]\times [0,1]$, and for each $0<t\le T$, there exists
$\delta = \delta(t)>0$ such that $\delta \le u(t,x)\le 1-\delta$.
Therefore, equation \eqref{5-01b} is strictly elliptic and can be
solved explicitly. The solution $H$ inherits the smoothness from
$u$. In particular, it belongs to $C^{1,2}((0,T]\times [0,1])$.

As $u$ belongs to $\Pi_\gamma$, $u$ follows the hydrodynamic equation
in a time-interval $[0, \mf t]$ for some $\mf t>0$. Hence, for
$0<t\le \mf t$, the solution of \eqref{5-01b} vanishes: $H(t,x)=0$ for
all $(t,x) \in [0,\mf t]\times [0,1]$. Hence, $H$ actually belongs to
$C^{1,2}([0,T]\times [0,1])$.

We turn to the formula for the rate function. For a function $G$ in
$C^{1,2}([0,T]\times [0,1])$, let
\begin{equation*}
L_{T,G} ( u ) \; =\; \big\langle u_T, G_T \big\rangle
\;-\; \langle u_0 , G_0  \rangle
\;-\; \int_0^{T} \big\langle u_t, \partial_t G_t  \big\rangle \; dt
\; +\; \int_0^{T} \big\langle \nabla  u_t , \nabla  G_t \big\rangle \; dt
\; .
\end{equation*}
Multiply equation \eqref{5-01b} by $G$, integrate over space and time,
integrate by parts in space, and recall the boundary conditions to
get that
\begin{equation*}
\begin{aligned}
L_{T,G} ( u ) \; & =\; 2 \, \int_0^{T} \big\langle
\sigma( u_t) \, \nabla H_t \, ,\,  \nabla  G_t \big\rangle \; dt \\
& +\;  \int_0^{T} \big\{\, G_t(1)\, \mf p_{\beta, B} \big(\,
u_t(1)\,,\, H_t(1)\, \big) \;+\; G_t(0)\,
\mf p_{\alpha, A} \big(\, u_t(0)\,,\, H_t(0)\, \big)\,\big\}\; dt
\; .
\end{aligned}
\end{equation*}
Insert this expression in \eqref{1-01b} and add and subtract some
terms to get that
\begin{equation*}
\begin{aligned}
J_{T,G} ( u ) \; =\; & -\, \int_0^{T} \big\langle
\sigma( u_t) \, [\, \nabla H_t \, -\,  \nabla  G_t\,]^2 \big\rangle \; dt \\
& - \;  \frac{1}{A} \, \int_0^{T} [\,1-u_t(0)\,] \, \alpha\,
\big[\, e^{G_t(0)} \,-\, e^{H_t(0)} \,-\, [G_t(0) - H_t(0)]\,
e^{H_t(0)} \,\big] \; dt \\
& - \;  \frac{1}{A} \, \int_0^{T}
u_t(0) \, (1-\alpha) \,
\big[\, e^{-G_t(0)} \,-\, e^{-H_t(0)} \,-\, [G_t(0) - H_t(0)]\,
e^{-H_t(0)} \,\big] \; dt \\
&- \; \mf B \; +\; \bb I_{[0,T]} (u)\;, 
\end{aligned}
\end{equation*}
where $\bb I_{[0,T]} (u)$ is the expression appearing on the
right-hand side of \eqref{5-03} and $\mf B$ is a term similar to the
second and third lines of this formula with the left boundary
conditions replaced by the right ones. Since the expressions inside the
integrals are all positive, the supremum in $G$ is attained at $G=H$,
so that
\begin{equation*}
I_{[0,T]} (u) \;=\; \sup_{G} J_{T,G} ( u ) \; =\; \bb I_{[0,T]} (u)\;,
\end{equation*}
which completes the proof of the lemma.
\end{proof}

\section{Proof of Theorem \ref{LDP}}
\label{sld}

In this section, we prove the dynamical large deviations. The strategy
is by now classical and we just indicate the main steps. The main
point here is that the dynamics can be considered as a small
perturbation of the exclusion process with Neumann boundary conditions
(the process induced by the generator $L^{\rm bulk}_{N} $) because the
boundary dynamics is speeded-up only by $N$.

The reversible stationary measures for the exclusion process with
Neumann boundary conditions are the uniform measures with a fixed
total number of particles. The grand canonical versions are the
product Bernoulli measures with a fixed density. For this reason, we
take one of these measures as reference measure.

There is an important difference between our model and the exclusion
process with Dirichlet boundary conditions. Recall the definition the
functional $J_{T,H}$ introduced in \eqref{1-01} and \eqref{1-01b}. For
the sake of this discussion, denote by $J^{\rm DBC}_{T,H}$ the
corresponding functional in the context of exclusion dynamics with
Dirichlet boundary conditions \cite{BDGJL2003, BLM09, FLM2011,
FGN_LPP}. While $J^{\rm DBC}_{T,H}$ is defined on the set
$D([0,T], \ms M_{\rm ac})$, in the present context, the functionals
$J_{T,H}$ are defined only on the subset
$D_{\mc E}([0,T], \ms M_{\rm ac})$ of trajectories with finite energy
because only for such trajectories are the boundary densities well
defined. As a consequence, in the two-blocks estimate, the usual
empirical density,
$(2N\epsilon +1)^{-1} \sum_{y\in \Lambda_N , |y-x|<\epsilon} \eta_y$
which, as a function of $x$, has jumps needs to be replaced by a
smooth approximation.  See \eqref{n03} below.

\subsection*{A super-exponential estimate}

We follow the proofs presented in \cite[Section 3]{BLM09},
\cite[Section 6]{FLM2011}, \cite{FGN_LPP}. Denote by
$\color{bblue} \nu_N$ the Bernoulli product measure on $\Omega_N$ with
density $1/2$ and by $\bb D_N$ the Dirichlet form given by
\begin{equation*}
\bb D_N(f) \;:=\;
\langle\,  -\, L^{\rm bulk}_{N} f  \,,\, f  \,
\rangle_{\nu_N}\;,
\quad f\colon \Omega_N \to \bb R_+\;.
\end{equation*}
Next result is \cite[Lemma 3.1]{BLM09} adapted to the present context.
The proof is elementary and left to the reader. It relies on a Schwarz
inequality.

\begin{lemma}
\label{eq:dir_car}
There exists a finite constant $C_0$, which only depends on the
parameters $\alpha$, $\beta$, $A$, $B$, such that
\begin{equation*}
\langle\, \ms L_{N} f \,,\, f  \,
\rangle_{\nu_N}
\;\le\; -\, \bb D_N(f) \;+\; C_0\, N\, E_{\nu_N}[\, f^2\, ]
\end{equation*}
for all $ f\colon \Omega_N \to \bb R_+$.
\end{lemma}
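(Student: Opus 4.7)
The plan is to split $\ms L_N = L^{\rm bulk}_N + L^{\rm lb}_N + L^{\rm rb}_N$ and handle each piece separately. The measure $\nu_N$ is product Bernoulli of density $1/2$, so it is reversible for the bulk exchange dynamics, and by the very definition of $\bb D_N$ one has $\langle L^{\rm bulk}_N f, f\rangle_{\nu_N} = -\,\bb D_N(f)$. The task thus reduces to proving $\langle L^{\rm lb}_N f, f\rangle_{\nu_N} + \langle L^{\rm rb}_N f, f\rangle_{\nu_N} \le C_0\, N\, E_{\nu_N}[f^2]$.

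For the left boundary, set $c_\alpha(\eta) := (1-\eta_{\mf e})\alpha + (1-\alpha)\eta_{\mf e}$, so that
\begin{equation*}
\langle L^{\rm lb}_N f, f\rangle_{\nu_N} \;=\; \frac{N}{A}\int c_\alpha(\eta)\, [f(\sigma^{\mf e}\eta) - f(\eta)]\, f(\eta)\, d\nu_N \;.
\end{equation*}
I would discard the nonpositive contribution $-(N/A)\int c_\alpha(\eta) f(\eta)^2 d\nu_N$, apply the Schwarz-type bound $f(\eta) f(\sigma^{\mf e}\eta) \le \tfrac12 [f(\eta)^2 + f(\sigma^{\mf e}\eta)^2]$ (valid since $f\ge 0$), change variables $\eta \mapsto \sigma^{\mf e}\eta$ in the second summand using the invariance of $\nu_N$ under this flip, and then use the elementary algebraic identity
\begin{equation*}
c_\alpha(\eta) \,+\, c_\alpha(\sigma^{\mf e}\eta) \;=\; (1-\eta_{\mf e})[\alpha + (1-\alpha)] \,+\, \eta_{\mf e}[\alpha + (1-\alpha)] \;=\; 1\;,
\end{equation*}
to conclude that $\langle L^{\rm lb}_N f, f\rangle_{\nu_N} \le (N/2A)\, E_{\nu_N}[f^2]$. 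The identical argument at the right boundary (with $\mf e$, $\alpha$, $A$ replaced by $\mf r$, $\beta$, $B$) gives the matching bound with $B$ in place of $A$. Summing the three contributions proves the lemma with $C_0 = \tfrac12(A^{-1}+B^{-1})$.

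There is no genuine obstacle here: the entire point is that the boundary generators $L^{\rm lb}_N$ and $L^{\rm rb}_N$ carry an explicit prefactor $N$ (and not $N^2$ as in the bulk), so a crude Schwarz bound on a single bond suffices. The small simplification provided by the identity $c_\alpha + c_\alpha\!\circ\!\sigma^{\mf e} = 1$ makes the constant independent of $\alpha$ and $\beta$; one could equally well just use $\|c_\alpha\|_\infty \le 1$ and absorb $\alpha$, $\beta$ into $C_0$, as permitted by the statement.
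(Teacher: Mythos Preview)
Your proof is correct and matches the approach the paper indicates: the paper does not spell out the argument but says it ``relies on a Schwarz inequality,'' which is exactly what you do after using the reversibility of $\nu_N$ for the bulk part. One minor remark: the inequality $ab \le \tfrac12(a^2+b^2)$ holds for all real $a,b$, so the hypothesis $f\ge 0$ is not actually needed at that step (though the lemma only asserts it for such $f$).
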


Given a cylinder function $h$, that is a function on $\{0,1\}^{\bb
Z}$ depending on $\eta_x$, $x\in\bb Z$, only through finitely many
$x$, denote by $\widetilde h(\alpha)$ the expectation of $h$ with
respect to $\nu_\alpha$, the Bernoulli product measure with density
$\alpha$:
\begin{equation*}
\widetilde h(\alpha) \;=\; E_{\nu_\alpha}[h]\; . 
\end{equation*}

Denote by $\color{bblue} \{\tau_x : x\in \bb Z\}$ the group of
translations in $\{0,1\}^{\bb Z}$ so that
$(\tau_x \zeta)_z = \zeta_{x+z}$ for all $x$, $z$ in $\bb Z$ and
configuration $\zeta$ in $\{0,1\}^{\bb Z}$.  Translations are extended
to functions and measures in a natural way. They are also extended to
configurations, functions and measures in $\Omega_N$. In this case,
for $x$, $y\in \{k/N : k\in \bb Z\}$ such that $y$,
$x+y\in \Lambda_N$, $(\tau_x \eta)_y = \eta_{x+y}$.

Fix a strictly decreasing sequence $\{ U_\epsilon : \epsilon>0\}$
converging to $1$: $U_\epsilon > U_{\epsilon'} >1$ for
$\epsilon >\epsilon'>0$, $\lim_{\epsilon\to 0} U_\epsilon =1$.  Recall
from \eqref{n04} the definition of the approximation of the unity
$\phi^\delta$. For $\epsilon>0$, $\pi\in \ms M$, denote by
$\Xi_\epsilon (\pi)$ the measure in $\ms M_{\rm ac}$ defined by
\begin{equation}
\label{n03}
\Xi_\epsilon (\pi) \, (dx)  \;=\; \frac{1}{U_{\epsilon}}
\int_0^1 \phi^\epsilon(y-x) \, \pi(dy) \, dx \;.
\quad\text{Let}\;\; \pi^{N,\epsilon} \;=\; \Xi_\epsilon (\pi^N)\;.
\end{equation}
Clearly, $\pi^{N,\epsilon}$ belongs to $\ms M_{\rm ac}$ for $N$
sufficiently large because $U_\epsilon>1$. Denote its density by
$\color{bblue} u^{N,\epsilon}$. We have just pointed out that
$0\le u^{N,\epsilon}(x) \le 1$ for $N$ large. The map
$x\mapsto u^{N,\epsilon}(x)$ is smooth, and, if
$x$ is at distance less than $\epsilon$ from the boundary of the
interval $[0,1]$, $u^{N,\epsilon}(x)$ does not represent the density
of particles around $x$ because the integral is carried over an
interval which does not contain the support of $\phi (\,\cdot\,-x\,)$.

Let $H\in C([0,T]\times [0,1])$ and $h$ a cylinder
function.  For $\epsilon>0$ and $N$ large enough, define
$V_{N,\epsilon}^{H,h} :[0,T]\times \Omega_N \to \bb R$ by
\begin{equation*}
V_{N,\epsilon}^{H,h}(t,\eta) \;=\; \frac 1N 
\sum_{x\in \Lambda_N}
H(t,x/N) \big\{ \tau_xh(\eta) \,-\,
\widetilde h\big(u^{N,\epsilon}(x)\big)\big\}\; .
\end{equation*}
The sum is carried over all $x\in \Lambda_N$ for which the support of
$\tau_xh$ is contained in $\Lambda_N$.  For a function $G\in C([0,T])$
and cylinder functions $h$, $f$ whose supports are contained in
$\bb N$, $-\bb N$, respectively, let
$W_G^\pm: [0,T]\times \Lambda_N \to \bb R$ be defined by
\begin{equation*}
\begin{gathered}
W^{G,h,-}_{N,\epsilon} (t,\eta) \;=\; G(t) \,
\big\{\, h(\eta) - \widetilde h
( u^{N,\epsilon}(\epsilon)  ) \, \big\}\; , \\
W^{G,f,+}_{N,\epsilon} (t,\eta) \;=\; G(t) \,
\big\{\, (\tau_N f) (\eta) - \widetilde f (
u^{N,\epsilon}(1-\epsilon) ) \, \big\}\; .
\end{gathered}
\end{equation*}

\begin{theorem}
\label{hs04}
Fix $H$ in $C([0,T]\times [0,1])$, $G$ in $C([0,T])$, a cylinder
function $h$ whose support ia contained in $\bb N$, a sequence of
configurations $\{\eta^N \in \Omega_N : N\ge 1\}$ and $\delta>0$. Then
\begin{equation*}
\begin{gathered}
\lim_{\epsilon\to 0}\,\limsup_{N\to\infty} \frac 1N  \log
\bb P_{\eta^N} \Big [\, \Big |
\int_0^{T}V_{N,\epsilon}^{H,h}(t,\eta_t)\, dt \, \Big | \,>\, \delta
\Big] \; =\; -\infty \; , 
\\
\lim_{\epsilon\to 0}\,\limsup_{N\to\infty}  \frac 1N  \log
\bb P_{\eta^N} \Big [\, \Big | \int_0^{T}
W^{G,h,-}_{N,\epsilon} (t,\eta_t) \, dt \Big | \,>\, \delta \, \Big] \; =\;
-\infty  \; .
\end{gathered}
\end{equation*}
\end{theorem}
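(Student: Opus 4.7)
The standard route is through the Feynman--Kac formula combined with one and two-blocks estimates. For the first statement, by Chebyshev and $e^{|x|} \le e^x + e^{-x}$, it suffices to show that for every $a>0$,
\begin{equation*}
\limsup_{\epsilon\to 0}\,\limsup_{N\to\infty} \frac{1}{N}\log
\bb E_{\eta^N}\Big[\exp\Big\{a N\int_0^T V_{N,\epsilon}^{H,h}(t,\eta_t)\,dt\Big\}\Big]\;\le\; 0,
\end{equation*}
and the corresponding bound with $-a$.  By Feynman--Kac, this logarithm is bounded by $\int_0^T \lambda_{N,\epsilon,t}\,dt$, where $\lambda_{N,\epsilon,t}$ is the largest eigenvalue of the symmetric part of $\ms L_N + a N V_{N,\epsilon}^{H,h}(t,\cdot)$ in $\ms L^2(\nu_N)$. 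Applying Lemma~\ref{eq:dir_car} to absorb the antisymmetric boundary contributions, this eigenvalue is at most
\begin{equation*}
C_0 \;+\; \sup_{f\colon \langle f^2\rangle_{\nu_N}=1}
\Big\{\,a\,\<V_{N,\epsilon}^{H,h}(t,\cdot),f^2\>_{\nu_N} \,-\, \tfrac{1}{N}\bb D_N(f)\,\Big\}.
\end{equation*}
Thus the matter is reduced to showing that, uniformly in densities $f^2$ with $\nu_N$-Dirichlet form of order $N$, the expectation of $V_{N,\epsilon}^{H,h}$ is negligible as $N\to\infty$ and then $\epsilon\to 0$.

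\noindent The latter is accomplished via the classical one-block/two-blocks scheme. First, for an intermediate scale $\ell$ with $1\ll \ell\ll \epsilon N$, one replaces $\tau_x h(\eta)$ by $\widetilde h(\eta^\ell(x))$, where $\eta^\ell(x)$ is the empirical density in a box of radius $\ell$ centred at $x$. This one-block estimate is standard: one partitions $\Lambda_N$ into boxes of size $2\ell+1$, uses the Dirichlet form bound to restrict $f^2$ to configurations with a fixed number of particles per box, and then invokes the equivalence of ensembles inside each box. Secondly, one replaces $\widetilde h(\eta^\ell(x))$ by $\widetilde h(u^{N,\epsilon}(x))$. Since $u^{N,\epsilon}(x)$ is a smooth convolution of the empirical measure against $\phi^\epsilon$, the bound
\begin{equation*}
\Big|\,\eta^\ell(x) \,-\, u^{N,\epsilon}(x)\,\Big|
\;\le\; C_0\,\Big|\,\frac{1}{|B_\ell|}\sum_{y\in B_\ell(x)} \eta_y \,-\,
\int \phi^\epsilon(y-x)\,\pi^N(dy)/U_\epsilon\,\Big|
\end{equation*}
together with a two-blocks estimate controlling density fluctuations on mesoscopic scales yields the replacement at the cost of terms that vanish as $N\to\infty$ and then $\epsilon\to 0$. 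The Lipschitz continuity of $\widetilde h$ in the density then closes the bulk estimate.

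\noindent The second limit is analogous but concerns the microscopic neighbourhood of the left boundary. Here $h$ depends on $\eta_{\mf e},\eta_{2\mf e},\dots$ near $0$, and one wishes to replace $h(\eta)$ by $\widetilde h(u^{N,\epsilon}(\epsilon))$. The one-block estimate proceeds by comparing $h(\eta)$ with its average over a boundary box of size $\ell$; crucially, only bulk bonds $(x,x+\mf e)$ are used in this comparison, so the Dirichlet form controlling the replacement is $(1/N)\bb D_N(f)$, exactly as in the bulk, and the contribution of $L^{\rm lb}_N$ is absorbed by Lemma~\ref{eq:dir_car}. The second step, from a block-average around $\epsilon$ to $u^{N,\epsilon}(\epsilon)$, is identical to the bulk two-blocks argument since $u^{N,\epsilon}(\epsilon)$ is defined as a convolution integral over an interval of width $\epsilon$ sitting at the boundary (the factor $U_\epsilon$ ensures the density remains well defined there).

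\noindent\textbf{Main obstacle.} In contrast with the Dirichlet-boundary models of \cite{BLM09,FLM2011,FGN_LPP}, here the boundary density is not pinned to $\alpha$ or $\beta$: the reservoir dynamics is sped up only by $N$, so at the diffusive scale the boundary value is an unknown determined by the trajectory. This is the reason the standard block-average $(2N\epsilon+1)^{-1}\sum_{|y-x|<\epsilon N}\eta_y$ cannot be used as a reference quantity near $x=0$ or $x=1$; instead one must work with the smoothed field $u^{N,\epsilon}$ of \eqref{n03}, and show, via the one- and two-blocks estimates above, that the replacement remains quantitatively sharp when the centre $x$ lies within $\epsilon$ of the boundary. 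The normalisation $U_\epsilon>1$ and the smoothness of $\phi^\epsilon$ are tailored precisely so that the two-blocks comparison survives this loss of symmetry, and this is the technically delicate point of the argument.
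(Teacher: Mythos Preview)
Your proposal is correct and follows essentially the same approach as the paper. The paper does not give a detailed proof of this theorem; it simply states that the result follows from Lemma~\ref{eq:dir_car} and the computation in \cite[Lemma 3.2]{l92}, which is precisely the Feynman--Kac plus Dirichlet-form variational bound, followed by the one-block/two-blocks replacement, that you have outlined.
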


A similar result holds if the cylinder functions $h$ has support
contained in $-\bb N$ and the minus sign in
$W^{G,h,-}_{N,\epsilon} (t,\eta_t)$ is replaced by a plus sign.  The
proof of this result follows from Lemma~\ref{eq:dir_car} and the
computation presented in the proof of \cite[Lemma 3.2]{l92}.

\subsection*{An energy estimate}

The next result is Lemma 3.3 and Corollary 3.4 in \cite{BLM09}.
The proof is similar and the details are left to the reader.

\begin{proposition}
\label{p_energy}
Fix a sequence $\{G_j : j\ge 1\}$ of functions in
$C^{0,1}([0,T]\times [0,1])$ with compact support in
$[0,T]\times(0,1)$ and a sequence $\{\eta^N \in \Omega_N : N\ge 1\}$
of configurations.  There exists a finite constant $C_0$, depending
only on $\alpha$, $\beta$, $A$, $B$, such that 
\begin{equation*}
\limsup_{\epsilon \to 0} \limsup_{N\to\infty} \frac 1N  \log
\bb P^N_{\eta^N} \Big [\, \max_{1\le j\le k} \mc Q_{G_j}
(u^{N, \epsilon} ) \,\ge\,  \ell \,\Big] 
\; \le \; -\ell \; +\; C_0 \, (T+1)\;.
\end{equation*}
for all $k$, $\ell \ge 1$.
\end{proposition}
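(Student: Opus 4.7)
I will follow the exponential Chebyshev plus Feynman--Kac scheme of Lemma~3.3 and Corollary~3.4 in \cite{BLM09}, adapted to the Robin boundary. First I reduce the maximum to a single-function estimate: by the elementary inequality $\max_j x_j \le N^{-1}\log \sum_j e^{N x_j}$ together with Markov,
\begin{equation*}
\bb P^N_{\eta^N}\big[\max_{1\le j\le k}\mc Q_{G_j}(u^{N,\epsilon})\ge \ell\big]
\;\le\; k\, e^{-N\ell}\,\max_{1\le j\le k}\bb E^N_{\eta^N}\big[e^{N\mc Q_{G_j}(u^{N,\epsilon})}\big],
\end{equation*}
and the prefactor $k$ contributes $N^{-1}\log k \to 0$ as $N\to\infty$ for each fixed $k$. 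It therefore suffices to prove, for each smooth $G$ compactly supported in $(0,T)\times(0,1)$, that $\bb E^N_{\eta^N}[\exp\{N\mc Q_G(u^{N,\epsilon})\}] \le \exp\{C_0 N(T+1) + r_{N,\epsilon}(G)\}$ with $C_0$ depending only on $\alpha,\beta,A,B$ and $r_{N,\epsilon}(G)/N \to 0$.

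The single-function bound will be obtained by constructing a mean-one exponential martingale whose logarithm dominates $N\mc Q_G(u^{N,\epsilon})$. Since $\nu_N(\{\eta\}) = 2^{-(N-1)}$, the crude bound $\bb E^N_{\eta^N}[F]\le 2^{N-1}\bb E^N_{\nu_N}[F]$ reduces the problem to an expectation under $\nu_N$, absorbing $\log 2$ into $C_0$. I then consider the Feynman--Kac martingale
\begin{equation*}
M^H_T \;=\; \exp\Big\{N\langle \pi^N_T,H_T\rangle - N\langle \pi^N_0,H_0\rangle -\int_0^T\!\!\big[N\langle \pi^N_s,\partial_sH_s\rangle + e^{-N\langle \pi^N_s,H_s\rangle}\ms L_N e^{N\langle \pi^N_s,H_s\rangle}\big]\,ds\Big\}
\end{equation*}
for a smooth $H$ to be chosen. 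A second-order Taylor expansion yields $e^{-N\langle\pi^N,H\rangle}\,L^{\rm bulk}_N e^{N\langle\pi^N,H\rangle} \le N\langle\pi^N,\Delta H\rangle + \tfrac{N}{2}\|\nabla H\|_2^2 + o(N)$, using the crude bound $(\eta_{x+\mf e_N}-\eta_x)^2\le 1$ for the quadratic part, while the Robin generators contribute $N\mf b_{\alpha,A}(\eta_{\mf e_N},H(\mf e_N)) + N\mf b_{\beta,B}(\eta_{\mf r_N},H(\mf r_N))$.

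To extract $\mc Q_G$, set $G^{(\epsilon)} = U_\epsilon^{-1}\phi^\epsilon * G$ and observe that $\langle u^{N,\epsilon}_s,\nabla G_s\rangle = \langle\pi^N_s,\nabla G^{(\epsilon)}_s\rangle$. Choose $H_s(x) = -\int_0^x G^{(\epsilon)}_s(y)\,dy + x\int_0^1 G^{(\epsilon)}_s(y)\,dy$, possibly multiplied by a smooth spatial cutoff, so that $H_s(0) = H_s(1) = 0$, $\Delta H_s = -\nabla G^{(\epsilon)}_s$, and $\|\nabla H_s\|_2^2 \le U_\epsilon^{-2}\|G_s\|_2^2$. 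Because $G$ is compactly supported in time, $H_T = H_0 = 0$. Substituting this $H$ gives $\log M^H_T \ge N\mc Q_G(u^{N,\epsilon}) - C_0 N(T+1) - r_{N,\epsilon}(G)$, and the identity $\bb E[M^H_T] = 1$ furnishes the claimed moment bound after applying Markov and taking successive limits in $N$ and $\epsilon$.

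\emph{Main obstacle.} The key difficulty absent from the Dirichlet setting of \cite{BLM09} is controlling the boundary contributions in the Feynman--Kac exponent. The Robin generators produce the terms $N\mf b_{\alpha,A}(\eta_{\mf e_N},H(\mf e_N))$ and $N\mf b_{\beta,B}(\eta_{\mf r_N},H(\mf r_N))$, and the bulk summation by parts produces additional boundary terms proportional to $H'(0)$ and $H'(1)$; both can a priori be $O(N)$ and $G$-dependent unless $H$ is chosen judiciously. The plan is to force $H$ to vanish at $x = 0, 1$ (making the $\mf b$-terms $O(1)$ by Taylor expanding $\mf b_{\varrho,D}(a,M)$ around $M=0$), and to either genuinely localize $H$ inside $(0,1)$ via a cutoff or exploit the compact support of $G$ in time to show that the offending $O(N)$ summation-by-parts contributions integrate to $o(N)$. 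Controlling these boundary terms so that $C_0$ remains independent of the particular $G_j$ is the technical heart of the argument.
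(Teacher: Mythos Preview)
Your high-level strategy --- exponential Chebyshev to reduce to a single $G$, pass from $\eta^N$ to $\nu_N$ at the cost of $2^{N-1}$, then invoke Feynman--Kac --- is correct and is what \cite{BLM09} does. The specific implementation via the exponential martingale $M^H_T$ with an auxiliary $H$, however, has real gaps. First, the hypothesis gives $G_j$ compact support in $[0,T]\times(0,1)$, i.e.\ compact support in the \emph{spatial} variable only; $G_j(0,\cdot)$ and $G_j(T,\cdot)$ need not vanish, so your claim $H_T=H_0=0$ fails. Second, $G_j\in C^{0,1}$ is merely continuous in time, so $\partial_s H_s$ is undefined and the martingale as written does not make sense. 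Third, even granting extra regularity, the three terms $N\langle\pi^N_T,H_T\rangle$, $-N\langle\pi^N_0,H_0\rangle$ and $-\int_0^T N\langle\pi^N_s,\partial_s H_s\rangle\,ds$ are each of order $N$ with constants depending on $\|G\|_\infty$ or $\|\partial_t G\|_\infty$; they cannot be absorbed into a $G$-independent $C_0$ nor into an $r_{N,\epsilon}(G)=o(N)$.

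The route in \cite{BLM09} avoids all this by \emph{not} introducing $H$. One applies the Feynman--Kac principal-eigenvalue bound directly to the potential $V_s(\eta)=\langle u^{N,\epsilon}(\eta),\nabla G_s\rangle-\tfrac12\|G_s\|_2^2$, obtaining $\bb E_{\nu_N}\big[\exp\{N\!\int_0^T V_s\,ds\}\big]\le\exp\{\int_0^T\lambda_s\,ds\}$ with $\lambda_s$ the largest eigenvalue of $NV_s+\ms L_N^{\rm sym}$ in $L^2(\nu_N)$. Lemma~\ref{eq:dir_car} gives $\lambda_s\le\sup_f\{N\langle V_s,f^2\rangle_{\nu_N}-\bb D_N(f)\}+C_0 N$, and the supremum is handled by rewriting $\langle\pi^N,\nabla G^{(\epsilon)}_s\rangle$ as a bond sum $-\sum_x[\eta_{x+\mf e}-\eta_x]\,G^{(\epsilon)}_s(x+\mf e)$ (the spatial compact support kills the boundary terms here), using the change of variables $\eta\mapsto\sigma^{x,x+\mf e}\eta$ under $\nu_N$, and balancing against $\bb D_N(f)$ via Schwarz. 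This yields $\sup_f\{\cdots\}\le\tfrac12\sum_x G^{(\epsilon)}_s(x)^2-\tfrac N2\|G_s\|_2^2+O_G(1)$, whose leading part is nonpositive since $U_\epsilon>1$. No time regularity of $G$ is used and the only $G$-dependent remainders are $O(1)$. The Robin boundary generators --- the obstacle you highlight --- enter \emph{only} through the constant $C_0$ of Lemma~\ref{eq:dir_car}; because $G$ is supported away from $x=0,1$, they never touch the test function in the eigenvalue computation.
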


\subsection*{Upper bound}

The upper bound proof relies on the super-exponential estimate
presented in Theorem \ref{hs04} and on the energy estimate stated in
Proposition \ref{p_energy}. It is similar to the one presented in
\cite[Subsection 6.3]{FLM2011}. As a consequence of the argument, the
rate function can be set as $+\infty$ for trajectories that are not
absolutely continuous with respect to the Lebesgue measure or which do
not have finite energy. In other words, in the proof of the upper
bound one can set $I_{[0,T]}(\pi|\gamma)=+\infty$ for
$\pi \not\in D_{\mc E}([0,T], \ms M_{\rm ac})$.

\subsection*{Lower bound}

We follow the arguments presented in \cite[Subsection 3.4]{BLM09} and
\cite[Subsection 6.4]{FLM2011}. Fix an open set $\ms G$ of
$D([0,T], \ms M)$ and a density profile $\gamma: [0,1] \to [0,1]$.
Recall the definition of the set $\Pi_\gamma$ introduced in Definition
\ref{d04}. Fix a path
$\pi(t,dx) = u(t,x) \, dx \in \Pi_\gamma \cap \ms G$.

Let $(\eta^N : N\ge 1)$ be a sequence of configurations associated to
the density profile in the sense \eqref{eq:associated}.  Denote by
$\bb P^H_{\eta^N}$ the probability measure on $D([0,T], \Omega_N)$
induced by the weakly asymmetric exclusion process with Robin boundary
conditions defined in Section \ref{s4}.

Given two probability measures $P$ and $Q$ we denote by
${\rm Ent}\, (Q\, |\, P)$ the relative entropy of $Q$ with respect to
$P$. By Theorem \ref{p02}, Proposition \ref{l09b} and an elementary
computation,
\begin{equation*}
\lim_{N\to\infty} \frac 1N\,  {\rm Ent}\,
(\, \bb P^H_{\eta^N} \, |\,  \bb P_{\eta^N}\, ) \;=\;  I_{[0,T]}
(\pi |\gamma)\;. 
\end{equation*}
Therefore, since $N^{-1}\, \log (d\bb P^H_{\eta^N} /  d\bb
P_{\eta^N})$ is absolutely bounded, by the proof of the lower bound
presented at \cite[page 277]{KL},
\begin{equation*}
\liminf_{n\to\infty} \frac{1}{N}\,  \log \bb P_{\eta^N}
\big[\, \ms G \, \big] \;\geq\;
-  \,\inf_{\pi \in \ms G \cap \Pi_\gamma}  I_{[0,T]} (u|\gamma) \; . 
\end{equation*}
The lower bound follows from this result and the
$I_{[0,T]}(\,\cdot\,|\gamma)$-density stated in Theorem \ref{mt3}.

\section{Weakly asymmetric exclusion with Robin boundary
conditions}
\label{s4}

Recall the notation introduced in Section \ref{sec2}.  Fix
$H\in C^{1,2}([0,T]\times[0,1])$.  Consider the weakly asymmetric
exclusion process induced by the external field $H$ with Robin
boundary conditions.  The generator of this process, denoted by
$\ms L^{H}_N$, is given by
\begin{equation}
\label{LNH}
\ms L^{H}_N \;=\;  L^{H, \textrm{lb}}_{N}
\;+\; L^{H, \textrm{bulk}}_{N}
\;+\; L^{H, \textrm{rb}}_{N}\;,
\end{equation}
where, for a function $f:\Omega_N\rightarrow \bb{R}$,
\begin{equation*}
(L^{H,\textrm{bulk}}_{N}f) (\eta) \; =\; N^2
\sum_{x\in\Lambda_N^0}
e^{-\, (\eta_{x+\mf e}-\eta_{x})\, [\, H_t(x+\mathfrak e)-H_t(x)\,]}
\big\{\, f(\sigma^{x,x+\mathfrak e}\eta) \,-\, f(\eta)\, \big\}\,,
\end{equation*}
\begin{equation*}
( L_{N}^{H,\textrm{lb}}f)(\eta) \;=\;
\frac{N}{A} \, \Big\{\, e^{H_t(\mathfrak e)}\, \alpha\,
(1-\eta_{\mathfrak e}) \,+\, e^{-H_t(\mathfrak e)}\, (1-\alpha)\,
\eta_{\mathfrak e}\, \Big\} \, \big\{ \, f(\sigma^{\mathfrak e} \eta)
\,-\, f(\eta) \, \big\} \,,
\end{equation*}
\begin{equation*}
( L_{N}^{H,\textrm{rb}}f)(\eta) \;=\;
\frac{N}{B} \, \Big\{\, e^{H_t(\mathfrak r)}\, \beta\,
(1-\eta_{\mathfrak r}) \,+\, e^{-H_t(\mathfrak r)}\, (1-\beta)\,
\eta_{\mathfrak r}\, \Big\} \, \big\{ \, f(\sigma^{\mathfrak r} \eta)
\,-\, f(\eta) \, \big\} \,.
\end{equation*}

Denote by $\color{bblue} \mathbb P_{\mu}^H$, $\mu$ a probability
measure on $\Omega_N$, the measure on $D([0,T],\Omega_N)$ induced by
the Markov process with infinitesimal generator $\ms L_{N}^{H}$ and
initial state $\mu$.  Let $\color{bblue} \mathbb Q^H_{\mu} $ be the
probability on $ D([0,T],\ms M)$ induced by the empirical measure
$\bs \pi$ and the measure $\mathbb P_{\mu}^H$:
$ \mathbb Q^H_{\mu} = \mathbb P^H_{\mu} \,\circ\, \bs \pi^{-1}$.

\begin{theorem}
\label{p02}
Fix a measurable profile $\gamma: [0,1] \to [0,1]$. Let
$\{\mu_N: N\ge 1\}$ be a sequence of probability measures on
$\Omega_N$ associated to $\gamma$ in the sense \eqref{eq:associated}.
Then, the sequence of probability measures $\mathbb Q^H_{\mu_N}$
converges to the probability measure $\mathbb Q^H$ concentrated on the
trajectory $\pi(t,dx) =u (t,x)\, dx$, whose density $u$ is the unique
weak solution of
\begin{equation}
\label{5-01}
\left\{
\begin{aligned}
& \partial_t u \;=\; \Delta u \,-\,
2\, \nabla  \{ \sigma(u) \ \nabla  H \}\; , \\
& \nabla  u_t (1) \,-\, 2\, \sigma(u_t(1)) \, \nabla  H_t(1) \,=\,
\mf p_{\beta, B} \big(\, u_t(1)\,,\, H_t(1)\, \big) \;, \\
& \nabla  u_t (0) \,-\, 2\, \sigma(u_t(0)) \, \nabla  H_t(0) \,=\,
-\, \mf p_{\alpha, A} \big(\, u_t(0)\,,\, H_t(0)\, \big) \;, \\
& u(0, \cdot) = \gamma (\cdot)\; .
\end{aligned}
\right.
\end{equation}
\end{theorem}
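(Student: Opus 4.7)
The plan is to follow the standard three-step programme for a hydrodynamic limit: prove tightness of $\{\mathbb Q^H_{\mu_N}\}$ in $D([0,T],\ms M)$, show that every limit point is concentrated on weak solutions of \eqref{5-01}, and then invoke uniqueness (obtained, as for \eqref{1-06}, by the energy method adapted to the Robin setting) to conclude that the whole sequence converges to the Dirac mass at the weak solution starting from $\gamma$. Since $\ms M$ is compact, tightness reduces to Aldous' criterion for $\<\bs\pi^N_\cdot, G\>$ for each $G\in C^2([0,1])$, and this is obtained by a direct computation of $\ms L^H_N\<\pi,G\>$ together with a second-moment bound on the martingale; the presence of the bounded field $H$ only shifts rates by bounded multiplicative factors, so Aldous' criterion goes through exactly as in the proof of Theorem \ref{mt1} in \cite{BMNS2017}.

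For the characterisation, fix $G\in C^{1,2}([0,T]\times[0,1])$ and consider the Dynkin martingale
\begin{equation*}
M^{N,G}_t \;=\; \<\bs\pi^N_t,G_t\>\,-\,\<\bs\pi^N_0,G_0\>\,-\,\int_0^t
\big(\partial_s+\ms L^H_N\big)\<\bs\pi^N_s,G_s\>\,ds\;.
\end{equation*}
Its quadratic variation is of order $O(1/N)$ so that $M^{N,G}_t\to 0$ in probability. A second-order Taylor expansion of the exponential weights $e^{-(\eta_{x+\mf e}-\eta_x)[H_s(x+\mf e)-H_s(x)]}$ in the bulk generator produces, up to $o(1)$ errors and after a discrete integration by parts, a symmetric Laplacian term $N^{-1}\sum_x\eta_x(\Delta G_s)(x)$ plus a drift term
\begin{equation*}
-\,\frac{2}{N}\sum_{x}\eta_x(1-\eta_{x+\mf e})\,(\nabla H_s)(x)\,(\nabla G_s)(x)\;,
\end{equation*}
the remaining quadratic pieces being negligible by standard bounds. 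The two-blocks/one-block estimates adapted from Theorem \ref{hs04} (applied to $h(\eta)=\eta_0(1-\eta_{\mf e})$) then replace $\eta_x(1-\eta_{x+\mf e})$ by $\sigma(u^{N,\epsilon}(x))$, delivering in the limit the bulk weak form $\int\{(\Delta G)u -2\sigma(u)(\nabla H)(\nabla G)\}dx$ that appears after integration by parts of \eqref{5-01}.

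The boundary generators $L^{H,\mathrm{lb}}_N$ and $L^{H,\mathrm{rb}}_N$ contribute, after a similar expansion of their Radon--Nikodym weights, expressions of the form
\begin{equation*}
G_s(0)\,\frac{1}{A}\big\{e^{H_s(\mf e)}\alpha(1-\eta_{\mf e})-e^{-H_s(\mf e)}(1-\alpha)\eta_{\mf e}\big\}
\end{equation*}
and its symmetric counterpart at $\mf r$. Applying the boundary super-exponential estimate from Theorem \ref{hs04} (the $W^{G,h,\pm}_{N,\epsilon}$ piece) with $h(\eta)=\eta_{\mf e}$ allows one to replace $\eta_{\mf e}$ by $u^{N,\epsilon}(\epsilon)$ and therefore, after sending $\epsilon\to 0$, by the trace $u_s(0)$; an elementary identity rewrites the resulting quantity as $G_s(0)\,\mf p_{\alpha,A}(u_s(0),H_s(0))$, and similarly at the right endpoint. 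Combining the bulk and boundary limits yields precisely the weak form of \eqref{5-01}, so that every limit point of $\mathbb Q^H_{\mu_N}$ is concentrated on weak solutions with initial datum $\gamma$.

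The two main obstacles are, first, ensuring that the boundary traces $u_s(0)$, $u_s(1)$ are well defined for the limiting trajectories: this requires an energy estimate of the type of Proposition \ref{p_energy} applied to the tilted process in order to guarantee finite energy in the limit and hence H\"older-continuous traces, and this is where the weak asymmetry must be controlled uniformly in $N$. Second, the boundary replacement must handle the fact that $\eta_{\mf e}$ sits at the edge of $\Lambda_N$, so the two-blocks argument has to be carried out one-sidedly as in \cite{BMNS2017, FGN_LPP}; this is the technically delicate step and is the reason Theorem \ref{hs04} was stated separately for the boundary cylinder functions $W^{G,h,\pm}_{N,\epsilon}$. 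Uniqueness of weak solutions of \eqref{5-01}, for $H$ smooth and bounded, follows by subtracting two solutions, testing against their difference, and using Gronwall: the Robin boundary contributions are controlled by the Lipschitz character of $\mf p_{\alpha,A}(\cdot,H_t(0))$ and $\mf p_{\beta,B}(\cdot,H_t(1))$ in the first argument, together with the trace inequality, exactly as in Appendix \ref{sec06} for \eqref{1-06}.
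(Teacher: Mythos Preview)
Your proposal is correct and follows exactly the strategy the paper sketches: tightness via Aldous, an energy estimate to ensure limit points lie in $\ms L^2(0,T;\mc H^1)$ (so that boundary traces are well defined), characterisation of limit points through the Dynkin martingale and the bulk/boundary replacement lemmas, and uniqueness of weak solutions (the paper packages this last step as Theorem~\ref{mt5}, proved by the same Gronwall argument you outline). The paper singles out Lemma~\ref{eq:dir_car} as the key technical input behind the replacement step; this is equivalent to, and in fact weaker than, the super-exponential Theorem~\ref{hs04} that you invoke, so your citation is more than sufficient once transferred to $\mathbb P^H_{\mu_N}$ via the bounded Radon--Nikodym derivative.
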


The proof of this result is by now classical and divided in several
steps. One first proves tightness. Then, one shows that any limit
point of the sequence $\mathbb Q^H_{\mu_N}$ is concentrated on
trajectories $\pi (t,dx) = u(t,x)\, dx$ whose density belongs to
$\ms L^2 (0,T; \mc H^1)$, where the $\mc H^1$ is the Sobolev space
introduced in Section \ref{sec07}. Finally, one shows
that limit points of the sequence $\mathbb Q^H_{\mu_N}$ are
concentrated on trajectories which satisfy the identity
\eqref{8-01}. It remains to invoke the uniqueness of weak solutions,
stated in Theorem \ref{mt5}, to complete the proof.  The technical
details are standard and the arguments rely on the bound presented in
Lemma \ref{eq:dir_car}. We refer to \cite{KL, BMNS2017}.

\appendix
\section{The Robin Laplacian}
\label{sec04}

We present in this section some results on the Robin Laplacian needed
in the previous section. We refer to \cite{M83, S08} for
details. Denote by $\color{bblue} \Delta_R$ the Laplacian on $[0,1]$
with Robin boundary conditions, sometimes called the Robin Laplacian
\cite[Section 4.3]{S08}.

Fix $\lambda\in \bb R$ and consider the eigenvalue problem
\begin{equation}
\label{6-01}
\left\{
\begin{aligned}
& -\, \Delta f =  \lambda \, f \; , \\
& (\nabla  f ) (0) \,=\, A^{-1} \, f(0) \;,\\
& (\nabla  f)(1) \,=\, -\, B^{-1} f(1) \;.
\end{aligned}
\right.
\end{equation}
This problem has only the trivial solution $f=0$ for $\lambda\le
0$. For $\lambda>0$, the equation $-\, \Delta f = \lambda \, f$ can be
turned into a two-dimensional ODE which yields that the solutions of
\eqref{6-01} are given by
$f(x) = a \, [ \, \cos (\sqrt{\lambda} x) + b \, \sin (\sqrt{\lambda}
x)\,]$ for some $a$, $b\in \mathbb R$. The boundary conditions are
satisfied if and only if
\begin{equation}
\label{6-09}
\tan \sqrt{\lambda} \;=\; (A+B)\,
\frac{\sqrt{\lambda}}{\lambda AB -1}\;,
\end{equation}
in which case $b= (A\sqrt{\lambda})^{-1}$. This identity excludes
$\lambda=0$ from the set of eigenvalues of the Robin Laplacian.

An analysis of \eqref{6-09} shows that it has a countable set of
solutions $\color{bblue} \{\lambda_j : j\ge 1\}$, where $0<\lambda_1$,
$\lambda_j < \lambda_{j+1}$ and $\lambda_j \sim j^2$ in the sense that
there exists $0<c_0<c_1<\infty$ such that
\begin{equation}
\label{6-12}
c_0 \, j^2 \;\le\; \lambda_j \;\le\; c_1\, j^2 \;\;\text{for all $j\ge
1$} \;.
\end{equation}
Denote by $\color{bblue} \{f_j : j\ge 1\}$ the associated orthonormal
eigenvectors, which form a basis of $\ms L^2([0,1])$. By the previous
analysis,
\begin{equation}
\label{6-13}
f_j(x) \;=\; a_j \,\big\{\, \cos (\sqrt{\lambda_j} x)
\;+\; \frac{1}{A\sqrt{\lambda_j}}  \, \sin (\sqrt{\lambda_j}
x)\, \big\}\;,
\end{equation}
where $a_j$ is chosen for $f_j$ to have $\ms L^2$-norm equal to $1$. It
can be shown that $|a_j|\le C_0$ for all $j\ge 1$, where $C_0$ is a
finite constant depending only on $A$ and $B$. Therefore, by
\eqref{6-12},
\begin{equation}
\label{6-11}
\Vert \, f_j \, \Vert_\infty \;\le\; C_0 \;,
\quad \Vert \, \nabla ^n f_j \, \Vert_\infty
\;\le\; C_0 \, (\lambda_j)^{n/2} \;\le\; C_0 \, j^n
\end{equation}
for all $j\ge 1$, $n\ge 1$.
A straightforward computation provides a formula for the Green
function of the Robin Laplacian: Let
$K_R: [0,1] \times [0,1] \to \mathbb R_+$ be given by
\begin{equation}
\label{6-10}
K_R(x,y) \;=\; \frac{1}{1+A+B}\,
\left\{
\begin{aligned}
& (B+1-x)\, (A+y)\;, \quad 0\le y\le x \le 1\;,  \\
& (B+1-y)\, (A+x)\;, \quad 0\le x\le y \le 1 \;.
\end{aligned}
\right.
\end{equation}
Denote by $K_R$ the integral operator defined by
\begin{equation*}
(K_R f)(x) \;=\; \int_0^1 K_R(x,y)\, f(y)\; dy\;.
\end{equation*}
Then, $K_R = (-\Delta_R)^{-1}$.

Denote by $\color{bblue} \mc H_R$ the Hilbert space obtained by
completing the space
\begin{equation*}
C^2_{A,B}([0,1]) \;=\; \big\{\, f \in C^2([0,1]) : (\nabla  f )
(0) \,=\, A^{-1} \, f(0) \;,\, (\nabla  f)(1) \,=\, -\, B^{-1} f(1)
\,\big \}
\end{equation*}
endowed with the scalar product $\<\,\cdot\,,\,\cdot\,\>_{\mc H_R}$
defined by
\begin{equation}
\label{6-05}
\begin{aligned}
\<\,f\,,\,g\,\>_{\mc H_R} \;& =\; \<\,f\,,\, (-\, \Delta_R) g\,\> \\
& =\;
\frac{1}{A}\, f(0)\,g(0) \;+\; \int_0^1 (\nabla   f)(x)\,
(\nabla   g)(x) \; dx \;+\; \frac{1}{B}\, f(1)\, g(1) \;.
\end{aligned}
\end{equation}
Denote by $\color{bblue} \Vert f\Vert_{\mc H_R}$ the norm induced by the
scalar product $\<\,\cdot\,,\,\cdot\,\>_{\mc H_R}$. We have that
\begin{equation}
\label{6-16}
\Vert f\Vert^2_{\mc H_R} \;=\;
\sum_{k\ge 1} \lambda_k\, \< f \,,\, f_k\>^2 \;,
\end{equation}
for all $f\in \mc H_R$.

Recall from \eqref{n08} the definition of the Sobolev space $\mc H^1$.
The norms $\Vert \,\cdot\, \Vert_{\mc H_R}$ and
$\Vert \,\cdot\, \Vert_{\mc H^1}$ are equivalent. There exist finite
constants $0<C_1<C_2<\infty$ such that
\begin{equation}
\label{6-14}
C_1\, \Vert f\Vert_{\mc H^1} \;\le\; \Vert f\Vert_{\mc H_R}
\;\le\; C_2\, \Vert f\Vert_{\mc H^1}
\end{equation}
for all $f\in C^2([0,1])$. In particular, the spaces $\mc H_R$ and
$\mc H^1$ coincide.

In terms of the
eigenfunctions $f_k$,
\begin{equation}
\label{6-07}
\Vert \, f\, \Vert^2_{\mc H_R} \;=\;
\sum_{k\ge 1} \lambda_k \, |\, \<\,f \,,\, f_k\,\>\,|^2 \;.
\end{equation}
Moreover, a straightforward computation yields that for all
$f\in C^2_{A,B}([0,1])$,
\begin{equation}
\label{6-02}
\Vert \, f \, \Vert^2_\infty \;\le\; 2\, (A\vee 1)\,
\Vert f\Vert^2_{\mc H_R} \; .
\end{equation}

Fix a function $f$ in $\mc H^1$. It is well known that there exists a
continuous function $f^{(c)}: [0,1] \to \mathbb R$ (actually H\"older
continuous,
$|f^{(c)}(y) - f^{(c)}(x)| \le \Vert f \Vert_{2}
|y-x|^{1/2}$) such that $f = f^{(c)}$ almost surely. Moreover, for all
$h\in C^1([0,1])$,
\begin{equation}
\label{1-03}
\int_0^1 f \, \nabla  h\;dx \;=\; f^{(c)}(1)\, h(1)
\;-\; f^{(c)}(0)\, h(0)
\;-\; \int_0^1 \nabla  f \,  h\;dx\;.
\end{equation}
The next result provides an explicit formula for $f^{(c)}$ in terms of
the eigenvectors $f_k$.

\begin{lemma}
\label{l05}
There exists a finite constant $C_0$ such that
\begin{equation*}
\sum_{k\ge 1} \big|\, \< f \,,\, f_k\>\, \big| \;\le\; C_0
\, \Vert \, f\,\Vert_{\mc H_R}
\end{equation*}
for all $f \in \mc H^1$.  In particular,
$\sum_{k\ge 1} \< f \,,\, f_k\>\, f_k(\cdot)$ defines a continuous
function, and, for almost all $x\in [0,1]$,
\begin{equation}
\label{6-15}
f(x) \;=\; \sum_{k\ge 1} \< f \,,\, f_k\>\, f_k(x)\;.
\end{equation}
\end{lemma}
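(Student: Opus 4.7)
The first bound is a Cauchy--Schwarz estimate against the weight $\lambda_k^{-1/2}$. Writing
\begin{equation*}
\sum_{k\ge 1} |\<f,f_k\>| \;=\; \sum_{k\ge 1} \lambda_k^{-1/2} \cdot \lambda_k^{1/2} |\<f,f_k\>|,
\end{equation*}
Cauchy--Schwarz yields
\begin{equation*}
\sum_{k\ge 1} |\<f,f_k\>| \;\le\; \Bigl(\sum_{k\ge 1} \lambda_k^{-1}\Bigr)^{1/2} \Bigl(\sum_{k\ge 1} \lambda_k \<f,f_k\>^2\Bigr)^{1/2}.
\end{equation*}
The first factor is a finite constant by the asymptotics $\lambda_k \ge c_0 k^2$ established in \eqref{6-12}, which makes $\sum_k \lambda_k^{-1}$ a convergent $p$-series. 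The second factor is exactly $\Vert f\Vert_{\mc H_R}$ by the Parseval-type identity \eqref{6-16}. Since $\mc H^1 = \mc H_R$ as sets with equivalent norms \eqref{6-14}, the first claim follows.

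Next I turn to the continuity of the sum $S(x) := \sum_{k\ge 1} \<f,f_k\> f_k(x)$. The uniform bound $\Vert f_k\Vert_\infty \le C_0$ from \eqref{6-11} together with the just-established summability of $|\<f,f_k\>|$ gives that the partial sums form a Cauchy sequence in $C([0,1])$ with the sup norm, so $S$ is continuous on $[0,1]$.

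Finally, for the identification $f = S$ almost everywhere, I use that $\{f_k\}_{k\ge 1}$ is an orthonormal basis of $\ms L^2([0,1])$ (this is standard for the Robin Laplacian on a bounded interval, as used implicitly throughout the section). Hence the partial sums $S_N := \sum_{k=1}^N \<f,f_k\> f_k$ converge to $f$ in $\ms L^2([0,1])$. On the other hand, the sup-norm convergence shown in the previous step gives $S_N \to S$ in $\ms L^2([0,1])$ as well, by dominated convergence. By uniqueness of $\ms L^2$-limits, $f = S$ almost surely, completing \eqref{6-15}.

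There is no real obstacle here; the only mildly non-trivial point is the uniform eigenfunction bound in \eqref{6-11}, which is what makes the absolute summability of $\<f,f_k\>$ strong enough to upgrade the $\ms L^2$-expansion to one that converges uniformly and thus selects the continuous representative of $f$.
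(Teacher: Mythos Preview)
Your proof is correct and follows essentially the same route as the paper: Cauchy--Schwarz with the weight $\lambda_k^{-1/2}$, the eigenvalue asymptotics \eqref{6-12} to sum $\sum_k \lambda_k^{-1}$, the Parseval identity \eqref{6-16}/\eqref{6-07} for $\Vert f\Vert_{\mc H_R}$, the uniform bound \eqref{6-11} for continuity of the series, and the $\ms L^2$-basis property for the almost-everywhere identification. There is nothing to add.
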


\begin{proof}
By \eqref{6-14}, $f$ belongs to $\mc H_R$. By Schwarz inequality,
\begin{equation*}
\Big( \sum_{k\ge 1} \big|\, \< f \,,\, f_k\>\, \big| \,\Big)^2
\;\le\;
\sum_{k\ge 1} \lambda_k\, \big|\, \< f \,,\, f_k\>\, \big|^2 \;
\sum_{k\ge 1} \frac{1}{\lambda_k}\;\cdot
\end{equation*}
The second sum is finite by \eqref{6-12} and the first one is finite
by \eqref{6-07}. This proves the first assertion.

Since each function $f_k$ is continuous, and a summable sum of
uniformly bounded continuous functions is continuous,
$\sum_{k\ge 1} \< f \,,\, f_k\>\, f_k(\cdot)$ defines a continuous
function. As $(f_k : k\ge 1)$ forms an orthonormal basis of
$\ms L^2([0,1])$, $f = \sum_{k\ge 1} \< f \,,\, f_k\>\, f_k$ as an
identity in $\ms L^2([0,1])$. In particular, these functions are equal
almost everywhere.
\end{proof}

Denote by $\color{bblue} (P^{(R)}_t: t\ge 0)$ the semigroup in
$\ms L^2([0,1])$ generated by the Robin Laplacian: For any function
$f\in \ms L^2([0,1])$, $t>0$,
\begin{equation}
\label{5-09}
P^{(R)}_t f \;=\; \sum_{k\ge 1} e^{-\lambda_k t}\, \< f \,,\, f_k\>\, f_k\;.
\end{equation}
In particular, for each $t\ge 0$, $P^{(R)}_t$ is a symmetric operator
in $\ms L^2([0,1])$ and $P^{(R)}_t f\in C^\infty ([0,1])$ for all
$f\in \ms L^2([0,1])$. Moreover, as $P^{(R)}_t$ is symmetric, by
\eqref{6-07}, $P^{(R)}_t$ is a contraction in $\mc H_R$ and
$\ms L^2([0,1])$:
\begin{equation}
\label{6-18}
\begin{gathered}
\Vert\, P^{(R)}_t f\,\Vert^2_{\mc H_R} \;=\;
\sum_{k\ge 1} e^{-2\lambda_k t}\, \lambda_k\,
|\, \< f \,,\, f_k\>\,|^2  \;\le\;
\Vert\,  f\,\Vert^2_{\mc H_R}\;, \\
\Vert\, P^{(R)}_t f\,\Vert^2_{2} \;=\;
\sum_{k\ge 1} e^{-2\lambda_k t}\,
|\, \< f \,,\, f_k\>\,|^2  \;\le\;
\Vert\,  f\,\Vert^2_{2}\;.
\end{gathered}
\end{equation}

Let $f\in \ms L^2([0,1])$ be given by
$f = \sum_{k\ge 1} \< f \,,\, f_k\> \, f_k$. For each $t>0$, there
exists a finite constant $C_0 (t)$ such that
\begin{equation}
\label{6-17}
\begin{aligned}
\Vert\, P^{(R)}_t f \,\Vert^2_{\infty} \;\le\;
C_0(t) \, \Vert\, f \,\Vert^2_{2} \;, \quad
\Vert\, P^{(R)}_t f \,\Vert^2_{\mc H_R} \;\le\;
C_0(t) \, \Vert\, f \,\Vert^2_{2}\;.
\end{aligned}
\end{equation}
Indeed, by \eqref{6-07} and since $P^{(R)}_t$ is symmetric and
$P^{(R)}_t f_k = e^{-\lambda_k t} f_k$,
\begin{equation*}
\Vert\, P^{(R)}_t f \,\Vert^2_{\mc H_R} \;=\;
\sum_{k\ge 1} \lambda_k \, e^{-2\lambda_k t}
\big|\, \< f \,,\, f_k\>\, \big|^2 
\;\le\; C_0(t) \, \sum_{k\ge 1} 
\big|\, \< f \,,\, f_k\>\, \big|^2
\;=\; C_0(t) \, \Vert\, f \,\Vert^2_{2} 
\end{equation*}
for some finite constant $C_0(t)$. On the other hand, by Schwarz
inequality and \eqref{6-11},
\begin{equation*}
\Vert\, P^{(R)}_t f \,\Vert^2_{\infty} \;=\;
\Big\Vert\, \sum_{k\ge 1} e^{-\lambda_k t}
\, \< f \,,\, f_k\>\, f_k   \, \Big\Vert^2_{\infty}
\;\le\; \sum_{k\ge 1} e^{-2\lambda_k t}
\sum_{k\ge 1} \, \< f \,,\, f_k\>^2
\;=\; C_0(t) \, \Vert\, f \,\Vert^2_{2} 
\end{equation*}
for some finite constant $C_0(t)$.

\begin{lemma}
\label{l07}
There exists a finite constant $C_0$ such that
\begin{equation*}
\Vert\, P^{(R)}_t f \,-\, f\,\Vert_2 \;\le \;
C_0\,  t^{1/3}\, \, \Vert\, f\, \Vert_{\mc H_R}
\end{equation*}
for all $t\ge 0$, $f \in \mc H_R$.
\end{lemma}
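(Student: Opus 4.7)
The plan is to diagonalize in the Robin eigenbasis $(f_k)_{k\ge 1}$ and reduce the claim to an elementary scalar inequality. First, by the spectral formula \eqref{5-09} and orthonormality of $(f_k)_{k\ge 1}$,
\[
\Vert P^{(R)}_t f - f\Vert_2^2 \;=\; \sum_{k\ge 1}(1 - e^{-\lambda_k t})^2\, \langle f, f_k\rangle^2,
\]
so in view of \eqref{6-16} it suffices to prove the pointwise bound $(1 - e^{-\lambda_k t})^2 \le C_0\, \lambda_k\, t^{2/3}$ uniformly in $k\ge 1$ and $t\ge 0$: summing against $\langle f, f_k\rangle^2$ would then yield $\Vert P^{(R)}_t f - f\Vert_2^2 \le C_0\, t^{2/3}\, \Vert f\Vert_{\mc H_R}^2$, and the lemma would follow by taking square roots.

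To establish this scalar bound I would split into the regimes $t\le 1$ and $t\ge 1$. The elementary inequality $(1 - e^{-x})^2 \le x$ for all $x\ge 0$ (which follows from $1-e^{-x}\le x$ when $x\le 1$ and from $1-e^{-x}\le 1 \le x$ when $x\ge 1$), applied with $x = \lambda_k t$, gives $(1 - e^{-\lambda_k t})^2 \le \lambda_k t \le \lambda_k t^{2/3}$ whenever $t\le 1$, using $t\le t^{2/3}$ on $[0,1]$. For $t\ge 1$, the trivial bound $(1 - e^{-\lambda_k t})^2 \le 1$ combined with $\lambda_k \ge \lambda_1 > 0$ from \eqref{6-12} and $t^{2/3}\ge 1$ yields $(1 - e^{-\lambda_k t})^2 \le 1 \le \lambda_1^{-1}\,\lambda_k t^{2/3}$. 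Together, the pointwise bound holds with $C_0 = \max(1,\, \lambda_1^{-1})$.

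There is no genuine obstacle here; the argument is purely spectral and routine. In fact the exponent $1/3$ is suboptimal, since the same scalar inequality already gives the sharper estimate $\Vert P^{(R)}_t f - f\Vert_2 \le t^{1/2}\, \Vert f\Vert_{\mc H_R}$ on $[0,1]$, the weaker exponent stated here presumably being chosen to parallel the boundary-trace estimate of Lemma~\ref{l08}.
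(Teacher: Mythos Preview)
Your proof is correct. Both you and the paper start from the spectral identity $\Vert P^{(R)}_t f - f\Vert_2^2 = \sum_{k\ge 1}(1-e^{-\lambda_k t})^2\langle f,f_k\rangle^2$, but from there the arguments diverge. The paper splits the sum at a cutoff index $k_0$, bounds the low modes by $(1-e^{-\lambda_{k_0}t})^2\Vert f\Vert_2^2$ and the high modes by $\lambda_{k_0}^{-1}\Vert f\Vert_{\mc H_R}^2$, then balances by choosing $\lambda_{k_0}^3\sim t^{-2}$. You instead establish the uniform termwise bound $(1-e^{-\lambda_k t})^2\le C_0\,\lambda_k\, t^{2/3}$ directly from the scalar inequality $(1-e^{-x})^2\le x$, which is more elementary and avoids the optimization step entirely. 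Your remark that the same scalar inequality already yields $\Vert P^{(R)}_t f - f\Vert_2\le t^{1/2}\Vert f\Vert_{\mc H_R}$ (in fact for all $t\ge 0$, not only on $[0,1]$) is also correct; the paper's splitting technique is the standard device when a genuine trade-off forces a nontrivial exponent, but here it is not needed and your direct route is cleaner.
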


\begin{proof}
Since $(f_k : k\ge 1)$ is an orthonormal basis of $\ms L^2([0,1])$,
\begin{equation*}
\Vert\, P^{(R)}_t f \,-\, f\,\Vert^2_2 \;=\;
\sum_{k\ge 1} \big[\, e^{-\lambda_k \, t} \,-\, 1\,\big]^2 \,
\big|\, \< f \,,\, f_k\>\, \big|^2 \;.
\end{equation*}
Fix $k_0\ge 1$. Since the sequence $\lambda_k$ increases, the
right-hand side can be bounded by
\begin{equation*}
\big[\, e^{-\lambda_{k_0} \, t} \,-\, 1\,\big]^2
\sum_{k= 1}^{k_0-1}  \big|\, \< f \,,\, f_k\>\, \big|^2
\;+\; \frac{1}{\lambda_{k_0}} \,
\sum_{k \ge k_0}  \lambda_k\, \big|\, \< f \,,\, f_k\>\, \big|^2\;.
\end{equation*}
The first sum is bounded by $\Vert\, f\, \Vert^2_2$. In view of
\eqref{6-07}, the second one is bounded by
$\Vert\, f\, \Vert^2_{\mc H_R}$ so that
\begin{equation*}
\Vert\, P^{(R)}_t f \,-\, f\,\Vert^2_2 \;\le \;
\big[\, 1  \,-\, e^{-\lambda_{k_0} \, t}  \,\big]^2
\,  \Vert\, f\, \Vert^2_2 \;+\;
\frac{1}{\lambda_{k_0}} \Vert\, f\, \Vert^2_{\mc H_R} \;.
\end{equation*}
As $1-e^{-x} \le x$, $x>0$, and since, by \eqref{6-14},
$\Vert\, f\, \Vert_2 \le C_0 \Vert\, f\, \Vert_{\mc H_R}$ for some
finite constant $C_0$,
\begin{equation*}
\Vert\, P^{(R)}_t f \,-\, f\,\Vert^2_2 \;\le \;
\Big\{\, C_0\, (\, \lambda_{k_0} \, t  \,)^2
\;+\; \frac{1}{\lambda_{k_0}} \,\Big\}\, \Vert\, f\, \Vert^2_{\mc H_R} \;.
\end{equation*}
To complete the proof, it remains to choose $k_0$ such that
$\lambda^{-3}_{k_0} \sim t^2$.
\end{proof}

\begin{lemma}
\label{l08}
There exists a finite constant $C_0$ such that
\begin{equation*}
\Vert \, P^{(R)}_t f \,-\, f  \,\Vert_\infty \;\le \;
C_0\,  t^{1/5}\, \, \Vert\, f\, \Vert_{\mc H_R}
\end{equation*}
for all $t\ge 0$, $f \in C([0,1]) \cap \mc H_R$.
\end{lemma}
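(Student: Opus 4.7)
The plan is to run the spectral decomposition argument of Lemma \ref{l07} in the sup-norm rather than in $L^2$, trading $L^2$-orthogonality for the uniform bound $\Vert f_k\Vert_\infty \le C_0$ from \eqref{6-11}. Since $f \in C([0,1]) \cap \mc H_R$, the representation \eqref{6-15} holds pointwise, so for every $x \in [0,1]$,
\begin{equation*}
|P^{(R)}_t f(x) - f(x)| \;\le\; \sum_{k\ge 1} (1 - e^{-\lambda_k t}) \, |\langle f, f_k\rangle| \, |f_k(x)| \;\le\; C_0 \sum_{k\ge 1} (1 - e^{-\lambda_k t})\, |\langle f, f_k\rangle|\;.
\end{equation*}

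I would then split the right-hand side at an integer $k_0$ to be chosen later. On the low-frequency block $k \le k_0$, bound $1 - e^{-\lambda_k t} \le \lambda_k t \le \lambda_{k_0} t \le c_1 k_0^2 t$ and pull this factor out, applying Lemma \ref{l05} to the remaining $\ell^1$ sum:
\begin{equation*}
\sum_{k \le k_0} (1 - e^{-\lambda_k t})\, |\langle f, f_k\rangle| \;\le\; c_1\, k_0^2\, t \sum_{k\ge 1} |\langle f, f_k\rangle| \;\le\; C_0\, k_0^2\, t\, \Vert f\Vert_{\mc H_R}\;.
\end{equation*}
On the high-frequency block $k > k_0$, use the crude bound $1 - e^{-\lambda_k t} \le 1$, together with Cauchy--Schwarz and the $\mc H_R$-representation \eqref{6-07}:
\begin{equation*}
\sum_{k > k_0} (1 - e^{-\lambda_k t})\, |\langle f, f_k\rangle| \;\le\; \Big(\sum_{k > k_0} \lambda_k^{-1}\Big)^{1/2} \Vert f\Vert_{\mc H_R} \;\le\; C_0\, k_0^{-1/2}\, \Vert f\Vert_{\mc H_R}\;,
\end{equation*}
where the tail $\sum_{k > k_0} \lambda_k^{-1}$ is estimated via \eqref{6-12}.

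Combining these two estimates yields $\Vert P^{(R)}_t f - f\Vert_\infty \le C_0 (k_0^2\, t + k_0^{-1/2}) \Vert f\Vert_{\mc H_R}$. Choosing $k_0 = \lceil t^{-2/5}\rceil$ balances the two terms at order $t^{1/5}$, which is the desired bound for small $t$. For $t \ge 1$ the estimate follows trivially from the triangle inequality, the embedding \eqref{6-02}, and the $\mc H_R$-contraction property \eqref{6-18}, since then $\Vert P^{(R)}_t f - f\Vert_\infty \le C_0\bigl(\Vert P^{(R)}_t f\Vert_{\mc H_R} + \Vert f\Vert_{\mc H_R}\bigr) \le 2 C_0 \Vert f\Vert_{\mc H_R} \le 2 C_0 t^{1/5}\Vert f\Vert_{\mc H_R}$.

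I do not expect any serious obstacle: the proof is a direct sup-norm analogue of Lemma \ref{l07}. The only real design choice is whether to apply Lemma \ref{l05} on the low-frequency block (giving the stated $t^{1/5}$) or Cauchy--Schwarz as in Lemma \ref{l07} (which would actually yield the sharper $t^{1/4}$); either suffices for the applications, but the former matches the exponent claimed in the statement.
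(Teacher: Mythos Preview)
Your proof is correct and follows essentially the same approach as the paper's: start from the pointwise spectral representation via \eqref{6-15} and \eqref{6-11}, then split the sum at a cutoff $k_0\sim t^{-2/5}$ and balance. The only cosmetic difference is ordering: the paper applies Cauchy--Schwarz globally to get $\big(\sum_k \lambda_k^{-1}(1-e^{-\lambda_k t})^2\big)^{1/2}\Vert f\Vert_{\mc H_R}$ and then splits that sum, whereas you split first and use Lemma~\ref{l05} on the low block; both routes yield the same $t^{1/5}$, and your closing remark about the sharper $t^{1/4}$ is accurate.
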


\begin{proof}
Fix $x\in [0,1]$. Since $f$ is continuous, by \eqref{6-15} and
\eqref{6-11},
\begin{equation*}
\big\{ \, P^{(R)}_t f (x) \,-\, f (x) \,\big\}^2 \;\le\;
C_0^2\, \Big(\, \sum_{k\ge 1} \big[\, 1 \,-\, e^{-\lambda_k \, t} \,\big] \,
\big|\, \< f \,,\, f_k\>\, \big|\,\Big)^2
\end{equation*}
for some finite constant $C_0$. By Schwarz inequality and
\eqref{6-07}, the right-hand side is bounded by
\begin{equation*}
C_0^2\, \sum_{k\ge 1}  \frac{1}{\lambda_k} \,
\big[\, 1 \,-\, e^{-\lambda_k \, t} \,\big]^2 \,
\sum_{k\ge 1}  \lambda_k \,
\,
\big|\, \< f \,,\, f_k\>\, \big|^2 \;=\;
C_0^2\, \sum_{k\ge 1}  \frac{1}{\lambda_k} \,
\big[\, 1 \,-\, e^{-\lambda_k \, t} \,\big]^2\Vert\, f\, \Vert^2_{\mc H_R}  \;.
\end{equation*}
It remains to estimate the sum.  Fix $k_0\ge 1$.  Since the sequence
$\lambda_k$ increases, as $1-e^{-x} \le x$, $x>0$, by \eqref{6-12},
the sum is less than or equal to
\begin{equation*}
C\, \big[\, 1 \,-\, e^{-\lambda_{k_0} \, t} \,\big]^2
\;+\; \sum_{k \ge k_0}  \frac{1}{\lambda_{k}} \;\le\;
C\, \Big\{ \, (k^2_0 \, t)^2 \;+\; \frac{1}{k_0} \,\Big\}
\end{equation*}
for some finite constant $C$. It remains to choose $k_0$ such that
$k_0^5 \sim t^{-2}$.
\end{proof}

\section{Initial-value problems with Robin boundary conditions}
\label{sec06}

We present in this section some result on the initial-boundary value
problems \eqref{1-06}, \eqref{5-01}.  Recall the definition of the
Sobolev space $\mc H^1$ introduced in \eqref{n08}.  Fix a function
$\phi\in \ms L^2([0,1])$, and consider the initial-boundary problem

\begin{equation}
\label{1-06h}
\begin{cases}
\partial_tu \,=\, \Delta u\\
(\nabla  u) (t,0) \,=\, A^{-1} \,u(t,0) \\
(\nabla  u)(t,1) \,=\, -\, B^{-1}\, u(t,1) \\
u(0,\cdot) \,=\, \phi (\cdot)\;.
\end{cases}
\end{equation}

\begin{definition}
\label{d01}
A function $u$ in $\ms L^2(0,T; \mc H^{1})$ is said to be a
generalized solution in the cylinder $[0,T]\times [0,1]$ of the
equation \eqref{1-06h} if
\begin{equation*}
\begin{aligned}
& \int_0^1  u_t \, H_t \; dx \;-\; \int_0^1 \phi \, H_0 \; dx
\;-\; \int_0^t ds \, \int_0^1  u_s \, \partial_s H_s \; dx \\
& \quad =\;- \int_0^t ds \, \int_0^1 \nabla  u_s\, \nabla  H_s \; dx
\;-\; \int_0^t \big\{\,  \frac{1}{B}\, u_s(1)\, H_s(1)
\,+\, \frac{1}{A} \, u_s(0)\, H_s(0) \,\big\} \; ds
\end{aligned}
\end{equation*}
for every $0<t\le T$, function $H$ in $C^{1,2}([0,T]\times [0,1])$.
\end{definition}

Next result is proved in \cite{BMNS2017}. We present it here in sake
of completeness.

\begin{theorem}
\label{t01}
For each $\phi\in \ms L^2([0,1])$, there exists one and only one
generalized solution to \eqref{1-06h}. The solution is smooth in
$(0,\infty) \times [0,1]$ and can be represented as
$u(t,x) = (P^{(R)}_t \phi)(x)$, where $P^{(R)}_t$ is the semigroup
associated to the Robin Laplacian. Moreover,
\begin{equation}
\label{10-01}
\min \{\, 0 \,,\, {\rm ess\, inf }\, \phi \,\}
\,\le\, u(t,x) \,\le\, \max \{\, 0 \,,\,
{\rm ess\, sup }\, \phi \,\}
\end{equation}
for all $(t,x) \in \mathbb R_+ \times [0,1]$. Finally, if $\phi (x) \le b$
for some $b>0$, then, for each $t_0>0$ there exists $\epsilon >0$ such
that $u(t,x) \le b -\epsilon$ for all
$(t,x) \in [t_0, \infty) \times [0,1]$. Analogously,  if $\phi (x) \ge a$
for some $a<0$, then, for each $t_0>0$ there exists $\epsilon >0$ such
that $u(t,x) \ge a + \epsilon$ for all
$(t,x) \in [t_0, \infty) \times [0,1]$.
\end{theorem}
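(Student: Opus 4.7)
The plan is to construct the solution explicitly via the spectral decomposition from Appendix \ref{sec04}, then derive all stated properties from it.

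\textbf{Existence, smoothness and verification.} Set $u(t,x) := (P^{(R)}_t \phi)(x) = \sum_{k\ge 1} e^{-\lambda_k t} \<\phi,f_k\> f_k(x)$. For any $t_0>0$, estimates \eqref{6-11}, \eqref{6-12} together with the exponential factor $e^{-\lambda_k t}$ yield absolute and uniform convergence of the series defining $\partial_t^m \nabla^n u(t,x)$ on $[t_0,\infty)\times[0,1]$, so $u$ is $C^\infty$ on $(0,\infty)\times[0,1]$. Since each $f_k$ satisfies the Robin boundary conditions, the same is true of the partial sums, and passing to the limit yields the boundary conditions for $u$. To verify the generalized formulation of Definition \ref{d01}, I differentiate $\langle u_s,H_s\rangle$, use $\partial_s u_s=\Delta u_s$, and apply Green's identity
\begin{equation*}
\int_0^1 (\Delta u_s)\,H_s\,dx \;=\; -\,\langle \nabla u_s,\nabla H_s\rangle \;+\; H_s(1)\nabla u_s(1) \,-\, H_s(0)\nabla u_s(0)\,.
\end{equation*}
Substituting the Robin conditions $\nabla u_s(1)=-B^{-1}u_s(1)$ and $\nabla u_s(0)=A^{-1}u_s(0)$ and integrating in time produces exactly the identity in Definition \ref{d01}. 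For general $\phi\in\ms L^2([0,1])$, one approximates $\phi$ by smooth data to handle the initial-time limit in $\ms L^2([0,1])$.

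\textbf{Uniqueness.} Let $w$ be a generalized solution with initial datum $0$. Setting $g_k(t) := \langle w_t, f_k\rangle$ and using the time-dependent test function $H_s(x):=f_k(x)$ in Definition \ref{d01} gives $g_k(t) = -\lambda_k\int_0^t g_k(s)\,ds$ (the boundary and interior terms combine into $-\lambda_k\int g_k(s)ds$ thanks to the eigenvalue identity $-\Delta f_k=\lambda_k f_k$ and the Robin conditions). Gronwall then forces $g_k\equiv 0$ for every $k$, hence $w\equiv 0$.

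\textbf{Maximum principle \eqref{10-01}.} Let $M=\max\{0,\,\mathrm{ess\,sup}\,\phi\}$ and $v=u-M$, so $v_0\le 0$ a.e. I use the test function $v^+$ (after a standard mollification) in the formulation. Note that $\nabla v = \nabla u$, and the Robin conditions give boundary contributions
\begin{equation*}
v^+(t,1)\,\nabla v(t,1)\,-\,v^+(t,0)\,\nabla v(t,0) \;=\; -\,\tfrac{1}{B}\,v^+(t,1)\,u(t,1) \,-\,\tfrac{1}{A}\,v^+(t,0)\,u(t,0)\,.
\end{equation*}
Since $v^+(t,i)>0$ forces $u(t,i)>M\ge 0$, both terms are $\le 0$, hence
\begin{equation*}
\tfrac{1}{2}\tfrac{d}{dt}\|v^+_t\|_2^2 \;\le\; -\,\|\nabla v^+_t\|_2^2 \;\le\; 0\,,
\end{equation*}
and $v^+\equiv 0$. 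The lower bound follows symmetrically by applying the same argument to $-u$, using that $\nabla(-u)(0)=A^{-1}(-u)(0)$ and similarly at $x=1$.

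\textbf{Strict interior bound.} Suppose $\phi\le b$ with $b>0$; by the step above, $u\le b$ everywhere. For strictness, I combine two ingredients. Since all $\lambda_k>0$, the spectral representation gives $\|u(t,\cdot)\|_\infty\to 0$ exponentially as $t\to\infty$, so $u\le b/2$ for $t\ge T_\star$ with $T_\star$ large. On the compact set $[t_0,T_\star]\times[0,1]$, the strong parabolic maximum principle applies to $u-b$: if $u(t_\star,x_\star)=b$ at some $(t_\star,x_\star)\in(0,T_\star]\times[0,1]$, then $u\equiv b$ on $[0,t_\star]\times[0,1]$, contradicting the Robin condition $\nabla u(\cdot,0)=A^{-1}b\ne 0$. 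Hence $u<b$ on the compact set, and by continuity $u\le b-\epsilon$ there for some $\epsilon>0$; combining with the large-$t$ bound yields the claim. The lower bound is analogous. The main obstacle here is making the strong maximum principle argument at the boundary rigorous, but the explicit smoothness provided by the semigroup representation makes this a standard application of the Hopf boundary-point argument.
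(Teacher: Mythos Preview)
Your argument is correct. The main differences from the paper's proof are worth noting.

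For existence, uniqueness and the semigroup representation, the paper simply invokes Theorems 1 and 3 of \cite[Section~VI.2]{M83}; your explicit spectral construction and eigenfunction--testing uniqueness argument are self-contained alternatives that work equally well.

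For the bound \eqref{10-01}, the paper uses the classical parabolic maximum principle from \cite{PW84}: it first treats $\phi\in\mc H^1$, where Lemma~\ref{l08} gives $u(t)\to\phi$ in $\ms L^\infty$ as $t\downarrow 0$, applies the maximum principle on $[t_0,\infty)\times[0,1]$, and lets $t_0\downarrow 0$; the $\ms L^2$ case is then obtained by approximation using \eqref{6-17}. Your Stampacchia-type energy estimate with the truncation $(u-M)^+$ is a genuinely different route: it avoids the two-step $\mc H^1$/$\ms L^2$ approximation since only $\ms L^2$ continuity at $t=0$ is needed, at the cost of justifying $(u-M)^+$ as an admissible test function (which is indeed standard once you know $u$ is classical for $t>0$).

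For the strict bound, both proofs rely on the strong maximum principle and the observation that $u\equiv b$ violates the Robin condition. The paper's version is slightly more economical: it argues only at the single time $t_0$ that $\max_x u(t_0,x)=b'<b$, and then propagates $u(t,x)\le\max\{0,b'\}$ to all $t\ge t_0$ directly from \eqref{10-01} applied with initial datum $u(t_0,\cdot)$. Your use of exponential decay to handle large times is correct but unnecessary.
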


\begin{proof}
Existence and uniqueness of generalized solutions, as well as their
representation in terms of the semigroup $P^{(R)}_t$ is the content of
Theorems 1 and 3 in \cite[Section VI.2]{M83}.

We turn to \eqref{10-01}. Assume first that $\phi$ belongs to
$\mc H^1$.  By \eqref{6-14}, $\phi \in \mc H_R$, and, by
Lemma~\ref{l08}, $u(t)$ converges to $\phi$ in $\ms L^\infty ([0, 1])$
as $t\to 0$.  Since the solution is smooth in
$(0,\infty) \times [0,1]$, by the maximum principle stated in Theorems
2 and 3 of \cite[Chapter 3]{PW84},
\begin{equation*}
\min \{\, 0 \,,\, \inf_{0\le y\le 1} u(t_0,y)  \,\}
\,\le\, u(t,x) \,\le\, \max \{\, 0 \,,\,
\sup_{0\le y\le 1} u(t_0,y) \,\}
\end{equation*}
for all $(t,x) \in [t_0, \infty) \times [0,1]$. Letting $t_0\to 0$, as
$u(t_0)$ converges to $\phi$ in $\ms L^\infty([0,1])$, yields
\eqref{10-01}.

To extend this result to $\phi\in \ms L^2([0,1])$, we consider a
sequence $\phi_n \in \mc H^1$ which converges to $\phi$ in
$\ms L^2([0,1])$ and such that
${\rm ess\, inf }\, \phi \,\le\, \phi_n (x) \,\le\, {\rm ess\,
sup }\, \phi$ for all $0\le x\le 1$. Denote by $u^n$ the solution of
\eqref{1-06h} with initial condition $\phi_n$. Fix $t>0$. By the
result for initial conditions in $\mc H^1$,
\begin{equation*}
\begin{aligned}
& \min \{\, 0 \,,\, {\rm ess\, inf }\, \phi \,\}
\;\le\;
\min \{\, 0 \,,\, \inf_{0\le y\le 1} \phi_n(y) \,\} \\
& \quad \;\le\;
u^n(t,x)
\;\le\;
\max \{\, 0 \,,\, \sup_{0\le y\le 1} \phi_n(y)  \,\}
\,\le\, \max \{\, 0 \,,\,
{\rm ess\, sup }\, \phi \,\}\;.
\end{aligned}
\end{equation*}
for all $0\le x\le 1$. By \eqref{6-17}, $u^n(t)$ converges to $u(t)$
in $\ms L^\infty ([0,1])$. This completes the proof of \eqref{10-01}.

Assume that $\phi (x) \le b$ for some $b>0$.  By \eqref{10-01},
$u(t,x) \le b$ for all $t \ge 0$, $0\le x\le 1$.  Fix $t_0>0$, and
assume that $\max_{0\le x\le 1} u(t_0,x) = b$. As $b>0$, the boundary
conditions imply that the maximum cannot be attained at the
boundary. On the other hand, if it is attained at the interior, by
Theorem 2 of \cite[Chapter 3]{PW84} and by the smoothness of the
solution, $u(t,x) = b$ for all $(t,x) \in (0,t_0] \times [0,1]$. This
is not possible at the boundary. Therefore,
$\max_{0\le x\le 1} u(t_0,x) < b$. By the maximum principle, this
bound can be extended to all $(t,x) \in [t_0, \infty) \times [0,1]$.
The same argument applies to the lower bound.
\end{proof}

Let $\bar\rho\in \ms M_{\rm ac}$ be the unique stationary solution of
the equation \eqref{1-06}. That, is $\bar\rho$ is the solution of the
elliptic equation
\begin{equation}
\label{1-05}
\begin{cases}
\Delta \rho  \,=\,  0\\
(\nabla   \rho) (0) \,=\, A^{-1} [\,\rho (0)-\alpha\,] \\
(\nabla  \rho)(1) \,=\, B^{-1} [\, \beta - \rho (1)] \;.
\end{cases}
\end{equation}
An elementary computation yields that
$\bar\rho$ is given by
\begin{equation*}
\bar\rho(x) \;=\; \frac{\alpha (1+B) + \beta A}{1+B+A}
\;+\; \frac{(\beta-\alpha)\, x}{1+B+A}\;\cdot
\end{equation*}
Note that $\bar\rho$ is the linear interpolation between
$\bar\rho(-A) = \alpha$ and $\bar\rho(1+B) = \beta$.

\begin{definition}
\label{d02}
Fix $\gamma: [0,1] \to [0,1]$.  A function $u$ in
$\ms L^2(0,T; \mc H^1)$ is said to be a generalized solution in the
cylinder $[0,T]\times [0,1]$ of the equation \eqref{1-06} if
$u(t,x) - \bar\rho$ is a generalized solution of the initial-boundary
problem \eqref{1-06h} with initial condition $\gamma - \bar\rho$.
\end{definition}

Therefore, a function $u$ in $\ms L^2(0,T; \mc H^1)$ is a
generalized solution in the cylinder $[0,T]\times [0,1]$ of the
equation \eqref{1-06} if
\begin{equation*}
\begin{aligned}
& \int_0^1  u_t \, H_t \; dx \;-\; \int_0^1 \gamma \, H_0 \; dx
\;-\; \int_0^t ds \, \int_0^1  u_s \, \partial_s H_s \; dx
=\;- \int_0^t ds \, \int_0^1 \nabla  u_s\, \nabla  H_s \; dx \\
& \quad
\;-\; \int_0^t \Big\{\,  \frac{1}{B}\, [\, u_s(1) - \beta\, ] \, H_s(1)
\,+\, \frac{1}{A} \, [\, u_s(0) - \alpha\,] \, H_s(0) \,\Big\} \; ds
\end{aligned}
\end{equation*}
for every $0<t\le T$, function $H$ in $C^{1,2}([0,T]\times [0,1])$.

\begin{theorem}
\label{mt4}
Fix $\gamma: [0,1] \to [0,1]$.  There exists a unique generalized
solution of \eqref{1-06}.  The solution is smooth in
$(0,T]\times [0,1]$ and satisfies the bounds
\begin{equation}
\label{1-08}
\min \{\, \alpha \,,\, {\rm ess\, inf }\,  \gamma \,\}
\,\le\, u(t,x) \,\le\, \max \{\, \beta \,,\,
{\rm ess\, sup }\, \gamma  \,\}
\end{equation}
for all $(t,x) \in [0,T] \times [0,1]$. Moreover, for all
$0<t_0 \le T$ there exists $\epsilon>0$ such that
$\epsilon \le u(t,x) \le 1-\epsilon$ for all
$(t,x) \in [t_0,T]\times [0,1]$.
\end{theorem}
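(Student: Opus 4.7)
The plan is to reduce everything to Theorem \ref{t01} for the homogeneous Robin problem via the substitution $v := u - \bar\rho$, and then obtain the maximum-principle bounds by a direct parabolic argument on $u$ itself.

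First, for existence, uniqueness, and smoothness, I would invoke Definition \ref{d02}: a function $u$ is a generalized solution of \eqref{1-06} with initial datum $\gamma$ if and only if $v := u-\bar\rho$ is a generalized solution of \eqref{1-06h} with initial datum $\phi := \gamma-\bar\rho \in \ms L^\infty([0,1])\subset \ms L^2([0,1])$. Since $\bar\rho$ is smooth and stationary, Theorem \ref{t01} immediately transfers existence, uniqueness, the representation $u = \bar\rho + P^{(R)}_t(\gamma-\bar\rho)$, and smoothness on $(0,T]\times[0,1]$ from $v$ to $u$.

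Next, for the a priori bounds \eqref{1-08}, I would argue by a parabolic maximum principle applied directly to $u$ with non-homogeneous Robin boundary conditions. Set $M := \max\{\beta,\,\ess\sup \gamma\}$ and $w := u-M$, so that $\partial_t w = \Delta w$, $w(0,\cdot) \le 0$ a.e., and
\begin{equation*}
\nabla w(t,0) \,=\, A^{-1}\bigl[\,w(t,0) + M - \alpha\,\bigr],
\qquad
\nabla w(t,1) \,=\, -\,B^{-1}\bigl[\,w(t,1) + M - \beta\,\bigr].
\end{equation*}
Suppose $w$ attained a positive maximum at $(t^*,x^*)\in(0,T]\times[0,1]$. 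If $x^*\in(0,1)$, the interior strong maximum principle (Theorems 2--3 of \cite{PW84}, Chapter 3) would force $w$ to be a positive constant backwards in time, contradicting $w(0,\cdot)\le 0$. If $x^*=0$, the Hopf boundary point lemma gives $\nabla w(t^*,0)\le 0$, while the Robin identity gives $\nabla w(t^*,0) = A^{-1}[w(t^*,0)+M-\alpha]>0$, a contradiction since $w(t^*,0)>0$ and $M\ge\beta\ge\alpha$. The case $x^*=1$ is analogous, yielding the upper bound $u\le M$; an identical argument with $w := u-m$, $m := \min\{\alpha,\ess\inf \gamma\}$, gives the lower bound. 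To handle the initial condition rigorously, when $\gamma$ is only measurable I would approximate it by $\gamma_n\in \mc H^1\cap C([0,1])$ with $m\le \gamma_n\le M$, invoke Lemma \ref{l08} to obtain $\ms L^\infty$-continuity at $t=0$ for the corresponding $u_n$ (so the max-principle argument applies cleanly), then pass to the limit using the $\ms L^\infty$ continuity of $P^{(R)}_t$ from $\ms L^2$ for $t\ge t_0>0$ provided by \eqref{6-17}.

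Finally, for the strict separation from $0$ and $1$, note that \eqref{1-08} combined with $0<\alpha\le \beta<1$ and $0\le \gamma\le 1$ already gives $0\le u\le 1$ on $[0,T]\times[0,1]$. I would then rule out equality at any point of $(0,T]\times[0,1]$ via the strong maximum principle and the Hopf lemma: if $u(t^*,x^*)=1$ with $x^*\in(0,1)$, the interior case forces $u\equiv 1$ on $[0,t^*]\times[0,1]$, which violates the Robin condition at $x=0$ (indeed $\nabla 1 = 0 \ne A^{-1}(1-\alpha) > 0$); if $x^*=0$, Hopf gives $\nabla u(t^*,0)<0$, while Robin gives $A^{-1}(1-\alpha)>0$; and if $x^*=1$, Hopf gives $\nabla u(t^*,1)>0$ whereas Robin gives $B^{-1}(\beta-1)<0$. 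The analogous reasoning excludes $u(t^*,x^*)=0$. Hence $0<u(t,x)<1$ strictly on $(0,T]\times[0,1]$, and since $u$ is continuous on the compact set $[t_0,T]\times[0,1]$, we conclude the uniform $\epsilon$ exists.

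The main obstacle is step two: the maximum-principle argument requires enough regularity of $u$ up to $t=0$ to make sense of the inequality $w(0,\cdot)\le 0$ pointwise. For rough $\gamma$ the solution $u(t,\cdot)$ only converges to $\gamma$ in $\ms L^2$ as $t\downarrow 0$, so the approximation step via smooth $\gamma_n$ (together with the semigroup smoothing estimates of Appendix \ref{sec04}) is essential, and must be carried out so that the uniform bounds $m\le u_n\le M$ are preserved in the limit.
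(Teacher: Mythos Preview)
Your proposal is correct and takes essentially the same approach as the paper, which simply records that the proof is ``similar to the one of Theorem~\ref{t01}.'' You make this explicit: existence, uniqueness, and smoothness are transferred from Theorem~\ref{t01} via the substitution $v=u-\bar\rho$ built into Definition~\ref{d02}, while the bounds \eqref{1-08} and the strict separation from $0$ and $1$ are obtained by rerunning the Protter--Weinberger strong maximum principle and Hopf lemma directly on $u$ (with the inhomogeneous Robin data), together with the same $\mc H^1$-approximation of rough initial data used in the proof of Theorem~\ref{t01}.
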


\begin{proof}
The proof of this result is similar to the one of Theorem \ref{t01}.
\end{proof}

Fix $\gamma: [0,1]\to [0,1]$, and denote by $u^{(\gamma)}$ the unique
weak solution of \eqref{1-06} with initial condition $\gamma$.

\begin{lemma}
\label{l14}
There exists a
finite constant $C_0$, which depends only on $\alpha$, $\beta$, $A$,
$B$ such that
\begin{equation*}
\begin{aligned}
& \int_0^t ds \int_0^1 \frac{(\nabla  u_s)^2}{\sigma(u_s)} \, dx
+\; \frac{1}{A}  \int_0^t  \,\Big|\,  [\, u_s(0) -\alpha\, ]\,
\log \frac{u_s(0)}{1-u_s(0)} \,\Big| \, ds \\
&\qquad \,+\, \frac{1}{B} \int_0^t
\,\Big| \, [\, u_s(1) -\beta\, ]\,
\log \frac{u_s(1)}{1-u_s(1)}  \,\Big| \, ds \;\le \;
C_0\, t \;+\;  \int_0^1 F_0(\gamma)\, dx \;-\; \int_0^1 F_0(u_t)\, dx
\end{aligned}
\end{equation*}
for all $t>0$ and all $\gamma: [0,1]\to [0,1]$.
\end{lemma}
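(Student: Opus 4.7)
The plan is to identify $F_0$ as the Bernoulli entropy $F_0(u) = u\log u + (1-u)\log(1-u)$ and derive the estimate as an entropy dissipation identity. Recall that this $F_0$ is bounded on $[0,1]$, with $F_0'(u) = \log\{u/(1-u)\}$ and $F_0''(u) = 1/\sigma(u)$. I would first fix $0<t_0<t$ and exploit Theorem \ref{mt4}, which ensures that $u^{(\gamma)}$ is smooth in $[t_0,t]\times[0,1]$ and that there exists $\epsilon=\epsilon(t_0)>0$ with $\epsilon\le u_s \le 1-\epsilon$ on this set. On such a set $F_0'(u_s)$ is a smooth bounded function, so multiplying $\partial_s u_s = \Delta u_s$ by $F_0'(u_s)$, integrating over $[t_0,t]\times[0,1]$ and integrating by parts in the space variable gives
\begin{equation*}
\int_0^1 F_0(u_t)\,dx - \int_0^1 F_0(u_{t_0})\,dx
= -\,\int_{t_0}^{t}\!ds\int_0^1 \frac{(\nabla u_s)^2}{\sigma(u_s)}\,dx
\;+\; \int_{t_0}^{t} \mathrm{BT}(s)\,ds\;,
\end{equation*}
where, after invoking the Robin boundary conditions \eqref{1-06},
\begin{equation*}
\mathrm{BT}(s)\;=\;\frac{1}{B}\,[\beta-u_s(1)]\,\log\frac{u_s(1)}{1-u_s(1)}
\;-\;\frac{1}{A}\,[u_s(0)-\alpha]\,\log\frac{u_s(0)}{1-u_s(0)}\;.
\end{equation*}

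Next, I would extract the absolute values on the boundary. Set $h(v)=\log\{v/(1-v)\}$; since $h$ is increasing, $[v-a][h(v)-h(a)]\ge 0$, hence
\begin{equation*}
[v-a]\,h(v)\;=\;[v-a][h(v)-h(a)]\;+\;[v-a]\,h(a)\;\ge\;-\,|h(a)|\;,
\end{equation*}
for $v\in[0,1]$, $a\in(0,1)$. A short case analysis (split according to the sign of $[v-a]h(v)$) then yields $|[v-a]h(v)|\le [v-a]h(v)+2|h(a)|$. Applying this with $(v,a)=(u_s(0),\alpha)$ and $(v,a)=(u_s(1),\beta)$, adding the resulting inequalities to the displayed identity, and reorganising produces
\begin{equation*}
\int_{t_0}^{t}\!\! ds\!\int_0^1\! \frac{(\nabla u_s)^2}{\sigma(u_s)}\,dx
+\frac{1}{A}\!\int_{t_0}^{t}\!\Big|[u_s(0){-}\alpha]h(u_s(0))\Big|ds
+\frac{1}{B}\!\int_{t_0}^{t}\!\Big|[u_s(1){-}\beta]h(u_s(1))\Big|ds
\le C_0\,(t-t_0) + \!\int_0^1\! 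F_0(u_{t_0})\,dx - \!\int_0^1\! F_0(u_t)\,dx\;,
\end{equation*}
with $C_0 = 2A^{-1}|h(\alpha)| + 2B^{-1}|h(\beta)|$.

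Finally, I would let $t_0\downarrow 0$. By Theorem \ref{t01}, $u_{t_0} = \bar\rho + P^{(R)}_{t_0}(\gamma-\bar\rho)$, so $u_{t_0}\to \gamma$ in $\ms L^2([0,1])$ and hence (up to a subsequence) a.e. Since $F_0$ is bounded and continuous on $[0,1]$, the dominated convergence theorem gives $\int_0^1 F_0(u_{t_0})\,dx \to \int_0^1 F_0(\gamma)\,dx$. On the left-hand side, each term is nonnegative and monotone in $t_0$, so Fatou's lemma upgrades the inequality to the statement of the lemma.

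The main technical point is the boundary manipulation in the middle paragraph: one has to keep track that $|[v-a]h(v)|$ is dominated by the signed quantity that actually appears in the entropy identity, plus an $O(1)$ error absorbable into $C_0\,t$. The $t_0\downarrow 0$ limit is routine thanks to the regularity and strict-positivity statements in Theorems \ref{t01} and \ref{mt4}, which make the formal integration by parts rigorous for any $t_0>0$.
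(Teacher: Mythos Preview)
Your proposal is correct and follows essentially the same approach as the paper: both identify $F_0$ as the Bernoulli entropy, derive the dissipation identity by multiplying the equation by $F_0'(u_s)$ and integrating by parts on $[t_0,t]\times[0,1]$ (where Theorem~\ref{mt4} guarantees smoothness and strict positivity), and then pass to absolute values on the boundary terms using the lower bound $[v-a]\log\{v/(1-v)\}\ge -c_1$ together with the elementary implication $f\ge -c_1\Rightarrow |f|\le f+2c_1$. Your version is in fact slightly more explicit---you identify the constant as $c_1=|h(a)|$ via the monotonicity of $h$---and your justification of the limit $t_0\downarrow 0$ (monotone convergence on the left, dominated convergence for $\int F_0(u_{t_0})$ on the right) is a touch more careful than the paper's, but the argument is the same.
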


\begin{proof}
Fix $F\in C^2([0,1])$, an initial profile $\gamma:[0,1]\to [0,1]$, and
denote by $u$ the solution of \eqref{1-06}. Since $u$ is smooth on
$(0,\infty) \times [0,1]$, integrating by parts and in view of the
boundary conditions, for all $0<\delta<t<\infty$,
\begin{equation*}
\begin{aligned}
& \int_0^1 F(u_t)\, dx \;-\; \int_0^1 F(u_\delta)\, dx
\;=\; -\, \int_\delta^t ds \int_0^1 F'' (u_s) \, (\nabla  u_s)^2 \, dx \\
& -\, \int_\delta^t  \frac{1}{A} \, [\, u_s(0) -\alpha\, ]\,  F' (u_s(0) ) \, ds
\,-\, \int_\delta^t  \frac{1}{B} \, [\, u_s(1) -\beta\, ]\,
F' (u_s(1) ) \, ds\;.
\end{aligned}
\end{equation*}
As $u_\delta$ converges to $\gamma$ in $\ms L^2([0,1])$, letting
$\delta\to 0$ yields that for all $t>0$,
\begin{equation*}
\begin{aligned}
&  \int_0^t ds \int_0^1 F'' (u_s) \, (\nabla  u_s)^2 \, dx
+\; \int_0^t  \frac{1}{A} \, [\, u_s(0) -\alpha\, ]\,  F' (u_s(0) ) \,
ds \\
&\quad \,+\, \int_0^t  \frac{1}{B} \, [\, u_s(1) -\beta\, ]\,
F' (u_s(1) ) \, ds \;=\; \int_0^1 F(\gamma)\, dx
\;-\; \int_0^1 F(u_t)\, dx \;.
\end{aligned}
\end{equation*}

Since for each $t>0$, there exists $\epsilon>0$ such that
$\epsilon \le u(s,x) \le 1-\epsilon$ for all
$(s,x) \in [t,\infty) \times [0,1]$, the previous argument can be
applied to the function $F_0 (r) = r \log r + (1-r) \log (1-r)$. It
yields that
\begin{equation}
\label{n05}
\begin{aligned}
& \int_0^t ds \int_0^1 \frac{(\nabla  u_s)^2}{\sigma(u_s)} \, dx
+\; \int_0^t  \frac{1}{A} \, [\, u_s(0) -\alpha\, ]\,
\log \frac{u_s(0)}{1-u_s(0)}  \, ds \\
&\qquad \,+\, \int_0^t  \frac{1}{B} \, [\, u_s(1) -\beta\, ]\,
\log \frac{u_s(1)}{1-u_s(1)}  \, ds \;=\;
\int_0^1 F_0(\gamma)\, dx \;-\; \int_0^1 F_0(u_t)\, dx
\end{aligned}
\end{equation}
for all $t>0$. Clearly, for each $\varrho>0$, the function
$f_\varrho : (0,1) \to \mathbb R$ defined by
$f_\varrho (r) = [\, r \,-\, \varrho \, ]\, \log [\, r/(1-r)\,]$ is
bounded below by a finite constant, say $-\, c_1(\varrho) <0$. Hence,
$| \, f_\varrho (r) \,| \le f_\varrho (r) + 2 c_1$. Therefore, there
exists a finite constant $C_0 = C_0(A,B, \alpha, \beta)$ such that
\begin{equation*}
\begin{aligned}
& \int_0^t ds \int_0^1 \frac{(\nabla  u_s)^2}{\sigma(u_s)} \, dx
+\; \frac{1}{A}  \int_0^t  \,\Big|\,  [\, u_s(0) -\alpha\, ]\,
\log \frac{u_s(0)}{1-u_s(0)} \,\Big| \, ds \\
&\qquad \,+\, \frac{1}{B} \int_0^t
\,\Big| \, [\, u_s(1) -\beta\, ]\,
\log \frac{u_s(1)}{1-u_s(1)}  \,\Big| \, ds \;\le \;
C_0\, t \;+\;  \int_0^1 F_0(\gamma)\, dx \;-\; \int_0^1 F_0(u_t)\, dx
\end{aligned}
\end{equation*}
for all $t>0$, as claimed.
\end{proof}

As $u_t$ converges to $\gamma$ in $\ms L^2([0,1])$, letting
$t\to 0$ in the previous lemma yields that
\begin{equation}
\label{n06}
\begin{aligned}
\lim_{t\to 0} \Big\{\,
\int_0^t ds \int_0^1 \frac{(\nabla  u_s)^2}{\sigma(u_s)} \, dx
& +\; \frac{1}{A}  \int_0^t  \,\Big|\,  [\, u_s(0) -\alpha\, ]\,
\log \frac{u_s(0)}{1-u_s(0)} \,\Big| \, ds \\
&\qquad \,+\, \frac{1}{B} \int_0^t
\,\Big| \, [\, u_s(1) -\beta\, ]\,
\log \frac{u_s(1)}{1-u_s(1)}  \,\Big| \, ds \,\Big\} \;= \; 0\;.
\end{aligned}
\end{equation}

\begin{definition}
\label{d03}
Fix $H\in C^{0,1}([0,T]\times [0,1])$.  A function $u$ in
$\ms L^2(0,T; \mc H^1)$ such that $0\le u\le 1$ a.e.\! is said to be a
generalized solution in the cylinder $[0,T]\times [0,1]$ of the
equation \eqref{5-01} if
\begin{equation}
\label{8-01}
\begin{aligned}
& \int_0^1  u_t \, G_t \; dx \;-\; \int_0^1 \gamma \, G_0 \; dx
\;-\; \int_0^t ds \, \int_0^1  u_s \, \partial_s G_s \; dx \\
&\quad \;=\;
\int_0^t ds \, \int_0^1 \big\{\,
-\, \nabla  u_s \, \nabla  G_s \, +\, 2\, \sigma (u_s)\,
\nabla  H_s \,\nabla  G_s \,\big\} \; dx \\
&\quad  \;+\; \int_0^t \big\{\,  \mf p_{\beta,B} ( u_s(1), H_s(1))
\, G_s(1) \,+\, \mf p_{\alpha,A} ( u_s(0) , H_s(0)) \, G_s(0)  \,\big\} \; ds
\end{aligned}
\end{equation}
for every $0<t\le T$ and function $G$ in $C^{1,2}([0,T]\times [0,1])$.
\end{definition}

\begin{theorem}
\label{mt5}
Fix $\gamma: [0,1]\to [0,1]$ and $H$ in $C^{0,1}([0,T]\times
[0,1])$. There exists a unique weak solution of \eqref{5-01}.
\end{theorem}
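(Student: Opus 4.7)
The plan is to address existence and uniqueness separately, with uniqueness being the substantive step.

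For existence, I would use a Galerkin approximation based on the eigenbasis $(f_k)_{k\ge 1}$ of the Robin Laplacian from Appendix~\ref{sec04}. Setting $u^n(t,x) = \sum_{k=1}^n a^n_k(t) f_k(x)$ and projecting the weak formulation \eqref{8-01} onto $\mathrm{span}\{f_1,\dots,f_n\}$ yields an ODE system whose coefficients are locally Lipschitz in the $a^n_k$ (using the smoothness of $H$, $\sigma$, and $\mf p_{\varrho,D}$). A formal energy identity, together with Young's inequality and a one-dimensional trace estimate, produces uniform bounds in $L^\infty(0,T; L^2) \cap L^2(0,T; \mc H^1)$; an Aubin--Lions type compactness argument then extracts a limit satisfying \eqref{8-01}. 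The $L^\infty$ bound $0\le u\le 1$ can be recovered by first extending $\sigma$ to be zero outside $[0,1]$ and then using a truncation/comparison argument as in Theorem~\ref{mt4}. Alternatively, existence is automatically furnished by the hydrodynamic-limit argument behind Theorem~\ref{p02}: tightness of $\mathbb Q^H_{\mu_N}$ together with identification of any subsequential limit as a solution of \eqref{5-01} produces a weak solution without using uniqueness.

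For uniqueness, let $u^1, u^2$ be two weak solutions and set $w = u^1 - u^2$. Subtracting the two instances of \eqref{8-01} gives, for every $G \in C^{1,2}([0,T]\times[0,1])$,
\begin{equation*}
\begin{aligned}
& \int_0^1 w_t G_t\, dx \,-\, \int_0^t\!\!\int_0^1 w_s\, \partial_s G_s\, dx\, ds
\,=\, -\int_0^t\!\!\int_0^1 \nabla w_s\, \nabla G_s\, dx\, ds \\
&\quad +\, 2\int_0^t\!\!\int_0^1 [\sigma(u^1_s) - \sigma(u^2_s)]\, \nabla H_s\, \nabla G_s\, dx\, ds \,+\, \int_0^t \mf R_s(G)\, ds,
\end{aligned}
\end{equation*}
where $\mf R_s(G)$ collects the boundary contributions
$[\mf p_{\beta,B}(u^1_s(1),H_s(1)) - \mf p_{\beta,B}(u^2_s(1),H_s(1))]\, G_s(1) + [\mf p_{\alpha,A}(u^1_s(0),H_s(0)) - \mf p_{\alpha,A}(u^2_s(0),H_s(0))]\, G_s(0)$.
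The goal is to test against $G=w$. Since $w$ is not itself admissible, I would either time-mollify as in Section~\ref{sec03}, or first deduce from the weak formulation that $\partial_t w \in L^2(0,T; (\mc H^1)^*)$, whence $w \in C([0,T]; L^2)$ and the chain rule $\tfrac{d}{dt}\|w_t\|_2^2 = 2\langle \partial_t w_t, w_t\rangle$ holds in the Lions--Magenes sense.

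The resulting identity (using $w_0 = 0$) is
\begin{equation*}
\tfrac{1}{2}\|w_t\|_2^2 \,+\, \int_0^t \|\nabla w_s\|_2^2\, ds \,=\, 2\int_0^t\!\!\int_0^1 [\sigma(u^1_s)-\sigma(u^2_s)]\, \nabla H_s\, \nabla w_s\, dx\, ds \,+\, \int_0^t \mf R_s(w)\, ds.
\end{equation*}
Since $\sigma$ is Lipschitz on $[0,1]$, $(a,M)\mapsto \mf p_{\varrho,D}(a,M)$ is smooth, hence Lipschitz in $a$ uniformly for $\|H\|_\infty$ bounded, and the trace inequality $|w_s(0)|^2 + |w_s(1)|^2 \le \eta \|\nabla w_s\|_2^2 + C_\eta \|w_s\|_2^2$ holds on $[0,1]$, Young's inequality leads to
\begin{equation*}
\|w_t\|_2^2 \,+\, \int_0^t \|\nabla w_s\|_2^2\, ds \,\le\, C(H)\int_0^t \|w_s\|_2^2\, ds,
\end{equation*}
and Grönwall's inequality forces $w\equiv 0$. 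The main obstacle I anticipate is the rigorous justification of the energy identity with $G=w$: the nonlinear boundary terms require that the traces $u^i_s(0), u^i_s(1)$ be square-integrable in $s$ and behave well under time-mollification. This follows from the trace theorem applied to functions in $L^2(0,T; \mc H^1)$ combined with the smoothness of $\mf p_{\varrho,D}$ in both arguments, and is the only nontrivial point in an otherwise standard energy uniqueness argument.
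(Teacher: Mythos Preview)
Your proposal is correct and follows essentially the same energy--Gronwall strategy as the paper, including the option of existence via the hydrodynamic limit and of rigorizing the test $G=w$ by time-mollification as in Section~\ref{sec03}. One simplification you miss: because $a\mapsto \mf p_{\varrho,D}(a,M)$ is \emph{affine}, the difference $\mf p_{\varrho,D}(u^1,M)-\mf p_{\varrho,D}(u^2,M)$ equals $-\,w\cdot D^{-1}[\varrho e^M+(1-\varrho)e^{-M}]$, so your boundary term $\mf R_s(w)$ is exactly $-\,w_s(1)^2\,\overline{\mf p}_{\beta,B}(H_s(1))-w_s(0)^2\,\overline{\mf p}_{\alpha,A}(H_s(0))\le 0$ and can simply be dropped; the paper uses this sign and avoids the trace inequality altogether.
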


\begin{proof}
Existence follows from the hydrodynamic limit of the WASEP. Uniqueness
is based on the energy estimate. Fix two initial
conditions $\gamma^{(1)}$,  $\gamma^{(2)}$, and denote by  $u^{(1)}$,
$u^{(2)}$ two weak solutions of \eqref{5-01} with initial conditions
$\gamma^{(1)}$,  $\gamma^{(2)}$, respectively.

Before presenting a rigorous argument we provide an heuristic one.
Approximate $w = u^{(2)} - u^{(1)}$ by a sequence of functions $G$ in
$C^{1,2}([0,T]\times [0,1])$. By \eqref{8-01}, for all $0<t\le T$,
\begin{equation}
\label{8-03}
\begin{aligned}
& \frac{1}{2}\, \int_0^1  w^2_t \; dx
\;-\; \frac{1}{2}\, \int_0^1 [\, \gamma^{(2)} - \gamma^{(1)}\,]^2  \; dx
\;+\;
\int_0^t ds \, \int_0^1  (\nabla  w_s)^2
\; dx \\
&\quad \;=\; 2\,
\int_0^t ds \, \int_0^1 [\, \sigma (u^{(2)}_s) - \sigma (u^{(1)}_s) \,]
\, \nabla  H_s \, \nabla  w_s  \; dx  \\
&\quad  \;-\; \int_0^t \Big\{\, w_s(1)^2\,
\overline{\mf p}_{\beta, B}(H_s(1)) \,+\,
w_s(0)^2\, \overline{\mf p}_{\alpha, A}(H_s(0)) \, \Big\} \; ds\;,
\end{aligned}
\end{equation}
where
$\color{bblue} \overline{\mf p}_{\varrho, D} (M) \,=\, D^{-1}\, \{\,
\varrho \, e^{M} + (1-\varrho) e^{-M} \,\}$.  As
$\overline{\mf p}_{\varrho, D}(M) \ge 0$, the last integral is
negative. Therefore, by Young's inequality
$2xy \le a x^2 + a^{-1} x^2$, and since $\nabla H$ is uniformly
bounded and $\sigma$ Lipschitz continuous,
\begin{equation*}
\begin{aligned}
& \frac{1}{2}\, \int_0^1  w^2_t \; dx
\;-\; \frac{1}{2}\, \int_0^1 [\, \gamma^{(2)} - \gamma^{(1)}\,]^2  \; dx
\;+\; \frac{1}{2}
\int_0^t ds \, \int_0^1  (\nabla  w_s)^2
\; dx \\
&\quad \;\le \; C_0(H)\,
\int_0^t ds \, \int_0^1 w^2_s  \; dx  \;.
\end{aligned}
\end{equation*}
for some finite constant $C_0(H)$ which depends on $H$.
It remains to apply Gronwal inequality to conclude that
\begin{equation*}
\int_0^1  w^2_t \; dx \;\le\;
e^{C_0t}\,
\int_0^1 [\, \gamma^{(2)} - \gamma^{(1)}\,]^2  \; dx \;,
\end{equation*}
which yields uniqueness.

We turn to a rigorous proof.  Recall the notation introduced in the
proof of Lemma \ref{l03}.  Fix a smooth function
$F:\mathbb R \to \mathbb R$ and recall that $w = u^{(2)} -
u^{(1)}$. As $w^{\epsilon,\delta}$ is a smooth function, for
$0<t\le T$,
\begin{equation*}
\<\, F(w^{\epsilon,\delta}_t)\,\> \,-\,
\<\, F(w^{\epsilon,\delta}_0)\,\> \,=\,
\int_0^t ds \int_0^1 F'(w^{\epsilon,\delta}_s)\,
\partial_s w^{\epsilon,\delta}_s \, dx\;.
\end{equation*}
Integrating by parts, the right-hand side becomes
\begin{equation*}
\int_0^1  w^{\epsilon,\delta}_t\, F' (w^{\epsilon,\delta}_t)\; dx \,-\,
\int_0^1  w^{\epsilon,\delta}_0\, F' (w^{\epsilon,\delta}_0)\; dx \,-\,
\int_0^t ds \int_0^1 w^{\epsilon,\delta}_s\,
\partial_s F'(w^{\epsilon,\delta}_s)\, dx\;.
\end{equation*}
By Lemma \ref{l03}, actually its proof since we changed the definition
of $w^\epsilon$, this expression is equal to
\begin{equation}
\label{n07}
\int_0^1  w_t\, F' (w^{\epsilon,\delta}_t)^{\epsilon,\delta}\; dx \,-\,
\int_0^1  w_0\, F' (w^{\epsilon,\delta}_0)^{\epsilon,\delta}\; dx \,-\,
\int_0^t ds \int_0^1 w_s\,
\partial_s [F'(w^{\epsilon,\delta}_s)^{\epsilon,\delta}]\, dx \;+\;
R_{\epsilon,\delta}\;,
\end{equation}
where for all $\epsilon>0$, $\lim_{\delta\to 0} R_{\epsilon,\delta} =0$.

Take $F(a) = (1/2) \, a^2$. Let $\phi^{(2)}$ be the convolution of $\phi$
with itself:
\begin{equation*}
\phi^{(2)} (t) \;=\; \int_{\mathbb R} \phi(t-s) \, \phi(s)\; ds\;,
\end{equation*}
and set
$\color{bblue} \phi_\delta^{(2)} (t) = \delta^{-1}\, \phi^{(2)}
(t/\delta)$. Since $P^{(R)}_t$ is a semigroup and since $P^{(R)}_t$
comutes with the time convolution, for any function
$f\in \ms L^2([0,T]\times [0,1])$,
\begin{equation*}
(f^{\epsilon,\delta})^{\epsilon,\delta} (t,x) \;=\;
\int_{\mathbb R} [\, P^{(R)}_{2\epsilon } f(t+s)\,](x)
\, \phi_\delta^{(2)} (s) \; ds  \;.
\end{equation*}
Therefore, the first three terms of \eqref{n07} are equal to
\begin{equation*}
\int_0^1  w_t\, w^{2\epsilon,\delta}_t\; dx \,-\,
\int_0^1  w_0\, w^{2\epsilon,\delta}_0\; dx \,-\,
\int_0^t ds \int_0^1 w_s\,
\partial_s w^{2\epsilon,\delta}_s\, dx \;,
\end{equation*}
with the convention, starting from this equation and up to the end of
the proof, that the superscript $\delta$ represent now convolution
with $\phi_\delta^{(2)}$ instead of $\phi_\delta$.

By \eqref{8-01}, this sum is equal to
\begin{equation*}
\begin{aligned}
& \int_0^t ds \, \int_0^1 \big\{\,
-\, \nabla  w_s \, \nabla  w^{2\epsilon,\delta}_s \,
+\, 2\, \{\, \sigma (u^{(2)}_s) - \sigma (u^{(1)}_s) \,\}
\, \nabla  H_s
\,\nabla  w^{2\epsilon,\delta}_s \,\big\} \; dx \\
&\quad  \;-\; \int_0^t \big\{\,
\overline{\mf p}_{\beta,B} (H_s(1))
\, w_s(1)\, w^{2\epsilon,\delta}_s(1) \,+\,
\overline{\ \mf p} _{\alpha,A} (H_s(0)) \,
w_s(0) \, w^{2\epsilon,\delta}_s(0) \  \,\big\} \; ds\;,
\end{aligned}
\end{equation*}
where $\overline{\ \mf p} _{\varrho, D} (M)$ has been introduced in
\eqref{8-03}.  By \eqref{6-17}, \eqref{6-14}, for each $\epsilon >0$,
$\nabla w^{2\epsilon}$ belongs to $\ms L^2([0,T]\times
[0,1])$. Therefore, as $\delta\to 0$,
$\nabla w^{2\epsilon,\delta} = (\nabla w^{2\epsilon})^{\delta} \to
\nabla w^{2\epsilon}$ in $\ms L^2([0,T]\times [0,1])$.

On the other hand, by \eqref{6-05} and \eqref{6-14},
\begin{equation*}
|\, w^{2\epsilon, \delta}_t(1) \,-\, w^{2\epsilon}_t(1)\,|^2 \;\le\; C_0 \,
\Vert\, w^{2\epsilon, \delta}_t \,-\, w^{2\epsilon}_t \,\Vert^2_{\mc
H^1}
\end{equation*}
for some finite constant $C_0$ independent of $\epsilon$ and $t$. A
similar inequality holds at $x=0$.  Therefore, as
$\nabla  w^{2\epsilon,\delta} \to \nabla  w^{2\epsilon}$ in
$\ms L^2([0,T]\times [0,1])$ as $\delta\to 0$,
$ w^{2\epsilon, \delta}_t(1) \to w^{2\epsilon}_t(1)$ in $\ms L^2([0,T])$
as $\delta\to 0$. In conclusion, letting $\delta\to 0$, the sum
appearing in the penultimate displayed equation converges to
\begin{equation*}
\begin{aligned}
& \int_0^t ds \, \int_0^1 \big\{\,
-\, \nabla  w_s \, \nabla  w^{2\epsilon}_s \,
+\, 2\, \{\, \sigma (u^{(2)}_s) - \sigma (u^{(1)}_s) \,\}
\, \nabla  H_s
\,\nabla  w^{2\epsilon}_s \,\big\} \; dx \\
&\quad  \;- \; \int_0^t \big\{\,
\overline{\mf p}_{\beta,B} (H_s(1))
\, w_s(1)  \, w^{2\epsilon}_s(1) \,+\,
\overline{\ \mf p} _{\alpha,A} (H_s(0)) \,
w_s(0) \,  w^{2\epsilon}_s(0) \  \,\big\} \; ds\;.
\end{aligned}
\end{equation*}

By the first assertion of Lemma \ref{l02}, as $\epsilon\to 0$,
$\nabla  w^{2\epsilon}$ converges to $\nabla  w$ in
$\ms L^2([0,T] \times [0,1])$. Therefore, as $\epsilon \to 0$, the first
line converges to
\begin{equation*}
\int_0^t ds \, \int_0^1 \big\{\,
-\, \nabla  w_s \, \nabla  w_s \,
+\, 2\, [\, \sigma (u^{(2)}_s) - \sigma (u^{(1)}_s) \,]
\, \nabla  H_s \,\nabla  w_s \,\big\} \; dx \;.
\end{equation*}

On the other hand, as $w\in \ms L^2(0,T; \mc H^1)$, by Lemma \ref{l08},
$w^{\epsilon}_t(1) \to w_t(1)$ in $\ms L^2([0,T])$. Hence, by the
dominated convergence theorem, as $\epsilon \to
0$, the second line converges to
\begin{equation*}
-\, \int_0^t \big\{\,
\overline{\mf p}_{\beta,B} (H_s(1))
\, w_s(1)^2 \,+\,
\overline{\ \mf p} _{\alpha,A} (H_s(0)) \,
w_s(0)^2 \  \,\big\} \; ds\;.
\end{equation*}
This proves that equation \eqref{8-03} is in force and completes the
proof of the theorem.
\end{proof}

We conclude this section with a heat equation with mixed boundary
equations. Fix a function $\phi\in \ms L^2([0,1])$, and consider the
initial-boundary problem

\begin{equation}
\label{n09}
\begin{cases}
\partial_tu \,=\, \Delta u\\
(\Delta  u) (t,0) \,=\, A^{-1} \,\nabla u(t,0) \\
(\Delta  u)(t,1) \,=\, -\, B^{-1}\, \nabla u(t,1) \\
u(0,\cdot) \,=\, \phi (\cdot)\;.
\end{cases}
\end{equation}

One can define generalized solutions of this problem as in Definition
\ref{d01} and prove existence and uniqueness as stated in Theorem
\ref{t01}. The solution can be represented as $u_t = P^{(M)}_t \phi$,
where $(P^{(M)}_t : t\ge 0)$ represents the semigroup associated to
the Laplacian with boundary conditions
\begin{equation*}
(\Delta  f ) (0) \,=\, A^{-1} \, (\nabla f) (0) \;,\qquad
(\Delta  f)(1) \,=\, -\, B^{-1} (\nabla f) (1) \;.
\end{equation*}
Denote this operator by $\Delta_M$.  An elementary computation shows
that the eigenvalues of $\Delta_M$ coincide with those of $\Delta_R$.

We claim that for all $s\ge 0$ and function $f$ in $C^1([0,1])$,
\begin{equation}
\label{6-23b}
\nabla P^{(R)}_{s} f \;=\; P^{(M)}_{s} \nabla f\;.
\end{equation}
To check this identity, fix $f$ in $C^1([0,1])$, and let
$u_s := P^{(R)}_{s} f$. Clearly $u_s$ is the solution of \eqref{1-06h}
with initial condition $u_0=f$. Let $v_s := \nabla u_s$, Then, $v_s$
solves \eqref{n09} initial condition $v_0=\nabla f$. Hence, $v_s$ can
be represented as $v_s = P^{(M)}_{s} \nabla f$, that is,
$P^{(M)}_{s} \nabla f = v_s = \nabla u_s = \nabla P^{(R)}_{s} f$, as
claimed.

\noindent \textbf{Acknowledgment.} T.\ F.\ has been partially
supported by the CNPq Bolsa de Produtividade em Pesquisa PQ
301269/2018-1. P.G. thanks FCT/Portugal for financial support through
the project UID/MAT/04459/2013. This project has received funding from
the European Research Council (ERC) under the European Union's Horizon
2020 research and innovative programme (grant agreement n. 715734).
C.\ L.\ has been partially supported by FAPERJ CNE E-26/201.207/2014,
by CNPq Bolsa de Produtividade em Pesquisa PQ 303538/2014-7, by
ANR-15-CE40-0020-01 LSD of the French National Research Agency.
A. N. has been partially supported by the CNPq Bolsa de Produtividade
em Pesquisa PQ 310589/2018-5.

\end{document}